\newcommand{\twistAut}[2]{#2_{\Aut,#1}}
\newcommand{\gogl}{\check{\gog}}
\newcommand{\Tr}{T_r}
\newcommand{\Tl}{T_l}
\newcommand{\vac}{|0\rangle}
\newcommand{\noz}{\text{noz}\xspace}
\newcommand{\fsonoz}{fsonoz\xspace}
\newcommand{\ccg}{ccg}
\newcommand{\exhaustion}{exhaustion\xspace}
\newcommand{\cgmod}{semi-cg topological\xspace}
\newcommand{\ccgmod}{semi-\ccg\xspace topological\xspace}
\newcommand{\oSigma}{{ \overline \Sigma}}
\newcommand{\tsigma}{T_{\overline{\Sigma}}}
\newcommand{\VkSg}{\calV^\kappa_{\Sigma}(\gog)}
\newcommand{\mVkSgt}{\mV^\gog_{\Sigma,t}}
\newcommand{\calVkSgt}{\calV^\gog_{\Sigma,t}}
\newcommand{\Vkg}{V^\kappa(\gog)}
\newcommand{\calUgS}{\calU(\hgog_{\Sigma})}
\newcommand{\vbasic}{\calV_{\text{basic}}}
\newcommand{\vbasicpiu}{\calV_{\text{basic}+}}
\newcommand{\vcom}{\calV_{\text{com}}}
\newcommand{\vcompiu}{\calV_{\text{com}+}}
\newcommand{\mvbasic}{\mV_{\text{basic}}}
\newcommand{\mvbasicpiu}{\mV_{\text{basic}+}}
\newcommand{\mvcom}{\mV_{\text{com}}}
\newcommand{\mvcompiu}{\mV_{\text{com}+}}
\newcommand{\am}[1]{{\color{blue}\textsf{[[AM: #1]]}}}
\newcommand{\lc}[1] { {\color{orange} \textsf{[[LC: #1]]} } }
\DeclareMathOperator{\cont}{cont}
\newcommand{\qcc}[1]{{{\widetilde {#1}}^c}}
\newcommand{\QCC}{QCC\xspace}
\newcommand{\QCCF}{QCCF\xspace}
\newcommand{\tensor}[1]{{\stackrel{#1}{\otimes}}}
\newcommand{\exttensor}[1]{{\stackrel{#1}{\boxtimes}}}
\newcommand{\calHom}{\operatorname{\calH\! \it{om}}}
\newcommand{\calHomcont}{\operatorname{\calH\! \it{om}}^{\cont}}
\newcommand{\Homcont}{\operatorname{Hom}^{\cont}}
\newcommand{\finitosigma}{I}
\newcommand{\piuno}{\calR_r}
\newcommand{\barOX}{{\overline{\calO}_X}}
\newcommand{\pbarO}{{\calO_{\overline{ \Sigma}}}}
\newcommand{\pbarOpoli}{{\calO_{\overline{\Sigma}^*}}}
\newcommand{\pbarOK}[1]{{\calO_{\overline{\Sigma}_{#1}}}}
\newcommand{\pbarOpoliK}[1]{{\calO_{\overline{\Sigma}^*_{#1}}}}
\newcommand{\pbarF}[1]{{#1_{\overline{\Sigma}}}}
\newcommand{\pbarFpoli}[1]{{#1_{\overline{\Sigma}^*}}}
\newcommand{\pbarFI}[2]{{#1_{\overline{\Sigma}_{#2}}}}
\newcommand{\pbarOmega}{{\Omega^1_{\overline{\Sigma}}}}    
\newcommand{\pbarOmegapoli}{{\Omega^1_{\overline{\Sigma}^*}}}
\newcommand{\pbarD}{\calD_{\overline{\Sigma}}}
\newcommand{\pbarDpoli}{{\calD_{\overline{\Sigma}^*}}}
\newcommand{\barO}[1]{\overline{\calO}_{#1}}
\newcommand{\pbarOmegam}[1]{\Omega_{\overline{\Sigma}^{#1}}}
\newcommand{\pbarOm}[1]        {\calO^{#1}_{\overline{\Sigma}}}
\newcommand{\pbarOpolim}[1]    {\calO^{#1}_{\overline{\Sigma}^*}}
\newcommand{\pbarDm}[1]        {\calD^{#1}_{\overline{\Sigma}}}
\newcommand{\pbarDpolim}[1]   {\calD^{#1}_{\overline{\Sigma}^*}}
\newcommand{\unoU}{{\mathbf{1}}}
\newcommand{\Tan}{T}
\DeclareMathOperator{\Res} {Res}
\newcommand{\tRes}{\widetilde{\Res}}
\newcommand{\hgog}{{\hat{\gog}}}
\newcommand{\pbarOq}{{\calO^2_{\overline {\Sigma}}}}
\newcommand{\pbarOpoliq}{{\calO^2_{\overline{\Sigma}^*}}}
\newcommand{\pbarOpolil}{{\calO^{2,\ell}_{\overline{\Sigma}^*}}}
\newcommand{\pbarOpolir}{\calO^{2,r}_{\overline{\Sigma}^*}}
\newcommand{\pbarOqdelta}{{\calO^2_{\overline {\Sigma}}}(\infty\Delta)}
\newcommand{\otimesr}{\overrightarrow{\otimes}}
\newcommand{\otimesl}{\overleftarrow{\otimes}}
\newcommand{\otimesst}{\stackrel{*}{\otimes}}
\newcommand{\otimessh}{\stackrel{!}{\otimes}}
\newcommand{\pbarOn}{{\calO_{\overline {\Sigma}^n}}}
\newcommand{\pbarOpolin}{{\calO_{\left(\overline {\Sigma}^*\right)^n}}}
\newcommand{\pbarOpoliu}[1]{\calO^{#1}_{\overline{\Sigma}^*}}
\newcommand{\pbarOu}[1]{\calO_{\overline{\Sigma}^{#1}}}
\newcommand{\Divisorepi}{\nabla}
\newcommand{\surjmap}{\twoheadrightarrow}
\newcommand{\op}{\text{op}}
\newcommand{\gsig}[1]{\hat{\gog}_{\Sigma,#1}}
\newcommand{\ugsig}[1]{\calU_{#1}(\hat{\gog}_\Sigma)}
\DeclareMathOperator{\ffset}{ffSet}
\DeclareMathOperator{\Top}{Top}
\DeclareMathOperator{\tmod}{-mod}
\DeclareMathOperator{\Aut}{Aut}
\newcommand{\Autzero}[1]{\Aut^+_{#1}}
\newcommand{\Autpiu}[1]{\Aut^0_{#1}}
\DeclareMathOperator{\Aff}{Aff}
\DeclareMathOperator{\Fun}{Fun}
\DeclareMathOperator{\Sch}{Sch}
\theoremstyle{plain}
\newtheorem{lemma}{Lemma}[subsection]
\newtheorem{theorem}[lemma]{Theorem}
\newtheorem{proposition}[lemma]{Proposition}
\newtheorem{corollary}[lemma]{Corollary}
\theoremstyle{definition}
\newtheorem{definition}[lemma]{Definition}
\newtheorem{remark}[lemma]{Remark}
\newtheorem{example}{Example}
\theoremstyle{remark}
\theoremstyle{plain}
\theoremstyle{definition}
\theoremstyle{remark}
\newtheorem{ntz}[lemma]{Notation}
\newcommand{\mA}{\mathbb A}
\newcommand{\mC}{\mathbb C}
\newcommand{\mF}{\mathbb F} 
\newcommand{\mG}{\mathbb G}
\newcommand{\mH}{\mathbb H}
\newcommand{\mN}{\mathbb N}
\newcommand{\mV}{\mathbb V}
\newcommand{\mZ}{\mathbb Z}
\newcommand{\calA}{\mathcal A}
\newcommand{\calB}{\mathcal B} 
\newcommand{\calC}{\mathcal C} 
\newcommand{\calD}{\mathcal D}
\newcommand{\calE}{\mathcal E} 
\newcommand{\calF}{\mathcal F} 
\newcommand{\calG}{\mathcal G}
\newcommand{\calH}{\mathcal H} 
\newcommand{\calI}{\mathcal I} 
\newcommand{\calJ}{\mathcal J}
\newcommand{\calK}{\mathcal K} 
\newcommand{\calM}{\mathcal M}
\newcommand{\calN}{\mathcal N} 
\newcommand{\calO}{\mathcal O} 
\newcommand{\calQ}{\mathcal Q} 
\newcommand{\calR}{\mathcal R} 
\newcommand{\calT}{\mathcal T} 
\newcommand{\calU}{\mathcal U} 
\newcommand{\calV}{\mathcal V}
\newcommand{\calW}{\mathcal W} 
\newcommand{\calX}{\mathcal X} 
\newcommand{\calY}{\mathcal Y}
\newcommand{\goF}{\mathfrak F}
\newcommand{\gog}{\mathfrak g}
\newcommand{\gom}{\mathfrak m}
\newcommand{\gra}{\alpha} 
\newcommand{\grb}{\beta}       
\newcommand{\grg}{\gamma}
\newcommand{\grd}{\delta} 
\newcommand{\gre}{\varepsilon}
\newcommand{\grl}{\lambda}     
\newcommand{\grs}{\sigma}
\newcommand{\grf}{\varphi}
\newcommand{\ra}       {\rightarrow}
\newcommand{\la}       {\leftarrow}
\newcommand{\lra}      {\to}
\newcommand{\vuoto}    {\varnothing}
\newcommand{\isocan}   {\simeq}
\renewcommand{\geq}    {\geqslant}
\renewcommand{\leq}    {\leqslant}
\newcommand{\senza}    {\smallsetminus}
         \newcommand{\mand}     {\text{ and }}
        \newcommand{\mif}      {\text{ if }}
  \newcommand{\mforall}  {\text{ for all }}
\DeclareMathOperator{\Hom}  {Hom}
\DeclareMathOperator{\End}  {End}
\DeclareMathOperator{\Der}  {Der}
\DeclareMathOperator{\Spec} {Spec}
\DeclareMathOperator{\Sp}   {Sp}
\DeclareMathOperator        {\Lie}{Lie}
\newcommand{\limind}{\varinjlim}
\newcommand{\limpro}{\varprojlim}
\DeclareMathOperator{\Set}{Set}
\DeclareMathOperator{\id}{id}
\DeclareMathOperator{\univ}{univ}
\DeclareMathOperator{\triv}{triv}
\DeclareMathOperator{\ev}{ev}
\DeclareMathOperator{\loc}{-loc}
\DeclareMathOperator{\oloc}{loc}
\DeclareMathOperator{\Ran}{\mathbf{Ran}}
\DeclareMathOperator{\fact}{\mathbf{fact}}
\DeclareMathOperator{\Op}{Op}
\DeclareMathOperator{\fil}{fil}
\title{Topological sheaves and spaces of distributions in the global case}
\author{Luca Casarin, Andrea Maffei}
\begin{document}

\maketitle

\noindent Luca Casarin: Dipartimento di Matematica, Sapienza Universitá di Rome, Italy \& INFN, P.le A. Moro 5, 00185 Rome, Italy. E-mail: luca.casarin@uniroma1.it \newline \noindent Andrea Maffei: Department of Mathematics, University of Pisa, Largo Bruno Pontecorvo, 5, 56127 Pisa, Italy. E-mail: andrea.maffei@unipi.it

\begin{center}
    \textbf{Abstract}
    
    \smallskip
    \justifying
    \noindent We extend the theory of fields/distributions developed in \cite{cas2023} to a general base scheme. In order to do so we introduce suitable notions of topological sheaves on schemes and study their basic properties. We then construct appropriate analogues of the spaces of fields, consider multiplication between them and rebuild the basic theory of vertex algebras in the setting of global distributions in place of formal power series, which takes the form of chiral algebras introduced by \cite{BDchirali}.
\end{center}

\tableofcontents

\section{Introduction}

The present paper is a generalization of \cite{cas2023}, whose goal was proving a version of the Feigin Frenkel theorem on the center of the enveloping algebra at the critical level in the multiple singularities setting. This problem has its roots in the work of Gaitsgory, Frenkel and Gaitsgory, Gaitsgory and Raskin on the geometric Langlands correspondence. In particular, the case of the affine algebra with multiple singularities was considered in Fortuna's thesis \cite{fortuna2013beilinson} and in Raskin's lectures \cite{Raskin}. In the case of $\mathfrak{sl}_2$ and in the $2$ singularities setting, the same problem was considered also in \cite{fortuna2022local}.

The point of view adopted in \cite{cas2023}, which makes it possible to use tools from vertex algebra theory also in this setting, was to reinterpret fields as distributions. Indeed, the formalism of formal power series would be extremely cumbersome in the multiple singularities case, while the formalism of distributions allows one to treat the objects in a uniform way, in direct analogy with the classical case of one singularity. Even if the point of view of distributions is well known, the theory of vertex algebras is usually developed with the formalism of formal power series, so that in \cite{cas2023} it was necessary to give several details on how to develop the theory in the former language. A very interesting consequence of the point of view of distributions is that it allows one to treat factorization properties in a very natural way, making it possible to use vertex algebra tools in order to prove factorizable statements. We will recall and refer to \cite{cas2023} from time to time, during this introduction.

The goal of this paper is to develop the language of fields/distributions from a global point of view (with respect to the base scheme) and in a coordinate free manner. Our real motivation to do so is that of proving a global, factorizable version of the Feigin-Frenkel isomorphism, which will heavily use the tools developed here and will be the topic of \cite{casmaffei2}. Our choice of working globally and coordinate freely forced us to revisit some foundational aspects and to reformulate some of the constructions of \cite{cas2023} in a more intrinsic fashion. Before giving more details, let us point out what we think are the main results of this work:

\begin{itemize}
	\item The generality of our geometric setting is that of an arbitrary smooth family of curves $X \to S$ together with a finite set of sections $\Sigma$. We will consider several sheaves which are supported on $\Sigma$, these are always defined over its formal neighborhood $\oSigma$, but for notational convenience we will consider them as sheaves on $S$ via pushforward. We assume $S$ to be topologically noetherian and in some cases integral for technical reasons;
	\item The language of distribution as developed in \cite{cas2023} is very well adapted to be generalized to a global context, which is the point of view we take in the present paper. In particular, given a sheaf of associative topological algebras whose topology is generated by left ideals the space $\mF^n_{\Sigma,\calU}$ of $n$ fields/distributions with values in $\calU$ is defined as a topological sheaf on the whole $S$. On the space of fields we define a \emph{chiral product} whose construction is global and does not depend on any local argument;
	\item In this context it is possible to give a good notion of mutually local fields and of an algebra generated by mutually local fields. The structure that arises from these notions is naturally a chiral algebra in the sense of Beilinson and Drinfeld \cite{BDchirali}, or rather a version of it which takes into account the topological nature of our objects;
	\item In particular, thanks to the description given in the previous point, given any finite dimensional Lie algebra $\gog$ with a non degenerate symmetric invariant form $\kappa$ it is possible to construct $\VkSg$, a chiral algebra over $\oSigma$ (which we think of as a formal curve over $S$) without mentioning any local construction. This chiral algebra is an analogue in our geometric setting of the chiral algebra constructed in \cite{frenkel2004vertex}, whose chiral product is constructed by making local considerations, while our product is canonically globally constructed;
	\item Across the entire paper we see how our global constructions reduce to more familiar ones in the local case. In particular, in the last chapter we study how our local descriptions behave under coordinate changes; this in particular allows us to confront our chiral algebra with the one constructed in \cite{frenkel2004vertex};
	\item The objects we introduce have all very good factorization properties. This aspect is developed in a systematic way along the whole paper at the cost of a bit of redundance. In \cite{casmaffei2} these factorization properties will be crucial to prove the factorizable version of the Feigin-Frenkel theorem on the center of the completed enveloping algebra.
\end{itemize}

Finally, let us address the elephant in the room: why is this paper so long? 

\noindent The main reason is that we constantly have to deal with the topological structures of our objects. There is no way to avoid this: without taking suitable completions our objects would lack of some of their crucial algebraic properties. These topological structures also allow for some more synthetic and clear arguments, but their setup requires some caution since there are many subtleties involved. These subtleties are not too hard too handle but we decided to treat them with particular attention. These aspects also appeared in some form in \cite{cas2023}, but for its specific modules the topological situation was way more clear. Thus, deciding to work in a more general setting forced us to develop some more technical tools to treat our topologies.

\subsection{Spaces of fields}

In the classical theory of vertex algebra some of the key players are the space of formal series $\End(V)[[z^{\pm 1}]]$ for $V$ a vector space, and the subspace of fields, which are formal series as above which satisfy some continuity property. It is a well known remark that the space of fields identifies with $\Homcont(\mC((z)),\End(V))$, where the topology of $\mC((z))$ is that given by the subspace $z^N\mC[[z]]$, $V$ has the discrete topology and $\End(V)$ as the point wise convergence topology. One of the main results of \cite{cas2023} is that the theory of fields can be extended by replacing $\mC((z))$ and $\End(V)$ with more general algebras, so that there is a notion of mutually local fields, $n$ product of fields and so on also in this more general setting.

In particular, in \emph{loc. cit.}, the topological ring $\mC((z))$ is replaced with $K_n = A_n[[t]][\prod (t-a_i)^{-1}]$, where $A_n = \mC[[a_1,\dots,a_n]]$. One can construct a version of the affine algebra $\hat{\gog}_{n,\kappa}$ and study fields in $\Homcont_{A_n}(K_n,U_\kappa(\hat{\gog}_n))$, where $U_\kappa(\hat{\gog}_n)$ the completed enveloping algebra of $\hat{\gog}_{n,\kappa}$. The theory is developed by defining mutually local fields and $n$-products between fields similarly to the case of vertex algebras, strongly using the coordinate $t \in K_n$. This language was particularly efficient to study the center of $U_\kappa(\hat{\gog}_n)$: in very few steps it allows, for instance, to retrieve the explicit formulas for the analogue of the Sugawara operators used to describe the center in \cite{fortuna2022local}. This formalism was also fruitfully applied in \cite{fortuna2023semi} in order to prove a weak form of an analogue of the Theorem of Frenkel and Gaitsgory \cite{FG6}, \cite{FG7} on spherical representations of the affine algebra in the setting of two singularities.

One of the main purposes of this paper is to develop a similar theory in a more general geometric setting and to do so in a coordinate free manner. 

We consider a smooth family of curves $X \to S$ over a topologically noetherian scheme $S$, equipped with a finite collection of sections $\Sigma = \{ \sigma_i : S \to X\}_{i \in I}$. In \cite{casmaffei2} we will use the tools developed here to study a global, factorizable version of the Feigin-Frenkel center, so that the case we are ultimately interested in is the one where $S = C^I$, for a fixed smooth complex curve $C$, $X = C \times C^I$ and $\Sigma = \{ \sigma_j\}_{j \in I}$ is composed by the canonical sections $\sigma_j((x_i)_{i \in I}) = (x_j,(x_i)_{i\in I})$.

Attached to the data of $S,X,\Sigma$ we consider a complete topological sheaf $\pbarO$ on $S$, which is the completion of $\calO_X$ along the subscheme determined by $\Sigma$. We also consider $\pbarOpoli$, which is constructed from $\pbarO$ by allowing singularities at the sections $\sigma_i$. The case of \cite{cas2023} is recovered by setting $S = \Spec A_n$, $X = \Spec A_n[t]$ and $\sigma^\sharp_i(t) = a_i$. We show that locally on $S$ there exists a coordinate $t \in \pbarO$, which allows us to describe the sections of $\pbarOpoli$ as a slight modification of the ring $K_n$. 

We are able to manipulate the topological sheaves $\pbarO$ and $\pbarOpoli$ as they were sheaves of functions on affine spaces $\oSigma, \oSigma^*$ over $S$. This interpretation makes mathematical sense only in the case of $\pbarO$, which may be interpreted as the sheaf of functions on an ind-affine scheme over $S$ (see Section \ref{sssec:spaces}) but it fails for $\pbarOpoli$, so we will use this interpretation only to justify our terminology. We are able to consider multiple tensor products $\pbarOpolim{n}$ of the sheaves $\pbarOpoli$. There is a natural multiplication map $\pbarOpoliq \to \pbarOpoli$, we call the kernel of the above map the diagonal ideal and denote it by $\calJ_\Delta$. We are also able to speak about the sheaf of differential operators $\pbarD$ in this setting as well. In the case we have a local coordinate $t \in \pbarO$ we see that $\calJ_\Delta$ is generated by $t\otimes 1 - 1\otimes t$, while $\pbarD = \pbarO[\partial_t]$. These facts hold only because we are considering suitable completions of our objects.

\smallskip

We fix a complete topological algebra with topology generated by left ideals $\calU$ and study fields in $\mF^1_{\Sigma,\calU} = \Homcont(\pbarOpoli,\calU)$ as well as multivariable fields in $\mF^n_{\Sigma,\calU} = \Homcont(\pbarOpolim{n},\calU)$. Given two fields $X,Y\in \mF^1_{\Sigma,\calU}$ their bracket is defined as an element $[X,Y] \in \mF^2_{\Sigma,\calU}$. It is already evident at this point of the discussion that our choice to work coordinate independently forces us to define the space of local $2$-fields as those $Z \in \mF^2_{\Sigma,\calU}$ which are killed by an high enough power of $\calJ_{\Delta}$, and to define two fields $X$ and $Y$ to be \emph{mutually local} if their bracket is a local $2$-field. 

It then follows by a version of Kashiwara's theorem that the space of local $2$-fields is isomorphic to the $\calD$-module pushforward $\Delta_!\mF^1_{\Sigma,\calU}$. This expression should be thought as the analogue in our geometric setting of the fact the space of local formal  power series $\End(V)[[z^{\pm 1},w^{\pm 1}]]^{\mathrm{loc}}$ is generated by the derivatives of $\delta(z-w)$. 

We are also able to construct analogues of negative $n$-products from vertex algebra theory in our geometric setting. Again, since we are working coordinate independently, we cannot directly consider $(t \otimes 1-1\otimes t)^{-n}$ as in \cite{cas2023} but we must rather consider a suitable localization at the ideal $\calJ_\Delta$. We see that the morphisms \begin{align*}
	m_r &: \mF^1_{\Sigma,\calU} \otimes \mF^1_{\Sigma,\calU} \to \mF^2_{\Sigma,\calU} \qquad X\otimes Y \mapsto (f \otimes g \mapsto X(f)Y(g)), \\
	m_\ell &: \mF^1_{\Sigma,\calU} \otimes \mF^1_{\Sigma,\calU} \to \mF^2_{\Sigma,\calU} \qquad X\otimes Y \mapsto (f \otimes g \mapsto Y(g)X(f))
\end{align*}
enhance to morphisms $\mF^1_{\Sigma,\calU} \otimes \mF^1_{\Sigma,\calU} (\infty\Delta) \to \mF^2_{\Sigma,\calU}$, where $\mF^1_{\Sigma,\calU} \otimes \mF^1_{\Sigma,\calU} (\infty\Delta)$ is the above mentioned localization. This enhancement is possible thanks to the fact that the topology on $\calU$ is generated by left ideals. We are then able to define the \emph{chiral bracket} on the space of fields as the difference
\[
	\mu = m_r - m_\ell : \mF^1_{\Sigma,\calU} \otimes \mF^1_{\Sigma,\calU} (\infty\Delta) \to \mF^2_{\Sigma,\calU}.
\]

\smallskip

In classical vertex algebra theory it is a well known fact that a subspace of mutually local fields which is closed under $n$ products and derivatives is naturally a (non-unital) vertex algebra. In our setting we say that a $\calD$-submodule $\calV \subset \mF^1_{\Sigma,\calU}$ consists of mutually local fields if the map $\mu$ restricted to $\calV \otimes \calV$ factors as a morphism $\calV \otimes \calV \to \Delta_!\mF^1_{\Sigma,\calU}$ to the space of local $2$-fields. The analogue of being closed under $n$-products becomes the condition for the morphism $\mu$ to factor as 
\[
	\mu : \calV \otimes \calV (\infty\Delta) \to \Delta_!\calV \subset \Delta_!\mF^1_{\Sigma,\calU}.
\]
One of the main results of this paper (Theorem \ref{thm:chiralalgebra}) is that a $\calD$ submodule of $\mF^1_{\Sigma,\calU}$ which satisfies the above condition is naturally a \emph{chiral algebra} in the sense of \cite{BDchirali}. Let us say that this, for us, provides a conceptual bridge between the theory of vertex algebras and the theory of chiral algebras: the discussion above shows how the formalism of chiral algebras is indeed perfect to describe spaces of fields in a coordinate free manner.

\subsection{Topological Sheaves}

It is clear by the above discussion that to develop a suitable theory of fields in our geometric setting we need a working theory of topological sheaves on schemes. We will develop this in Section \ref{app:top}, stating the main results and construction we will need.

Our definition of a topological sheaf is in some sense global: a topological sheaf is a sheaf $\calF$ with a filtered system of subsheaves $\calU \subset \calF$ which play the role of the neighborhoods of zero in $\calF$. We define complete topological sheaves as those topological sheaves for which the canonical map $\calF \to \varprojlim \calF/\calU$ is an isomorphism. We study limit and colimits of topological sheaves and of complete topological sheaves. Discuss $\Hom$ spaces and consider several tensor products. 

Let us say that the discussion on completed tensor product is the one that requires the most care. The lack of associativity or commutativity for some these, for instance, makes it not trivial to introduce a structure of algebra on certain completed tensor products of two topological algebras. Indeed, this is possible only under certain strict assumptions, which are satisfied in the cases we are interested in. Let us say that, even if we started with objects with a rather simple topology, such as the ring of Laurent series $\mC((t))$, our work naturally leads to treat more complicated ones and to single out the correct hypothesis that allowed us to treat everything in a uniform way is not always easy.

That said, the actual proofs of these results are not much complicated once the correct hypothesis are established. For this reason, in this paper we decided to only state the results we needed without many proofs, which will appear in a subsequent, more detailed, work. An exception of this is Lemma \ref{lem:BCMN} for which we actually gave a proof. This choice is due to the fact that its hypothesis are not so natural and are very much modeled on our objects.

\subsection{Pseudo-tensor structures}

In order to prove Theorem \ref{thm:chiralalgebra} we need to prove that $\mu$ satisfies some complicated commutation relations. In order to check those we use the formalism of pseudo-tensor structures as in \cite{BDchirali} and check that $\mu$ defines a Lie structure in the Beilinson Drinfeld chiral pseudo-tensor structure $P^{chBD}$. Let us recall that, in \cite{BDchirali}, $I$ operations for this pseudo-tensor structure are defined as
\[
	P^{chBD}_I(\{ M_i \},N) = \Hom_{\calD_C} \left((\boxtimes_{i \in I} M_i)(\infty\nabla(I)),\Delta(I)_!N\right),
\]
where $C$ is a smooth curve, $\{M_i\}_{i \in I},N$ are right $\calD_C$-modules, $\Delta(I) : C \to C^I$ is the diagonal and $\nabla(I) \subset C^I$ is the divisor determined by any two coordinates being equal. 

Although some topological issues force us to slightly change the definition (see Section \ref{ssec:filteredbeilinsondrinfeld}), we are able to easily translate this in our $\oSigma$ setting, but to prove that $\mu$ induces a Lie structure we need to make a detour and consider other pseudo-tensor structures. Let us say that after having carefully set up the pseudo-tensor structures and our operads the result on $\mu$ essentially follows from the associativity of $\calU$. This is all dealt with in Section \ref{sec:operads}; in what follows we explain a bit how all these pseud-tensor structures are related.

Since the $\mu$ originates from a morphism $\mu : \mF^1 \otimes \mF^1 (\infty \Delta) \to \mF^2$ it is reasonable to define our pseudo-tensor structures for graded modules. Thus, we will consider the category of graded (by finite sets) topological sheaves $M^\bullet$ such that for any finite set $A$ the module $M^A$ is also a $\pbarOpolim{A}$ (or $\pbarDpolim{A}$) module. The basic example of such a graded module is $\mF^\bullet$. 

We will consider two pseudo-tensor structures: $P^{\Delta}$ and $P^{\otimesr}$. Both are constructed considering certain tensor products $\exttensor{\Delta}_{i \in I} M^\bullet_i$ and $\exttensor{\ra} M^\bullet_i$ for graded modules $M^\bullet_i$ and then defined as
\begin{align*}
	P^{\Delta}_I ( \{M_i\},N) &= \Homcont \left( \exttensor{\Delta} M^\bullet_i, N^\bullet \right), \\
	P^{\otimesr}_I ( \{M_i\},N) &= \Homcont \left( \exttensor{\ra} M^\bullet_i, N^\bullet \right). \\
\end{align*}
Let us say that when defining these pseudo-tensor structures some care is needed. Indeed $\exttensor{\Delta}$ is obtained by considering a graded version of the $\otimes^{*}$ tensor product, which is not strictly associative, and then inverting suitable diagonals. On the other hand $\exttensor{\ra}$ is obtained by considering the $\otimesr$ tensor product, which is not commutative. In particular $\exttensor{\ra} M^\bullet_i$ must be defined as the sum of all graded $\otimesr$ tensor products over all possible orderings of $I$.

The $P^{\otimesr}$ pseudo-tensor structure is considered in \cite{beilinsonTopologicalAlgebras} as well. There, Lie algebras in $P^{\otimesr}$ are called chiral algebras and it is shown that these are exactly the same as associative algebras for the $\otimesr$ tensor product; this result applies in our graded setting as well. We are interested in this pseudo-tensor structure because $\mu$ can be constructed as follows. We show that since $\calU$ is an $\otimesr$ associative algebra, the morphisms
\begin{align*}
	m_r &: \mF^1_{\Sigma,\calU} \otimes \mF^1_{\Sigma,\calU} \to \mF^2_{\Sigma,\calU}, \\
	m_\ell &: \mF^1_{\Sigma,\calU} \otimes \mF^1_{\Sigma,\calU} \to \mF^2_{\Sigma,\calU}.
\end{align*}
enhance to morphisms
\begin{align*}
	m_r &: \mF^1_{\Sigma,\calU} \otimesr \mF^1_{\Sigma,\calU} \to \mF^2_{\Sigma,\calU}, \\
	m_\ell &: \mF^1_{\Sigma,\calU} \otimesl \mF^1_{\Sigma,\calU} \to \mF^2_{\Sigma,\calU}.
\end{align*}
The map $m_r$ is part of an associative graded algebra structure $\mF^\bullet \otimesr \mF^\bullet \to \mF^\bullet$ so that by Beilinson's result $\mu = m_r - m_\ell$ is a Lie structure in $P^{\otimesr}$. 

The morphism $\mu : \mF^1\otimes \mF^1(\infty\Delta) \to \mF^2$ is obtained by restricting $m_r$ and $m_\ell$ to $\mF^1\otimes \mF^1(\infty\Delta) \to \mF^1\otimesr \mF^1$ and $\mF^1\otimes \mF^1(\infty\Delta) \to \mF^1\otimesl \mF^1$ respectively. We show that this restriction is part of a natural morphism of pseudo-tensor structures $P^{\otimesr} \to P^{\Delta}$ so that $\mu : \mF^1\otimes \mF^1 (\infty\Delta) \to \mF^2$ is actually part of a Lie structure.

Finally we link our $P^{\Delta}$ with $P^{chBD}$ using the following construction: given a right $\calD$-module $M$ we consider the graded module $M_!^A = \Delta(A)_!M$, where $\Delta(A)_!$ is the $\calD$-module pushforward along the diagonal embedding $\oSigma \to \oSigma^A$. So we will consider $P^{\Delta}_I(\{M^\bullet_{!,i}\},N^\bullet_!)$. It will follow by construction that the latter is equipped with a natural map $P^{\Delta}_I(\{M^\bullet_{!,i}\},N^\bullet_!) \to P_I^{chBD}(\{M_i\},N)$. Thus, our pseudo-tensor structure $P^\Delta$ provides a bridge between the Beilinson-Drinfeld chiral pseudo-tensor structure $P^{chBD}$ in \cite{BDchirali} and the pseudo-tensor structure $P^{\otimesr}$ of \cite{beilinsonTopologicalAlgebras}.

\subsection{The chiral algebra \texorpdfstring{$\VkSg$}{V} and the map \texorpdfstring{$\calY_{\Sigma,t}$}{Y}}

We apply the general results on fields obtained so far to construct and study a chiral algebra on $\oSigma$ constructed starting from a finite dimensional Lie algebra $\gog$ over $\mC$.

We start by constructing $\hat{\gog}_{\Sigma,\kappa}$ an analogue of the affine algebra in our geometric setting. For this we need a residue morphism $\Res_{\Sigma} : \pbarOmega \to \calO_S$ which we define as the sum of the residues at each section $\sigma_i$, in Section \ref{sec:residuo}. We can then define $\hat{\gog}_{\Sigma,\kappa}$ by the usual commutation formulas:
\[
	\hat{\gog}_{\Sigma,\kappa} = \gog \otimes_{\mC} \pbarOpoli \oplus \calO_S \mathbf{1},
\]
where $\mathbf{1}$ is central, while for $X,Y \in \gog$ we have $[X\otimes f, Y \otimes g] = [X,Y]\otimes fg + \kappa(X,Y)\Res_{\Sigma}(gdf)\mathbf{1}$. From this sheaf of complete Lie algebras we construct a completed enveloping algebra $\calU_{\kappa}(\hat{\gog}_\Sigma)$ as in the usual setting and then consider the space of fields $\mF^1_{\Sigma,\gog} = \Homcont\left(\pbarOpoli,\calU_{\kappa}(\hat{\gog}_\Sigma) \right)$.

For any given $X \in \gog$ there is a naturally attached field $X : \pbarOpoli \to \ugsig{\kappa}$ given by $f \mapsto X \otimes f$. As in the classical theory of vertex algebra we may iterated the chiral product $\mu$, together with taking derivations, to generate a chiral algebra starting from the fields attached to $\gog$. We call this chiral algebra $\VkSg$. Let us notice that our construction has the benefit of being global: both $\VkSg$ and its chiral product are constructed directly on the whole $S$, so there is no gluing procedure going on. 

Nevertheless, we are able to describe $\VkSg$ locally in a very efficient way. After the choice of a local coordinate $t$ we see that chiral algebra structure induces a vertex algebra structure (which depends on the choice of $t$) and the assignment $\gog \to \VkSg$ induces a morphism of vertex algebras $V^\kappa(\gog) \to \VkSg$. We show that its $\pbarO$-linear extension
\[
	\calY_{\Sigma,t} : V^\kappa(\gog) \otimes \pbarO \to \VkSg
\]
is an isomorphism. This parallels \cite[Proposition 4.0.6]{cas2023} and should be thought as a version of the usual state/field correspondence.

A remarkable property of the isomorphism $\calY_{\Sigma,t}$ is that it \emph{factorizes}, just as in \cite[Corollary 7.2.3]{cas2023}. We expand on these ideas in \ref{sec:factorizationstructures} and in Theorem \ref{thm:factorizationofcaly}. This property is crucial to reduce the study of $\VkSg$ to the one section case where $\Sigma = \{\sigma \}$ and it will play a fundamental role in our proof of a factorizable version of the Feigin-Frenkel center \cite{casmaffei2}. Let us emphasize again that this is a crucial point for us: when we say that we are able to use vertex algebra tools in a factorizable way we do that via the factorization properties of $\calY_{\Sigma,t}$.

\subsection{Comparison with some classical constructions}

In the special case where $S = \Spec \mC$, $X = C$ is a smooth complex curve, and $\Sigma = \{ \sigma \}$ is a single section (that corresponds to a point $x \in C$) our constructions directly compare to the classical vertex algebra ones. 

Let $\calO_x$ be the completion of the local ring $\calO_{C,x}$ and let $\calK_x$ be its field of fractions. Our construction of the affine algebra boils down to
\[
	\hat{\gog}_{\Sigma,\kappa} = \gog \otimes \calK_x \oplus \mC\mathbf{1}.
\]
After the choice of a local coordinate $t$ around $x$ we obtain an isomorphism $\rho_t : \mC[[z]] \to \calO_x$ which maps $z \mapsto t$, and whose extension $\rho_t : \mC((z)) \to \calK_x$ induces an isomorphism of Lie algebras between $\hat{\gog}_{\Sigma,\kappa}$ and the usual affine algebra $\hat{\gog}_{\kappa}$. This induces an identification at the level of completed enveloping algebras $U_{\kappa}(\hat{\gog}_\Sigma) \simeq U_{\kappa}(\hat{\gog})$ as well. Under all these identifications induced by $\rho_t$ our map $\calY_{\Sigma,t}$ reads as a morphism
\[
	\calY_z : V^\kappa(\gog) \otimes \mC[[z]] \to \Homcont\left(\mC((z)),U_\kappa(\hat{\gog}) \right)
\]
which does not depend on the choice of $t$. Under the identification of fields/distributions with formal power series, we have that for any $v \in V^\kappa(\gog)$ the field $\calY_z(v\otimes 1)$ identifies with the formal series $\tilde{Y}(v,z)$ with coefficients in $U_\kappa(\hat{\gog})$ considered in \cite[Section 4.2.5]{frenkel2004vertex}; in particular under the projection 
\[
	\Homcont\left(\mC((z)),U_\kappa(\hat{\gog}) \right) \to \Homcont\left(\mC((z)),\End(V^\kappa(\gog)) \right)
\]
induced by the action of $U_{\kappa}(\hat{\gog}) \to \End(V^\kappa(\gog))$ the field $\calY_z(v\otimes 1)$ is mapped to the usual vertex operator $Y(v,z)$ under the identification of formal power series with fields/distributions.

\medskip

The fact that our constructions are coordinate independent leads to some more considerations and implies that the subspace determined by $\calY_z$ is canonical. Studying how $\calY$ behaves under coordinate changes $s = \tau(t)$, for $\tau \in \Aut O$ we see in Section \ref{ssec:autoactions} that $\calY$ induces and action $\tau \mapsto \calY_\tau$ of $\Aut O$ on $V^\kappa(\gog) \otimes \mC[[z]]$, and that this action is the restriction of the natural conjugation action of $\Aut O (\mC) = \Aut^{\cont}(\mC[[z]])$ on $\Homcont\left(\mC((z)),\End(V^\kappa(\gog)) \right)$ along the immersion $\calY_z$.  

\medskip

In this framework we are also able to recover a formula on how the usual vertex operators change under coordinate changes obtaining a result to the one of Y.-Z Huang \cite{Huang} (see also \cite[Lemma 6.5.6]{frenkel2004vertex}). That means studying how, on the vertex algebra $V^\kappa(\gog)$, the vertex operators $Y(v,z) \in \End(V^\kappa(\gog))[[z^{\pm 1}]]$ attached to $v \in V^\kappa(\gog)$ changes under an automorphism $\tau \in \Aut^{\cont} (\mC[[z]])$. 

Let us notice that it may be tricky to define $Y(v,\tau(z))$ as a formal power series, but if we consider $Y(v,z)$ as a field/distribution $Y(v) \in \Homcont\left(\mC((z)),\End(V^{\kappa}(\gog))\right)$, then we may simply define $Y(v,\tau(z)) = Y(v) \circ \tau^{-1}$ (the minus sign is chosen in order to get an actual action).
We will directly study our enhancement of $Y(v) = \calY_z(v \otimes 1)$ as an $U_\kappa(\hat{\gog})$ valued field. 

To describe $Y(v)\circ \tau^{-1}$ we consider $V^\kappa(\gog)$ as acted on by the group scheme $\Aut O(R) = \Aut^{\cont}_R(R[[w]])$, so that for any $R$ and any $r(w) \in \Aut^{\cont}_R(R[[w]])$ (here we identify an element $r \in \Aut O (R)$ with its value on $w$) there is a natural action of $r(w)$ on $V^\kappa(\gog) \otimes R$. In the particular case $R = \mC[[z]]$ we can consider the automorphism $r_\tau \in \Aut^{\cont}_{\mC[[z]]}(\mC[[z]][[w]])$ which satisfies $r_\tau(w)= \sum_{k\geq 1} \frac{1}{k!}(\partial_z^k \tau(z))w^k$ and for any $v \in V^\kappa(\gog)$ consider the element
\[
	r_\tau(w)\cdot v \in V^\kappa(\gog) \otimes \mC[[z]].
\]

A consequence of Section \ref{ssec:autoactions} is that
\[
	Y(v)\circ \tau^{-1} = \tau^{-1} \circ Y\left((r_\tau(w)\cdot v) \cdot \partial_z\tau(z)\right).
\]
where $\tau^{-1} \circ Y(\bullet)$ stands for the composition of $Y(\bullet) : \mC((z)) \to U_\kappa(\hat{\gog})$ with the isomorphism induced by $\tau^{-1}$ from $U_\kappa(\hat{\gog})$ to itself.

\subsection{Description of the work}

In the rest of the introduction we give some more details on what happens in each section and compare our constructions to the ones of \cite{cas2023}.

\medskip

Section \ref{app:top} is preliminary material, where we introduce our notion of topological sheaves and develop the basic terminology and tools which we will use in the following Sections. We decided to not include many proofs of our assertions, which will appear in a subsequent publication. Let us say that most of these proofs are not difficult but often require some caution.  We give the definition of quasi-coherent completed topological sheaf (QCC sheaf for short, c.f. Definition \ref{def:qccsheaves}) and study some properties of particular classes of QCC sheaves. We investigate various completed tensor products between such objects and compare them with the completed tensor products of \cite{cas2023} and \cite{beilinsonTopologicalAlgebras}. We introduce $\Hom$-spaces between topological sheaves, which are themselves topological sheaves when appropriate conditions are satisfied.

In Section \ref{sec:localstructure} we introduce the basic geometric setting in which we work. We consider a smooth family of curves $p : X \to S$ over a fixed quasi-compact and quasi-separated scheme $S$. In what follows we will consider sheaves supported on closed subschemes of $X$ which are finite over $S$; to easily manipulate them we consider their pushforward to $S$, so that it practice all our sheaves will actually be over $S$. Let us also mention that the factorization picture takes a slightly different form in this context, for which we refer to Notation \ref{ntz:factorization}.

\smallskip

Given $\Sigma$ as above we consider $\overline{\Sigma}$, the formal completion of $X$ at the union of the images of the embeddings $\sigma_i$. To this formal scheme we attach QCC  sheaves of $\calO_S$-commutative algebras: $\pbarO$, the sheaf of regular functions on $\overline{\Sigma}$, and $\pbarOpoli$, the sheaf of regular functions on $\overline{\Sigma}\setminus\Sigma$. These will be some of the key players of these pages. We study their local structure and show that locally on $S$ there exist functions $a_i \in \calO_S$, a surjection $\pi : I \twoheadrightarrow J$ and functions $t_j \in \pbarO$ such that they may be described as products of sheaves of the form:
\begin{align*}
	\pbarO &\simeq \prod_{j \in J} \varprojlim_n \frac{\calO_S[t_j]}{\prod_{i \in I_j} (t_j-a_i)^n}, \\ 
	\pbarOpoli &\simeq \prod_{j \in J} \varprojlim_n\frac{ \calO_S[t_j]}{\prod_{i \in J_i} (t_j-a_i)^n}\left[\prod_{i \in I_j} (t_j - a_i)^{-1}\right].
\end{align*}

A function $t = (t_j) \in \pbarO$ which induces this kind of description will be called a \emph{coordinate}. Open affine subschemes of $S$ which admit a coordinate will be called \emph{well covered}. These sheaves will play the role that the complete topological rings $R_n,K_n$ played in \cite{cas2023}, which are recovered in the case $S = \Spec \mC[[a_1,\dots,a_n]]$.

\medskip

In Section \ref{sec:residuo} we introduce the well known \emph{residue morphism}
\[ 
\Res_\Sigma : p_*\left( \frac{\Omega^1_{X/S}(\infty\Sigma)}{\Omega^1_{X/S}} \right) \to \calO_S,\]
we could not find a reference for the residue map adequate for our purposes so we propose a construction ourselves.
When $X = \mA^1_S$ and $\Sigma$ consists of a single section our $\Res$ coincides with the usual residue map. We show that the collection of morphisms $\Res_{\Sigma}$, as $X$ and $\Sigma$ vary, is unique under the following assumptions: it is compatible with the usual residue map in the $\mA^1$ case, it is invariant under pullback, behaves well under étale maps and satisfies some factorization properties.

\medskip

Section \ref{ssec:convenzionispazi} introduces the notion of $\calD$-modules over $\overline{\Sigma}$ and is basically devoted to introduce some notations, adapting some basics of the theory of $\calD$-modules to our topological setting. We prove an analogue of Kashiwara's Lemma, a version of the Cauchy formula and we introduce the de Rham residue map. In this Section we also describe a condition under which the ordered tensor product of two topological algebras acquires a structure of algebra. All these ingredients will be needed in the sequel.

\medskip

After everything is set up, in Section \ref{sec:distributionandfields} we finally start our study of fields and fix a new object, $\calU$, a complete associative QCC algebra with topology generated by left ideals (i.e. an $\otimesr$-algebra). One of the main players of this paper gets on stage: the space of \emph{fields} $\mF^1_{\Sigma,\calU} = \calHomcont_S(\pbarOpoli,\calU)$ and its multivariable versions $\mF^A_{\Sigma,\calU} = \calHomcont_S(\pbarOpolim{A},\calU)$, for $A$ a finite set. These are complete topological sheaves on $S$. Our $\mF^1_{\Sigma,\calU}$ plays the same role that the space of fields $\{\text{Fields on } V\} \subset \End_{\mC}(V)[[z^{\pm}]]$ plays in the usual theory of vertex algebras but in a coordinate free fashion. For instance, one can easily make sense of \emph{mutually local fields} using this language, as in \cite{cas2023}. We study the sections of $\mF_{\Sigma,\calU}$ on affine open subsets of $S$ and show that in the case where we consider sections over an affine well covered open subset we essentially get the space of fields considered in \cite{cas2023}. We introduce the multiplication maps 
\[
	m^r : \mF^1_{\Sigma,\calU}\otimesr\mF^1_{\Sigma,\calU} \to \mF^2_{\Sigma,\calU}, \qquad m^\ell : \mF^1_{\Sigma,\calU}\otimesl\mF^1_{\Sigma,\calU} \to \mF^2_{\Sigma,\calU}
\] 
and show associativity of their graded versions. We consider the restrictions $m^r,m^\ell : \mF^1_{\Sigma,\calU}\exttensor{*}\mF^1_{\Sigma,\calU}(\infty\Delta) \to \mF^2_{\Sigma,\calU}$ as well and we introduce what we call the \emph{chiral product} \[\mu = m^r - m^l : \mF^1_{\Sigma,\calU}\exttensor{*}\mF^1_{\Sigma,\calU}(\infty\Delta) \to \mF^2_{\Sigma,\calU}.\] We compare our construction to those of \cite{cas2023}.

\medskip 

We move forward showing that the morphism $\mu$ is a Lie bracket in a suitable sense, doing this in a purely operadic way. This leads to some heavy notation since we need to consider graded objects, but after having appropriately defined our operads, $P^\Delta,P^{\otimesr}$, the result easily follows from associativity of $m$. This is dealt with in Sections \ref{sec:operads} and \ref{sec:chiralprod}. We compare our operads with the Beilinson's chiral pseudo-tensor structure of \cite{beilinsonTopologicalAlgebras} and to the chiral pseudo-tensor structure of \cite{BDchirali}, thus establishing a conceptual bridge between these two different notions which bear the same name. Our main result is Theorem \ref{thm:chiralalgebra}, which should be thought as an analogue of \cite[Propositon 3.2]{kac1998vertex} and its consequences. Roughly, it states that a subsheaf of $\mF^1$ consisting of mutually local fields and closed under the chiral product is naturally a \emph{chiral algebra}\footnote{In these sections one finds that the chiral algebras we consider are actually filtered. This is just a technical device to properly formalize some results in our topological setting} (in place of a vertex algebra as in \cite{kac1998vertex}). Let us emphasize again that the work done so far, in the case of a single section and for $S=\mC$, shows that if one tries to develop the theory of vertex algebras in a coordinate free manner from the beginning then one is naturally led to the notion of a chiral algebra.

\smallskip

Finally, in Section \ref{sec:factorizationstructures} we adapt the notion of a factorization algebra to our $X,S,\Sigma$ setting. We give a definition of factorization algebra that is adapted to our purposes and prove some factorization properties of the space of fields with coefficients in a factorization algebra. Let us take the opportunity to establish our notation on finite sets and on diagonals.

\smallskip

The last two sections are devoted to the study of a particular chiral algebra $\VkSg$, which will play a crucial role in \cite{casmaffei2} where we prove the factorizable version of the Feigin-Frenkel theorem. In Section \ref{sez:chiralig} we construct $\VkSg$ as a chiral algebra of mutually local fields, but first we need to specify which associative algebra we are considering as coefficients for our fields. This will be a completed enveloping algebra of a version of the affine algebra. 

Starting from a finite Lie algebra $\gog$ over $\mC$ and a level $\kappa$ (by which we mean an invariant bilinear form on $\gog$) we introduce a complete topological sheaf of $\calO_S$-Lie algebras $\hat{\gog}_{\Sigma,\kappa}$. This is constructed following the construction of the classical affine algebra in our setting:
\begin{align*}
	\hat{\gog}_{\Sigma,\kappa} &= \gog\otimes_{\mC}\pbarOpoli \oplus \mathbf{1}\calO_S \\
	[\mathbf{1},\hat{\gog}_{\Sigma,\kappa}] &= 0 \\
	[X\otimes f, Y \otimes g] &= [X,Y] \otimes fg + \kappa(X,Y)\mathbf{1}\Res_{\Sigma}(gdf).
\end{align*}

in the case where $S = \mC$ and $\Sigma$ consists of a single section one recovers the usual affine Kac-Moody algebra, while for $S = \Spec \mC[[a_1,\dots,a_n]]$ one recovers the Lie algebra $\hat{\gog}_{\kappa,n}$ of \cite{cas2023}. We construct the associated sheaf of completed enveloping algebras $\ugsig{\kappa}$ and show some of its factorization properties. 

\smallskip

In Section \ref{sez:chiralig} we then apply the results on $\mF^1_{\Sigma,\calU}$ (which from now on is denoted $\mF^1_{\Sigma,\gog}$) obtained in the previous Chapter in the case $\calU = \ugsig{\kappa}$. There is a canonical map $\gog \to \mF^1_{\Sigma,\gog}$ whose image consists of mutually local fields; it then follows from the construction of Section  \ref{ssez:generatechiral} that we may iterate the chiral product to generate a canonically defined chiral algebra $\VkSg \subset \mF^1_{\Sigma,\gog}$ starting from $\gog$. We move forward studying it sections: locally on $S$, given a coordinate $t$, we construct a morphism $\calY_{\Sigma,t} : V^{\kappa}(\gog) \otimes \pbarO \to \VkSg$ (here $V^{\kappa}(\gog)$ is the usual vertex algebra attached to $\gog,\kappa$) and show that it is an isomorphism, analogously to Proposition 4.0.6 of \cite{cas2023}. The proof of $\calY_{\Sigma,t}$ being an isomorphism is slightly more involved than the one of \emph{loc. cit.} only because we must be more careful in our topological setting.

\smallskip

Finally, in Section \ref{sec:identificationopers1} we give coordinate independent descriptions of our chiral algebra $\calV^\kappa_{\Sigma}(\gog)$. We show that $\calV^\kappa_{\Sigma}(\gog)$ coincides with the chiral algebra constructed in \cite[6.5]{frenkel2004vertex} (adapting their construction to our geometric setting).

The strategy for proving this result is the following. To present a canonical identification we construct an isomorphism locally, using a coordinate $t$ for $\pbarO$, and then show that our construction is independent from the choice of such a coordinate. In the process of doing this we will study how the isomorphism $\calY_{\Sigma,t}$ behaves under change of coordinates. We treat first the case of a single section exploiting the fact that the set of relevant coordinates in this situation is an $\Autpiu{} O$ torsor. This remark implies that to check that our identifications are independent from the choice of a coordinate we may reduce ourselves to compare a priori different actions of $\Autpiu{} O$ and show that they are actually equal, so that the claim follows by direct computation of these actions. The case of multiple sections is then deduced by the case of a single section together with some factorization properties.

\subsection{Notation for finite sets}\label{ssez:artquestonotazionifiniteset}
	To avoid possible set theoretical issues we fix the following setting. We will denote by $\ffset$ the category of finite non-empty subsets of $\mN_{\geq 1}$ with surjective maps as morphisms. We will refer to elements $I \in \ffset$ simply as finite sets. The main properties of $\ffset$ that we are interested in are that it is itself a set and that any subset of an object in $\ffset$ is still in $\ffset$. Given a surjective map of finite sets $\pi : J \twoheadrightarrow I$ and a subset $I' \subset I$ we will write $J_{I'}$ for $\pi^{-1}(I')$, when $I' = \{i\}$ we will write simply $J_i$ \index{$J_i$}. Given two surjections $q : K \twoheadrightarrow J, p : J \twoheadrightarrow I$ and an element $i \in I$ we write $q_i: K_i \twoheadrightarrow J_i$ for the associated projection on the fibers.   
	
\subsection{Notation for diagonals}\label{sssez:art2notazionidiagonali}
	We introduce some notations to describe diagonals of products of a scheme $X$ over a base scheme $S$ (so products are to be understood relative to $S$), we also assume that the structural morphism $X \to S$ is separated. We denote by $\Delta:X\lra X^2$ the diagonal map. 
	More in general, given a finite set $I$, we define the small diagonal $\Delta(I):X\lra X^I$, and given $\pi :J\surjmap I$, a surjective map of finite sets, we define the diagonals $\Delta(\pi) 
 = \Delta_{J/I}:X^I\lra X^J$ \index{$\Delta(\pi),\Delta_{J/I}$} in the obvious way, so that, for instance, given surjections $K \twoheadrightarrow J \twoheadrightarrow I$ we have $ \Delta_{K/I} = \Delta_{K/J}\Delta_{J/I}$. If $a,b\in J$ we denote by $\pi_{a,b}$ or by $\pi^J_{a=b}$ a surjection from $J$ to a set with one element less than $J$ such that $\pi_{a=b}(a)=\pi_{a=b}(b)$, \index{$\pi_{a=b}$} and by $\Delta_{a=b}$ \index{$\Delta_{a=b}$} the corresponding diagonal. In many constructions the fact that $\pi_{a=b}$ is not fixed in a unique way will not be a problem, we will specify the map only when necessary.

	By a slight abuse of notation we write $\Delta_{J/I}$ also for the closed subscheme of $X^J$ determined by $\Delta_{J/I}$. Given a finite set $I$ we define the big diagonal $\nabla(I)$ of $X^I$ as the scheme theoretic union of all diagonals $\Delta_{a=b}$ and more in general given $\pi:J\surjmap I$, a surjective map of finite sets, we set
	$$
	\nabla(J/I)=\bigcup_{\pi(a)\neq \pi(b)}\Delta_{a=b}\subset X^J
	$$
	\index{$\nabla(J/I)$}so that $\nabla(I)=\nabla(I/I)$. We denote by $j_{J/I} : U_{J/I} = X^J \setminus \nabla(J/I) \hookrightarrow X^J$ the associated open immersion. Finally, given two disjoint subsets $A$ and $B$ of $J$ we define 
	$$
	\nabla_{A=B}=\bigcup_{a\in A, b\in B}\Delta_{a=b}
	$$
	and write $j_{A\neq B} : U_{A \neq B} = X^I \setminus \nabla_{A=B} \hookrightarrow X^I$ for the associated open immersion.


\section{Topological Sheaves and Algebras}\label{app:top}

In this Section we do some remarks on sheaves and modules equipped with a topology. We will start with a quick review of the theory of topological modules and then move on and define our notion of topological sheaves. An important class of topological sheaves is that of \QCC sheaves (see Section \ref{ssec:qccsheaves}) which substitute quasi-coherent sheaves in the topological setting. 

\subsection{Topological Modules}

In this Section we give an overview of the theory of topological modules which we will need in the rest of the paper, but let us say that the formalism that we are going to actually use is that of \emph{topological sheaves} which we are going to develop in the following Section. Let us emphasize that many of the constructions we are going to give for modules will translate without any effort to the setting of sheaves, in addition many of the results that hold for modules apply verbatim to the class of quasi-coherent completed sheaves.

We work over a fixed commutative ring $A$ which we will never consider with a topology (or, equivalently, regarded with the discrete topology), tensor products must be considered as defined over $A$ if not differently specified.

\begin{definition}[Topological Modules]
	A topology on an $A$-module $M$ is a family of submodules $U \subset M$, called the neighborhoods of zero (\noz for short), such that for every $U \subset U'$ if $U$ is a \noz then $U'$ is a \noz and such that for any two given \noz $U_1,U_2$, their intersection $U_1\cap U_2$ is again a \noz. A topology main be defined by a fundamental system of neighborhoods of zero (\fsonoz for short), this is just a filtered (with respect to inclusion) family of submodules $U_i \subset M$; a submodule of $M$ is then defined to be a \noz if it contains one of the $U_i$. Any topological module is an honest topological space: a subset $M' \subset M$ is open if for every $m \in M'$ there exists a \noz $U_m$ such that $m + U_m \subset M'$. For an $A$-linear morphism $f : M \to N$ to be continuous is equivalent to $f$ being continuous at $0$: for any \noz $V \subset N$ there exists a \noz $U \subset N$ such that $f(U) \subset V$. We call $A\mathrm{-mod}_{\Top}$ the category of topological modules with continuous morphisms.
\end{definition}

\begin{definition}[Topologically free]
	
\end{definition}
	
\begin{definition}[Subspaces, quotients]
	A submodule $N \subset M$ is a topological module with the induced topology, neighborhoods of zero are of the form $N\cap U$ for $U$ a \noz in $M$. The quotient $M/N$ with the quotient topology is a topological module, neighborhoods of zero in $M/N$ are given by $(U+N)/N$ for $U$ a \noz in $M$. The natural morphisms $N \to M \to M/N$ are continuous. If $N$ is open then $M/N$ is a discrete topological module.
\end{definition}

\begin{remark}
	The category of topological modules is complete and cocomplete, kernels and cokernels may be constructed using the subspace/quotients constructions above. The topology on the direct sum $\oplus_i M_i$ has \noz equal of the form $\oplus_i U_i$, where $U_i$ is a \noz of $M_i$, the topology on the product is the usual one: a \noz of $\prod_i M_i$ is of the form $\prod_i U_i$ where $U_i$ is a \noz in $M_i$ and $U_i = M_i$ for all but finitely many indices. It follows that colimits and limits are computed as in the category of ordinary modules and their topologies are described by combining the notion of the subspace/quotient topology and the topology on the direct sum/product.
\end{remark}

\begin{definition}[Separated and complete modules]
	A topological module is called \emph{separated} if the intersection of all \noz is $0$; it is called \emph{complete} if the natural map $M \to \varprojlim_{U \text{ \noz}} M/U$ is an isomorphism; a complete module is automatically separated. Asking the same requirements for a fixed \fsonoz gives equivalent definitions. A topological module is separated (or complete) if and only if it is so as a topological space.  Consider $(M)_{\mathrm{Sep}} = M/(\cap_{U \text{\noz}}U)$ and $\widehat{M} = \varprojlim_{U \text{ \noz}} M/U$; then the canonical morphism $M \to (M)_{\mathrm{Sep}}, M \to \widehat{M}$ are initial among morphisms $M \to N$ where $N$ is separated or complete respectively. We will call $\widehat{M}$ the \emph{completion} of $M$.
\end{definition}

\begin{remark}[Behaviour of completion]
	By general nonsense the category of complete topological modules admits arbitrary colimits which are computed by $\widehat{\varinjlim M_i}$. In the special case where we have a sequence of inclusions of complete modules $M_n \subset M_{n+1}$ with the subspace topology, then $M= \varinjlim M_n$ is already complete.
	
	One can show in addition that limits in the category of topological modules preserve completeness and so do direct sums. In addition
	$$
	\bigoplus \widehat{M}_i \simeq \widehat{\bigoplus M_i},\qquad
	\prod \widehat{M}_i \simeq \widehat{\prod M_i}\quad \mand\quad
	\widehat{\varinjlim \widehat{M}_i} \simeq \widehat{\limind M_i}
	$$
	for arbitrary colimits.
    If we assume that the modules are separated then we have also that the completion of the kernel of a morphism is isomorphic to the kernel of the morphism between the completions of the spaces. 
    More in general if $M_i$ are separated we have also $\limpro \widehat{M}_i \simeq \widehat{\limpro M_i}$.
\end{remark}

\begin{remark}[Closed submodules]
	A submodule $N \subset M$ is said to be \emph{closed} if it is a closed subspace. We list some remarks on closed submodules
	\begin{itemize}
		\item An open submodule $U \subset M$ is also closed, indeed $M = \sqcup_{m \in M/U} m + U$;
		\item Given any submodule $N \subset M$, its closure $\overline{N}$ is also an $A$-submodule since addition and multiplication by scalar are continuous;
		\item We have $\overline{N} = \cap_{N \subset V\text{\noz}} V$: the inclusion $\overline{N} \subset \cap_{N \subset V\text{\noz}} V$ follows by the fact that open subsets are closed. The other inclusion can be proven as follows: assume that $x \notin \overline{N}$ and let $V$ be a \noz such that $(x + V) \cap N = \emptyset$, then it is easy to check that $x \notin V + N$, which is a \noz which does not contain $N$;
		\item A closed submodule of a complete module is complete;
		\item If $M$ is a topological module and $N \subset M$ is a closed submodule, then $M/N$ is separated; even if $M$ is complete $M/N$ does not have to be complete, but it is so if there is a countable \fsonoz for $M$. 
	\end{itemize}
\end{remark}

\begin{definition}[Pullback and pushforward]
	Let $\varphi : \Spec B \to \Spec A$ be a morphism of affine schemes. For any topological module $M$ the $B$-module $\varphi^*M = M \otimes_A B$ has a natural topology where a \noz is a submodule which contains $U \otimes_A B$ for some $U$ a \noz of $M$, so that is $U_i$ is a \fsonoz of $M$ then $U_i \otimes B$ is a \fsonoz of $\varphi^*M$. We define the completed pullback $\hat{\varphi}^*M$ as the completion of $\varphi^*M$.
\end{definition}
	
\begin{remark}[Strongly separated]
	In general it is not true that if $M$ is a separated module then $M_f$ is separated for all $f$. We call a module with this property strongly separated. 
	Any sum of strongly separated modules is \emph{strongly separated}, however not any product of strongly separated module need to be strongly separated. 
	Finite limits also preserves this property, while if $M_n$ are strongly separated and $M_n\subset M_{n+1}$ are closed and complete and $M_n$ has the induced topology then $\limind M_n$ is strongly separated. \index{strongly separated}
\end{remark}

\subsubsection{Tensor product of topological modules}\label{ssec:tensorproductsmodules} Following Beilinson \cite{beilinsonTopologicalAlgebras} and \cite{cas2023} we introduce different completions of tensor products. 
Let $M$ and $N$ be two topological $A$-modules. We define seven different topologies on the tensor product $M\otimes N$. 
\begin{itemize}
	\item $ \hat{} \;$-topology: a \fsonoz is given by the products $U\otimes V$ where $U$ is a \noz of $M$ and $V$ is a \noz of $N$.  
	\item $r$-topology: a \fsonoz is given by the products $M\otimes V$ where $V$ is a \noz of $N$.  
	\item $\ell$-topology: is defined symmetrically to the $r$-topology.
	\item $\rightarrow$-topology: a \fsonoz is given by the sums
	$M\otimes V+\sum_{n\in N} U_n\otimes n$
	where, for $n\in N$, the modules $U_n$ are \noz of $M$ and $V$ is a \noz of $N$. 
	\item $\leftarrow$-topology: is defined symmetrically to the $\rightarrow$-topology.
	\item $*$-topology: a \fsonoz is given by the sums
	$U\otimes V+\sum_{n\in N} U_n\otimes n+\sum_{m\in M}m\otimes V_m$
	where $V$ and $V_m$, for $m\in M$ are \noz of $N$, and $U$ and $U_n$, for $n\in N$, are \noz of $M$. 
	\item $!$-topology: a \fsonoz is given by the sums $M\otimes V+U\otimes N$ where $U$ is a \noz of $M$ and $V$ is a \noz of $N$.  
\end{itemize}
We denote the completion of $M\otimes N$ with respect to the topology $\tau$ with $M\stackrel{\tau}{\otimes}_A N$. We also notice that the construction of these tensor products is functorial. Let us notice that once we fixed $N$ the functor $\_ \otimes^\tau N$ is additive, it preserves finite direct sums.\index{$\stackrel{\tau}{\otimes}$}\index{$\stackrel{\tau}{\otimes}$!$\hat{\otimes}$}\index{$\stackrel{\tau}{\otimes}$!$\otimes^r$}\index{$\stackrel{\tau}{\otimes}$!$\otimes^l$}\index{$\stackrel{\tau}{\otimes}$!$\otimesr$}\index{$\stackrel{\tau}{\otimes}$!$\otimesl$}\index{$\stackrel{\tau}{\otimes}$!$\otimesst$}\index{$\stackrel{\tau}{\otimes}$!$\otimessh$}

\begin{remark}[Adjunction for the $\otimesr$ tensor product]
	Let $M,P$ be two topological modules consider the module $\Homcont(M,P)$, we will give a topology on this in Section \ref{ssec:hommodules} but here we consider the topology given by the subspaces $\Homcont(M,W)$ with $W$ a \noz of $P$. We will denote this topological module by $\Homcont(M,P)_{\mathrm{abs}}$; notice that if $P$ is complete then $\Homcont(M,P)_{\mathrm{abs}}$ is complete. For any other complete topological module $N$ and under the assumption that $P$ is complete we have
	\[
		\Homcont(M\otimesr N,P) = \Homcont(N,\Homcont(M,P)_{\mathrm{abs}}).
	\]
	Thus, $M\otimesr\_$, as a functor to the category of complete topological modules to itself, is a left adjoint and hence commutes with all colimits.
\end{remark}

\begin{remark}\label{oss:prodottiregolarifreccetta} 
For the $\ra,!$-tensor products we have:
$$
M\tensor{\ra}\left(\bigoplus N_i\right) = \bigoplus M\tensor{\ra} N_i, \qquad  M\tensor{!}\left(\bigoplus N_i\right) = \bigoplus M\tensor{!} N_i
$$
for arbitrary direct sums. Notice that without assuming that the $N_i$ are all discrete but for a finite number this property does not hold for the $*$-product, indeed in general the topology on the right hand side is finer then the one on the left hand side. This property does not hold for the 
$\leftarrow$-tensor product.
\end{remark}

\begin{remark}\label{oss:tensoriisomorfi}
	There are natural maps among the completions of the tensor products which in some cases are isomorphisms of $A$-topological modules and more often as plain $A$-modules. We list some of these isomorphisms. If $N$ is discrete then $M\hat\otimes N\simeq M\otimes^{r}N\simeq M\otimes N$ with the discrete topology. Moreover $
	M\otimes^{*}N\simeq M\tensor{\rightarrow}N$ and 
$M\tensor{\leftarrow}N\simeq M\otimes^{!}N$ as topological modules. 
	If $N$ is topologically free then the natural map $
	M\tensor{\rightarrow}N \to M\otimes^{r}N
	$ is an isomorphism of plain $A$-modules; if both $M$ and $N$ are topologically free, then $M\otimes^{*}N\simeq M\hat \otimes N$ as plain $A$-modules.
\end{remark}

\subsubsection{Multiple tensor products} \label{sssec:dmtp}The tensor products $\rightarrow$, $\leftarrow$ and $!$ are associative. This is not true for the other tensor products. For this reason we introduce completion for the tensor products of more than one module. We explain it in the case of $\tau=*$ and three modules (the generalizations to the other cases are the most natural ones and in any case they are less important for us). 

Let $M,N,P$ be three topological modules. A \noz for the  $**$ topology is an $A$-submodule $Q\subset M\otimes N\otimes P$ such that 
\begin{enumerate}[\noindent i)]
\item There exist a \noz $U$ of $M$ a \noz $V$ of $N$ and a \noz $W$ of $P$ such that the image of $U\otimes V\otimes W$ is contained in $Q$.
\item For all $m\in M$ there exists a \noz $V_m$ of $N$ and a \noz $W_m$ of $P$ such that the image of $m\otimes V_m\otimes W_m$ is contained in $Q$. 
\item Conditions analogous to the previous one for $n\in N$ and $p\in P$. 
\item For all $m\in M$ and $n\in N$ there exists a \noz $W_{m,n}$ of $P$ such that the image of $m\otimes n \otimes W_{m,n}$ is contained in $Q$. 
\item Conditions analogous to the previous one for $(m,p)\in M\times P $, and for $(n,p)\in N\times P$.
\end{enumerate}
The completion with respect to this tensor product is denoted with $M\tensor{*}N\tensor{*}P$. \index{** topology}

\begin{remark}
In the case of $\tau=\rightarrow$ or $\tau =\leftarrow$ or $\tau=!$ the multiple tensor product coincide with the iterated tensor product: for example
$M\tensor{\rightarrow}N\tensor{\rightarrow}P\simeq (M\tensor{\rightarrow}N)\tensor{\rightarrow}P.$
\end{remark}

\begin{remark}\label{oss:multicontinuita}
A multilinear map $\grg:M\times N \times P\lra Q$ is continuous if and only if the associated linear map $M\otimes N \otimes P\lra Q$ is continuous for the $**$ topology. Similar remarks holds also for tensor products of $n\geq 2 $ modules. 
\end{remark}

\begin{remark}\label{oss:prodotticompletaprimaedopo}
	For $\tau=*,\rightarrow,\leftarrow,!$ we have $\overline M\tensor{\tau}\overline N\simeq M\tensor{\tau}N$. A similar remark holds for multiple tensor products. 
\end{remark}

\begin{remark}\label{oss:amtp}The $*$-tensor product is not associative, however we have natural maps such as
$M\otimes^{*}N\otimes^{*}P\to (M\otimes^{*}N)\otimes^{*}P$. Let us make an example: consider the case $M= (\mC[t])[[z]]$ with the topology $z^nM$ and $N=\mC((y)), P=\mC[[z]]$ with the usual topologies, then one can show that $(M\otimes^* N) \otimes^* P$ is not isomorphic (as a topological module) to $M \otimes^* (N \otimes^* P)$; actually there are no continuous map between the two which are the identity on $M \otimes N \otimes P$. This also implies that the canonical map $M\otimes^* N \otimes^* P \to (M \otimes^* N) \otimes^* P$ can fail to be an isomorphism.
\end{remark}

\begin{remark}\label{oss:!conservaqccopf}
	For $\tau=\,!$ tensor products and multiple tensor products of topologically free modules are topologically free. 
\end{remark}

\subsubsection{Compact modules}\label{ssec:compactmodules}
We say that a topological $A$-module $M$ is $w$-compact (respectively fp-compact and proj-compact)\index{$w$-compact module}\index{$w$-compact module!fp-compact module}\index{$w$-compact!proj-compact module} if it is complete and there exists a \fsonoz $U_i$ such that the quotient $M/U_i$ is finitely generated (respectively finitely presented and projective and finitely generated). If $M$ is not complete but $M/U_i$ is finitely generated (resp. finitely presented, projective and finitely generated) for a \fsonoz $U_i$, then the completion $\widehat{M}$ is $w$-compact (resp. fp-compact, proj-compact).

\begin{remark}\label{oss:prodottocompatti1}
If $M$ is a $w$-compact module then 
$$
M\tensor{*}N\simeq M\tensor{\rightarrow}N
\quad\mand\quad
M\tensor {\leftarrow} N \simeq M\tensor{!}N
$$
as topological modules. More in general if $M_1,\dots,M_h$ are $w$-compact then we have the following isomorphism of topological modules
$$
M_1\tensor{*}\cdots \tensor{*} M_h\tensor{*}N\simeq M_1\tensor{\rightarrow}\cdots\tensor{\rightarrow}M_h\tensor{\rightarrow}N
$$
\end{remark}

\begin{remark}\label{oss:prodottocompatti}
	If $M$ and $N$ are $w$-compact modules then 
	$$
	M\tensor{*}N\simeq M\tensor{\rightarrow}N\simeq M\tensor {\leftarrow} N \simeq M\tensor{!}N
	$$
	as topological modules and moreover this module is $w$-compact. Moreover if the modules are fp-compact (or proj-compact) then the same property holds for their completed tensor product. A similar result holds for multiple tensor products. In particular $*$-tensor product of $w$-compact modules is associative. 
\end{remark}

\begin{remark}\label{rmk:starassociativecompactdiscrete}
	The $*$-tensor product between compact and discrete modules is associative.
\end{remark}

\noindent For some future applications we record the following remark as well. 

\begin{remark}\label{oss:prodottiregolaristar}
	For $k=1,\dots,\ell$ let  $M(k)=\bigoplus_{i\geq 0} M_i(k)$ with  $M_i(k)$  discrete for $i\geq 1$. Let $M_{\leq n}(k)=\bigoplus _{0\leq i \leq n}M_i(k)$ then 
	$$
	M(1)\tensor{*}\cdots \tensor{*}M(\ell)=\bigoplus M_{i_1}(1)\tensor{*}\cdots\tensor{*}M_{i_\ell}(\ell)=\limind 
	M_{\leq n}(1)\tensor{*}\cdots\tensor{*}M_{\leq n}(\ell).
	$$
	In addition, if we assume $M_0(k)$ $w$-compact for all $k$ (c.f. Section \ref{ssec:compactmodules}), the tensor products $M_{i_1}(1)\otimes^*\cdots \otimes^* M_{i_l}(l)$ and $M_{\leq n}(1)\otimes^*\cdots\otimes^* M_{\leq n}(l)$ with the iterated tensor products. This follows from a version of Remark \ref{oss:prodottiregolarifreccetta} for the multiple $*$-tensor product together with Remark \ref{rmk:starassociativecompactdiscrete}.
\end{remark}

\begin{remark}\label{oss:compattezzaeomomorfismi}
	If $M$ is a $w$-compact module then it is not a compact object in the category of modules, however 
	$$\Homcont(M,\oplus X_i)=\oplus \Homcont(M,X_i)$$
	for arbitrary direct sums. Moreover if the modules $X_n$ are complete and $X_n\lra X_{n+1}$ are injective and $X_n$ has the subspace topology then, 
	$$
	\Homcont(M,\limind X_n)=\limind \Homcont(M,X_n)
	$$ 
\end{remark}

\subsubsection{Compactly generated modules, \ccg-modules and regular modules}
Given a topological module $M$, the set of $w$-compact submodules of $M$ is filtered, indeed given $H$ and $K$ $w$-compact submodules then $\overline{H+K}$ is also $w$-compact. We have a natural map $\limind K \lra M$ where the colimit is taken in the category of topological modules, over all $w$-compact submodules of $M$. We say that $M$ is compactly generated (for short cg-module)\index{cg-module} if the above map is an isomorphism. For many applications this hypothesis will be sufficient for us. Notice that we do not require $M$ to be complete. If there exists a sequence $K_n\subset K_{n+1}$ of $w$-compact submodules such that $M=\limind K_n$ then we say that $K_n$ is an \exhaustion  of $M$ and that $M$ is compact countably generated module (for short \ccg-module)\index{cg-module!\ccg-module}. Some important difference among the two concept is explained in the following remark. 

\begin{remark}\label{oss:proprietamoduliregolari}Every \ccg-module is also a cg-module, indeed if $M_n$ is an \exhaustion of the module $M$ and $K$ is a compact submodule then $K\subset M_n$ for some $n$ by Remark \ref{oss:compattezzaeomomorfismi}. Also \ccg-modules are always complete (recall that $w$-compact modules are assumed to be complete).
\end{remark}

These classes of module will be important for us. A even better class of modules should be that of regular modules. 

\begin{definition}\label{def:moduloregolare}
A topological module $M$ is $w$-regular\index{$w$-regular} if it is a \ccg-module and if every compact submodule has a countable \fsonoz. 
\end{definition}

That the topological module $M$ is a $w$-regular module is equivalent to the fact that there exists an \exhaustion of $M$ by $w$-compact submodules $M_n$ such that each $M_n$ has a countable \fsonoz. In particular the sheaf associated to a $w$-regular module has trivial cohomology on affine subsets. 

Let us remark the well behaviour of ccg-modules with respect of the $\ra$-tensor product.

\begin{remark}\label{oss:prodottiecolimiti}
	Assume $M$ is topologically free and that $N=\limind N_n$ is an \exhaustion of $N$. Then $\limind M\tensor{\ra} N_n$ is isomorphic to $M\tensor{\ra}N$. 
\end{remark}

\subsubsection{Homomorphisms of modules}\label{ssec:hommodules}Let $M$ and $N$ be two topological modules. On $\Homcont(M,N)$ we put the topology where neighborhoods of zero are the submodules 
\[
\mH_{K,V}=\Homcont(M,N)_{K,V}=\{\varphi:M\to N \,:\, \varphi(K)\subset V\}
\]
where $K$ is a $w$-compact submodule of $M$ and $V$ is a \noz of $N$.\index{$\mH_{K,V}$}

\begin{remark}\label{oss:Hometensorifasci}
	If $M$ is compactly generated and $N$ is complete then 
	$\Homcont(M,N)$ is complete.
	
	Moreover if $M$ is compactly generated, then the following natural isomorphisms of modules are also isomorphisms of topological modules:
	\begin{align*}
	\Homcont(M,N)
	&\simeq \limpro_{K}\Homcont(K,N)
	\simeq \limpro_{V}\Homcont\left(M,\frac N V\right)\\
	&\simeq \limpro_{K,V}\Homcont\left(K,\frac N V\right)
	\simeq \limpro_{K,V}\limind_{U}\Hom\left(\frac K U , \frac N V\right)
	\end{align*}
	where $K$ is a compact submodule of $M$, $V$ is a \noz of $V$ and $U$ is a \noz of $K$. 
\end{remark}

\subsection{Topological sheaves}

We move forward introducing our notion of a topological sheaf on a scheme, this will be more than a sheaf of topological modules: we ask a stronger condition that the topology is defined globally. We will then single out a peculiar class of topological sheaves which we call \emph{quasi-coherent completed} (\QCC for short), study their properties and then develop the theory paralleling the theory of modules exposed in the previous Section.

We will consider sheaves of $\calO_S$-modules on a base scheme $S$ which we do not require to be quasi-coherent. We always assume that $S$ is topologically noetherian and quasi separated, although these hypotheses is not always necessary.

\begin{definition}[Topological Sheaf] Let $\calF$ be a $\calO_S$-module. A topology on $\calF$ is a family of $\calO_S$-subsheaves $\calV \subset \calF$, called the neighborhoods of zero (\emph{\noz} for short),\index{\noz} such that if $\calV\subset\calV'\subset \calF$ and $\calV$ is a neighborhoods of zero, then $\calV'$ is a neighborhood of zero, and such that if $\calV,\calV'$ are neighborhood of zero, then $\calV\cap \calV'$ is a neighborhood of zero. Equivalently, a topology can be specified by a fundamental system of neighborhood of zero (\emph{\fsonoz} for short).\index{\fsonoz}

A continuous morphism of topological sheaves $\grf:\calF\lra\calG$ is a morphism of $\calO_S$-sheaves such that for all $\calW$ \noz of $\calG$ there exists a \noz $\calV$ of $\calF$ such that $\grf(\calV)\subset \calW$. The set of continuous morphism will be denoted by $\Homcont_{\calO_S}(\calF,\calG)$ and the associated sheaf by $\calHomcont_{\calO_S}(\calF,\calG)$. Since $S$ is topologically noetherian we can check continuity locally, in particular $\calHomcont$ can be indeed shown to be a sheaf.  \index{$\calHomcont$} A topological sheaf is said to be complete if the natural map $\calF \to \limpro_{\noz}\calF/\calU$ is an isomorphism and separated if it is injective. We will write $\hat{\calF} = \limpro \calF/\calU$ and call it the \emph{completion} of $\calF$.

These definitions make sense if we replace the word ``sheaf'' with ``presheaf'' so that there is an analogous notion of a topological presheaf on $S$ and a notion of continuous homomorphisms between them.
\end{definition}

\begin{remark}[Sheaves vs presheaves]\label{rmk:sheavesvspresheaves}
	Given a presheaf $\calF$ on $S$ we write $\calF^\sharp$ for its sheafification. If $\calF$ has a topology given by neighborhoods of zero $\calV_i$, then $\calV_i^\sharp \subset \calF^\sharp$ determines a topology on $\calF^\sharp$. It is easy to check that given a topological sheaf $\calG$ continuous morphisms $\calF \to \calG$ correspond to continuous morphisms $\calF^\sharp \to \calG$. It follows by adjunction that for a topological presheaf $\calF$ we have that $\widehat{(\calF^\sharp)}$ agrees with the completion of $(\widehat{\calF})^\sharp$.
\end{remark}

The category of topological sheaves is a quasi abelian category (while the category of complete topological sheaves is not), it has arbitrary limits and colimits just as in the case of modules. Limits are computed point-wise

\begin{remark}[Pushforward and pullback]
	If $\varphi :S' \to C$ and $\calF$ is a topological sheaf on $S'$ then the pushforward of the \noz of $\calF'$ define a topology on $\varphi_*(\calF')$ and similarly if 
$\calF$ is a topological sheaf on $S$ then the pullback of the \noz of $\calF$ define a topology on $\varphi^{-1}(\calF)$ (see Section \ref{ssec:pullbackcompletati} for the definition of the topology on $\varphi^*(\calF)$). Pushforward preserves completeness while, in general $\varphi^{-1}$ does not. Restriction to open subsets preserves completeness and moreover completeness is a local property. In addition, the restriction of the completion is the completion of the restriction.
\end{remark}

\begin{definition}[Subsheaves, quotients]
	Given a topological sheaf $\calF$ and a subsheaf $\calG \subset \calF$, $\calG$ inherits a topology by declaring a \fsonoz to be $\calG \cap \calU$, with $\calU$ a \noz of $\calF$. On the other hand, we define a topology on the quotient by declaring that a subsheaf $\calV \subset \calF/\calG$ is a \noz if and only if $q^{-1}(\calV)$ is a \noz in $\calF$, where $q : \calF \to \calF/\calG$ is the quotient map. Equivalently, neighborhoods of zero are of the form $(\calU + \calG)/\calG$ where $\calU \subset \calF$ is a \noz. These constructions allow us to construct kernels and cokernels in the category of topological sheaves.
\end{definition}

\begin{remark}\label{rmk:completionkernel}
	Let us notice that the kernel of a morphism $\calF \to \calG$ with $\calF$ complete and $\calG$ separated is complete. It follows that an arbitrary limit of complete sheaves is complete.
\end{remark}

\begin{definition}[Closure]
	Let $\calG \subset \calF$ be a subsheaf of a topological sheaf $\calF$. We define, in analogy to the case of modules, $\overline{\calG}$, the \emph{closure} of $\calG$ as the intersection $\overline{\calG} = \cap_{\calG \subset \calU \text{ \noz}}\, \calU$. If $\calF$ is a complete sheaf then $\overline{\calG}$ is complete (this follows from the previous remark and by the fact that open subsheaves are complete).
\end{definition}

\begin{remark}\label{lemma31}
	Let $\calF$ be a sheaf and let $\hat \calF$ be its completion. Let $\calG$ a subsheaf of $\calF$ and $\calH$ be a subsheaf of $\hat \calF$ and assume that the image of $\calG$ in $\hat \calF$ is contained and is dense in $\calH$.  
	Then the completion of $\calF/\calG$ is isomorphic to the completion of $\calF/\overline \calG$ and also to the completion of $\hat \calF/\calH$.
\end{remark}

\begin{remark}[Topology on sections]
	Given a topological sheaf $\calF$ and an open subset $U \subset S$ the module $\calF(U)$ has a natural topology, where a \fsonoz is given by $\calV(U) \subset \calF(U)$ for $\calV$ a \noz of $\calF$. If $\calF$ is complete (resp. separated) then $\calF(U)$ is complete (resp. separated) for any $U$. The converse is not true, if $\calF(U)$ is complete for every $U$ the sheaf $\calF$ does not need to be complete.
\end{remark}

\subsubsection{\QCC Sheaves}\label{ssec:qccsheaves}

Let $S = \Spec A$ be affine and $M$ be a topological $A$-module, denote by $\widetilde{M}$ the quasi-coherent module on $S$ attached to $M$. This is naturally a topological sheaf where a fundamental system of neighborhoods of zero in $\widetilde{M}$ is given by $\widetilde{U}$ for $U$ a \noz in $M$.

\begin{definition}\label{def:qccsheaves}
	We write $\widetilde{M}^c$ for the completion of the sheaf $\widetilde{M}$ with the above topology. Notice that for any $f \in A$ we have
	\[
		\widetilde{M}^c(\Spec A_f) = \varprojlim_{U \text{ \noz}} \frac{M_f}{U_f}.
	\]
	A topological sheaf on an a scheme $S$ is called quasi-coherent completed (\QCC for short) \index{\QCC} if its restriction to any affine open subset $U \subset \calF$ is isomorphic to $\qcc{\calF(U)}$. 
\end{definition}

\begin{definition}
	A \QCC sheaf $\calF$ is called \QCCF if for every open affine subset $U\subset S$ the module $\qcc{\calF(U)}$ is topologically locally free (see Definition)
\end{definition}

\begin{remark}\label{oss:sqccclocalita}
	Being a \QCC sheaf is a local property meaning the following: if $\calF$ is a topological sheaf on a scheme $S$ and $S$ can be covered by affine open subsets $S_i$ such that the restriction of $\calF$ to $S_i$ is a \QCC sheaf, then $\calF$ is a \QCC sheaf.
\end{remark}

\begin{remark}\label{oss:sqccccomologia}
Let $\calF$ be a complete topological sheaf that admits a countable \fsonoz $\{\calU_n\}_{\mN}$ such that for every $n \in \mN$, $\calF/\calU_n$ is a quasi-coherent sheaf. Then 
\begin{itemize}
	\item $\calF$ is a \QCC sheaf 
	\item $H^p(U,\calF)=0$ for any $p>0$ and for any open affine subset $U$.  
\end{itemize} 
\end{remark}

\begin{remark}\label{oss:qcclimiti}
	Let $\calF$ be a topological sheaf, assume that $\calF_n \subset \calF_{n+1}$ is a countable system of immersions and that $\calF = \varinjlim \calF_n$. Then, if all $\calF_n$ are \QCC sheaves, $\calF$ is a \QCC sheaf as well.
\end{remark}

\subsubsection{Tensor products of sheaves}\label{ssec:topologytensorprodsheaves}
Let $\calF$ and $\calG$ be two topological $\calO_S$ modules; here we define several different topologies on the tensor product $\calF\otimes_{\calO_S}\calG$ (we usually omit the subscript $\calO_S$ when the tensor product is done with respect the structure sheaf of our base scheme). Let us emphasize that we do not need the sheaves in question to be \QCC for the following definitions to work. We define the following topologies at the level of presheaves; in particular we may assume that $\calF,\calG$ are topological presheaves. In the case where $\calF,\calG$ are sheaves the following induce topologies on the sheaf theoretic tensor product by the discussion of Remark \ref{rmk:sheavesvspresheaves}.

\begin{itemize}
	\item $ \hat{} \;$-topology: a \fsonoz is given by the products $\calU\otimes \calV$ where $\calU$ is a \noz of $\calF$ and $\calV$ is a \noz of $\calG$.  
	\item $r$-topology: a \fsonoz is given by the products $\calF\otimes \calV$ where $\calV$ is a \noz of $\calG$.  
	\item $\ell$-topology is defined symmetrically to the $r$-topology.
    \item $\rightarrow$-topology: we consider the presheaf $\calH$ given by $U\mapsto \calF(U)\otimes \calG(U)$. A \noz $\calQ$ of $\calH$ is a subpresheaf such that 
    \begin{itemize}
       	\item there exists a \noz $\calV$ of $\calG$ such that $\calQ$ contains the image of the presheaf $U\mapsto\calF(U)\otimes \calV(U)$ in $\calH$.
       	\item for each open set $U$ of $S$ and for each $\grs\in \calG(U)$ there exists a \noz $\calU_\grs$ of $\calF$ such that $Q|_U$ contains the image of the $\calU_\grs\otimes \grs$ in $\calH$. 
    \end{itemize} 
    The $\rightarrow$ topology on $\calF\otimes_{\calO_S}\calG$ 
    is the sheafification of this topology. 
    \item $\leftarrow$-topology: is defined symmetrically to the $\rightarrow$-topology.
    \item $*$-topology: we consider the presheaf $\calH$ given by $U\mapsto \calF(U)\otimes \calG(U)$. A \noz $\calQ$ of $\calH$ is a subpresheaf such that 
    \begin{itemize}
    	\item there exists a \noz $\calU$ of $\calF$ and a \noz $\calV$ of $\calG$ such that $\calQ$ contains the image of the presheaf $U\mapsto\calF(U)\otimes \calV(U)$ in $\calH$.
    	\item for each open set $U$ of $S$ and for each $\grs\in \calG(U)$ there exists a \noz $\calU_\grs$ of $\calF$ such that $Q|_U$ contains the image of the $\calU_\grs\otimes \grs$ in $\calH$ and symmetrically for each $\grs'\in \calF(U)$. 
    \end{itemize} 
    The $*$ topology on $\calF\otimes_{\calO_S}\calG$ 
    is the sheafification of this topology. 
    \item $!$-topology: a \fsonoz is given by the sums $\calU\otimes \calG+\calF\otimes \calV$ where $\calU$ is a \noz of $\calF$ and $\calV$ is a \noz of $\calG$.  
\end{itemize}
We denote the completion of $\calF\otimes \calG$ with respect to the topology $\tau$ with $\calF\stackrel{\tau}{\otimes} \calG$. We notice also that the construction of these tensor products is functorial.

The topology $*\cdots*$ on multiple tensor products is defined in a completely analogous way to the case of modules. 

\begin{remark}\label{oss:prodottolocalizzabene}
	The construction of these topologies is local, meaning that if 
	$S'$ is an open subset of $S$ then the $\tau$-topology on the restriction of the product is equal to the restriction of the topology. Moreover, if $M$ and $N$ are complete modules, then on an affine scheme $S = \Spec A$, we have 
	$$\qcc{M}\tensor{\tau}\qcc{N} \simeq \qcc{M\tensor{\tau}N}$$ and on an open subset $\Spec A_f$ we have $$\big(\qcc{M}\tensor{\tau}\qcc{N}\big)(S_f)=\overline {M_f}\tensor{\tau}\overline{N_f}$$ for all the topologies above. In particular the completed tensor products of two \QCC sheaves is \QCC. Similar results hold for multiple tensor products.
\end{remark}

\begin{remark}\label{oss:tensoriisomorfifasci}
	As in the case of modules, there are natural maps among the completions of the tensor products. Let us notice here that if $\calN$ is discrete then $\calM\hat\otimes \calN\simeq \calM\otimes^{r}\calN\simeq \calM\otimes \calN$ with the discrete topology. Moreover $
	\calM\otimes^{*}\calN\simeq \calM\tensor{\rightarrow}\calN$ and 
$\calM\tensor{\leftarrow}\calN\simeq \calM\otimes^{!}\calN$ as topological sheaves. 
	If $\calN$ is \QCCF then the natural map $
	\calM\tensor{\rightarrow}\calN \to \calM\otimes^{r}\calN
	$ is an isomorphism of plain sheaves; if both $\calM$ and $\calN$ are \QCCF, then $\calM\otimes^{*}\calN\simeq \calM\hat \otimes \calN$ as plain sheaves.
\end{remark}

\begin{remark}\label{prop:otimesshcontrootimesr}
    Let $\calF_1,\calF_2,\calG_1,\calG_2$ be topological sheaves. Then there exists a canonical map
    \[
        (\calF_1\otimessh \calG_1) \otimesr (\calF_2\otimessh \calG_2) \to (\calF_1\otimesr \calF_2)\otimessh (\calG_1\otimesr \calG_2),
    \]
    which satisfies natural associativity properties. In particular, given $\otimesr$-algebras $\calA$ and $\calB$, the tensor product $\calA\otimes^!\calB$ is naturally a $\otimesr$ algebra, whose product comes from the above morphism taking $\calF_1 = \calF_2 = \calA$, $\calG_1=\calG_2 = \calB$ together composed with the products of $\calA$ and $\calB$.
\end{remark}

We state the following Lemma that explains how completion is related to the various tensor products.

\begin{lemma}\label{lemma:pullbackprodottitensori}
Assume $S$ is topologically noetherian.
Let $\calF$ and $\calG$ be two topological sheaves on $S$ and let $\overline\calF$ and $\overline \calG$ be their completion. Then 
$$
\calF\tensor {\ra} \calG \simeq \calF\tensor {\ra} \overline \calG,\quad
\calF\tensor {\ra} \calG \simeq \overline \calF\tensor {\ra}  \calG,\quad
\calF\tensor {*} \calG \simeq \calF\tensor {*} \overline \calG,\quad
\calF\tensor {!} \calG \simeq \calF\tensor {!} \overline \calG.
$$
\end{lemma}
\begin{proof}
We prove the first statement. The proof of the other statement is similar. 
By functoriality there is a map from the left hand side to the right hand side. 
We construct a map on the opposite direction. Fix an open subset $U$ and a section $\grs$ of $\calF(U)$. The map
$$
\calG|_U \lra (\calF\tensor {\ra}  \calG)|_U
\quad \text{ given by }\quad 
\tau\longmapsto \grs\otimes \tau 
$$
defined a continuous map from the two sheaves (this fact would be false for the tensor products $\tensor r$ or $\tensor \ell$ or $\hat\otimes $). Hence it determines a map 
$$\overline \calG|_U \lra (\calF\tensor {\ra}  \calG)|_U$$
In particular, since $\overline{\calG}|_U = \overline{\calG|_U}$, we have constructed for every $U$ a map
$$
\gra:\calF(U)\otimes (\overline\calG)(U)\lra (\calF\tensor {\ra}  \calG)(U)
$$
This determines a map among the two presheaves. We prove that this is a continuous map of topological presheaves if we put on the left hand side the $\tensor{\ra}$-topology. 

Fix a noz $\calQ$ of $\calF\tensor{\ra}\calG$. Recall that such a noz  contains the image of $\calF\tensor{\ra} \calV$ for a noz $\calV$ of $\calG$ and for every open subset $W$ and every $\tau\in \calG(W)$ there exists a noz $\calU$ of $\calF$ such that $\calQ|_W$ contains the image of $\calU|_W\otimes \tau$. 

It is clear that the map $\gra$ restricted to $\calF \otimes \overline \calV$ factors through $\calF\tensor{\ra} \calV$ hence its image is contained in $\calQ$. 

Now given an open subset $W$ and a section $\tau'$ of $\overline\calG(W)$ we want to prove that there exists a noz $\calU'$ of $\calF$ such that the  image of $\calU'|_W\otimes \tau'$ is contained in $\calQ|_W$.

The section $\tau'$ determines a section of $\overline \calG/\overline \calV\simeq \calG/\calV$ on $W$. By quasi compactness of $W$ there exists a finite covering $W_1,\dots,W_m$ of $W$ and sections $\tau_i\in \calG(W_i)$ such that 
$\tau'|_{W_i}$ is represented in this quotient by $\tau_i$. Let $\calU_i$ be a noz of $\calF$ such that $\calQ|_{W_i}$ contains $\calU_i|_{W_i}\otimes \tau_i$. Set $\calU=\cap\,\calU_i$. 
We want to prove that the image of $\calU|_W\otimes \tau'$ is contained in $\calQ|_{W}$. It is enough to prove that the image of the map $\gra$ restricted to $\calU|_{W_i}\otimes \tau'|_{W_i}$ is contained in $\calQ|_{W_i}$ for all $i$. Indeed 
$$
\gra\Big(\calU|_{W_i}\otimes \tau'|_{W_i} \Big)\subset \gra\Big(\calU_i|_{W_i}\otimes (\tau_i+\overline \calV|_{W_i})\Big)\subset 
\gra\Big(\calU_i|_{W_i}\otimes \tau_i +\calF|_{W_i}\otimes \calV|_{W_i}\Big)
\subset \calQ|_{W_i}.$$
This proves that the map $\gra$ is continuos. Since the target of $\gra$ is a complete topological sheaf it determines a continuos map of topological sheaves  
$$\gra_1:\calF\tensor\ra \overline \calG\lra \calF\tensor\ra\calG$$ where on the domain of $\gra_1$ we put the $\tensor\ra$ topology. 

Finally the two maps are inverse to each other. 
\end{proof}

\subsubsection{Compact sheaves}\label{ssec:compactsheaves} 
We say that a \QCC sheaf $\calF$ on $S$ is $w$-compact (respectively fp-compact, proj-compact)\index{$w$-compact sheaf}\index{$w$-compact sheaf!fp-compact sheaf}\index{$w$-compact sheaf!proj-compact sheaf} if it is complete and there exists a \fsonoz $\calU_i$ such that $\calF/\calU_i$ is a quasi coherent Zariski-locally finitely generated $\calO_S$-sheaf (respectively quasi coherent and  finitely-presented, locally free $\calO_S$-sheaf).

To our understanding these are not local properties since the sheaves $\calU_i$ are fixed globally. However we have the following remark.

\begin{remark}\label{oss:compattiaffini}
	If $S=\Spec A$ and $M$ is a complete topological module then $\qcc{M}$ is $w$-compact (respectively fp-compact, proj-compact) if and only if the module $M$ is $w$-compact (respectively fp-compact and proj-compact). 

    This implies that a \QCC sheaf is $w$-compact (respectively fp-compact, proj-compact ) if there exists a \fsonoz $\calU_i$ such that the restriction of $\calF/\calU_i$ to any affine subset $S'=\Spec A'$ of $S$ is of the form $\qcc{Q}$ with $Q$ respectively finitely generated, finitely presented and finitely generated and projective $A'$ module. 
\end{remark}

\begin{remark}\label{oss:fascicompattimorfismi}
Let  $\calF$ be a $w$-compact sheaf and recall that we are assuming $S$ to be topologically noetherian. 
Let $\calG=\limind\calG_n$ with $\calG_n\subset \calG_{n+1}$  \QCC sheaves (in this case also $\calG$ is a \QCC sheaf), then 
$$ \Homcont(\calF,\calG)=\limind \Homcont(\calF,\calG_n). $$
As in the case of modules, if $\calF$ is $w$-compact then $\Homcont(\calF,\cdot)$ commutes with arbitrary direct sums. 
Similar considerations hold for the sheaf $\calHomcont$.
\end{remark}

\begin{remark}\label{oss:prodottotensorfascicompatti}We have analogues of Remark \ref{oss:prodottocompatti1} and Remark \ref{oss:prodottocompatti} for $w$-compact sheaves. In particular  if $\tau=*,\rightarrow,\leftarrow$ and $!$ the multiple tensor products of $w$-compact sheaves coincide. Moreover this tensor product preserves $w$-compactness, fp-compactness and proj-compactness.
\end{remark}

\subsubsection{Compactly generated sheaves and regular sheaves}\label{ssec:ccgeregolari}
In the definition of compactly generated sheaves we need to make some slight changes since it is not clear that the closure of the sum of two compact subsheaves is still compact. Thus, given a topological sheaf $\calF$, we define an \exhaustion of $\calF$ to be a filtered system $\calK_i \subset \calF$ consisting of $w$-compact subsheaves (in particular \QCC and complete) such that $\varinjlim_i \calK_i = \calF$ in the category of topological sheaves. A compactly generated sheaf is then a topological sheaf which admits an exhaustion. Compact countably generated sheaves (for short \ccg) sheaves, and regular sheaves are defined in a completely analogous way to the case of modules. \index{\QCC!cg-sheaf}\index{\QCC!cg-sheaf!\ccg-sheaf} For example an \exhaustion of a sheaf $\calF$ is an increasing  sequence $\calF_n$ of $w$-compact \QCC subsheaves of $\calF$ such that $\limind \calF_n=\calF$. 

\begin{remark}\label{oss:fascicgccgregulari}
Thanks to remark \ref{oss:compattiaffini} if $S=\Spec A$ and $M$ is a complete topological module then $\qcc{M}$ is a compactly generated \QCC sheaf if and only if $M$ is a compactly generated module. Similarly for 
\ccg, \exhaustion and regular sheaves. In addition, if we have an exhaustion by $w$-compacts $M_n \subset M_{n+1}$, $M = \varinjlim M_n$ we have $\qcc{M} = \varinjlim \qcc{M_n}$. 
Assuming that $S$ is topologically noetherian, as we always do, if $\calF_n$ is an \exhaustion of the sheaf $\calF$ and $\calK$ is a $w$-compact sheaf of $\calF$ then $\calK\subset \calF_n$ for some $n$. 

\end{remark}
	
\begin{remark}\label{oss:fasciregolari1}
Regular sheaves have trivial non zero cohomology on affine subsets. We already knew this result for a particular class of \QCC sheaves, by Remark \ref{oss:sqccccomologia}\end{remark}

\subsubsection{Homomorphisms of sheaves}\label{ssec:omomorfismi}
In the following discussion our assumption that $S$ is topologically noetherian becomes crucial. We will also assume that $\calF$ is compactly generated, meaning $\calF=\limind \calF_i$ over some filtered system of $w$-compact subsheaves of $\calF$. 
We define a topology on $\Homcont(\calF,\calG)$ as follows. A \fsonoz of is given by the subsheaves
$$
\mH_{\calK,\calV}=\Homcont(\calF,\calG)_{\calK,\calV}=\{\grf:\calF\to \calG\,:\, \grf(\calK)\subset \calV\},
$$
\index{$\mH_{\calK,\calV}$}where $\calK$ is a $w$-compact subsheaf of $\calF$ and $\calV$ is a \noz of $\calG$. Similarly, we define a topology on the sheaf $\calHomcont(\calF,\calG)$: given $\calK$ and $\calG$ as above the subsheaf $\calHomcont(\calF,\calG)_{\calK,\calV}$
on an open subset $U \subset S$ is given by $\Homcont(\calF|_{U},\calG|_{U})_{\calK|_{U},\calV|_{U}}$.

\begin{remark}\label{oss:compatibilitaHomcalHom}
The two definitions are compatible: meaning that the topological module $\calHomcont(\calF,\calG)(S)$ is isomorphic to $\Homcont(\calF,\calG)$.
\end{remark}

\begin{remark}\label{rmk:completezzaHom}
If $\calF$ is compactly	generated and  $\calG$ is complete then the sheaf $\calHomcont(\calF,\calG)$ is complete and moreover we have the following isomorphism of topological sheaves 
\begin{align*}
\calHomcont(\calF,\calG)
&\simeq \limpro_{\calK}\calHomcont(\calK,\calG)
\simeq \limpro_{\calV}\calHomcont\left(\calF,\frac \calG \calV\right)\\
&\simeq \limpro_{\calK,\calV}\calHomcont\left(\calK,\frac \calG \calV\right)
\simeq \limpro_{\calK,\calV}\limind_{\calU}\calHom\left(\frac {\calK} \calU , \frac \calG \calV\right)
\end{align*}
where $\calK$ is a $w$-compact subsheaf of $\calF$, $\calV$ is a \noz of $\calG$ and $\calU$ is \noz of $\calK$. The analogous properties for the modules $\Homcont$ follows by Remark \ref{oss:compatibilitaHomcalHom}.
\end{remark}

\begin{remark}\label{rmk:sectionshom}
	Assuming that $\calF$ is topologically free, meaning there exists a \fsonoz $\calF_i$ with $\calF/\calF_i$ free, we have that 
	\[
		\Homcont_{\calO_S}(\calF,\calG)(S) = \Homcont_{\calO_S}(\calF(S),\calG(S)).
	\]
\end{remark}

\noindent We will be mainly interested in the case of \QCC sheaves, where sections of $\Hom$ sheaves are very well behaved. 

\begin{remark}\label{oss:morfismidifasci1}
If $S=\Spec(A)$, $N$ is a complete module and $M=\limind M_n$ is an \exhaustion of $M$ such that each $M_n$ is proj-compact and $M_n$ has a complement in $M$ for all $n$, then 
$$ \calHomcont(\qcc{M},\qcc{N})=\qcc{\Homcont(M,N)}. $$
It follows that in the case $\calF$ is \QCC, compactly generated by proj-compact modules and $\calG$ is \QCC, we have that $\calHomcont(\calF,\calG)$ is \QCC and that on any affine open subset $U \subset S$ we have $\calHomcont(\calF,\calG)(U) = \Homcont(\calF(U),\calG(U))$.
\end{remark}

\subsection{Topological algebras and their modules}
\label{ssec:topologicalalgebrasandmodule}

In this section we develop a bit of the theory of topological algebras in our setting of topological sheaves. We deal directly with the case of sheave, since it is the case we will be working with in the rest of the paper. There are several notions of a topological algebra depending on the completed tensor product one may consider. It is also possible to relax the continuity condition in several ways, we will explain some constructions here.

Let us first introduce the notion of separately continuous morphism. If $\calN_1,\calN_2,\calM$ are topological sheaves of $\calO_S$-modules and we are give a bilinear morphism $b: \calN_1 \otimes \calN_2 \to \calM$, we say that $b$ is \emph{continuous in the first variable} if for every open subset $U \subset S$ and any local section $n \in \calN_1(U)$ the induced morphism $b(n\otimes\_) : \calN_2|_U \to \calM|_U$ is continuous. To be continuous on the second variable is defined analogously. We call $b$ continuous if it is continuous in both variables and for every \noz $\calV \subset \calM$ there exists \noz $\calU_1 \subset \calN_2,\calU_2\subset\calN_2$ such that $b(\calU_1\otimes\calU_2) \subset \calV$. This is equivalent to the fact that $b$ is continuous for the $*$-topology. 

\begin{definition}[Topological Algebra]
	A topological algebra is a topological sheaf $\calR$, which is also an algebra whose product is $*$-continuous.\index{topological algebra} A  $\calR$-topological module $\calF$ is a $\calR$-module which is also a $\calO_S$ topological module such that the product $\calR\times\calF\to \calF$ is continuous.\index{topological algebra!topological module}. We say that $\calF$ is a linear $\calR$-topological module if the topology is defined by a \fsonoz that are $\calR$- submodules.\index{topological algebra!topological module!linear topological module} Occasionally, we will consider $\calO_S$-algebra (or modules) that are topological $\calO_S$-sheaf but such that the multiplication map is continuous for a topology $\tau$ on the tensor product different from the $*$-topology. We call these $\tau$-topological algebra (or modules).
\end{definition}

It is natural to ask if the tensor product of topological algebras is again a topological algebra, and if so if the tensor product of modules is a module for the latter. This is certainly true if we consider complete $!$-topological algebras and modules, since the $\otimes^!$ is associative and commutative. The lack of associativity of the $\otimes^*$ and the lack of commutativity of the $\otimesr$ product is the source of some technical problems which we will treat in the following sections. Let us remark that, when using the more naive tensor products $\hat{\otimes},\otimes^r$ these issues disappear.

\begin{remark}\label{oss:prodottodialgebrehat}
	If $\calR_1$ and $\calR_2$ are topological algebras then the tensor products
	$\calR_1\tensor{r}\calR_2$ and $\calR_1\hat\otimes\calR_2$ are topological algebras. 
\end{remark}

Let us state here the following remark, which deals with the simplest case of a module locally homeomorphic to $\calR$.

\begin{remark}\label{oss:continuitamorfismimodulitopologici}If $S$ is quasi compact, $\calR$ is a topological algebra on $S$, and $\calF$ and $\calG$ are $\calR$-topological sheaves such that $\calF$ is locally homeomorphic to $\calR$ then every $\calR$-linear morphism from $\calF$ to $\calG$ is continuous. 
\end{remark}

\subsubsection{The case of \texorpdfstring{$w$}{w}-compact algebras}
Let $\calR_i$ be $w$-compact $\calO_S$-topological algebras. We already noticed that the $*$ tensor product coincide with the $\rightarrow$ tensor product and with the $!$ tensor product and that multiple tensor products coincide with iterated tensor product. In particular from the associativity and commutativity it follows that the tensor product $\calR_1\tensor{*} \calR_2$ is a topological algebra and it is $w$-compact. The results for multiple tensor products follows. 

It is clear that the $!$-product of modules is still a $!$-topological module for the $!$-product of algebras. Let us investigate what happens for the other tensor products. Notice first that given topological modules $\calK$ and $\calM$, with $\calK$ is $w$-compact, and a bicontinuous map $b:\calK\otimes \calM\lra \calM$, then from $\calK\tensor{*}\calM\simeq \calK\tensor\ra \calM$ it follows that for every $\calU$ \noz of $\calM$ there exists $\calV$ \noz of $\calM$ such that $b(\calK\otimes \calV)\subset \calU$.

\begin{remark}\label{oss:prodottoalgebremoduli}
Assume that $\calR_1, \dots, \calR_h$ are $w$-compact topological algebras and that 
$\calM_i$ is a $\calR_i$ topological module. Set $\calR=\calR_1\tensor{*}\cdots\tensor{*}\calR_h$
Then the action of $\calR_1\otimes\cdots \otimes \calR_h$ on the tensor product of the modules $\calM_i$ equipped with $**$ or $\rightarrow$ topology is continuous, in particular
$$
\calM_1\tensor{\rightarrow}\cdots\tensor{\rightarrow}\calM_h
\quad\mand\quad 
\calM_1\tensor{*}\cdots\tensor{*}\calM_h
$$
are topological $\calR$-modules. 
\end{remark}

\subsubsection{\ccg-topological algebras and cg-tensor product}\label{ssec:ccgecg}
The only general result that we know on the $*$ and $\rightarrow$ tensor product is the one above about $w$-compact algebras. To deal with more general situations we need to relax the condition of continuity and bootstrap the constructions from the $w$-compact case. 

\begin{definition}[\ccg-topological algebras and modules]\label{def:ccgalgmod}
Let $\calR$ be a \ccg-topological algebra and $\calR=\limind \calR_n$ be an \exhaustion by compact subsheaves $\calR_n$. Assume that $\calR$ is also a $\calO_S$-algebra. We say that $\calR$ is a \ccg-topological algebra if the restriction of the product to $\calR_n\times \calR_n$ is continuous for all $n$. Notice that by Remark \ref{oss:fascicompattimorfismi} this condition does not depend on the choice of the \exhaustion $\calR_n$. If $\calM$ is an $\calR$-module and a topological $\calO_S$-sheaf, we say that $\calM$ is an $\calR$ \cgmod module if the action restricted to $\calR_n\times \calM$ is continuous for all $n$.
\end{definition}

\begin{remark}
Let $\calR$ be a \ccg-algebra, and $\calM$ a cg-module over $\calR$. Then the multiplications maps
$$
\calR\times \calR\lra \calR \quad\mand\quad \calR\times \calM\lra \calM
$$
are continuous in both variables.
\end{remark}

We also consider a suitable tensor product of \ccg-algebras.

\begin{definition}(cg-tensor product)
Let $\calA=\limind \calA_n$, $\calB = \limind\calB_n$ be \ccg-topological $\calO_S$-modules. Then define 
$$
\calA\tensor{cg}\calB=\limind \calA_n\tensor{*}\calB_n.
$$
The colimit is independent on the choice of the \exhaustion. And this tensor product is commutative and associative in the category of \ccg-topological modules. A \ccg-topological algebra is exactly an algebra with respect to the $\otimes^{cg}$-tensor product. In particular, if $\calA,\calB$ are ccg-topological algebras then $\calA\otimes^{cg}\calB$ is a cg-topological algebra.
\end{definition}

\begin{remark}\label{oss:algebreregolariemoduli}If $\calM_i$ is a \cgmod module of the \ccg-topological-algebra $\calR_i$ then 
	$$
	\calM_1\tensor{\rightarrow}\cdots\tensor{\rightarrow}\calM_h
	\quad\mand\quad 
	\calM_1\tensor{*}\cdots\tensor{*}\calM_h
	$$
are \cgmod modules of the \ccg-topological algebra $\calR_1\tensor{cg}\cdots\tensor{cg}\calR_h$. 
\end{remark}

\subsubsection{Ordered tensor products}\label{sssec:prodottiordinatigenerale}

In this section we consider the following questions: if $\calA,\calB$ are topological algebras is it true that $\calA\otimesr\calB$ is a topological algebra? If $\calM,\calN$ are modules for $\calA,\calB$ respectively, $\calM\otimesr\calN$ is a $\calA\otimesr\calB$ module? 
The answer is generally no to both question, we will introduce some hypothesis that may look strange to make this machinery work in some cases. The assertion we make here are rather easy to check, let us say that in Section \ref{ssec:prodottiordinatiOeD} we will give another tool, which requires a more involved proof, to deal with these problems with hypothesis that suit our case of interest.
\smallskip

We will consider ordered tensor products $\tensor{\ra}$ and $\tensor{r}$ at the same time. The second one behaves very well: if $\calA$ and $\calB$ are topological algebras, $\calM$ is a topological $\calA$ module and $\calN$ is a topological $\calB$-module, then $\calA\tensor{r}\calB$ is a topological algebra and 
$\calM\tensor{r}\calN$ is a topological $\calA\tensor{r}\calB$ module (indeed $\calA$ and $\calM$ do not even need to have any topological structure here).
We do an example which establishes some limitations 
to what we could expect to be true for the $\ra$-tensor product. 

\begin{remark} Take $S=\Spec \mC$, $\calA = \calM =\mC((t))$,  $\calB = \mC[[s]]$, and $\calN = \mC((s))$. In this case we have 
	$\calA\tensor{\ra}\calB=\mC((t))[[s]]$ and $\calM\tensor{\ra}\calN=\mC((t))((s))$. Then the natural action of $\calA\tensor{\ra}\calB$ on $\calM\tensor{\ra}\calN$ is not continuous. 
\end{remark}

There are two possible escamotages to introduce a natural product on the $\ra$-tensor product of two topological algebras. The first one is based on the fact that in many cases we will be interested in, there are identifications at the level of sheaves (forgetting about the topology) between the $\ra$ tensor product and the $r$-tensor product.

\begin{remark}\label{rmk:otimesrproductalgebras}
Let $\calA$ and $\calB$ two topological algebras and let $\calM$ a $\calA$-topological modules and $\calN$ a $\calB$-topological module. 
Assume that $\calA\tensor{\ra}\calB=\calA\tensor{r}\calB$ as plain sheaves (by Remark \ref{oss:tensoriisomorfi} this condition is satisfied, for example, if $\calB$ is \QCCF).
Then 
\begin{enumerate}[\indent a)]
	\item $\calA\tensor{\ra}\calB$ has a natural structure of algebra 
	\item for all $f\in \calA\tensor{\ra}\calB$ it is defined the multiplication by $f$ from $\calM\tensor{\ra}\calN$ to $\calM\tensor{\ra}\calN$ is continuous,
	\item the action of $\calA\tensor{\ra}\calB$ on $\calM\tensor{\ra}\calN$ described in the previous point makes $\calM\tensor{\ra}\calN$ a $\calA\tensor{\ra}\calB$ module. 
    \item If $\calM\tensor{\ra}\calN=\calM\tensor{r}\calN$ (by Remark \ref{oss:tensoriisomorfi} this condition is satisfied, for example, if $\calN$ is \QCCF) then 
    for all local sections $p\in \calM\tensor{\ra}\calN$ the multiplication with $p$ from $\calA\tensor{\ra}\calB$ to $\calM\tensor{\ra}\calN$ is continuous,
\end{enumerate}
\end{remark}

We will not use this result or the next one. However, in Section \ref{ssec:prodottiordinatiOeD} we will prove a similar 
results with some hypothesis that now could look strange, but that are satisfied in the cases we are interested in. The second escamotage can be applied when $\calB$ is a \ccg-topological sheaf, so that $\calB=\limind \calB_n$ and $\calA\tensor{\ra}\calB\simeq \limind \calA\tensor{\ra}\calB_n$ (and similarly for $\calM$ and $\calN$). These hypothesis are fulfilled, for example, if we are in one of the two situations described in Remark \ref{oss:prodottiregolarifreccetta} or Remark \ref{oss:prodottiecolimiti}. 

\begin{remark}\label{lem:prodottocompattofreccetta}
	Let $\calA$ and $\calB$ two topological algebras, let $\calM$ a $\calA$-topological module and let $\calN$ a $\calB$-topological module. 
	Let $\calK$ be a $w$-compact $\calO_S$-submodule of $\calB$ and
	$\calH$ be a $w$-compact $\calO_S$-submodule of $\calN$. Then there is a natural product
	$$
	(\calA\tensor{*} \calK)\otimes (\calM\tensor{\ra} \calH)\lra \calM\tensor{\ra}\calN
	$$
	which is continuous for the $*$-topology.
\end{remark}

The lemma in particular applies to the $\calM=\calA$ and $\calN=\calB$. The hypothesis of the two statements can be weakened. In section \ref{ssec:prodottiordinatiOeD} we will prove a similar result for \ccg-algebras whose proof is completely analogous. Finally we record a positive result, which follows easily for formal reasons.

\begin{remark}
	Assume that $\calA$ is a $\ra$ topological algebra and $\calB$ is a $w$-compact topological algebra, then $\calA\tensor{\ra}\calB$ is a $\ra$-topological algebra. 
\end{remark}

\subsubsection{Localization of topological algebras} \label{ssec:localization}
Let $\calR$ be a $\calO_S$-topological commutative algebra which is also a \QCC sheaf. Let $\calI$ be a closed ideal of $\calR$ and assume that $\calI$ is locally free of rank one, meaning that for small enough open subsets $U$ there exists a section $\grf\in\calI(U)$ such that the product $r\mapsto r\grf$ is an isomorphism of topological modules. This assumption implies that we have isomorphisms between the tensor product over $\calR$ of $m$ copies of $\calI$ and  $\calI^m\subset \calR$ and that this module is locally isomorphic to $\calR$ as a topological module. 

In this setting define $\calI^{-m}=\calHom_{\calR}(\calI^m,\calR)$ where a \fsonoz is given by the subsheaves $\calHom_{\calR}(\calI^m,\calI^n)$ for positive $n$. We will use the notation $\calR(-m\calI)=\calI^m$. These are all $\calR$-topological modules locally isomorphic to $\calR$. Finally we notice that we have natural closed immersions $\calR(m\calI)\subset \calR(n\calI)$ for $m<n$ and we define 
$$
\calR(\infty \calI)=\limind \calR(n\calI)
$$ 
with the colimit topology.\index{$\calR(\infty\calI)$}\index{$\calR(\infty\calI)$!$\calR(n\calI)$} By Remark \ref{oss:qcclimiti} this is also \QCC sheaf and in particular it is complete. 
This space has a natural structure of algebra and it follows from the local description below that it is a topological algebra. 

If $\calF$ is a linear $\calR$-topological module we define 
$$
\calF(n\calI)=\calF\otimes _\calR \calR(n\calI)
$$
where a \fsonoz is given by the subsheaves $\calU \otimes_\calR \calR(n\calI)$ where $\calU$ is a $\calR$-linear \noz of $\calF$. 
We notice that $\calF(n\calI)$ is locally isomorphic to $\calF$ as a topological module. Finally we define
$$
\calF(\infty\calI)=\limind \calF(n\calI)
$$ 
with the colimit topology.\index{$\calF(\infty\calI)$}\index{$\calF(\infty\calI)$!$\calF(n\calI)$} We notice also that as a $\calR$-module we have that $\calF(\infty\calI)=\calF\otimes_\calR\calR(\infty\calI)$.

The localization $\calF(\infty\calI)$ has the following universal property: if $\grf:\calF\lra \calG$ is a continuous morphism of $\calR$-topological modules and that the multiplication map $\calI\otimes_\calR\calG \to \calG$ is an isomorphism, then there exists a unique continuous extension $\tilde \grf:\calF(\infty\calI)\lra \calG$ of $\grf$.

\begin{remark}[Local description]\label{sssec:localdescription} Assume $S=\Spec A$, let $R=\calR(S)$ and let $\grf\in R$ be a generator of $\calI$ on $S$. 
Then $\grf$ is not a zero divisor in $R$. Then the sections of $\calR(\infty \calI)$ on $S$ are equal to $R_\grf$ and a \fsonoz of $R_\grf$ is given by 
$\sum U_n\grf^{-n}$ where $U_n\subset R$ is a \noz of $R$. Similarly if $F=\calF(S)$ then the section of $\calF(\infty \calI)$ on $S$ are equal to $F_\grf$ and a \fsonoz of $F_\grf$ is given by $\sum V_n\grf^{-n}$ where $V_n\subset F$ is a \noz of $F$. From this description it is easy to check that the multiplication of $R_\grf$ and the action of $R_\grf$ on $F_\grf$ are continuous. 
\end{remark}

\subsection{Pullbacks}\label{ssec:pullbackcompletati}
	
	In this section we define the topology on the pullback of a sheaf and study how completed pullback behaves with respect to limits, colimits and \QCC sheaves.

	\begin{definition}
		Let $\varphi : S' \to S$ be a morphism of schemes and $\calF$ a topological sheaf of $\calO_S$-modules. Then the pullback
		\[
			\varphi^*\calF = \calO_{S'} \otimes_{\varphi^{-1}\calO_S} \varphi^{-1}\calF 
		\]
		has a natural topology, where a \fsonoz is given by the images of the morphisms $\varphi^*\calU \to \varphi^*\calF$, where $\calU$ is a \noz in $\calF$. We define the completed pullback $\hat{\varphi}^*\calF$ as the completion of $\varphi^*\calF$ along this topology.
	\end{definition}

	\begin{remark}
		Let $\calF$ be a topological sheaf and $\calF_i$ a \fsonoz of $\calF$. Since tensor product commutes with colimits and $\hat{\varphi^{-1}}$ is exact, we have
		\[
			\hat{\varphi}^*\calF = \varprojlim_{i} \varphi^*\left( \calF/\calF_i \right).
		\]
		It follows that for any topological sheaf $\calF$ we have $\hat{\varphi}^*\calF = \hat{\varphi}^*\overline{\calF}$.
	\end{remark}

Notice that the operation of pullback is well behaved with respect to restriction to open subsets. Indeed, for open subsets $U' \subset S'$, $U \subset S$ such that $\varphi(U') \subset U$, we have an isomorphism of topological sheaves $(\varphi^*\calF)|_{U'} = (\varphi|_{U'})^*(\calF|_U)$. The same holds for completed pullbacks since completion commutes with restriction.

\begin{remark}[\QCC sheaves and pullback]\label{rmk:qccpullback}
	Let $\varphi : \Spec A' \to \Spec A$ be a morphism of affine schemes. Let $M$ be a complete topological $A$ module, let $\tilde{M}$ be its attached quasi-coherent sheaf and let $\qcc{M}$ be its attached quasi-coherent completed sheaf. Then the topology of $\varphi^*\tilde{M} = \widetilde{M \otimes_A A'}$ agrees with the one of the topological module $M \otimes A'$. It follows that $\hat{\varphi}^*\qcc{M}$ identifies with $\qcc{M \otimes_A A'}$ and by the discussion above it follows that the completed pullback of a \QCC sheaf is a \QCC sheaf.
\end{remark}

\begin{lemma}[Pullback vs limits and colimits]\label{lem:pullbacklimits}
	The following hold
	\begin{itemize}
		\item Let $\calF_i$ be a system of topological sheaves, then $\hat{\varphi}^*(\varinjlim \calF_i)$ is isomorphic to the completion of $\varinjlim \varphi^*\calF_i$ as a topological sheaf;
		\item Given a collection of complete topological sheaves $\calF_i$, then $\hat{\varphi}^*(\oplus_i \calF_i) = \oplus_i \hat{\varphi}^*\calF_i$;
		\item  We have $\hat\grf^*(\calF)\simeq \hat\grf^*(\hat \calF)$ and 
	$\hat\grf^*(\calF/\calG)$ is isomorphic to the completion of $\hat\grf^*(\calF)/\hat\grf^*(\cal G)$.
	\end{itemize}
\end{lemma}

\begin{proposition}\label{prop:pullbacktensorproduct}
	Let $\calF$ and $\calG$ be two topological sheaves. Then 
	$$
	\hat \grf^*\big(\calF\tensor {\ra} \calG \big)\simeq \hat \grf^*\big(\calF\big)\tensor {\ra} \hat \grf^*\big(\calG\big),\quad
	\hat \grf^*\big(\calF\tensor {*} \calG \big) \simeq \hat \grf^*\big(\calF\big)\tensor {*} \hat \grf^*\big(\calG\big),\quad
	\hat \grf^*\big(\calF\tensor {!} \calG \big) \simeq \hat \grf^*\big(\calF\big)\tensor {!} \hat \grf^*\big( \calG\big).
	$$
\end{proposition}


\section{Preliminary remarks and local structure}\label{sec:localstructure}
In this Section we fix some notation and we do some preliminary remarks on our basic geometric objects. All schemes we consider will be over the complex numbers. Let $S$ be a topologically noetherian and quasi separated scheme and let $p:X\lra S$ be a smooth map (in particularly locally of finite presentation) of relative dimension 1. Consider $\grs:S \to X$, a section of $p$\index{$\sigma$}, which is in particular a closed immersion. We denote by $j:X\setminus \grs(S)\lra X$ the open immersion of the complement.  
We denote by $\calI_\grs\subset \calO_X$\index{$\calI_\sigma$} the ideal defining $\grs(S)$ in $X$. Under these assumptions $\grs(S)$ is a Cartier divisor of $X$. More precisely we have the following Lemma \cite[Theorem 2.5.8]{fu2011etale}
\begin{lemma}\label{lem:intornoregolare}
	For all $x\in \grs(S)$ there exists an affine neighborhood $X'=\Spec B$ of $x$, an affine neighborhood $S'=\Spec A$ of 
	$p(x)$ and an \'etale map $f:X\lra \mA^1_{S'}=\Spec A[t]$ such that 
	\begin{enumerate}[\indent i)]
		\item $p(X')=S'$ and $\grs(S')\subset X'$.
		\item $p=\pi_{S'}\circ f$, where $\pi_{S'}:\mA^1_{S'}\lra S'$ is the projection. 
		\item the ideal $I_\grs$ defining $\grs(S')$ in $X'$ is generated by the image of $t$ in $B$ under the map $f^\sharp$.
	\end{enumerate}
\end{lemma}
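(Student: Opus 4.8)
The plan is to take a local defining equation $t$ of $\grs(S)$ near $x$ as the would-be coordinate and to verify that $\grf=(p,t)$ is étale there; this is essentially the local structure theorem for smooth morphisms \cite[Theorem 2.5.8]{fu2011etale}, the only extra point being the \emph{choice} of coordinate. Since everything asserted is local around $x$ and $p(x)$, I would first replace $S$ by an affine open $\Spec A_0\ni p(x)$ and $X$ by $p^{-1}(\Spec A_0)$, which is still smooth over $S$ of relative dimension $1$. As $\grs(S)$ is an effective Cartier divisor, there is an affine open $V=\Spec B_0$ containing $x$ on which $\calI_\grs|_V=(t)$ for some $t\in B_0$ (a non-zero-divisor, necessarily vanishing at $x$ since $x\in\grs(S)$); set $\grf=(p|_V,t):V\to\mA^1_S=\Spec A_0[T]$, i.e. $\grf^\sharp(T)=t$.

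The heart of the argument is that $\grf$ is étale in a neighbourhood of $x$. For this I would run the conormal sequence of the section $\grs$: since $\Omega^1_{S/S}=0$ it degenerates to a surjection $\calI_\grs/\calI_\grs^2\twoheadrightarrow\grs^*\Omega^1_{X/S}$, and as $\calI_\grs|_V=(t)$ the target is generated near $p(x)$ by the image of $dt$. Since $\grs$ is, near $x$, the closed immersion cut out by $t$, we have $\calO_{S,p(x)}=\calO_{V,x}/(t)$, and, $t$ lying in the maximal ideal $\gom_x$, this forces $dt$ to generate $\Omega^1_{V/S}\otimes k(x)$. By Nakayama and coherence of $\Omega^1_{V/S}$ --- which is moreover invertible, $V$ being $S$-smooth of relative dimension $1$ --- after shrinking $V$ the element $dt$ is a free generator of $\Omega^1_{V/S}$. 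Then $\grf^*\Omega^1_{\mA^1_S/S}\to\Omega^1_{V/S}$ sends the free generator $dT$ to $dt$, hence is an isomorphism of invertible sheaves, whence $\Omega^1_{V/\mA^1_S}=0$ and $\grf$ is unramified; flatness of $\grf$ near $x$ then follows from the fibral flatness criterion together with ``miracle flatness'' for the morphism of smooth curves $\grf_s:V_s\to\mA^1_{k(s)}$ over each point $s$ of $S$ --- alternatively one invokes \cite[Theorem 2.5.8]{fu2011etale} directly at this stage. Shrinking $V$ a last time, $\grf$ is étale.

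It remains to place the section inside the chart. The open set $\grs^{-1}(V)$ contains $p(x)$, so choose an affine open $S'=\Spec A$ with $p(x)\in S'\subseteq\grs^{-1}(V)$ and set $X'=V\times_S S'=\Spec(B_0\otimes_{A_0}A)$, an affine open of $X$ containing $x$, and containing $\grs(S')$ since $S'\subseteq\grs^{-1}(V)$ and $\grs(S')\subseteq p^{-1}(S')$. Restricting $\grf$ gives an étale map $f:X'\to\mA^1_{S'}$ with $p=\pi_{S'}\circ f$, and $p(X')=S'$ because $p(X')\supseteq p(\grs(S'))=S'$ while trivially $p(X')\subseteq S'$; this gives (i) and (ii). Finally $\grs(S')=\grs(S)\cap X'$ as closed subschemes, since closed immersions are stable under base change, so the ideal of $\grs(S')$ in $X'$ is $\calI_\grs|_{X'}=(t|_{X'})$ with $t|_{X'}=f^\sharp(T)$; this is (iii). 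The one genuinely delicate step is the étaleness in the middle paragraph; the rest is routine manipulation of nested open neighbourhoods.
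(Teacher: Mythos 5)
Your proof is correct. The paper itself does not prove this lemma; it simply cites \cite[Theorem 2.5.8]{fu2011etale} and moves on, so there is no in-paper argument to compare against. Your fill-in is the standard one: take a local generator $t$ of $\calI_\grs$ (using that $\grs(S)$ is an effective Cartier divisor), observe via the conormal sequence for $\grs$ that $\grs^*(dt)$ generates $\grs^*\Omega^1_{X/S}$ near $p(x)$, use $k(x)=k(p(x))$ plus Nakayama and invertibility of $\Omega^1_{X/S}$ to conclude $dt$ freely generates $\Omega^1_{V/S}$ after shrinking, deduce that $(p,t)$ is unramified from $\Omega^1_{V/\mA^1_S}=0$, and get flatness from the fibral criterion combined with miracle flatness for the fiberwise maps of smooth curves (or, more directly, invoke the differential characterization of \'etale morphisms between $S$-smooth schemes). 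The final bookkeeping with $S'\subseteq\grs^{-1}(V)$, $X'=V\cap p^{-1}(S')$, and the identification $\grs(S')=\grs(S)\cap X'$ correctly delivers parts (i)--(iii). Nothing is missing; the one place a reader might want a word more is the remark that non-constancy of $\grf_s$ (needed for the zero-dimensionality hypothesis in miracle flatness) is supplied by quasi-finiteness, which you already have from unramifiedness.
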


Notice that, in particular, the last claim says that $\grs$ is a pull back of the zero section $S'\lra\mA^1_{S'}$ through the map $f$. We want also to stress the following consequence of the previous Lemma. 

\begin{corollary}\label{coro:affineminussigma}
	The morphism $j:X \setminus \sigma(S) \to X$ is affine.
\end{corollary}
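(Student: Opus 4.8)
The plan is to invoke the standard local criterion for affineness: a morphism of schemes is affine as soon as its target admits a cover by affine open subschemes whose preimages are affine. Thus it suffices to produce such a cover of $X$, adapted to the closed subset $\grs(S)$.

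For a point $x \in X \senza \grs(S)$ there is nothing to do: since $\grs$ is a closed immersion, $\grs(S)$ is closed in $X$, so $x$ has an affine open neighbourhood $U$ with $U \cap \grs(S) = \vuoto$, and then $j^{-1}(U) = U$ is affine.

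For a point $x \in \grs(S)$ I would apply Lemma \ref{lem:intornoregolare} to obtain an affine neighbourhood $X' = \Spec B$ of $x$, an affine $S' = \Spec A$, and an étale map $f : X \to \mA^1_{S'}$ with $p(X') = S'$, $\grs(S') \subset X'$, and such that the ideal of $\grs(S')$ in $X'$ is generated by the image $\bar t \in B$ of the coordinate $t$. The small point to verify is that $\grs(S) \cap X' = \grs(S')$: the inclusion $\supseteq$ is obvious, and conversely if $y \in \grs(S) \cap X'$, say $y = \grs(s)$, then $s = p(y) \in p(X') = S'$, so $y \in \grs(S')$. Hence $j^{-1}(X') = X' \senza \grs(S') = X' \senza V(\bar t) = \Spec B_{\bar t}$ is a principal open of $X'$, in particular affine.

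The opens $U$ and $X'$ obtained in this way cover $X$ and have affine preimages under $j$, so the local criterion gives that $j$ is affine. No real difficulty is expected; the only step demanding a little care is the identification $\grs(S) \cap X' = \grs(S')$, which is what makes $j^{-1}(X')$ the honest principal open $D(\bar t)$ and not merely some open subscheme of $X'$.
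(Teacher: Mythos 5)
Your argument is correct and is exactly the intended deduction from Lemma~\ref{lem:intornoregolare}: the paper gives no separate proof, but the corollary is placed immediately after that lemma precisely so that one covers $X$ by affine opens disjoint from $\grs(S)$ (away from the section) and by the affine opens $X'=\Spec B$ provided by the lemma (near the section), with $j^{-1}(X')=D(\bar t)$ principal affine. Your remark that one must check $\grs(S)\cap X'=\grs(S')$ --- which follows from $p(X')=S'$ --- is the one genuinely non-trivial point, and you handle it correctly.
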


We will consider also the case of more than one section. Given a finite set $\finitosigma$ we consider a section $\Sigma : S \to X^I$ which we consider as an $I$-tuple of sections $\Sigma = \{ \sigma_i : S \to X \}_{i \in I}$\index{$\Sigma$}. When we will need to emphasize the dependence on the set $I$ we will write $\Sigma = \Sigma_I$\index{$\Sigma_I$}, so that, for instance, given $I' \subset I$ the symbol $\Sigma_{I'}$ denotes the sub-collection of sections of $\Sigma$ indexed by $I'$. 

We denote by $\calI_{\Sigma}=\prod_{i\in \finitosigma} \calI_{\sigma_i}$\index{$\calI_\Sigma$} and by a slight abuse of notation we will denote by $\Sigma$ the closed subscheme of $X$ defined by this ideal and by $j:X\setminus\Sigma \lra X$ the immersion of the complement. By the results above, $j$ is affine and $\Sigma$ is a Cartier divisor. 

\begin{ntz}\label{sigmacalI}
We denote by $\calO_X(\Sigma)$ the invertible sheaf on $X$ associated to $\Sigma$ and for $n\in \mZ$ we denote by  $\calO_X(n\Sigma)$ the sheaf $\calO_X(\Sigma)^{\otimes n}$. In particular $\calO_X(-\Sigma)=\calI_{\Sigma}$. Finally, we denote 
by $\calO_X(\infty \Sigma)=j_*j^*\calO_X$. For all $n$ we have $\calO_X (n\Sigma)\subset \calO_X(\infty\Sigma)$ and 
$\calO_X(\infty\Sigma)=\bigcup_n\calO_X(n\Sigma)$. The sheaves $\calO_X$ and $\calO_X(\infty\Sigma)$ have $\calO_X$-linear topologies defined by the ideals $\calO(-n\Sigma)$ for $n>0$ and we denote by 
$\overline{\calO_X}$ and $\overline{\calO_X}(\infty\Sigma)$ their completions so that 
$$
\barOX=\limpro_n \frac{\calO_X}{\calI_{\Sigma}^n},\qquad 
\barOX(\infty\Sigma)=\barOX\otimes _{\calO_X}\calO_X(\infty\Sigma).
$$
The sheaf $\barOX(\infty\Sigma)$ is exactly the topological sheaf introduced in Section \ref{ssec:localization}.
\end{ntz}

As a consequence of Lemma \ref{lem:intornoregolare} we have the following local description of $\Sigma$. 

\begin{lemma}\label{lem:intornoregolareI1} The following hold.

a) Let $s\in S$, let $\{x_j\}_{j \in J} \in X$ be distinct points and let $\pi : \finitosigma \twoheadrightarrow J$ a surjective map of finite sets such that  $\grs_i(s)=x_{\pi(i)}$. Then there exists an affine neighborhood $S'$ of $s$ and an open subset $X'$ of $X$ such that $p(X')\subset S'$, $\grs_i(S')\subset X'$ and for all $i_1,i_2 \in I$ with $\pi(i_1) \neq \pi(i_2)$, we have $\grs_{i_1}(S')\cap \grs_{i_2}(S') =\vuoto$. 

 b) Let $s\in S$, $x\in X$ and assume that for all $i\in \finitosigma$ we have $\grs_i(s)=x$. Then there exists 
an affine neighborhood $S'$ of $s$, an affine neighborhood $X'$ of $x$ and an \'etale map $f:X'\lra \mA^1_{S'}$ such that 
	\begin{enumerate}[\indent\indent  i)]
	\item $p(X')=S'$ and $\grs_i(S')\subset X'$ for all $i\in \finitosigma$.
	\item $p=\pi_{S'}\circ f$, where $\pi_{S'}:\mA^1_{S'}\lra S'$ is the projection. 
    \item for all $i$, the following diagram, 
    where $\tau_i=f \circ \grs_i|_{S'}$, is a pull back 
    $$ \xymatrix{ S'\ar[r]^{\grs_i} \ar@{=}[d] & X'\ar[d]^{f} \\ S'\ar[r]^{\tau_i}& \mA^1_{S'}} $$ 
\end{enumerate}
\end{lemma}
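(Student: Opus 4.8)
The plan is to deduce both parts from the single-section statement, Lemma \ref{lem:intornoregolare}, by a reduction argument.

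For part a), the points $\{x_j\}_{j\in J}$ are distinct, so by the Hausdorff-like separation available on a scheme we may choose pairwise disjoint open neighborhoods $W_j\ni x_j$ in $X$. For each $i\in I$, the section $\grs_i$ is continuous, so $\grs_i^{-1}(W_{\pi(i)})$ is an open neighborhood of $s_0$ in $S$; intersecting these finitely many opens, and then shrinking to an affine, produces $S'$. Setting $X'=\bigsqcup_{j\in J}(W_j\cap p^{-1}(S'))$ (we may further shrink the $W_j$ so that $p(W_j)\subset S'$) gives an open subset of $X$ with $p(X')\subset S'$ and $\grs_i(S')\subset W_{\pi(i)}\subset X'$. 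If $i\in I_h$, $j\in I_k$ with $h\neq k$, then $\grs_i(S')\subset W_h$ and $\grs_j(S')\subset W_k$, and $W_h\cap W_k=\vuoto$, giving the required disjointness.

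For part b), all $\grs_i(s_0)=x_0$, so we may apply Lemma \ref{lem:intornoregolare} directly with $x=x_0$: it yields an affine $X'=\Spec B\ni x_0$, an affine $S'=\Spec A\ni p(x_0)=s_0$, and an étale $f:X'\to \mA^1_{S'}$ with $p=\pi_{S'}\circ f$ on $X'$ and with $\calI_{\grs}$ (for one distinguished section, say $\grs_{i_0}$) generated by the image of $t$. Since each $\grs_i(s_0)=x_0\in X'$ and $\grs_i$ is continuous, after shrinking $S'$ (which stays affine) we may assume $\grs_i(S')\subset X'$ for all $i\in I$; this gives i) and ii). For iii), set $\tau_i=f\circ\grs_i|_{S'}:S'\to\mA^1_{S'}$; since $\pi_{S'}\circ\tau_i=\pi_{S'}\circ f\circ\grs_i=p\circ\grs_i=\id_{S'}$, the morphism $\tau_i$ is itself a section of $\pi_{S'}$. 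The square in iii) then commutes on the nose by definition of $\tau_i$, and it is a pullback because $f$ is étale and $\tau_i$ is a section: more concretely, $S'\times_{\mA^1_{S'},\tau_i}X'$ receives a canonical map from $S'$ via $(\id,\grs_i)$, and one checks this is an isomorphism because $\grs_i$ is a section of $p=\pi_{S'}\circ f$ whose composition with $f$ equals $\tau_i$; equivalently, base-changing the étale map $f$ along the section $\tau_i$ and using that $\grs_i$ provides a section of the base-changed map which hits the correct component.

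The only genuinely delicate point is verifying the pullback assertion in iii) rather than merely commutativity: one must see that $\grs_i$ realizes $S'$ as the full fiber product, not just a closed subscheme of it. The cleanest way is to observe that $f^{-1}(\tau_i(S'))\to S'$ is étale (base change of étale) with a section $\grs_i$; an étale monomorphism onto its image that admits a section along the structure map is an isomorphism onto a clopen piece, and since $S'$ is the relevant connected-over-$S'$ component containing $\grs_i(S')$, we conclude. I expect this to be the main obstacle; the neighborhood-shrinking bookkeeping in a) and the verification of i), ii) in b) are routine.
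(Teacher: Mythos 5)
Both parts of your argument rest on claims that are false in general.

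In a), you invoke ``Hausdorff-like separation on a scheme'' to produce pairwise disjoint opens $W_j\ni x_j$. The Zariski topology is not Hausdorff: for distinct closed points on an irreducible curve fiber $X_{s_0}$ (say on $\mathbb A^1_{\kappa(s_0)}$) every two nonempty opens intersect, so no such $W_j$ exist. The correct input is the separatedness of $p$: for $\pi(i)\neq\pi(j)$ the agreement locus $E_{ij}=(\sigma_i,\sigma_j)^{-1}(\Delta_{X/S})$ is closed in $S$ and misses $s_0$ because $\sigma_i(s_0)=x_{\pi(i)}\neq x_{\pi(j)}=\sigma_j(s_0)$; shrink $S'$ to an affine open avoiding the finitely many $E_{ij}$ and take $X'=p^{-1}(S')$.

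In b) the gap is in iii). Your observation that $\sigma_i$ is a clopen immersion into $F_i:=S'\times_{\mathbb A^1_{S'},\tau_i}X'$ is correct, but you then assert its surjectivity (``we conclude'') without argument, and it can fail even after Lemma \ref{lem:intornoregolare} has been applied and $S'$ has been shrunk. Take $S'=\Spec\mathbb C[a]$, $X'=\Spec\mathbb C[a,u,(u-1)^{-1},(u-2)^{-1}]$, $f^\sharp(t)=u(u-2)$ (\'etale since $d(u(u-2))=2(u-1)\,du$ is a unit multiple of $du$), $\sigma_{i_0}\colon u=0$, $\sigma_1\colon u=a$, $s_0=\{a=0\}$, $x_0=(0,0)$. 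Here $I_{\sigma_{i_0}}=(u)=(f^\sharp(t))$ as $u-2$ is a unit, so the output of Lemma \ref{lem:intornoregolare} is satisfied; yet $\varphi_1=f^\sharp(t)-(a^2-2a)=(u-a)(u+a-2)$, so $F_1$ contains the nonempty clopen piece $Z_1=\{u=2-a\}\cap X'$ and the square for $\sigma_1$ is not Cartesian. Moreover $p(Z_1)=\{a\neq 0,\ a\neq 1\}$ is dense in $S'$, so shrinking $S'$ alone cannot remove $Z_1$. One must also shrink $X'$ to discard the closed ``ghost'' pieces $Z_i=F_i\setminus\sigma_i(S')$ (each is closed in $X'$ and misses $x_0$, since the fiber of each $F_i$ over $s_0$ is exactly $\{x_0\}$), and then re-shrink $S'$ so that $p(X')=S'$ and all $\sigma_i(S')\subset X'$ continue to hold. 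That shrinking of $X'$, not merely of $S'$, is the essential step your proposal omits; the clopen-section observation by itself does not give the Cartesian square.
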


\noindent In what follows we will need the following Lemma as well. 

\begin{lemma}\label{lem:intornoregolareI2} 
Assume $X=\Spec B$, $Y=\Spec C$ and $S=\Spec A$ are affine and that 
$p:X\lra S$, $q:Y\lra S$ are smooth maps of relative dimension $1$. Let  $f:X\lra Y$ be an \'etale map over $S$. 
For $i\in \finitosigma$ let $\grs_i:S\lra X$ be a section of $p$ and set  $\tau_i=f\circ \grs_i$. Assume that $\grs_i$ is a pull back of $\tau_i$  through $f$ and that $\tau_i(S)$ is defined by a function $\grf_i$ which is  not a zero divisor in $C$. Set $\psi_i=f^\sharp(\grf_i)$ for all $i$.

Then for all $k\geq 0$ and all $k$-tuple of distinct indices $i_1,\dots,i_k\in \finitosigma$, the map $f^\sharp$ induces an isomorphism 
$$ \frac{C}{( \varphi_{i_1} \cdots \varphi_{i_{k}} )}\simeq 
\frac{B}{( \psi_{i_1} \cdots \psi_{i_{k}} )}. $$ 
and these are $A$ free modules of rank $k$. 
\end{lemma}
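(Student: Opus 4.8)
The plan is to reduce everything to the key fact that $f$ is étale and that the $\sigma_i$ are pullbacks of the $\tau_i$, so that the relevant diagrams are cartesian. First I would handle the case $k=0$: here the claim is simply that $f^\sharp : C \to B$ is an isomorphism, which need not be true globally, so I expect the statement for $k=0$ is to be read as the trivial isomorphism $C/(1) = 0 = B/(1)$, i.e. nothing to prove, or else the running assumptions (from Lemma \ref{lem:intornoregolareI1}) force $C\to B$ an isomorphism; in any case $k\ge 1$ is the substantive range and the rank statement ``$A$-free of rank $k$'' already presupposes $k\ge 1$. So I would concentrate on $k\ge 1$.

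The heart of the argument: set $\varphi = \varphi_{i_1}\cdots\varphi_{i_k} \in C$ and let $Z = \Spec(C/(\varphi)) \subset Y$ be the corresponding closed subscheme, which by hypothesis (each $\varphi_i$ a non-zero-divisor, hence $\tau_i(S)$ a Cartier divisor, and a product of such is again defined by a non-zero-divisor since $C$ is a domain-like enough situation — actually one only needs that $\varphi$ is a non-zero-divisor, which follows because each $\tau_i(S)$ is finite flat over $S$, see below) is finite flat over $S$. The preimage $f^{-1}(Z) \subset X$ is cut out by the image of $\varphi$ in $B$, i.e.\ is $\Spec(B/(\varphi))$. The claim that $f^\sharp$ induces an isomorphism $C/(\varphi) \simeq B/(\varphi)$ is then exactly the statement that $f^{-1}(Z) \to Z$ is an isomorphism. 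Now $f$ is étale, so $f^{-1}(Z)\to Z$ is étale; to show an étale map is an isomorphism it suffices to show it is a universal homeomorphism, or, since $Z$ is affine and $f^{-1}(Z)\to Z$ is étale and affine, it suffices to check it is radicial and surjective, or — cleanest — to produce a section and use that an étale map with a section through which it is an isomorphism onto a clopen piece, combined with connectedness-type control, gives the result. The real mechanism, though, is the cartesian-square hypothesis: I would argue that $Z$ is itself, as a scheme over $S$, glued from the sections $\tau_i$ along their scheme-theoretic intersections, and the hypothesis that each $\sigma_i$ is the pullback of $\tau_i$ along $f$ says precisely that $f^{-1}(\tau_i(S)) = \sigma_i(S)$ with $\sigma_i(S)\to\tau_i(S)$ an isomorphism (both being copies of $S$ via the sections). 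Passing from the $\varphi_i$ to their product is a matter of the following: $C/(\varphi_{i_1}\cdots\varphi_{i_k})$ has a finite filtration whose graded pieces are the $C/(\varphi_{i_a})$-modules (tensored appropriately), and $f^\sharp$ respects this filtration and is an isomorphism on each graded piece, hence an isomorphism.

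More concretely, the inductive step: write $\varphi = \psi\cdot \varphi_{i_k}$ with $\psi = \varphi_{i_1}\cdots\varphi_{i_{k-1}}$. There is a short exact sequence of $C$-modules
\[
0 \to (\psi)/(\varphi) \to C/(\varphi) \to C/(\psi) \to 0,
\]
and multiplication by $\psi$ gives a surjection $C/(\varphi_{i_k}) \twoheadrightarrow (\psi)/(\varphi) = \psi C/\psi\varphi_{i_k} C$; this is an isomorphism because $\psi$ is a non-zero-divisor (here is where one needs: each $\tau_j(S)\to S$ is finite flat, so $C/(\varphi_j)$ is $A$-flat, from which $\varphi_j$ non-zero-divisor and, by an easy induction using the exact sequence, $\psi$ and $\varphi$ are non-zero-divisors and $C/(\psi)$, $C/(\varphi)$ are $A$-flat; the rank of $C/(\varphi)$ over $A$ is then additive in the exact sequence, giving rank $k$). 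The same exact sequence holds for $B$ with the images of $\psi,\varphi_{i_k}$, and $f^\sharp$ maps one exact sequence to the other. By the induction hypothesis $C/(\psi)\to B/(\psi)$ is an isomorphism, and $C/(\varphi_{i_k})\to B/(\varphi_{i_k})$ is an isomorphism by the $k=1$ case; chasing the diagram (five lemma) gives $C/(\varphi)\simeq B/(\varphi)$. So everything reduces to $k=1$.

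The case $k=1$: we must show $f^\sharp$ induces $C/(\varphi_i) \simeq B/(\varphi_i)$, i.e.\ that $f^{-1}(\tau_i(S)) \to \tau_i(S)$ is an isomorphism. By hypothesis the square with corners $S,X,S,\mA^1_{S'}$ — here $S \xrightarrow{\sigma_i} X$, $S\xrightarrow{\tau_i}Y$, $f$ — is cartesian, which means exactly $f^{-1}(\tau_i(S)) = \sigma_i(S)$ as closed subschemes of $X$ and that the induced map $\sigma_i(S) \to \tau_i(S)$ is an isomorphism (both identified with $S$ via the sections, compatibly). Translating to rings: $B/(\varphi_i) = B \otimes_C C/(\varphi_i)$, and the section $\tau_i$ gives $C/(\varphi_i) \xrightarrow{\sim} A$, the section $\sigma_i$ gives $B/(\varphi_i)\xrightarrow{\sim}A$, and $f^\sharp$ is compatible with these; hence $C/(\varphi_i)\to B/(\varphi_i)$ is an isomorphism, and this module is $A$ itself, free of rank $1$. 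I expect the main obstacle to be purely bookkeeping: making precise that the cartesian-square hypothesis really does give $f^{-1}(\tau_i(S)) = \sigma_i(S)$ with the correct scheme structure (this is where ``$\sigma_i$ is a pullback of $\tau_i$'' must be used, not merely ``$\sigma_i(S)$ maps to $\tau_i(S)$''), and then tracking flatness/rank through the filtration argument — there are no hard ideas, but one must be careful that $\varphi = \prod \varphi_{i_a}$ is a non-zero-divisor (allowing repeats among the $i_a$), which is handled by the flatness induction above.
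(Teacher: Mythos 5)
Your proof is correct and follows essentially the same route as the paper: induction on $k$, base case $k=1$ from the cartesian/pullback hypothesis, and the inductive step via a short exact sequence $0 \to (\cdot)/(\varphi) \to C/(\varphi) \to C/(\cdot) \to 0$ compared against its analogue for $B$ (you peel off $\varphi_{i_k}$ on the left where the paper peels it off on the right, but this is the same argument). The flatness-over-$A$ detour for proving $\varphi=\prod\varphi_{i_a}$ is a non-zero-divisor is unnecessary, since a product of non-zero-divisors is always a non-zero-divisor, and the paper in fact avoids this issue entirely by only ever multiplying by one more $\varphi_{i_k}$ at each inductive stage.
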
	
\begin{proof} 
We prove the statement by induction on $k$. For $k=1$ it follows from the  assumptions about the pull back of $\tau_i$  along $f$. In this case the first quotient is the coordinate ring of $\tau_i(S)$ and in particular is isomorphic to $A$. 

Let now $k>1$. By assumptions   $\varphi_i$ are not zero divisors in $C$ and, by flatness of $f$, the elements $\psi_i$ are non zero divisor in $B$. Hence we have the following commutative diagram with exact rows
\[ \xymatrix{ 
	0 \ar[r] &  
	\frac{\varphi_{i_k}C}{(\varphi_{i_1}\cdots \varphi_{i_{k}} )} \ar[r]\ar[d] & 
 	\frac{C}{( \varphi_{i_1} \cdots \varphi_{i_{k}} )}      \ar[r]\ar[d] & 
	\frac{C}{( \varphi_{i_{k}} )}                     \ar[r]\ar[d] & 
	0 \\ 
0 \ar[r] & \frac{\psi_{i_k}B}{(\psi_{i_1}\cdots \psi_{i_{k}} )} \ar[r] & 
\frac{B}{( \psi_{i_1} \cdots \psi_{i_{k}} )}      \ar[r] & 
\frac{B}{( \psi_{i_{k}} )}    \ar[r] & 0} \] 
The map on the right column is an isomorphism, by what noticed in the case $k=1$. Modules on the left are isomorphic to 
$\frac{C}{( \varphi_{i_1} \cdots \varphi_{i_{k-1}} )}$, and $\frac{B}{( \psi_{i_1} \cdots \psi_{i_{k-1}} )}$, hence the map on the left column is an isomorphism by induction. The thesis follows. 
\end{proof}

Notice that the assumptions that $\varphi_i$ is not a zero divisor, is always satisfied in case $Y=\mA^1_S$, since $\varphi_i=t-\grs_i^\sharp(t)$. In particular it is always satisfied if we take a neighborhood which is small enough as in Lemma \ref{lem:intornoregolareI1}. This remarks and the Lemma above imply the following result.

\begin{lemma}\label{lem:mappeetale1}
Let $p:X\lra S$ and $q:Y\lra S$ be smooth schemes of relative dimension $1$ over $S$ and let $\Sigma = \{ \grs_i \}_{i \in I}$ be a collection of sections of $p$. Let $f:X\lra Y$ be an \'etale map over $S$ and set $\tau_i=f\circ\grs_i$, $\calT = \{ \tau_i \}_{i \in I}$. Assume that $\grs_i$ is the pull back of $\tau_i$ through $f$ for all $i$. 
Then 

\begin{enumerate}[\indent a)]
	\item the pull back of functions induces isomorphisms $\displaystyle f^*(\calO_Y(n \calT))\simeq \calO_X(n \Sigma)$ which, taking the colimit along $n$ induce $\displaystyle {f^*(\calO_Y(\infty \calT))\simeq \calO_X(\infty \Sigma)}$;
    \item $f$ induces an isomorphism $\displaystyle \Sigma \simeq \calT$;
    \item if $\calF$ is a quasi-coherent sheaf supported on $\calT$ (i.e. $\calF|_{X\setminus\calT = 0}$), then $p_*f^*(\calF)\simeq q_*(\calF)$;
    \item $\displaystyle f_*(\calO_X/\calI_{\Sigma}^n)\simeq \calO_Y/\calI_{\calT}^n$ for all $n\geq 0$;
    \item $\displaystyle f_*(\calO_X(\infty \Sigma)/\calO_X)\simeq \calO_Y(\infty \calT)/\calO_Y$;
    \item $\displaystyle p_*(\calO_X(n \Sigma)/\calO_X))\simeq q_*(\calO_Y(n \calT)/\calO_Y)$ and $\displaystyle p_*(\calO_X(\infty \Sigma)/\calO_X))\simeq q_*(\calO_Y(\infty \calT)/\calO_Y)$ and they are $\calO_S$-locally free module. 
\end{enumerate}
\end{lemma}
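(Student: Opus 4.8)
The plan is to localize on $S$, reduce to the affine situation of Lemma~\ref{lem:intornoregolareI2}, and read off the six statements from it together with the behaviour of infinitesimal neighbourhoods under $f$. All the assertions are local on $S$, so using Lemma~\ref{lem:intornoregolareI1} --- and separating, at each point of $S$, the sections $\grs_i$ into the groups that collide there, so that each sheaf below becomes a finite product --- I may assume $S=\Spec A$, $Y=\Spec C$, $X=\Spec B$ are affine, $C\to B$ is the \'etale $A$-algebra map corresponding to $f$, and that for each $i$ the divisor $\tau_i(S)$ is cut out by a non-zero-divisor $\varphi_i\in C$ (a section of a smooth relative curve being a Cartier divisor), so $\calI_{\tau_i}=(\varphi_i)$ and, as $\grs_i$ is the pullback of $\tau_i$ along the flat map $f$, $\calI_{\grs_i}=f^*\calI_{\tau_i}=(f^\sharp\varphi_i)$ with $f^\sharp\varphi_i$ again a non-zero-divisor in $B$. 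In particular $f^*\calI_{\calT}=\calI_\Sigma$ as ideal sheaves, i.e.\ $f^*\calO_Y(-\calT)=\calO_X(-\Sigma)$; these being invertible, $f^*\calO_Y(n\calT)\simeq\calO_X(n\Sigma)$ for all $n\in\mZ$, and since $f^*$ (a left adjoint) preserves the colimits $\calO_Y(\infty\calT)=\limind_n\calO_Y(n\calT)$ and $\calO_X(\infty\Sigma)=\limind_n\calO_X(n\Sigma)$, also $f^*\calO_Y(\infty\calT)\simeq\calO_X(\infty\Sigma)$; this, together with the matching of topologies, is (a).

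Next, write $g=\prod_{i\in I}\varphi_i$, so $\calI_{\calT}=(g)$ and $\calI_\Sigma=(f^\sharp g)$. For $n\geq 0$ the ideal $\calI_\Sigma^n$ is generated by $(f^\sharp g)^n$, the product of the functions $f^\sharp\varphi_i$ with each $i\in I$ taken $n$ times, so Lemma~\ref{lem:intornoregolareI2} (with $k=n|I|$) shows that $f^\sharp$ induces an isomorphism $\calO_Y/\calI_{\calT}^n\simeq\calO_X/\calI_\Sigma^n$ (free of rank $n|I|$ over $\calO_S$). Equivalently, $f$ restricts to an isomorphism between the closed subschemes $\Sigma^{(n)}\subseteq X$ and $\calT^{(n)}\subseteq Y$ cut out by $\calI_\Sigma^n$ and $\calI_{\calT}^n$, both finite over $S$. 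Pushing forward along $f$ gives (d), and the case $n=1$ gives (b).

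For (c) and (e): from $f^*(\calI_{\calT}^n)=\calI_\Sigma^n$ one gets $\Sigma^{(n)}=f^{-1}(\calT^{(n)})$ scheme-theoretically, so by (d) the map $f$ identifies the formal neighbourhood of $\Sigma$ in $X$ with that of $\calT$ in $Y$, compatibly over $S$. As $\calT$ is a Cartier divisor, a sheaf $\calF$ supported on $\calT$ is the union $\calF=\limind_n\calF[\calI_{\calT}^n]$ of its $\calO_{\calT^{(n)}}$-submodules $\calF_n$, and $f^*\calF_n=(\calO_X/\calI_\Sigma^n)\otimes_{\calO_Y/\calI_{\calT}^n}\calF_n$ is supported on $\Sigma^{(n)}$ and corresponds to $\calF_n$ under the isomorphism above; since $p|_{\Sigma^{(n)}}=q|_{\calT^{(n)}}\circ f|_{\Sigma^{(n)}}$ is affine over $S$, both $p_*$ and $q_*$ commute with $\limind_n$, whence $p_*f^*\calF\simeq q_*\calF$, which is (c). The same computation with $f_*$ in place of $p_*$ gives $f_*f^*\calF\simeq\calF$; applied to $\calF=\calO_Y(\infty\calT)/\calO_Y=\limind_n\calO_Y(n\calT)/\calO_Y$, whose pullback is $\calO_X(\infty\Sigma)/\calO_X$ by (a), this yields (e).

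Finally, the isomorphism in (f) is (c) applied to $\calF=\calO_Y(\infty\calT)/\calO_Y$ (using (a)); to see the resulting $\calO_S$-module is locally free, observe that $\calO_X(\infty\Sigma)/\calO_X$ is supported on $\Sigma$, so $p_*(\calO_X(\infty\Sigma)/\calO_X)$ may be computed on a local model as in Lemma~\ref{lem:intornoregolareI1}(b): locally on $S$ there is an \'etale $h:X'\to\mA^1_{S'}$ with each $\grs_i$ the pullback of $\tau_i':=h\circ\grs_i$, whence by (e), already established, $p_*(\calO_{X'}(\infty\Sigma)/\calO_{X'})\simeq(\pi_{S'})_*\bigl(\calO_{\mA^1_{S'}}(\infty\calT')/\calO_{\mA^1_{S'}}\bigr)$, and on $\mA^1_{S'}=\Spec A[t]$ --- with $\calI_{\calT'}=(g)$, $g=\prod_{i\in I}(t-a_i)$ monic of degree $|I|$ --- Euclidean division by $g$ exhibits $A[t]_g/A[t]$ as $\calO_{S'}$-free with basis $\{\,t^j g^{-n}:n\geq 1,\ 0\leq j<|I|\,\}$ (at a point where the $\grs_i$ form several collisions one applies Lemma~\ref{lem:intornoregolareI1}(a) first and gets the corresponding finite direct sum). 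The step I expect to be the actual work is not the algebra, which is short, but the local reduction: arranging the affine models so that Lemma~\ref{lem:intornoregolareI2} genuinely applies, and checking that $f_*$, $p_*$ and $q_*$ commute with the colimits over $n$ --- routine, but demanding the kind of caution the authors warn about, since $f$ need not be affine nor $X$ quasi-compact.
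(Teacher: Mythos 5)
Your proof is correct and follows essentially the same route as the paper's (the paper's own proof is just three sentences, deferring entirely to Lemma~\ref{lem:intornoregolareI1} for the local reduction and Lemma~\ref{lem:intornoregolareI2} for the actual isomorphisms). You fill in the details that the paper leaves implicit: the pullback identity $f^*\calI_\calT=\calI_\Sigma$ for (a), the explicit appeal to Lemma~\ref{lem:intornoregolareI2} with $k=n|I|$ for (b),(d), the formal-neighbourhood argument with $\calF=\bigcup_n\calF[\calI_\calT^n]$ and flat base change for (c) and (e), and the Euclidean-division basis for the local freeness in (f). The one place where your logic is ordered slightly differently is (c) and (e): the paper derives (c) directly from (b) and (e) directly from Lemma~\ref{lem:intornoregolareI2}, while you route both through the colimit argument — this is harmless. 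You are right to flag the commutation of $p_*,q_*,f_*$ with the filtered colimits over $n$ as the point needing care; this is exactly the kind of verification the paper suppresses, and it is handled by noting that the relevant sheaves are all supported on $|\Sigma|$ (resp.\ $|\calT|$), which is finite, hence affine, over $S$, so the sections over any open can be computed on a quasi-compact neighbourhood of that support.
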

\begin{proof}
We prove the first claim in point a) and we sketch the proof of the other claims. For $n=0$ the claim is trivial. Since $f^*$ commutes with tensor products it is enough to prove the case $n=-1$, that is $f^*(\calI_\calT)\simeq \calI_\Sigma$. Since $f$ is flat we have that $f^*(\calI_\calT)$ is a subsheaf of $f^*(\calO_Y)\simeq\calO_X$. Hence we need to prove that the image of $f^*(\calI_\calT)$ under the isomorphism $f^\sharp: f^*(\calO_Y)\lra \calO_\Sigma$
is equal to $\calI_\Sigma$. We can check this claim locally. Hence we can assume that all schemes are affine and that $\calI_{\tau_i}$ is generated by a element $\grf_i$. The condition that $\grs_i$ is the pull back of $\tau_i$ and the flatness of $f$ implies that $\calI_{\sigma_i}$ is generated by 
$\psi_i=f^\sharp(\grf_i)$. By  definition $\calI_\calT$ is generated by the products of the elements $\psi_i$ and $\calI_\Sigma$ is generated by the products of the elements $\grf_i$. This implies the second claim in point a). 
Notice that we can further assume that $\grf_i$ are non zero divisors, hence point b) and d) and the first claim in point f) follows from \ref{lem:intornoregolareI2}. For point c), notice that in the case $\calF$ is annihilated by $\calI_{\calT}$ this  follows from b); and that the case where it is annihilated by $\calI_{\calT}^n$ follows by induction on $n$. The general case follows by filtering the quasi-coherent sheaves by subsheaves annihilated by powers of $\calI_{\calT}$. 
The claim about the sheaf $\calO_X(\infty\Sigma)$ can be analyzed similarly.
\end{proof}

The sheaves $\barOX$ and $\barOX(\infty\Sigma)$  are supported on $\Sigma$. It will be convenient for us to work on them as sheaves over $S$. Since $\Sigma$ is finite over $S$ this will not be a big difference. 

\begin{definition}\label{def:pbarOpbarOpoli}
	We introduce the following notations:
\[
\pbarO = p_* \barOX, \qquad \pbarO(n) = p_* \big(\barOX(n\Sigma)\big), \qquad \pbarOpoli = p_* \big(\barOX(\infty\Sigma)\big). 
\]
\index{$\pbarO$}\index{$\pbarO(n)$}\index{$\pbarOpoli$}When we want to make explicit the dependence of these sheaves on the set of sections we will write $\pbarO = \pbarOK{I}$. Sheaves on $X$ will not appear anymore, in particular we will adopt the notation $\calI_{\Sigma} =\pbarO(-1)$.

There are obvious inclusions among these sheaves and  $\pbarOpoli=\bigcup \pbarO(n) = \pbarO(\infty\Sigma)$. Moreover they are $\calO_S$-topological algebras where a fundamental system of neighborhoods or zero is given by $\pbarO(-n)$.
\end{definition}

\subsection{Local description and bases}\label{ssec:descrizionelocale1} The lemmas above imply the following local description of these sheaves.
Let $s \in S$ and let $x_j$ for $j\in J$ be the distinct images of the point $s$ under the sections $\grs_i$. Let $\pi:\finitosigma\twoheadrightarrow J$ the surjection defined by $\grs_i(s)=x_{\pi(i)}$. 
As a consequence of Lemma \ref{lem:intornoregolareI1} point a) we have that there exists an affine neighborhood $S'$ of $s$ such that on $S'$ we have 
\begin{equation}\label{eq:Oprodotto1}
\pbarOK{I}\simeq \prod_{j\in J} \pbarOK{\finitosigma_j}
\end{equation}
where $\finitosigma_j=\pi^{-1}(j)$. The sheaves $\pbarOK{\finitosigma}(n)$ and $\pbarOpoliK{\finitosigma}$ decompose similarly. 

By point b) of Lemma \ref{lem:intornoregolareI1} there exist an affine neighborhood $S''=\Spec A\subset S'$ of $s_0$ a neighborhood $X'_j$ of $x_j$ such that $p(X_j')\subset S''$ and \'etale maps $f_j:X_j'\lra \mA^1_{S''}=\Spec A[t]$ such that $\grs_i$ is the pull back of a section $\tau_i:S''\lra \mA^1_{S''}$  for $i\in \finitosigma_j$. 
We call such such an open affine $S''$ a \emph{well covered neighborhood} and $t$ a \emph{local coordinate}. Using Lemmas 
\ref{lem:intornoregolareI2} and \ref{lem:mappeetale1} over a well covered neighborhood we have 
\begin{equation*}\label{eq:Oprodotto2}
\pbarOK{\finitosigma_j} \simeq \calO_{\overline{\calT}_{\finitosigma_j}}
\end{equation*}
for all $j\in J$ and similarly for the sheaves $\pbarOK{\finitosigma}(n)$ and $\pbarOpoliK{\finitosigma}$. These reduces the local description of these sheaves to the case where $p:\mA^1_S\lra S$ is the standard projection and $S$ affine. In particular if
$\tau_i^\sharp (t)=a_{ji}$ and \index{$\varphi$}
$$\varphi_j=\prod_{i\in \finitosigma_j} (t-a_{ji})
$$
then 
$$
\pbarOK{\finitosigma_j}\simeq  \varprojlim_n \frac{\calO_{S}[t]}{\left(\varphi_j^n\right)} 
\quad \mand \quad
\pbarOpoliK{\finitosigma_j}\simeq\pbarOK{\finitosigma_j}[\varphi_j^{-1}].
$$

\begin{remark}\label{oss:unsolof}
Above we have described $\pbarO$ as product of rings of the form $\limpro \calO[t]/(\varphi_j^n)$. We notice, to simplify some notations in the sequel, that up to restrict further the neighborhood of $s_0$ we can describe $\pbarO$ as a unique projective limit of the same form. Indeed let $\pbarO=\prod \limpro \calO[t]/(\varphi^n_j)$ as in the discussion above, with $\varphi_j=\prod (t-a_{ji})$. We can assume, up to a translation along $\mA^1$ that $a_{ji}(s_0)=0$ in the residue field of $s_0$. Choose distinct complex numbers $\grl_j$ and define $g_j(t)=\varphi_j(t+\grl_j)$. Then the projective limits $\limpro A[t]/{\varphi_j^n}$
and  $\limpro A[t]/{g_j^n}$ are isomorphic. Moreover, by construction, over the residue field of $s_0$ the polynomials $g_j$ and $g_h$ are coprime for $j\neq h$. Hence they are coprime over the local ring of $s_0$ and up restrict $S$ we can assume they are coprime in $A[t]$. Hence, if we set $g=\prod g_j= \prod (t-a_{ji}-\grl_j)$, we have
$$
\pbarO\simeq \limpro \frac{A[t]}{(g^n)}.
$$ 
\end{remark}

\subsubsection{Bases}\label{sssec:basespbarO}
The description above implies that the sheaves $\pbarO$ and $\pbarOpoli$  are locally \QCCF (quasi coherent completed, topologically free, c.f. Definition \ref{def:qccsheaves}) sheaves. Indeed we can easily describe a complement of a \fsonoz (fundamental system of neighborhoods of zero, c.f. Section \ref{app:top}) of $\pbarO$ or 
$\pbarOpoli$. Assume $S=\Spec A$ is a well covered neighborhood and let $a_i$, $\varphi_j$ be as as before. Then $A[t]/\varphi_j$ is free over $A$ and let $u_{1j},\dots,u_{n_jj}$ be a basis. Then the set
$$
\varphi_j^hu_{\ell j}\; \text{ such that }\; j\in J, \quad \ell=1,\dots,n_j,\quad
h<n
$$
is a basis of $\pbarOpoli/\pbarO(n)$. If we restrict the possible values of $h$ to $0\leq h<n$ then we get a basis of  $\pbarO/\pbarO(n)$. It is also clear that the sheaves $\pbarO(n)$ are $w$-compact.

In the setting of Remark \ref{oss:unsolof} above, we introduce the following bases. We will eventually use them to describe the local behaviour of some of our constructions. Let 
$\varphi=\prod_{i=1}^n(t-a_i)$ and define
\begin{equation}\label{eq:fni}
\varphi^+_{m,i}(t)=\varphi(t)^m \cdot (t-a_1)\cdots (t-a_{i})\quad\mand\quad
\varphi^-_{m,i}(t)=\varphi(t)^m \cdot (t-a_{n-i+1}) \cdots (t-a_n)
\end{equation}
for $m\in \mZ$ and $i=0,\dots,n-1$, by that we mean $\varphi^+_{m,0} =\varphi(t)^m,\varphi^-_{m,0} = \varphi(t)^m$. Then the elements $\varphi^+_{m,i}(t)$ for $m<k$ (as well as $\varphi^-_{m,i}(t)$) constitute a basis of $\pbarOpoli/\pbarO(k)$ which is compatible with the subsheaves $\pbarO(h)/\pbarO(k)$. \index{$\varphi^+_{m,i}$}\index{$\varphi^-_{m,i}$} 

\medskip

Finally, notice that in the case of a single section $\grs$ we have 
	\[
	\frac{\calO_S[z]}{z^n} \isocan  p_*\left(\frac{\calO_X}{\calI_\grs^n}\right)
	\]
	where $z$ is an indeterminate and the isomorphism is induced by $p^\sharp$ on $\calO_S$ and sends $z$ to $t$. As a consequence $p_*(\overline{\calO_X})\simeq \calO_S[[z]]$ and $p_*(\overline{\calO_X}(\infty\grs))\simeq \calO_S((z))$.

\subsection{The case of a general sheaf}\label{ssec:definizionefascibarF} \index{$\calF_{\overline{\Sigma}}$}\index{$\calF_{\overline{\Sigma}}(n)$}\index{$\calF_{\overline{\Sigma}^*}$}The discussion above easily generalizes to the following setting. 
Let $\calF$ be a quasi coherent $\calO_X$-sheaf on $X$ and set
$\calF(\infty\Sigma)=\calF \otimes _{\calO_X}\calO_X(\infty\Sigma)$. On  $\calF$ consider the discrete topology and define  
$\overline\calF=\calF \stackrel{\rightarrow}{\otimes}_{\calO_X}\barOX$,
$\overline\calF(n\Sigma)=\calF \stackrel{\rightarrow}{\otimes}_{\calO_X}\barOX(n\Sigma)$
and $\overline\calF(\infty\Sigma)=\calF \stackrel{\rightarrow}{\otimes}_{\calO_X}\barOX(\infty\Sigma)$. These are complete sheaves supported on $\Sigma$. Define also
$$
\pbarF{\calF}=p_*(\overline \calF)\qquad 
\calF_{\bar\Sigma}(n)=p_*\left(\overline \calF(n\Sigma)\right)
\qquad
\pbarFpoli{\calF}=p_*(\overline \calF(\infty\Sigma)) 
$$
We write $\pbarFI{\calF}{I}$, etc, if we want to emphasize the set of sections $\Sigma_I$. These sheaves are complete and they are respectively topological $\pbarO$ and $\pbarOpoli$-topological modules. The description given in formula \eqref{eq:Oprodotto1}, generalizes to this setting. Assume $S$ is well covered. Then, using the notation of Section \ref{ssec:descrizionelocale1} we will have 
$$
\pbarFI {\calF}{\finitosigma} =\bigoplus _{j\in J}\pbarFI {\calF}{\finitosigma_j}
$$
and similarly for the the other sheaves introduced above.

\subsection{Pullbacks}\label{ssec:pullback1}

We state the behaviour of the sheaves introduced so far under pullback on the base.

Let $p : X\to S$ be a smooth family of curves as always, $S'$ be a topologically noetherian and quasi separated scheme and $f : S' \to S$ be a morphism and $p' : X'\to S'$ be the pullback of $X$ along $f$, and let  $f_X : X' \to X$ be the projection map. Let $\Sigma = \{\sigma_i\}_{i \in I}$ be a collection of sections of $p$ and $\Sigma'$ the pullback of $\Sigma$. With an abuse of notation we denote by $\Sigma$ also the closed subscheme of $X$ given by the union of the sections of $\sigma_i$ and analogously we denote by $\Sigma'$ the closed subscheme of $X'$ given by the union of the sections of $\sigma'_i$. Recall that $\Sigma/S$ is affine and that $\Sigma'$ is the pullback of $\Sigma$ along $\varphi$.

\begin{remark}\label{rmk:basechangecoerente}
Exactly as in the proof of Lemma \ref{lem:mappeetale1}, point a), we have $f_X^*\calO_{X/S}(n\Sigma)\simeq \calO_{X'/S'}(n\Sigma')$ for all $n$ (included $n=\infty$). More in general since $f_X^*$ preserves tensor product we have $f_X^*(\calF(n\Sigma))\simeq (f_X^*\calF)(n\Sigma)$. We notice also that if $\calF$ is a $\calO_X$ quasi coherent sheaf supported on $\Sigma$ then 
$ f^*p_*\calF \simeq p'_*f_X^*\calF$. Indeed, by adjunction we have a map from the left hand side to the right hand side, we can check it is an isomorphism locally. Since $\calF$ is supported on $\Sigma$ we can assume $S$ is well covered and that $X$ is affine. Then the claim follows by base change for affine maps. 
\end{remark}

\begin{remark}\label{rmk:pullbackcompletion}
	We notice that $\hat{f}^*\pbarO=\calO_{\overline{\Sigma}'}$ and 
$\hat{f}^*\pbarOpoli=\calO_{\overline{\Sigma'}^*}$. More in general, with the notation above if $\calF$ is a sheaf of quasi-coherent $\calO_X$ modules, then for every positive integer $n$ we have $$(f_X)^*\left(\frac{\calF(\infty\Sigma)}{\calF(-n\Sigma)} \right) \simeq \frac{((f_X)^*\calF)(\infty\Sigma')}{((f_X)^*\calF)(-n\Sigma')}$$
Hence $\hat{f}^* \calF_{\overline{\Sigma} }=(f^* \calF)_{\overline{ \Sigma}' }$. This remark will apply to many canonical sheaves (for example one forms), in the sequel. 
\end{remark}

\section{Residues}\label{sec:residuo}
This Section is devoted to defining the residue morphism in our $X,S,\Sigma$ geometric setting. The definition we give is guided by the fact that we want our residue morphism to satisfy certain \emph{factorization properties}. So, let $I$ be a finite set and let $ \Sigma = ( \grs_i:S\lra X )_{i \in I}$ be a collection of sections of $p$ as above. We are going to define an $\calO_S$-linear morphism 
$$
\Res_{\Sigma}:p_*\left(\frac{\Omega^1_{X/S}(\infty\Sigma)}{\Omega^1_{X/S}}\right)\lra \calO_S
$$
\index{$\Res_{\Sigma}$}\noindent with the following additional properties:
\begin{enumerate}[\indent Res1:]
	\item it is invariant under pull back;
	\item it is factorizable;
	\item it behaves well under \'etale maps;
	\item In the case where $S=\Spec \mC$, $X=\Spec \mC[t]$ it is the usual residue map.
\end{enumerate}
Let us start by making explicit the meaning of points a), b) and c).

\subsection{Invariance under pull backs} \label{ssec:PBresiduo} Consider a pullback diagram 
\begin{equation}\label{diag:PB1}
\xymatrix{
X'\ar[r]^{f_X}\ar[d]_{p'} & X \ar[d]^p\\
S'\ar[r]^{f}              & S
}
\end{equation}
Let $\grs'_i$ be the section of $p'$ obtained by pullback from $\sigma_i$. With these notation we have the following Lemma.

\begin{lemma}\label{lem:residuoPB}With the notation above we have the following natural isomorphisms:
	\begin{enumerate}[\indent a)]
\item $\displaystyle {(f_X)^* \left(\frac{\Omega^1_{X/S}(\infty\Sigma)}{\Omega^1_{X/S}}\right)\simeq
\frac{\Omega^1_{X'/S'}(\infty\Sigma')}{\Omega^1_{X'/S'}}}$
\item $\displaystyle{p'_*\left(\frac{\Omega^1_{X'/S'}(\infty\Sigma')}{\Omega^1_{X'/S'}}\right)\simeq f^*p_*\left(\frac{\Omega^1_{X/S}(\infty\Sigma)}{\Omega^1_{X/S}}\right)}$
\end{enumerate}
\end{lemma}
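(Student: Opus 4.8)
The plan for a) is to reduce to three standard facts about the pullback functor $(f_X)^*$ and then invoke its right-exactness. First, formation of relative differentials commutes with base change, so $(f_X)^*\Omega^1_{X/S}\simeq\Omega^1_{X'/S'}$ canonically (and since $p$ is smooth of relative dimension one, both sheaves are invertible). Second, $\Sigma'$ is the scheme-theoretic preimage $f_X^{-1}(\Sigma)$ (cf. \S\ref{ssec:pullback1}), so the pullback of the section defining the effective Cartier divisor $n\Sigma$ is the section defining $n\Sigma'$; hence $(f_X)^*\calO_X(n\Sigma)\simeq\calO_{X'}(n\Sigma')$, compatibly with the inclusions $\calO_X(n\Sigma)\subset\calO_X(m\Sigma)$ for $n\le m$, and therefore $(f_X)^*\Omega^1_{X/S}(n\Sigma)\simeq\Omega^1_{X'/S'}(n\Sigma')$. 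Third, $(f_X)^*$ is a left adjoint, so it commutes with the filtered colimit $\Omega^1_{X/S}(\infty\Sigma)=\varinjlim_n\Omega^1_{X/S}(n\Sigma)$, giving $(f_X)^*\Omega^1_{X/S}(\infty\Sigma)\simeq\Omega^1_{X'/S'}(\infty\Sigma')$, again compatibly with the inclusion of $\Omega^1_{X/S}$. Applying the right-exact functor $(f_X)^*$ to $\Omega^1_{X/S}\to\Omega^1_{X/S}(\infty\Sigma)\to\tfrac{\Omega^1_{X/S}(\infty\Sigma)}{\Omega^1_{X/S}}\to 0$ and substituting these identifications yields a). The only point requiring care is that all identifications are compatible with the natural inclusion maps, which is precisely the pullback formalism for effective Cartier divisors used in the second step.

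For b), by a) it suffices to prove the base-change isomorphism $p'_*(f_X)^*\calG\simeq f^*p_*\calG$, where $\calG=\tfrac{\Omega^1_{X/S}(\infty\Sigma)}{\Omega^1_{X/S}}$. One cannot appeal to flat base change here, since $f$ is an arbitrary morphism; the idea is instead to exploit that $\calG$ is assembled from sheaves supported on infinitesimal thickenings of $\Sigma$, which are finite over $S$. Write $\calG=\varinjlim_n\calG_n$ with $\calG_n=\tfrac{\Omega^1_{X/S}(n\Sigma)}{\Omega^1_{X/S}}$. Both sides of the desired isomorphism commute with this filtered colimit: $(f_X)^*$ and $f^*$ commute with all colimits, while $p_*$ and $p'_*$ commute with filtered colimits of quasi-coherent sheaves since $p$, $p'$ are quasi-compact and quasi-separated; so it is enough to treat each $\calG_n$. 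Since $\Omega^1_{X/S}$ is invertible, the local description of \S\ref{ssec:descrizionelocale1} shows that $\calG_n$ is locally isomorphic to $\calO_X/\calI_\Sigma^n$; in particular $\calG_n$ is a quasi-coherent $\calO_X/\calI_\Sigma^n$-module, i.e. $\calG_n=(\iota_n)_*\calH_n$ for the closed immersion $\iota_n\colon\Sigma_n:=\Spec_X(\calO_X/\calI_\Sigma^n)\hookrightarrow X$ and a quasi-coherent $\calH_n$ on $\Sigma_n$, and $\Sigma_n$ is finite (even locally free) over $S$, so $p_*\calG_n=(q_n)_*\calH_n$ with $q_n\colon\Sigma_n\to S$ finite.

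The remaining ingredient is the elementary fact that pushforward along an affine morphism commutes with arbitrary base change: on affine charts it is restriction of scalars along a ring map $A\to C$, and for any $A\to A'$ and any $C$-module $N$ the $A'$-module underlying $N\otimes_A(C\otimes_A A')$ is just $N\otimes_A A'$. Applying this to the closed immersions $\iota_n$ and to the finite morphisms $q_n$, and using $\Sigma_n\times_S S'=\Spec_{X'}(\calO_{X'}/\calI_{\Sigma'}^n)$ (again the Cartier pullback formula), one gets $(f_X)^*\calG_n\simeq(\iota'_n)_*(g')^*\calH_n$ and $f^*p_*\calG_n\simeq(q'_n)_*(g')^*\calH_n$, where $g'$ is the projection $\Sigma_n\times_S S'\to\Sigma_n$ and $\iota'_n,q'_n$ are the base-changed maps; pushing the first of these forward along $p'$ gives exactly $(q'_n)_*(g')^*\calH_n$. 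Chaining these identifications and passing to the colimit over $n$ proves b). The main obstacle is precisely the absence of flat base change, which forces one to factor the relevant pushforwards through the finite morphisms $\Sigma_n\to S$; once this is recognized, everything else is bookkeeping with colimits, right-exactness of pullback, and the standard base-change behaviour of Kähler differentials and effective Cartier divisors, together with the routine verification that all the canonical isomorphisms used are natural in the appropriate sense.
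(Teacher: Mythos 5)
The paper states Lemma~\ref{lem:residuoPB} without proof (the key identities are asserted in Section~\ref{ssec:pullback1}, again without argument, and the lemma is used immediately afterwards to phrase the pullback-invariance axiom for the residue). Your proof fills this gap and is correct.

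For part~a) your three ingredients — base change for $\Omega^1_{X/S}$, compatibility of Cartier-divisor pullback with the inclusions $\calO_X(n\Sigma)\subset\calO_X(m\Sigma)$, and commutation of $(f_X)^*$ with filtered colimits — together with right-exactness of $(f_X)^*$ do give the isomorphism, provided you additionally note that the pulled-back map $(f_X)^*\Omega^1_{X/S}\to(f_X)^*\Omega^1_{X/S}(\infty\Sigma)$ is injective. This follows from the Cartier-divisor description (locally it is the inclusion of an invertible sheaf into its localization at the nonzerodivisor cutting out $\Sigma'$), and without it you would only get a surjection from $(f_X)^*$ of the quotient onto the target. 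Note also that the identification $(f_X)^*\calI_\Sigma\simeq\calI_{\Sigma'}$ uses that $\calO_\Sigma$ is flat over $S$ (so the short exact sequence of $\calI_\Sigma$ stays exact after pullback); this flatness is supplied by Lemma~\ref{lem:intornoregolareI2}.

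For part~b) you correctly identify that flat base change is unavailable and that the right tool is base change along affine morphisms, applied to the finite morphisms $\Sigma_n\to S$ carrying the $\calG_n$. The reduction to each finite level $n$, the identification $\Sigma_n\times_X X'=\Sigma_n\times_S S'$, and the passage through $(q'_n)_*(g')^*\calH_n$ on both sides are all fine, and $p_*$, $p'_*$ commute with the filtered colimit since both morphisms are quasi-compact and quasi-separated. This is almost certainly the argument the authors had in mind: it is the same mechanism by which the étale-invariance statements in Lemma~\ref{lem:mappeetale1}(c) and the pullback identities of Section~\ref{ssec:pullback1} are justified.
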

\begin{proof}
	Both claims follow from $f^*\Omega^1_{X/S} \simeq \Omega^1_{X'/S'} $ the discussion in Remark \ref{rmk:basechangecoerente}
\end{proof}

To be invariant under pull back means that $f^*\Res_{\Sigma}=\Res_{\Sigma'}$ using the isomorphism given by point b) of the above Lemma.

\subsection{Factorizability}

\begin{lemma}\label{lem:residuofattorizzazione}

a) Let $I=I_1\sqcup I_2$ and assume that $\grs_{i_1}(S)\cap	\grs_{i_2}(S)=\vuoto$ for all  $i_1\in I_1$ and $i_2\in I_2$. Then 
$$
\frac{\Omega^1_{X/S}(\infty\Sigma_I)}{\Omega^1_{X/S}}=
\frac{\Omega^1_{X/S}(\infty\Sigma_{I_1})}{\Omega^1_{X/S}}\oplus
\frac{\Omega^1_{X/S}(\infty\Sigma_{I_2})}{\Omega^1_{X/S}}
$$

\indent b) Let $\pi:I\twoheadrightarrow J$ be a surjective map of finite sets and assume that if $\pi(i_1)=\pi(i_2)$ then $\grs_{i_1}=\grs_{i_2}$ let $\Sigma_J$ be the induced collections indexed by $J$. Then 
$$\Omega^1_{X/S}(\infty\Sigma_J) = \Omega^1_{X/S}(\infty\Sigma_I).$$

\end{lemma}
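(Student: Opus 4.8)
The plan is to argue directly with the sheaves on $X$ (the pushforward $p_*$ plays no role in this statement), reducing both parts to elementary facts about complements of closed subschemes, a Mayer--Vietoris/extension-by-zero argument for a), and invariance of the complement under thickening for b).

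\emph{Part a).} Since $\grs_{i_1}(S)\cap\grs_{i_2}(S)=\vuoto$ for all $i_1\in I_1$, $i_2\in I_2$, the closed subschemes $\Sigma_{I_1}$ and $\Sigma_{I_2}$ of $X$ are disjoint. Put $U_k=X\setminus\Sigma_{I_k}$, so that $\{U_1,U_2\}$ is an open cover of $X$ with $U_1\cap U_2=X\setminus\Sigma_I$ and $\Sigma_{I_k}\subset U_{3-k}$. First I would note that on $U_1$ the ideal $\calI_{\Sigma_{I_1}}$ is the unit ideal, whence $\calI_{\Sigma_I}\ristretto_{U_1}=(\calI_{\Sigma_{I_1}}\calI_{\Sigma_{I_2}})\ristretto_{U_1}=\calI_{\Sigma_{I_2}}\ristretto_{U_1}$, and therefore $\calO_X(n\Sigma_I)\ristretto_{U_1}=\calO_X(n\Sigma_{I_2})\ristretto_{U_1}$ for every $n$, and symmetrically on $U_2$. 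Tensoring with the line bundle $\Omega^1_{X/S}$, passing to the colimit in $n$ and then to the quotient by $\Omega^1_{X/S}$, the sheaf $\calF:=\Omega^1_{X/S}(\infty\Sigma_I)/\Omega^1_{X/S}$ satisfies $\calF\ristretto_{U_k}=\calF_{3-k}\ristretto_{U_k}$, where $\calF_h:=\Omega^1_{X/S}(\infty\Sigma_{I_h})/\Omega^1_{X/S}$. Each $\calF_h$ is supported on $\Sigma_{I_h}\subset U_{3-h}$, hence vanishes on $U_1\cap U_2$ and equals the extension by zero of its restriction to $U_{3-h}$, and the same holds for $\calF$. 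Gluing the identifications $\calF\ristretto_{U_1}=\calF_2\ristretto_{U_1}$ and $\calF\ristretto_{U_2}=\calF_1\ristretto_{U_2}$ along $U_1\cap U_2$, where both sides vanish so the gluing datum is trivial, and invoking the general fact that a sheaf which is zero on the overlap of a two-element open cover is canonically the direct sum of the extensions by zero of its restrictions to the two opens, one gets $\calF\simeq\calF_1\oplus\calF_2$; the two summands are exactly the images of the natural inclusions $\Omega^1_{X/S}(\infty\Sigma_{I_h})\hookrightarrow\Omega^1_{X/S}(\infty\Sigma_I)$, which is the asserted decomposition.

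\emph{Part b).} The hypothesis gives $\grs_i=\grs_{i'}$ whenever $\pi(i)=\pi(i')$, so writing $\grs_j$ for the common value on $\pi^{-1}(j)$ we have $\calI_{\Sigma_I}=\prod_{i\in I}\calI_{\grs_i}=\prod_{j\in J}\calI_{\grs_j}^{|\pi^{-1}(j)|}$, which has the same radical as $\calI_{\Sigma_J}=\prod_{j\in J}\calI_{\grs_j}$; in particular $\Sigma_I$ and $\Sigma_J$ have the same support, so $X\setminus\Sigma_I=X\setminus\Sigma_J$ as open subschemes. Since $\calO_X(\infty\Sigma)=j_*j^*\calO_X$ for the immersion $j$ of the complement of $\Sigma$ (Notation \ref{sigmacalI}), this yields $\calO_X(\infty\Sigma_I)=\calO_X(\infty\Sigma_J)$ as subsheaves of the common $j_*j^*\calO_X$; equivalently the chains $\calO_X(n\Sigma_I)$ and $\calO_X(n\Sigma_J)$ are cofinal in one another. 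Tensoring with the invertible sheaf $\Omega^1_{X/S}$ gives $\Omega^1_{X/S}(\infty\Sigma_J)=\Omega^1_{X/S}(\infty\Sigma_I)$, as claimed.

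Both parts are essentially formal; the one step needing care is the direct-sum splitting in part a), namely checking that the two natural inclusions of the pieces assemble into an isomorphism, which is handled precisely by the observation that $\calF$ and the $\calF_h$ all vanish on $U_1\cap U_2$. If preferred, the whole of part a) can instead be run Zariski-locally on $X$ using Lemma \ref{lem:intornoregolareI1}(a), where it reduces to the Chinese Remainder Theorem applied to the comaximal ideals $\calI_{\Sigma_{I_1}}^n$ and $\calI_{\Sigma_{I_2}}^n$ (comaximality being exactly the disjointness hypothesis), followed by passing to the colimit in $n$.
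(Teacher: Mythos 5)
The paper states Lemma \ref{lem:residuofattorizzazione} without proof (the text passes directly from the statement to the definition of factorizability), so there is no argument of the authors to compare against. Your proof is correct. In part a), the key facts — that each $\calF_h$ as well as $\calF$ is supported on a closed set contained in $U_{3-h}$, hence is the extension by zero of its restriction there, and that a sheaf vanishing on the overlap of a two-element open cover is canonically the direct sum of the two extensions by zero — are all sound, and the identifications $\calF\ristretto_{U_k}=\calF_{3-k}\ristretto_{U_k}$ indeed come from the natural inclusions, which is what the assertion of the lemma really requires. Part b) is the elementary observation that $\calI_{\Sigma_I}$ and $\calI_{\Sigma_J}$ have the same radical, hence the same complementary open subscheme, so $j_*j^*\calO_X$ is the same sheaf in both cases. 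The alternative local route via the Chinese Remainder Theorem (after the identification $\calO_X(n\Sigma)/\calO_X\simeq\calO_X/\calI_\Sigma^n$ given by multiplying by a local generator) also works and is closer in spirit to the paper's habit of reducing to the local picture of Section \ref{ssec:descrizionelocale1} via Lemma \ref{lem:intornoregolareI1}.
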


\begin{proof}
The sheaf ${\Omega^1_{X/S}(\infty\Sigma_I)}/{\Omega^1_{X/S}}$ is supported on $\Sigma_I=\Sigma_{I_1}\sqcup\Sigma_{I_2}$, hence it is the sum of its restriction to $\Sigma_{I_1}$, which is equal to ${\Omega^1_{X/S}(\infty\Sigma_{I_1})}/{\Omega^1_{X/S}}$ and to its restriction 
to $\Sigma_{I_2}$ which is equal to ${\Omega^1_{X/S}(\infty\Sigma_{I_1})}/{\Omega^1_{X/S}}$. The second claim is trivial
\end{proof}

Then to be factorizable means than in the hypothesis of point a) of the Lemma above we have
\begin{equation}\label{eq:residuofattorizzabilitaA}
\Res_{\Sigma_I}=\Res_{\Sigma_{I_1}}+\Res_{\Sigma_{I_2}}.
\end{equation}
and that under the hypothesis of point b) of the Lemma above we have
\begin{equation}\label{eq:residuofattorizzabilitaB}
\Res_{\Sigma_I}=\Res_{\Sigma_J}.
\end{equation}

This last condition seems trivial, but together with compatibility under pullback it becomes the kind of factorization property we are looking for.

\subsection{Well behaviour under \'etale maps}\label{ssec:etaleresiduo}
Let $p:X\lra S$ and $q:Y\lra S$ be smooth schemes of relative dimension $1$ over $S$ and let $\Sigma = \{ \grs_i \}$ be a collection of sections of $p$. Let $f:X\lra Y$ be an \'etale map over $S$ and set $\tau_i=\grf\circ\grs_i$. Assume that $\grs_i$ is a pull back of $\tau_i$ through $\grf$ for all $i$. Then, by Lemma \ref{lem:mappeetale1}, we have a natural isomorphism
$$
p_{*}\left(\frac{\Omega^1_{X/S}(\infty \Sigma)}{\Omega^1_{X/S}}\right)\simeq
q_{*}\left(\frac{\Omega^1_{Y/S}(\infty \calT)}{\Omega^1_{Y/S}}\right).
$$
To behave well under étale maps means that $ \Res_{\Sigma}=\Res_{\calT} $ under the above isomorphism. 

\smallskip

\noindent We can now state the main Theorem of this section. 

\begin{theorem}\label{teo:residuo}
	The exists a unique collection of maps $\Res_{\Sigma}$ which satisfy  hypothesis $\Res1$, $\Res2$, $\Res3$ and $\Res4$.
\end{theorem}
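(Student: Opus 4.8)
The plan is to establish existence and uniqueness separately. For existence, I would first construct $\Res_\Sigma$ in the model case $X = \mA^1_S$, $S = \Spec A$ affine, and $\Sigma$ a collection of sections $\tau_i$ with $\tau_i^\sharp(t) = a_i \in A$, by an explicit algebraic formula. Working locally as in Remark \ref{oss:unsolof} we may assume $\pbarOpoli = \limpro A[t]/(\varphi^n)[\varphi^{-1}]$ with $\varphi = \prod_i (t-a_i)$, and every element of $p_*(\Omega^1_{X/S}(\infty\Sigma)/\Omega^1_{X/S})$ is represented by $g(t)\,dt/\varphi(t)^n$ for some polynomial $g$; one then sets $\Res$ to be the $A$-linear functional extracting the appropriate coefficient, which agrees with $\frac{1}{2\pi i}\oint$ in the classical $\Res4$ case. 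I would check this is well defined (independent of the representative and of the auxiliary data in Remark \ref{oss:unsolof}) and that it satisfies $\Res2$ by the partial-fraction decomposition $\varphi^{-n} = \sum_j (\text{stuff})(t-a_j)^{-n}$, which at the level of sheaves is exactly Lemma \ref{lem:residuofattorizzazione}(a) after separating points. Then, using Lemma \ref{lem:mappeetale1} and Lemma \ref{lem:intornoregolareI1}(b), I would transport this definition along étale maps $f : X' \to \mA^1_{S'}$ to define $\Res_\Sigma$ on any well-covered open; the content here is that the resulting map is \emph{independent of the choice of étale chart} $f$, which is where $\Res3$ must be promoted from a requirement to a verified property. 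Finally I would glue over a well-covered cover of $S$ — the local definitions agree on overlaps by the uniqueness part applied to the smaller open, or more directly because two étale charts over a common refinement differ by an étale map over $\mA^1$ and one invokes $\Res3$ again — and check $\Res1$ (pullback invariance) using the isomorphisms of Lemma \ref{lem:residuoPB}, which is essentially formal once the local formula is pinned down.

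For uniqueness, suppose $\Res'_\Sigma$ is another such collection. The strategy is to reduce, via $\Res3$ and local structure, to the model case and there to $\Res4$. Given arbitrary $X, S, \Sigma$, uniqueness is a local question on $S$ (both maps are $\calO_S$-linear maps of sheaves), so we may assume $S$ is well covered. By Lemma \ref{lem:intornoregolareI1}(a) the source sheaf splits according to the partition $\pi : I \twoheadrightarrow J$ by distinct images, and by $\Res2$ (the factorization property \eqref{eq:residuofattorizzabilitaA}) it suffices to treat each $\Sigma_{I_j}$ with all sections passing through a common point; then by Lemma \ref{lem:intornoregolareI1}(b) there is an étale $f : X' \to \mA^1_{S'}$ exhibiting the $\sigma_i$ as pullbacks of sections of $\mA^1_{S'}$, so by $\Res3$ we are reduced to $X = \mA^1_S$. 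In that case, every section $g(t)\,dt/\varphi^n$ can, after a further use of $\Res2$ part \eqref{eq:residuofattorizzabilitaB} and partial fractions, be written as an $A$-combination of elements of the form $t^k(t-a)^{-m}\,dt$ supported at a single $a$; and such an element is the pullback, under a base change $S \to \Spec\mC[a]$ (invariance $\Res1$ together with Lemma \ref{lem:residuoPB}) of a single universal element over $\Spec\mC[a]$, which after translating $a$ to $0$ is forced by $\Res4$. Hence $\Res'_\Sigma = \Res_\Sigma$ everywhere.

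The main obstacle I expect is the étale-independence in the existence half: showing that the value of $\Res_\Sigma$ defined through an étale chart $f : X' \to \mA^1_{S'}$ does not depend on $f$. Two charts $f_1, f_2$ need not be directly comparable, but after passing to a common affine refinement they fit into a diagram with an étale map over $\mA^1_S$ between the two copies of $\mA^1$ — or one argues that both compute the same thing by a direct residue-theorem computation in the double localization. This is exactly the kind of step the authors warn "is not difficult but requires some caution," and it is the crux, because once a chart-independent local formula exists, gluing, $\Res1$, and $\Res2$ follow by the functoriality already packaged into Lemmas \ref{lem:residuoPB} and \ref{lem:mappeetale1}. A secondary technical point is checking that the colimit over $n$ (passing from $\Omega^1(\infty\Sigma)$ to the quotient by $\Omega^1$) is compatible with all of this, but that is routine given the explicit bases in Section \ref{sssec:basespbarO}.
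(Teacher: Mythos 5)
Your uniqueness argument is essentially the paper's: localize via $\Res1$ and $\Res3$, split by distinct images via $\Res2$, reduce to $\mA^1$ and then pin things down by $\Res4$ after a base change from the polynomial case. The existence half, however, diverges at exactly the point you flag as the crux, and the paper resolves that crux with an idea you don't have.

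For a single section the paper does not write a coefficient formula and then verify chart-independence. Instead Lemma \ref{lem:sigmaD} gives a \emph{canonical} construction: $\Omega^1_{X/S}(\infty\sigma)/\Omega^1_{X/S}$ is identified with the right $\calD_{X/S}$-module pushforward $\sigma_!\calO_S$, whose de Rham $h^0$ is canonically $\sigma_*\calO_S$, and $\tRes_\sigma$ is the resulting projection. This eliminates the \'etale-independence question rather than attacking it by computation. Your proposed fix --- that two charts $f_1,f_2 : X' \to \mA^1_{S'}$ fit, after refinement, into a diagram with an \'etale $S$-map between the two copies of $\mA^1$ --- does not work in general: no such map need exist, since \'etale $S$-endomorphisms of $\mA^1_S$ preserving the zero section are far more rigid than what is needed to compare two arbitrary \'etale charts of $X'$. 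Your fallback (a ``direct residue-theorem computation in the double localization'') amounts to reproving coordinate-independence of residues from scratch, which is precisely the subtlety the $\calD$-module picture is designed to absorb.

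For the general multi-section case the paper also does not follow ``explicit formula, chart-independence, glue.'' Having fixed the one-section residue, unicity already forces $\Res_{\Sigma'} = \sum_i \Res_{\sigma_i'}$ on the open locus $S'$ where the sections are pairwise disjoint; the real content is then that this extends regularly over the diagonals, i.e.\ that formula \eqref{eq:res1} holds. The paper verifies this in the universal case $S = C^n$, $X = C^{n+1}$ with the tautological sections $\sigma^\univ_i$, reducing further to $C = \mA^1$ over $\Spec\mC$, where the sum of residues at the $a_i$ is recomputed as minus a residue at $\infty$ and is visibly regular; the general $(X,S,\Sigma)$ is then handled by pulling back $\Res_{\Sigma^\univ_X}$ along $\Sigma : S \to X^I$. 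This universal-family reduction, and the ``no poles across the diagonal'' regularity check it isolates, are absent from your plan and would be awkward to recover by partial fractions alone, since the partial-fraction expansion itself degenerates on the diagonals.
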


\noindent We will prove this Theorem by analyzing different cases. 

\subsection{The case of one section}\label{sssec:residuounasezione} We first construct the residue map in the case of a single section $\grs$. Notice that the sheaf $\calF=\Omega_{X/S}^1(\infty\sigma)/\Omega_{X/S}^1$ is supported over 
$\grs(S)$, this implies that $p_*(\calF)\simeq \grs^{-1}(\calF)$. Hence, in this case the existence and uniqueness of a map $\Res_\grs$ satisfying the properties Res1, Res 3 and Res4 (Res2 is empty for one single section) is equivalent to the existence and uniqueness of a map
$$
\tRes_{\grs}:\frac{\Omega_{X/S}^1(\infty\sigma)}{\Omega_{X/S}^1}\lra \grs_*\calO_S
$$
satisfying
\begin{itemize}
	\item $\tRes$ 1: if $p,p',f,f_X,\grs'$ are as in Section \ref{ssec:PBresiduo} then 
	$\tRes_{\grs'}=f^*\Res$. (by Lemma \ref{lem:residuoPB} (a) and the fact that $f^*\grs_*\calO_S\simeq\grs'_*\calO_S'$ this claim makes sense)
	\item $\tRes$ 3: if $p,q,f,\tau$ are as in Section \ref{ssec:etaleresiduo} then $\tRes_\tau=f_*\tRes_{\grs}$.
	\item $\tRes$ 4: if $S=\Spec \mC$, $X=\Spec \mC[t]$ is the usual residue map.
\end{itemize}

The uniqueness of such a map is clear: indeed the properties about pull back implies that it is enough to check uniqueness locally. 
So we can assume to be in the situation of Lemma \ref{lem:intornoregolare}, and by property $\tRes$3 it is enough to check uniqueness 
in the case of $\mA^1_S$ and $\grs$ is the zero section. However this is determined by the last property and the property about pull back. 

This remark gives also a local description of the map $\tRes$, which is the one expected, once a local coordinate is chosen as in Lemma \ref{lem:intornoregolare}
then $\tRes(gdt)$ with $g\in \calO_X(\infty\grs)/\calO_X$ is equal to the coefficient of $t^{-1}$. However we need to check that this construction does not depend on the choice of the local coordinate $t$. To check  this claim we give a different construction.

Let $\calD_{X/S}$ be the sheaf of relative differential operators, the sheaf generated by $\calO_X$ and the vertical vector fields $T_{X/S}$. Recall
that the zero de Rham cohomology of a right $\calD_{X/S}$-module $\calF$ is defined as $h^0(\calF)=\calF/(\calF\cdot T_{X/S})$\index{$h^0$}. 
Finally we denote by $\grs_!$ the $\calD_{X/S}$-module pushforward. 

\begin{lemma}\label{lem:sigmaD}
Let $X,S,p,\grs$ be as above. Then 
\begin{enumerate}[\indent a)]
   \item $\displaystyle \frac{\Omega^1_{X/S}(\grs)}{\Omega^1_{X/S}}\simeq \grs_*\calO_S$.
   \item The right action of $\calD_{X/S}$ on $\Omega^1_{X/S}$ induces a right action on $\Omega^1_{X/S}(\infty \grs)$ and we have an isomorphism
   $\displaystyle \frac{\Omega^1_{X/S}(\infty \grs)}{\Omega^1_{X/S}}\simeq \grs_!\calO_S$.
   \item We have an isomorphism of $\calO_X$-module $h^0(\grs_!\calO_S)\simeq\grs_*\calO_S$.
\end{enumerate}
\end{lemma}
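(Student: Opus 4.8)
All three statements are local on $X$, so by Lemma~\ref{lem:intornoregolare} we may work on an affine chart over an affine $S=\Spec A$ on which $\calI_\grs=(t)$ for a coordinate $t$, whence $\Omega^1_{X/S}=\calO_X\,dt$ and $\calD_{X/S}=\bigoplus_{k\ge 0}\calO_X\,\partial_t^k$. The recurring subtlety is that the local isomorphisms produced below must be shown to be independent of $t$; in each case this holds because they are induced by intrinsically defined maps (a residue-type map in a), $\calD_{X/S}$-linear maps in b) and c)), so they glue to global ones.

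\smallskip\noindent\textbf{Part a).} In the local model $\Omega^1_{X/S}(\grs)=\calO_X\,t^{-1}dt$, so $\Omega^1_{X/S}(\grs)/\Omega^1_{X/S}$ is freely generated over $\calO_X/(t)$ by the class of $t^{-1}dt$; composing with the canonical identification $\calO_X/\calI_\grs\simeq\grs_*\calO_S$ gives the isomorphism, which sends a local section $\omega=\tfrac{h}{t}\,dt$ (with $h\in\calO_X$) to $h\bmod\calI_\grs$. This is independent of the chart: for another coordinate $s=ut$ with $u$ a unit one has $\tfrac{ds}{s}-\tfrac{dt}{t}=\tfrac{du}{u}\in\Omega^1_{X/S}$, so $\omega\equiv h\,\tfrac{ds}{s}\pmod{\Omega^1_{X/S}}$ and the output is unchanged. (Alternatively, tensor $0\to\calO_X\to\calO_X(\grs)\to\calO_X(\grs)/\calO_X\to 0$ with the line bundle $\Omega^1_{X/S}$, apply the projection formula, and use the identification of $\grs^*\Omega^1_{X/S}$ with the conormal bundle $\calI_\grs/\calI_\grs^2$ via $t\mapsto dt$.)

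\smallskip\noindent\textbf{Part b).} The left $\calD_{X/S}$-action on $\calO_X$ extends to $\calO_X(\infty\grs)=j_*j^*\calO_X$, since vertical vector fields preserve the subsheaf of functions with poles along $\grs(S)$ (locally $\partial_t(t^{-n}g)=-n t^{-n-1}g+t^{-n}\partial_t g$). Writing $\Omega^1_{X/S}(\infty\grs)=\Omega^1_{X/S}\otimes_{\calO_X}\calO_X(\infty\grs)$, the standard tensor rule $(\omega\otimes f)\cdot\theta=(\omega\cdot\theta)\otimes f-\omega\otimes\theta(f)$ makes it a right $\calD_{X/S}$-module; since $\theta(1)=0$ this extends the action on $\Omega^1_{X/S}$, hence descends to the quotient (equivalently the action is $\omega\mapsto-\calL_\theta\omega$, by Cartan's formula). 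For the identification with $\grs_!\calO_S$: because $\Omega^1_{X/S}$ has no sections supported on $\grs(S)$ and $j$ is affine (Corollary~\ref{coro:affineminussigma}), the quotient is the local cohomology $\underline{H}^1_{\grs(S)}(\Omega^1_{X/S})$, which is the standard description of the $\calD_{X/S}$-module pushforward $\grs_!\calO_S$ along the Cartier divisor $\grs(S)$. If one prefers to avoid invoking this (the general Kashiwara lemma being proved only later), argue locally: $\Omega^1_{X/S}(\infty\grs)/\Omega^1_{X/S}$ is the cyclic right $\calD_{X/S}$-module on $\overline{t^{-1}dt}$ with $\overline{t^{-1}dt}\cdot t=0$ and $\overline{t^{-n}dt}\cdot\partial_t=n\,\overline{t^{-n-1}dt}$, and an order-filtration computation gives $\operatorname{Ann}_r(\overline{t^{-1}dt})=\calI_\grs\calD_{X/S}$, so $\Omega^1_{X/S}(\infty\grs)/\Omega^1_{X/S}\simeq\calD_{X/S}/\calI_\grs\calD_{X/S}$, the usual local presentation of $\grs_!\calO_S$; this isomorphism is the unique right $\calD_{X/S}$-linear one restricting to the identity on the copy of $\grs_*\calO_S$ sitting inside (from part a)), hence it glues.

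\smallskip\noindent\textbf{Part c).} Since $h^0(\grs_!\calO_S)$ is supported on $\grs(S)$, and by Lemma~\ref{lem:mappeetale1} sheaves supported on $\grs(S)$ together with their pushforwards are unchanged under étale base change on $X$, we may take $X=\mA^1_S$ and, after a translation, $\grs$ the zero section, so $\calO_X(\infty\grs)=A[t,t^{-1}]$. For $\theta=g\partial_t$ one has $\omega\cdot\theta=-\calL_{g\partial_t}\omega=-d(g\,\iota_{\partial_t}\omega)$, hence $\Omega^1_{X/S}(\infty\grs)\cdot T_{X/S}=d\,\calO_X(\infty\grs)=\operatorname{span}_A\{t^m\,dt:m\ne -1\}$. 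Therefore $h^0(\grs_!\calO_S)=\Omega^1_{X/S}(\infty\grs)\big/\big(\Omega^1_{X/S}+d\,\calO_X(\infty\grs)\big)=A\cdot\overline{t^{-1}dt}$, on which $t$ acts by $0$, so $h^0(\grs_!\calO_S)\simeq\calO_X/\calI_\grs\simeq\grs_*\calO_S$; equivalently, the inclusion of order-$\le 1$ forms induces $\Omega^1_{X/S}(\grs)/\Omega^1_{X/S}\xrightarrow{\ \sim\ }h^0(\grs_!\calO_S)$, surjectivity being the fact that $\tfrac{h}{t^n}dt\equiv\tfrac{1}{n-1}\tfrac{\partial_t h}{t^{n-1}}dt$ modulo $\Omega^1_{X/S}(\infty\grs)\cdot T_{X/S}$ for $n\ge 2$, after which part a) concludes.

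\smallskip\noindent The one genuinely delicate point is the bookkeeping that in parts b) and c) the image of the $T_{X/S}$-action is \emph{exactly} the part of pole order $\ge 2$, with nothing of order exactly $1$ being absorbed; the reduction to $X=\mA^1_S$ via Lemma~\ref{lem:mappeetale1} is what makes this transparent. The $\calD_{X/S}$-module identification in b) is Kashiwara's lemma in its simplest instance, verified by hand here since its general form is only established in a later section.
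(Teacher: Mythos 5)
Your proof is correct and follows essentially the same route as the paper's: part a) via the conormal-bundle identification $\Omega^1_{X/S}\simeq\calI_\grs/\calI_\grs^2$, part b) via the $\calD_{X/S}$-linear extension of the isomorphism from a), and part c) by a direct local computation, with the local checks that the paper leaves implicit written out. The only genuine variant is your primary identification in part b) of $\Omega^1_{X/S}(\infty\grs)/\Omega^1_{X/S}$ with the local cohomology sheaf $\underline{H}^1_{\grs(S)}(\Omega^1_{X/S})$; the paper instead constructs the $\calD_{X/S}$-linear map directly from the isomorphism in a) and verifies it locally, which is precisely your fallback argument (local presentation $\calD_{X/S}/\calI_\grs\calD_{X/S}$ plus uniqueness of the $\calD_{X/S}$-linear extension), so both routes land in the same place.
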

\begin{proof}
	a) Recall that $\Omega^1_{X/S}\simeq \calI_\grs/\calI_\grs^2$, hence we can define a map 
	$$
	\Omega^1_{X/S}(\grs)\simeq \frac{\calI_\grs}{\calI_\grs^2}\otimes_{\calO_X}\calHom_{\calO_X}(\calI_\grs,\calO_X)\lra \grs_*\calO_S
	$$
	by $[i]\otimes g \longmapsto p^\sharp(g(i))$. It is easy to see that this map is well defined and induces the required isomorphism. 
	
	b) We have a left action of $\calD_{X/S}$ on $\calO_{X}(\infty \grs)$ which induces a right action on $\Omega^1_{X/S}(\infty \grs)$. The map in point a)  induces an homomorphism 
	of right $\calD_{X/S}$-modules from the pushforward $\grs_!\calO_S$ to $\Omega^1_{X/S}(\infty \grs)/\Omega^1_{X/S}$. It is easy to check locally that this is an isomorphism. 
	
	c) We have a morphism $\grs_*\calO_S$ to $h^0(\grs_!\calO_S)$ given by the natural immersion composed with the projection. Locally it is easy to check that it is an isomorphism.  
\end{proof}

The Lemma above implies that we have a natural map $$\Omega^1_{X/S}(\infty\grs)/\Omega^1_{X/S}\rightarrow \grs_*\calO_S,$$ it is easy to check that it locally coincides with the previous construction. Hence $\tRes$ is well defined and has the required properties. 

\subsection{The case of \texorpdfstring{$S=C^n$}{S=Cn}, \texorpdfstring{$X=C^{n+1}$}{X=Cn+1}}\label{sssec:SCn} In this Section we consider a smooth curve $C$ over a base scheme $T$ and focus on the case where $S=C^n$, $X=C\times C^{n}$, (all products are over $T$), $p$ is the projection on the last $n$ coordinates and the sections are given by $\sigma^{\univ}_i(x_1,\dots,x_n)=(x_i,x_1,\dots,x_n)$, we write $\Sigma^{\univ}_{C,I} = \Sigma^{\univ}_{C} = \{\sigma^{\univ}_i\}_{i \in I}$\index{$\sigma^{\univ}_i$}\index{$\Sigma^{\univ}_{C,I}$}. Set
$S'=C^n\senza\Delta$, where $\Delta$ is the big diagonal (the $n$-tuples of points of $C$ which are all distinct), $X'=p^{-1}(S')$, $\sigma_i'$ and $p$ are respectively the restriction of $\sigma_i$ and $p$ to $S'$ and $X'$ and $j_S$ is the open immersion of $S'$ in $S$. 

The map $\Res_{\Sigma'}$ is uniquely determined by the factorization property of formula \eqref{eq:residuofattorizzabilitaA} and the definition of $\Res_{\sigma'_i}$ in the case of one single section. Hence 
\begin{equation}\label{eq:resdistinti}
\Res_{\Sigma'}=\sum_{i\in I}\Res_{\sigma_i'}.
\end{equation}
This formula determines also $\Res_{\Sigma}$. Indeed, by property $\Res$1, we must have that 
$\Res_{\Sigma'}$ is the localization of $\Res_{\Sigma}$. So, to prove that $\Res_{\Sigma}$ is well defined, we need to prove that 
\begin{equation}\label{eq:res1}
\Res_{\Sigma'}\left( p_*\left(\frac{\Omega^1_{X/S}(\infty \Sigma)}{\Omega^1_{X/S}}\right) \right)\subset \calO_S \subset j_{S,*}j_S^*\calO_S
\end{equation}
where we consider $$p_*(\Omega^1_{X/S}(\infty \Sigma)/\Omega^1_{X/S}) \subset p'_*(\Omega^1_{X'/S'}(\infty \Sigma') / \Omega^1_{X'/S'})$$ 
and $\calO_S \subset \calO_{S'}$. Moreover we check that if $\pi:I\twoheadrightarrow J$ is a surjective map and $\Delta_\pi:X^J\subset X^I$ is the associated diagonal then 
\begin{equation}\label{eq:res2}
\Delta_\pi^*\Res_{\Sigma_I}=\Res_{\Sigma_J}
\end{equation}
as prescribed by property $\Res$1 (pullback properties) and formula \eqref{eq:residuofattorizzabilitaB}.

Since uniqueness has been established we can check these properties locally and by Lemma 
\ref{lem:mappeetale1} we reduce this verification to the case $C=\mA^1_T$. 
Moreover the case of $\mA^1_T$ for a general $T$ is deduced from the case of  the complex affine line $C=\mA^1$. Hence $S=\Spec \mC[x_1,\dots, x_n]$, $X=\Spec \mC[x_1,\dots, x_n,t]$ and $S'=\Spec\mC[x_1,\dots,x_n]_\grd$ with 
$\grd=\prod_{i<j}(x_i-x_j)$. In this case we can compute the sum of residues of formula \eqref{eq:resdistinti} as a single residue at infinity. More explicitly
\begin{equation*}
\Res_{s'_I}g(x,t)dt=-\Res_{0}g(x,\tau^{-1})\tau^{-2}d\tau
\end{equation*}
where $\Res_0$ is the coefficient of $\tau^{-1}$. Using this formula the properties expressed by formula \eqref{eq:res1} and \eqref{eq:res2} are clear.

\subsection{The general case}Consider the diagram 
\begin{equation*}\label{diag:PB2}
\xymatrix{
	X\ar[r]^{F\quad}\ar[d]_{p}      & X\times _S X^I \ar[d]^{\pi}\\
	S\ar[r]^{\Sigma}              & X^I
}
\end{equation*}
where $F=(\id,\Sigma)$. The  right column is the case analyzed in the previous section with $T=S$ and $C=X$ and $\grs_i$ are the pull back of the sections $\grs^{univ}_i$. Hence $\Res_{\Sigma}$ must be  given by   
 $$\Res_{\Sigma}=\Sigma^*(\Res_{\Sigma^{\univ}_X}),$$
where $\Sigma^{\univ}_X$ is the universal $I$-collection of sections $X^I \to X\times X^I $ introduced in \ref{sssec:SCn}.
The compatibility of this definition with the previous one is a consequence of formula \eqref{eq:res2} and it easy to check that the map defined in this way has properties $\Res1$, $\Res2$, $\Res3$ and $\Res$4. 

This completes the construction of the residue map. Later on we will need the following property.

\begin{lemma}
The residue map $\Res_{\Sigma}$ is surjective. 
\end{lemma}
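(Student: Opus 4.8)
The plan is to reduce surjectivity — which is a local question on $S$ — to the explicit computation that a form like $dt/(t-a)$ has residue $1$ on the affine line, and then to control the answer over an arbitrary base by pulling back from the universal family over $\mA^I$.

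First, since surjectivity can be checked locally and $\calO_S$ is generated by its unit section, it suffices to produce, in a neighbourhood of an arbitrary point $s_0\in S$ (which we may take to be affine, $S=\Spec A$), a section of $p_*\!\big(\Omega^1_{X/S}(\infty\Sigma)/\Omega^1_{X/S}\big)$ with residue $1$. Fix $i_0\in I$, put $x_0=\sigma_{i_0}(s_0)$ and $I_0=\{i\in I:\sigma_i(s_0)=x_0\}$. Using Lemma~\ref{lem:intornoregolareI1}a) I would shrink $S$ so that the sections indexed by $I_0$ have images disjoint from those indexed by $I\setminus I_0$; then Lemma~\ref{lem:residuofattorizzazione}a) and the factorization formula \eqref{eq:residuofattorizzabilitaA} give a direct sum decomposition under which $\Res_{\Sigma}$ restricts to $\Res_{\Sigma_{I_0}}$ on the $\Sigma_{I_0}$-summand. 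This reduces us to the case in which every section of $\Sigma$ passes through a common point $x_0$ at $s_0$. Next, by Lemma~\ref{lem:intornoregolareI1}b) there are, after a further shrinking, an affine open $X'\subseteq X$ containing $x_0$ and an \'etale $S$-map $f:X'\to\mA^1_S$ with each $\sigma_i$ the pullback along $f$ of the section $\tau_i:t=a_i$, $a_i\in A$. Since $\Omega^1_{X/S}(\infty\Sigma)/\Omega^1_{X/S}$ is supported on $\Sigma\subseteq X'$, replacing $X$ by $X'$ changes neither the pushforward nor $\Res_{\Sigma}$ (property $\Res3$ for the open immersion $X'\hookrightarrow X$), and property $\Res3$ for $f$ together with Lemma~\ref{lem:mappeetale1} identifies $\Res_{\Sigma}$ with $\Res_{\calT}$ through the isomorphism induced by $f^*$, where $\calT=\{t=a_i\}_{i\in I}$ on $\mA^1_S$. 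Moreover $f^\sharp(t)-a_1$ generates $\calI_{\sigma_1}$ by Lemma~\ref{lem:mappeetale1}a), so the class of $f^*\!\big(dt/(t-a_1)\big)=d(f^\sharp(t)-a_1)/(f^\sharp(t)-a_1)$ is a legitimate section. Thus it remains to prove $\Res_{\calT}\!\big(dt/(t-a_1)\big)=1$ in $A$.

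For the final step I would pass to the universal situation: $\calT$ is the pullback of the universal collection $\Sigma^{\univ}=\{t=x_i\}_{i\in I}$ on $\mA^1_{\mA^I}$ along the map $g:S\to\mA^I=\Spec\mC[x_i:i\in I]$ with $g^\sharp(x_i)=a_i$, and $dt/(t-a_1)=g^*\!\big(dt/(t-x_1)\big)$; so by property $\Res1$ it is enough to show $\Res_{\Sigma^{\univ}}\!\big(dt/(t-x_1)\big)=1$ in $\mC[x_i:i\in I]$. Over the dense open $U=\mA^I\setminus\nabla$ where the $x_i$ are pairwise distinct, formula \eqref{eq:resdistinti} gives $\Res_{\Sigma^{\univ}}|_U=\sum_{i\in I}\Res_{\sigma_i^{\univ}}$, and by the one-section description of Section~\ref{sssec:residuounasezione} the form $dt/(t-x_1)$ has residue $1$ along $t=x_1$ and residue $0$ along $t=x_i$ for $i\neq 1$ (being regular there on $U$); hence $\Res_{\Sigma^{\univ}}\!\big(dt/(t-x_1)\big)$ restricts to $1$ on $U$.

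The only genuine subtlety is this last point: over a possibly non-reduced base one cannot conclude from values at closed points, which is precisely why the argument is routed through $\mA^I$ — this base is integral, so an element of $\calO(\mA^I)=\mC[x_i:i\in I]$ that agrees with $1$ on the dense open $U$ must equal $1$, and the conclusion descends to arbitrary $A$ via $\Res1$. Everything else — the local reductions, the \'etale comparison, and the verification that the relevant forms $dt/(t-a)$ genuinely lie in the sheaves in question — I expect to be routine given the properties of $\Res$ already established.
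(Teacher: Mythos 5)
Your proof is correct, and all the reductions you invoke are available in the paper, but you take a genuinely longer route than the paper does. The paper's proof is a one-liner: by the second factorization property (formula \eqref{eq:residuofattorizzabilitaB}), the restriction of $\Res_{\Sigma}$ to the subsheaf $p_*\bigl(\Omega^1_{X/S}(\infty\sigma_1)/\Omega^1_{X/S}\bigr)$ of $p_*\bigl(\Omega^1_{X/S}(\infty\Sigma)/\Omega^1_{X/S}\bigr)$ agrees with $\Res_{\sigma_1}$, so surjectivity reduces to the single-section case, where it is immediate from the description in Section~\ref{sssec:residuounasezione} (e.g.\ $\Omega^1(\infty\sigma)/\Omega^1\simeq\grs_!\calO_S\twoheadrightarrow h^0\simeq\grs_*\calO_S$). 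You instead avoid this reduction-by-restriction entirely: you localize on $S$, pass through the \'etale model $\mA^1_S$ with $\tau_i\colon t=a_i$ via $\Res3$ and Lemma~\ref{lem:mappeetale1}, write down the explicit candidate $dt/(t-a_1)$, and then compute its residue by pulling back (via $\Res1$) to the universal family over $\mA^I$ and exploiting that $\mC[x_i:i\in I]$ is a domain so the open dense locus $\mA^I\setminus\nabla$ determines the answer. What your route buys is a direct, hands-on verification that a form with residue exactly $1$ exists locally, without having to first establish that $\Res_{\Sigma}$ restricts correctly to the $\sigma_1$-piece when sections are allowed to collide; what the paper's route buys is brevity, at the cost of having to know that restriction compatibility (a fact which, over a non-reduced base, itself ultimately comes from the same universal-family/integrality argument you spell out). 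Both proofs bottom out in the one-section residue of $dt/(t-a_1)$ being $1$.
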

\begin{proof}By the second property of factorizability it is enough to prove the statement in the case of one section. The case of one section was discussed in Section \ref{sssec:residuounasezione} and the residue map is clearly surjective. 	
\end{proof}

\subsection{The case of \texorpdfstring{$\pbarOmegapoli$}{} and non degeneracy}
\label{sssez:basiduali}
The residue map is zero on $\Omega^{1}_{X/S}$, and in particular is a continuous 
as map from $\Omega^{1}_{X/S}(\infty \Sigma)$ to $\calO_S$. Hence it induces a map
from $\pbarOmegapoli$ (this is the sheaf constructed ad in Section \ref{ssec:definizionefascibarF} in the case $\calF=\Omega^1_{X/S}$) to $\calO_S$ that we will denote with the same symbol $\Res_\Sigma$ and which will play an important role in our constructions. 

We end this Section by a local computation. Assume we are in the situation described in Remark \ref{oss:unsolof} and we will use the basis $\grf^+_{a,i}$ and $\grf^-_{a,i}$ introduced in equation \eqref{eq:fni}. Then
$$
\Res_{\Sigma}\Big(\grf^+_{a,i} \cdot \grf^-_{b,j} dt \Big) =\delta _{a+b,-1}\delta_{i+j,n-1}
\quad \mand \quad 
\Res_{\Sigma}\Big(\grf^-_{c,i} \cdot \grf^-_{d,j} dt \Big) =0
$$
for all $c,d<0$. This immediately implies the following lemma

\begin{lemma}\label{lem:resnondegenere}
The pairing from $\pbarOpoli \otimes \pbarOmegapoli  $ to $\calO_S$ defined by 
$$
f\cdot \omega \lra \Res_\Sigma (f\omega)
$$
is perfect (meaning that on small enough affine opens subsets induces a perfect topological pairing among the sections).

Moreover, in the local situation described above if we identify if we define $\calO_{\overline{\Sigma}^*}^-$ as the $\calO_S$ subsheaf generated by $\varphi^-_{a,i}$ for $a<0$ then 
$\pbarOpoli = \calO_{\overline{\Sigma}^*}^-\oplus \pbarO$ and the summands
$\calO_{\overline{\Sigma}^*}^-$ and $\pbarO$ are maximal isotropic submodules with respect to the pairing 
$(f,g)\mapsto \Res_{\Sigma}(fgdt)$.
\end{lemma}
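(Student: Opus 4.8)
The plan is to reduce the whole statement to the explicit local model of Remark~\ref{oss:unsolof} and then to read off everything from the two residue computations displayed just above together with the basis facts recorded in \S\ref{ssec:descrizionelocale1}. Being a perfect topological pairing among sections on small affine opens is, by definition, a local assertion, and by Lemma~\ref{lem:intornoregolareI1} and the discussion in \S\ref{ssec:descrizionelocale1} the scheme $S$ is covered by well covered affine opens of the form of Remark~\ref{oss:unsolof}; so I may assume $S=\Spec A$ is such an open, with local coordinate $t$ and $\varphi=\prod_{i=1}^n(t-a_i)$, and in particular $\pbarOmegapoli=\pbarOpoli\,dt$ since $dt$ generates $\Omega^1_{X/S}$. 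Recall that $\{\varphi^+_{a,i}\}_{a\in\mZ,\,0\le i\le n-1}$ is a filtration-compatible topological basis of $\pbarOpoli$ and $\{\varphi^-_{b,j}\,dt\}$ one of $\pbarOmegapoli$. The identity $\Res_{\Sigma}(\varphi^+_{a,i}\varphi^-_{b,j}\,dt)=\delta_{a+b,-1}\delta_{i+j,n-1}$ says that in these bases the pairing is the ``reversal'' matrix: $\varphi^+_{a,i}$ pairs to $1$ with $\varphi^-_{b,j}\,dt$ exactly when $(b,j)=(-1-a,\,n-1-i)$ and to $0$ otherwise. Since $(a,i)\mapsto(-1-a,\,n-1-i)$ is an involutive bijection of the index set, the induced map $\pbarOpoli\to\Homcont_{\calO_S}(\pbarOmegapoli,\calO_S)$ carries the topological basis vector $\varphi^+_{a,i}$ to the coordinate functional dual to $\varphi^-_{-1-a,\,n-1-i}\,dt$; the only thing to check is that it is a topological isomorphism, i.e.\ that it matches the colimit topology on $\pbarOpoli$ (finitely many terms of negative degree, then a tail convergent in $\pbarO$) with the topology on $\Homcont$ whose \noz are the $\mH_{\calK,0}$ for $\calK=\pbarO(-N)\,dt$. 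Under the index involution $a\to+\infty$ corresponds to $b\to-\infty$, which is precisely the reversal identifying the two filtrations, so this is immediate; the symmetric statement, with the roles of $\pbarOpoli$ and $\pbarOmegapoli$ exchanged, follows in the same way.

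For the ``moreover'', use that $\{\varphi^-_{a,i}\}_{a\in\mZ}$ is likewise a filtration-compatible topological basis of $\pbarOpoli$, so that $\{\varphi^-_{a,i}:a\ge 0\}$ is a topological basis of $\pbarO$ while $\{\varphi^-_{a,i}:a<0\}$ spans $\calO_{\overline{\Sigma}^*}^-$ as an honest direct sum $\bigoplus_{a<0,\,i}\calO_S\varphi^-_{a,i}$; splitting a section of $\pbarOpoli$ into its negative-degree and non-negative-degree parts gives $\pbarOpoli=\calO_{\overline{\Sigma}^*}^-\oplus\pbarO$. That $\pbarO$ is isotropic for $(f,g)\mapsto\Res_{\Sigma}(fg\,dt)$ needs no coordinates: if $f,g\in\pbarO$ then $fg\,dt\in\pbarOmega$, on which $\Res_{\Sigma}$ vanishes (it is zero on $\Omega^1_{X/S}$ and continuous, hence zero on the completion $\pbarOmega$). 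That $\calO_{\overline{\Sigma}^*}^-$ is isotropic follows by $\calO_S$-bilinearity and continuity of the pairing from the second displayed identity $\Res_{\Sigma}(\varphi^-_{c,i}\varphi^-_{d,j}\,dt)=0$ for $c,d<0$.

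Maximality of the two summands is then formal. By the first part, together with the identification $\pbarOmegapoli\simeq\pbarOpoli\,dt$, the symmetric $\calO_S$-bilinear form $\langle f,g\rangle=\Res_{\Sigma}(fg\,dt)$ on $\pbarOpoli$ is non-degenerate, meaning $\langle f,-\rangle=0$ forces $f=0$. For a non-degenerate symmetric form and a decomposition $V=A\oplus B$ into two isotropic submodules one has $A^\perp=A$ and $B^\perp=B$: if $a+b\in A^\perp$ then $\langle b,A\rangle=0$ (since $A$ is isotropic) and $\langle b,B\rangle=0$ (since $B$ is isotropic), hence $b=0$; and an isotropic submodule equal to its own orthogonal is maximal, since any isotropic $A'\supseteq A$ satisfies $A'\subseteq (A')^\perp\subseteq A^\perp=A$. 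I expect the one genuinely delicate point to be the topological bookkeeping in the first paragraph: one must be careful about exactly which w-compact subsheaves (hence which \noz of $\Homcont$) occur and verify that the index involution really identifies the two topologies up to reindexing, rather than yielding only an abstract isomorphism of $\calO_S$-modules; everything else reduces to the two given residue formulas, the basis statements of \S\ref{ssec:descrizionelocale1}, and the linear algebra just sketched.
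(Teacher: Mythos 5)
Your proposal is correct and spells out exactly the argument the paper leaves implicit: the paper gives no proof beyond asserting that the two displayed residue formulas immediately imply the lemma, and your reduction to the local model of Remark~\ref{oss:unsolof}, the reversal-matrix reading of $\Res_\Sigma(\varphi^+_{a,i}\varphi^-_{b,j}dt)=\delta_{a+b,-1}\delta_{i+j,n-1}$, and the formal $A^\perp=A$ argument for maximality are precisely what is intended. One small index slip in the topological bookkeeping you flag: the cofinal family of $w$-compact subsheaves of $\pbarOmegapoli$ indexing the \fsonoz of $\calHomcont(\pbarOmegapoli,\calO_S)$ should be $\calK=\pbarO(N)\,dt$ with $N\to+\infty$ (not $\pbarO(-N)\,dt$, which are the \noz of $\pbarOmega$); under the pairing $\mH_{\pbarO(N)dt,0}$ is carried to $\pbarO(-N)$, and these are indeed the \fsonoz of the colimit topology on $\pbarOpoli$, so the identification is a homeomorphism as you claim.
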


\section{Conventions on spaces and \texorpdfstring{$\calD$}{D}-modules}\label{ssec:convenzionispazi}

As we have done in the previous chapters, we will need to consider some constructions about sheaves on $X$ and on $X^n$, but we will  actually be interested in the case of sheaves supported on $\Sigma$ or on $\Sigma^n$. For technical reasons it will be convenient for us to consider these  as sheaves over $S$. Since the projection from $\Sigma$ to $S$ is a finite map this does not make any serious difference. 

We introduce some notations to treat uniformly these constructions. 
Let $p:X\lra S$ be a smooth family of curves over $S$, $\finitosigma$ a finite set, and let $\Sigma = \{ \grs_i\}_{i \in I}$ a collection of sections of $p$. We already introduced in Section \ref{sec:localstructure} the sheaves of topological algebras $\pbarO =p_*(\barO{X})$, $\pbarO(n)=p_*(\barOX(n\Sigma))$ and $\pbarOpoli =p_*(\barOX(\infty\Sigma))$.
We will consider, for a finite set $A$, the completed tensor products
\[
    \pbarOm{A} = \otimes^*_A \pbarO \qquad \pbarOpolim{A} = \otimes^*_A \pbarOpoli
\]

\index{$\pbarOm{A}$}\index{$\pbarOpolim{A}$}The first one is a $w$-compact (c.f. Section \ref{ssec:compactsheaves}) sheaf of topological algebras and on any well covered affine subset is a \QCCF sheaf. By remark \ref{oss:prodottiregolaristar}
the second sheaf is a regular sheaf (see Section \ref{ssec:ccgeregolari}) and an \exhaustion  of this sheaf is given by the subsheaves $\pbarOm{A}(n) = \otimes^*_A \pbarO(n)$\index{$\pbarOm{A}(n)$} which are locally isomorphic to $\pbarOm A$. 
Moreover by Remark \ref{oss:prodottiregolaristar} this product coincide with the $cg$-product and this sheaf has a natural structure of \ccg-topological algebra (see Section \ref{ssec:ccgecg}). In this case this product turns out to be also continuous as can be easily checked locally using the description of the next Section. 

\subsection{Local description}\label{ssec:descrizionelocalepbarO}
Locally, on a well covered neighborhood, using the notation of section \ref{ssec:descrizionelocale1}, we have that $\pbarOm{A}$ and $\pbarOpolim{A}$ are finite direct products of sheaves of the form 
$$
\pbarOK{I_{j_1}}\tensor{*}\cdots\tensor{*}\,\pbarOK{I_{j_\ell}} 
\qquad\mand\qquad
\pbarOpoliK{I_{j_1}}\tensor{*}\cdots\tensor{*}\,\pbarOpoliK{I_{j_\ell}}$$where $\ell$ is the cardinality of $A$.
The first tensor product is isomorphic to 
$$
\pbarOK{I_{j_1}}\tensor{*}\cdots\tensor{*}\,\pbarOK{I_{j_\ell}} 
=\limpro_n\frac{\calO_S[t_1,\dots,t_\ell]}
{\big(\grf_{j_1}(t_{1}),\dots,\grf_{j_\ell}(t_{\ell}) \big)^n}.
$$
The second tensor product, as a sheaf, is isomorphic to 
$$
\pbarOpoliK{I_{j_1}}\tensor{*}\cdots\tensor{*}\,\pbarOpoliK{I_{j_\ell}}=\pbarOK{I_{j_1}}\tensor{*}\cdots\tensor{*}\,\pbarOK{I_{j_\ell}} \big[\grf^{-1}\big]
$$
where $\grf=\grf_{j_1}(t_{1})\cdots \grf_{j_\ell}(t_{\ell})$. 
Notice that the topology on the last tensor product is not the one given by the \noz (neighborhoods of zero, c.f. Section \ref{app:top}) of $\pbarOK{I_{j_1}}\tensor{*}\cdots\tensor{*}\,\pbarOK{I_{j_\ell}}$ but by sums $\sum _{h\geq 0}U_h \grf^{-h}$ where $U_h$ is a \noz of $\pbarOK{I_{j_1}}\tensor{*}\cdots\tensor{*}\,\pbarOK{I_{j_\ell}}$. It follows easily from this description that also the product on $\pbarOpolim{A}$ is continuous and that $\pbarOpolim{A}$ is the localization of $\pbarOm{A}$ at the ideal generated by
$\calI_{\Sigma}\tensor{*}\cdots\tensor{*}\calI_{\Sigma}$ (see Section \ref{ssec:localization}).

In the affine and well covered case there is a natural basis basis of $\pbarOm{A}$ and $\pbarOpolim{A}$ which be described in a similar way to what we have done in the case of $\pbarO$ in Section \ref{sssec:basespbarO}.  Indeed, denote by $w_\ell$ with $\ell \in \mZ$ the topological basis constructed in that Section and let $N_h$ be such that $w_\ell\in \pbarO(h)$ for $\ell\geq N_h$ so that $w_\ell$ for $\ell<N_h$ is a basis of $\pbarOpoli/\pbarO(h)$. Define the tensor products
$$ w_{\underline \ell}=\otimes_{a\in A} w_{\ell_a} $$
where $\underline \ell=(\ell_a)_{a\in A}\in \mZ^A$. Fixed $h$, the elements $w_{\underline \ell}$ with $N_h<\ell_a$ for all $a\in A$, are a topological basis of $\pbarOm{A}(h)$. Indeed a \fsonoz of $\pbarO(h)$ is given by the subspaces $U_{h,k}$, defined as
$$\begin{array}{cl} \pbarO(k)\tensor{*}\pbarO(h)\cdots \tensor{*} \pbarO(h)+\pbarO(h)\tensor{*}\pbarO(k)\cdots \tensor{*} \pbarO(h)+\cdots +\pbarO(h)\tensor{*}\cdots \pbarO(h) \tensor{*} \pbarO(k)\end{array} $$
where $k<h$. The quotients $\pbarOm{A}(h)/U_{h,k}$ is free and the element $w_{\underline \ell}$ with $N_h<\ell_a\leq N_k$ are a basis of this quotient. In particular notice that the modules $\pbarOm{A}(h)$ are a product of countable many copies of $\calO_S$. 

\subsection{Forms and differential operators}\label{ssec:operatoridifferenziali} We denote by $\pbarOmega$ the sheaf constructed constructed as in Section \ref{ssec:definizionefascibarF}  from the sheaf of relative $1$-forms $\Omega_{X/S}$. The sheaf $\pbarOmega$ \index{$\pbarOmega$} co-represents the functor of continuous derivations: more precisely there is a universal continuous derivation $d : \pbarO \to \Omega^1_{\overline{\Sigma}}$ which induces an isomorphism
\[
    \Hom^{\text{cont}}_{\pbarO} ( \pbarOmega, \calM ) = \Der^{\text{cont}}_{\calO_S}(\pbarO,\calM)
\]
for any $\pbarO$-topological sheaf $\calM$ (see Section \ref{ssec:topologicalalgebrasandmodule} for the definition of topological sheaf over a topological algebra). 
When we restrict $\pbarOmega$ to a well covered open subset with a coordinate $t$ we have
\[
    \pbarOmega \simeq \pbarO dt.
\]
In particular it is a $w$-compact, \QCCF sheaf. Similarly, we consider the sheaves of 
vertical vector fields $\pbarF{T} = \calHom_{\pbarO}(\pbarOmega,\pbarO)$\index{$\pbarF{T}$} and the algebra of differential operator $\pbarD$\index{$\pbarD$} defined as the sheaf of subalgebras $\pbarD\subset \calHomcont_{\calO_S}(\pbarO,\pbarO)$ generated by $\pbarO$ and $\pbarF{T}$. This is the colimit of the subsheaves $\pbarD^{\leq n}$ \index{$\pbarD$:$\pbarD^{\leq n}$}
of differential operators of degree less or equal to $n$. On the sheaves $\pbarD^{\leq n}$ we put the topology induced by the inclusion in 
 $\calHomcont_{\calO_S}(\pbarO,\pbarO)$ and on $\pbarD$ we put the colimit topology, which is finer that the topology given by the inclusion  in 
 $\calHomcont_{\calO_S}(\pbarO,\pbarO)$. 
The sheaves $\pbarD^{\leq n}$ are $w$-compact and $\pbarD$ is a regular sheaf and a \ccg-topological algebra. 

Over a well covered open subset $U \subset S$ with a coordinate $t$ there is an isomorphisms of topological sheaves $$\pbarF{T}\simeq \pbarO\,\partial_t \quad\mand\quad\pbarD\simeq \bigoplus_{n \geq 0} \pbarO\,\partial_t^n.$$ 
By construction $\pbarO$ is a left $\pbarD$-module and, as in the classical case, $\pbarOmega$ is a right-$\pbarD$-module where locally the action is given by 
$f(t)dt\cdot \partial_t=-\partial_tf(t) dt$. In both cases these are $\pbarD$ \cgmod modules.

Similar definition can be given in the case of the sheaf $\pbarOm{A}$. 
In particular we define 
$$
\pbarDm{A}=\tensor{cg}_A\pbarD
$$
which is a \ccg-topological algebra and $\pbarOm{A}$ is a \ccgmod module over this algebra. On a well covered neighborhood, using the notation of Section \ref{ssec:descrizionelocalepbarO}, we have 
\begin{equation}\label{eq:DAlocale}
\pbarDm{A}=\bigoplus_\gra \pbarOm{A}\partial^{\gra}
\end{equation}\index{$\pbarD$:$\pbarDm{A}$}
where $\partial^\gra=\partial_{t_1}^{\gra_1}\cdots\partial_{t_n}^{\gra_n}$
if $A=\{1,2,\dots,n\}$. We define $(\pbarDm{A})^{\leq m}$ as subspace of $\pbarDm{A}$ of operator of total degree less or equal to $m$, that is $\bigoplus_\gra \pbarOm{A}\partial^{\gra}$
with $\sum \gra_i\leq m$. These are w-compact subsheaves of $\pbarDm{A}$ and their colimit is equal to $\pbarDm{A}$. 

Completely similar considerations can be given for $\pbarDpoli$. The only difference is that in this case $\pbarDpoli^{\leq n}$ \index{$\pbarD$:$\pbarDpoli$} is not anymore $w$-compact but it is a regular sheaf.  However, also in this case, $\pbarDpoli$ has a natural \exhaustion and it is a \ccg-algebra.

\begin{remark}[Behaviour under pullback]\label{rmk:pullbacktangentdifferential}
	Consider a morphism $\varphi : S' \to S$ and let $\Sigma'$ be the collection of sections of $X'=X\times_S S' \to S'$ induced by $\Sigma$. It follows from Remark \ref{rmk:pullbackcompletion} that we have $\hat{\varphi}^*T_{\oSigma} = T_{\overline{\Sigma}'}$ which implies $\hat{\varphi}^*\pbarD = \calD_{\overline{\Sigma}'}$ as well. 
\end{remark}

\subsection{Ordered tensor products}\label{ssec:prodottiordinatiOeD}
In this Section we make some remarks on the ordered tensor products of the rings introduced so far. 
Given $\calO_S$-algebras $\calB$ and $\calC$ the tensor product $\calB\otimes \calC$ is an $\calO_S$-algebra.
If $\calB$ and $\calC$ are topological algebras one would like to extend multiplication to the completion of this tensor product. 
In general there are technical problems in doing this, mainly due to the fact these topological tensor product are not associative.
In Section \ref{ssec:topologicalalgebrasandmodule} we have given some results that allow us to construct a multiplication on the tensor product; in this Section we give another result in this direction regarding ordered tensor products. The hypothesis we will use may seem a bit awkward, but they are satisfied by 
the objects we are interested in. One should think about the following proposition keeping in mind that we will apply these results to the topological algebras $\pbarOpoli,\pbarDpoli$. 

Let $\calB$ and $\calC$ two $\calO_S$-algebras equipped with a topology (we are not assuming they are topological algebras in the sense of Section \ref{ssec:topologicalalgebrasandmodule}), assume that
\begin{itemize}
	\item for all open subset $W\subset S$ and all section $b\in \calB(W)$ the left and right product by $b:\calB|_W\lra \calB|_W$ are continuous (in this case we say the product is continuous in both variables);
	\item $\calC$ is a \ccg-topological algebra (see Definition \ref{def:ccgalgmod})
	so that we have an \exhaustion $\calC=\limind_{l \in \mN} \calC_\ell$ with $\calC_\ell$ compact;
	\item $\calB\tensor{\ra} \calC_\ell\simeq \calB\tensor{r}\calC_\ell$ as plain sheaves for all $\ell$; 
	\item $\calB\tensor{\ra} \calC=\limind \calB\tensor{\ra}\calC_\ell$ as a topological sheaves.
\end{itemize}

These hypothesis, by Remarks \ref{oss:tensoriisomorfifasci},
\ref{oss:prodottiregolarifreccetta} are satisfied, for example, if $\calB$ or $\calC$ are equal to $\pbarOpolim{A}$ or $\pbarDpolim{A}$.
Under the assumptions above the tensor product $\calB\tensor{\ra}\calC$ has a natural structure of algebra whose multiplication is continuous in both variables 
(the proof of this claim is completely analogous to the proof of point b) in the Lemma \ref{lem:BCMN} below which treats the more general case of modules).
In particular the topological modules
$$
\pbarOpolim{A_1}\tensor{\ra}\cdots \tensor{\ra}\pbarOpolim{A_k} 
\quad \mand \quad
\pbarDpolim{A_1}\tensor{\ra}\cdots \tensor{\ra}\pbarDpolim{A_k} 
$$
have a natural structure of algebras, and the product is continuous in both variables.  

\begin{lemma}\label{lem:BCMN} Assume that $\calB$ and $\calC$ satisfy the assumptions above. 
	Let $\calM$ be a $\calB$ module, $\calN$ be a $\calC$ module and assume that 
	\begin{itemize}
		\item $\calN$ is a \ccgmod module (see Definition \ref{def:ccgalgmod})
		\item the action of $\calB$ on $\calM$ is continuous in both variables (see Section \ref{ssec:topologicalalgebrasandmodule});
	\end{itemize}
	Then $\calM\tensor\ra\calN$ has a natural structure of  $\calB\tensor\ra\calC$-module such that
	\begin{enumerate}[\indent a)]
		\item for all open subsets $W\subset S$ and for all $m\in\calM(W)$ and $n\in \calN(W)$ the map $f\mapsto f\cdot (m \otimes n)$ from $\calB|_W\tensor\ra \calC|_W$ to $\calM|_W\tensor\ra \calN|_W$ is continuous;
		\item for all open subset $W\subset S$ and all    $f\in (\calB\tensor{\ra}\calC)(W)$ it is defined the multiplication by $f$ from $\calM|_W\tensor{\ra}\calN|_W$ to $\calM|_W\tensor{\ra}\calN|_W$ and it is continuous. The same holds by replacing $\otimesr$ with $\otimes^r$;
		\item the action of $\calB\tensor{\ra}\calC$ on $\calM\tensor{\ra}\calN$ described in the previous point makes $\calM\tensor{\ra}\calN$ a $\calB\tensor{\ra}\calC$-module;
		\item if $\calM\tensor{\ra}\calN=\calM\tensor{r}\calN$ as plain sheaves (by Remark \ref{oss:tensoriisomorfifasci} this condition is satisfied, for example, if $\calN$ is \QCCF) then 
		for all $W$ and all sections $p\in (\calM\tensor{\ra}\calN)(W)$ the multiplication by $p$ from $\calB|_W\tensor{\ra}\calC|_W$ to $\calM|_W\tensor{\ra}\calN|_W$ is continuous.
	\end{enumerate}
\end{lemma}
\begin{proof}
	Recall the topologies of Section \ref{ssec:topologytensorprodsheaves} and that our topologies are defined at the level of the tensor product as presheaves. We will denote by $\otimes^{\mathrm{pre}}$ the tensor product as presheaves. We start by noticing that the fact that $\calN$ is a $\calC$ \ccgmod module implies that the action of $\calC$ on $\calN$ is continuous in both variables. 
	
	Fix an open subset $W \subset S$ and sections 
	$m\in \calM(W)$ and $n\in \calN(W)$, then acting on  $m$ and $n$ defines continuous morphisms $\calB|_W \to \calM|_W$ and $\calC|_W\to \calN|_W$ (here we use the continuity in the first variable). By functoriality of the topological tensor products these morphism induce a morphism $\calB|_W \otimes \calC|_W \to \calM|_W\otimes\calN|_W$ which is continuous for both the $r$ or the $\ra$ topology, in addition, the diagram
	\begin{equation}\label{eq:prodottofreccetta1}
	\xymatrix{
		\calB|_W \tensor{r}   \calC|_W \ar[rr]^{\cdot (m\otimes n)}\ar[d]   &  &  \calM|_W \tensor{r}   \calN|_W \ar[d] \\
		\calB|_W \tensor{\ra} \calC|_W \ar[rr]^{\cdot (m\otimes n)}       &  &  \calM|_W \tensor{\ra} \calN_W
	}
	\end{equation}
	is commutative. Indeed all maps are continuos and the restriction of the two composition maps to $\calB|_W\otimes \calC|_W$, which is dense, is the same. 
	In particular for every $f\in (\calB\tensor{\ra}\calC)(W)$ the product $f\cdot (m\otimes n)$ is defined. Hence for all $W'\subset W$ open subsets we have a map
	$f\cdot : \calM(W')\otimes \calN(W') \lra  (\calM\tensor{\ra}\calN)(W')$.  
	These maps are compatible with restrictions, hence they define a morphism of presheaves 
	$$
   f\cdot_{\mathrm{pre}} : \calM|_W\tensor{\mathrm{pre}} \calN|_W \lra  (\calM\tensor{\ra}\calN)|_W.  
	$$
	where the tensor product on the left hand side is the presheaf $W'\mapsto \calM(W')\otimes \calN(W')$ without any completion. 
	Claim (b) is the statement that this map is continuous with respect to the $\ra$-topology. We begin by proving the analogous statement for the $r$-topology. Indeed we have that
	$f\in (\calB\tensor{\ra} \calC_\ell)(W) =(\calB\tensor{r}\calC_\ell)(W)$ for some $\ell$, by our assumption on $\calB$ and $\calC$. Being $\calN$ a $\calC$ \ccgmod module we have that the product $\calC_\ell\otimes \calN\lra \calN$ is continuous, hence it induces a continuous morphism $\calB\otimes \calC_\ell \times \calM\otimes \calN\lra \calM\otimes \calN$ where we consider all the three tensor product with the $r$-topology. In particular the product by $f$ from $\calM|_W\otimes \calN|_W$ to $(\calM\tensor{r}\calN)|_W$ is continuous if we put on the domain the $r$-topology and a fortiori, the multiplication by $f$ from $\calM|_W\otimes \calN|_W$ to $(\calM\tensor{\ra}\calN)|_W$ is continuous   if we put on the domain the $r$-topology.
	
	We now prove point (b) for the $\ra$-topology. Let  $\calQ$ be a \noz of $\calM\tensor{\ra}\calN$ such that  $\calQ\supset \calM\otimes \calV$ for $\calV$ a \noz of $\calN$ and, for all open subset $W'$, and all $n$ in $\calN(W')$, there exist a noz $\calU_n$ of $\calM$ such that 
	$\calQ\supset \calU_n|_{W'}\otimes n$. 
	We have the following diagram
	$$
	\xymatrix{
		\calM|_W\tensor{\mathrm{pre}}     \calN|_W\ar[rr]^{f\cdot_{\mathrm{pre}} }\ar@{=}[d]  & &  
		(\calM\tensor{r}  \calN)|_W\ar[d]\\
		\calM|_W\tensor{\mathrm{pre}}     \calN|_W\ar[rr]^{f\cdot_{\mathrm{pre}}}             & &  (\calM\tensor{\ra}\calN)|_W.
	}
	$$
	The commutativity of this diagram follows from the commutativity of the diagram \eqref{eq:prodottofreccetta1}. 
	Since the top horizontal map is continuous when we put on the domain the $r$-topology, there exists a \noz $\calV'$ of $\calN$ such that $f\cdot_{\mathrm{pre}}(\calB\otimes \calV')\subset \calQ$. 	
	Now fix an open subset $W'$ of $W$ and $n\in \calN(W')$ and we prove there exists  a \noz $\calU'_n$ of $\calM$  such that $f\cdot_{\mathrm{pre}} (\calU'_n|_{W'}\otimes n)\subset \calQ|_{W'}$. Recall that we can assume $f\in (\calB\tensor{\ra} \calC_\ell)(W)$ for some $\ell$. 
	Since the product $\calC_\ell\times \calN\lra \calN$ is continuous there exists a \noz $\calV_n$ of $\calC_\ell$ such that $\calV_n|_{W'}\cdot n\subset \calV|_{W'}$.
   Recall also that $\calB\tensor{\ra} \calC_\ell=\calB\tensor{r} \calC_\ell$, hence $f\in (\calB\tensor{r}   \calC_\ell)(W)$. Let $\overline f$ be the image of $f$ in $\big(  \calB\tensor \ra (\calC_\ell/\calV_n)\big)(W')$. By the definition of $r$-topology we have 
   $$
   \frac{\calB\tensor r \calC_\ell}{\calB\tensor r \calV_n}\simeq \calB\tensor r \frac {\calC_\ell}\calV_n\simeq \calB\otimes \frac {\calC_\ell}\calV_n
   $$
   (we used that we are specifically working with the $r$ topology for both isomorphisms, and we used the fact that $\calC_\ell/\calV_n$ is discrete in the second isomorphism). Since $S$ is topologically noetherian we may check continuity locally and assume that $\overline{f} \in \calB \otimes^{\mathrm{pre}} (\calC_\ell/\calV_n)$ and write
   \[
		f = x + \sum_{j} b_j\otimes c_j
   \]
   with $x \in \calB \tensor{r}\calV_n(W)$, $b_j \in \calB(W)$ and $c_j \in \calC_\ell(W)$. Since multiplication by $b_i$ as a morphism $\calM \to \calM$ is continuous we can find open subsheaves $\calU'_j \subset \calN$ such that $b_j\calU'_j|_{W'} \subset \calU_{c_jn}|_{W'}$. Considering $\calU'_n = \cap\, \calU'_j$ we get that $(x + \sum_j b_j \otimes c_j)(\calU'_n|_{W'}\otimes n) \subset \calM \otimesr \calV|_{W'}+ \sum_j \calU_{c_jn} \otimes (c_jn) \subset \calQ$.
    This proves that the map $f\cdot_{\mathrm{pre}}$ is continuous. Being the right hand side a complete sheaf, we finally get a continuous map 
    	$$
    f\cdot : (\calM\tensor{\ra} \calN)|_W \lra  (\calM\tensor{\ra}\calN)|_W.  
    $$
    In particular this proves that there is an action of $\calB\tensor\ra\calC$ on $\calM\tensor\ra \calN$ and that this action has properties a) and b).  
    	
	c) By continuity of the two maps it is enough to prove that 
	$$
	(fg)\cdot (m\otimes n)=f\cdot(g\cdot(m\otimes n))
	$$
	for $m \in M$ and $n\in N$. By the commutativity of diagram \eqref{eq:prodottofreccetta1} this product are the same as the one defined using the $r$-tensor product. 
	We can assume that $f,g,fg\in B\tensor{r}C_\ell$ for a big enough $\ell$. Hence the claim follows from the continuity of the action of $C_\ell$ on $N$. 
	
	d) The proof is completely analogous to that of point b)
\end{proof}

We want to extend the above result to the case of multiple products. This is not really strictly necessary for our purposes but it makes some arguments more uniform. 

We assume $\calB_1,\dots\calB_k$ are ccg-topological algebras and let $\calB_i=\limind \calB_{i,\ell}$ be an exhaustion of $\calB_i$. Assume furthermore that the sheaves $\calB_{i,\ell}$ are \QCCF, so that the iterative version of the condition $\calB \otimesr \calC_l \simeq \calB \tensor{r}\calC_l$ above holds. Then 
$$
\calB=\calB_1\tensor \ra \cdots \tensor \ra \calB_k
$$
has a structure of $\calO_S$ algebra and the product is continuous in both variables. This is easily proved by induction using the previous Lemma. This applies for example to the case of the sheaves of algebras like $\pbarOpolim A$ and $\pbarDpolim A$. 

Now assume $\calM_i$ is \ccgmod $\calB_i$ module. Then if $\calM_i$ is \QCCF, we can argue by induction and prove that the module
$$
\calM=\calM_1\tensor \ra \cdots \tensor \ra \calM_k
$$
has a natural structure of $\calB$-module and the product is continuous in both variables. The hypothesis that $\calM_i$ is \QCCF will be satisfied in all our examples, however it would be preferable not to use this hypothesis. 

Without assuming that the modules are \QCCF we get the following slightly weaker result.

\begin{lemma}\label{lem:BCMN2}
Assume that $\calB_i$ are $\calO_S$ ccg-topological algebras satisfying all hypotheses above. Assume also that $\calM_i$ is a \ccgmod $\calB_i$ module. Then $\calM$ has a natural structure of $\calB$-module such that the product is continuous in the second variable. 
Moreover for all open subset $W$ and all section $m_i\in \calM_i(W)$ the product by $m_1\otimes\dots\otimes m_k$  from $\calB|_W$ to $\calM|_W$ is continuous.   
\end{lemma}
\begin{proof}
The proof is by induction on $k$ and will follow the same lines of the proof of the previous Lemma. We sketch only the main steps.

Let $\calB'$ the $\ra$ tensor product of the first $k-1$ algebras and $\calM'$ the product of the $k-1$ modules. 

As in the previous proof we fix an opens subset $W$ and sections $m_i\in\calM_i(W)$. Then by functoriality the product by $m=m_1\otimes\dots\otimes m_k$ is a well defined continuous map
$$
\cdot m:\calB|_W\lra \calM|_W.
$$
Hence for every $f\in \calB(W)$ we have a well defined map 
$$
\cdot f:\calM_1\tensor {\mathrm{pre}}\dots\tensor{\mathrm{pre}}\calM_k\lra\calM.
$$
We need to check that this map is continuous and we proceed exactly as in the previous lemma, using the fact that by induction the product $\calB' \times \calM'\lra \calM'$ is continuous in the second variable. 
\end{proof}

\subsection{Poles along the diagonal}\label{ssec:polesalongdiagonal} In the following discussion the assumption that  $S$ is quasi compact is important since allows to check continuity of a morphism locally. We already noticed that $\pbarOq = \pbarO\otimes^*\pbarO$ is a $w$-compact topological algebra and its topology is defined by ideals. We define $\pbarOq(-\Delta)=\calI_\Delta$\index{$\calI_\Delta$} to be the kernel of the multiplication map  $\pbarOq \to \pbarO$. 

Using the convention of Section \ref{ssec:localization}, given a $\pbarOq$-topological module $\calF$ who's topology is $\pbarOq$-linear we define  $\calF(n\Delta) = \calF(n\calI_\Delta)$ and $\calF(\infty\Delta) = \calF(\infty\calI_\Delta)$\index{$\calF(n\Delta),\calF(\infty\Delta)$}.

By what we noticed in Section \ref{ssec:localization}, $\pbarOq(\infty\Delta)$ is a topological algebra and $\calF(\infty\Delta)$ is $\pbarOq(\infty \Delta)$-module.

\begin{remark}
From the local triviality of $\pbarOq(n\Delta)$ we see that the sheaves $\calF(n\Delta)$
are locally homeomorphic to $\calF$, in particular if $\calF$ is a \QCC sheaf, they are also \QCC sheaves. Moreover local triviality implies that the natural map
$$
	\calF \otimes_\pbarOq \pbarOq(n\Delta)\lra \calHom_{\pbarOq}(\pbarOq(-n\Delta),\calF)
$$
is an isomorphism of sheaves. Finally notice that by remark \ref{oss:continuitamorfismimodulitopologici} every morphism 
from $\pbarOq(n\Delta)$ to $\calF$ is continuous. Hence we have 
$$
\calF(n\Delta)=
\calHomcont_{\pbarOq}(\pbarOq(-n\Delta),\calF)\subset
\calHomcont_{\calO_S}(\pbarOq(-n\Delta),\calF).
$$
In particular $\calF(n\Delta)$ it is defined as a subsheaf of the RHS sheaf by the continuous equation imposing $\pbarOq$-linearity. In particular it is a closed subsheaf, and the topology coincide with the subsheaf topology induced by the right hand side.
\end{remark}

\begin{remark}[Behaviour under pullback]\label{rmk:pullbackpolidiagonal}
	Consider a morphism $\varphi : S' \to S$ and let $\Sigma'$ be the collection of sections of $X'=X\times_S S' \to S'$ induced by $\Sigma$. It follows by local inspection that $\hat{\varphi}^*\calI_\Delta = \calI_{\Delta'}$, by this the fact that $\hat{\varphi}^*\pbarO = \calO_{\overline{\Sigma}'}$, and by the fact that $\hat{\varphi}^*$ commutes with $*$-tensor products (see Proposition \ref{prop:pullbacktensorproduct}) it follows that $\hat{\varphi}^*(\pbarOq(n\Delta)) = \calO^2_{\overline{\Sigma}'}(n\Delta')$,  $\hat{\varphi}^*(\pbarOq(\infty\Delta)) = \calO^2_{\overline{\Sigma}'}(\infty\Delta')$ as well as  $\hat{\varphi}^*(\calF(n\Delta)) = \calF_{\overline{\Sigma}'}(n\Delta')$,  $\hat{\varphi}^*(\calF(\infty\Delta)) = \calF_{\overline{\Sigma}'}(\infty\Delta')$ for any $\pbarOq$-linear topological module $\calF$.
\end{remark}

\subsection{Poles along diagonals in the multivariable case}\label{ssec:polimolti}
Let $a\neq b$ be indices in $A$ we will consider ideals $\calI_{\Delta_{a=b}} \subset \pbarOm{A}$ which analogous to $\calI_{\Delta}$: let $A\surjmap B$, where $B$ has one element less than $A$, be any surjective map which collapses $a$ and $b$ and let
$\calI_{\Delta_{a=b}}$ be the kernel of the corresponding surjection $\pbarOm{A} \to \pbarOm{B}$. This is a locally free ideal of rank $1$ which of course does not depend on the choice of $B$. We can define the topological algebras and modules
\[
\pbarOm{A}(n\Delta_{a=b}),\quad  \calF(n\Delta_{a=b}), \quad \pbarOm{A}(\infty\Delta_{a=b}),\quad \calF(\infty\Delta_{a=b})
\]
as we did above in the case of $\text{card}\, A=2$, by localizing this ideal. Notice that if $\calF$ is a left (resp.\ right) $\pbarDm{A}$-module then also these will have a natural structure of left (resp.\ right) $\pbarDm{A}$-module. 

We can iterate this construction and define 
$$
\calF\left(\infty(\Delta_{a_1=b_1}\cup\dots\cup \Delta_{a_k=b_k})\right) = \calF(\infty\Delta_{a_1=b_1})\dots(\infty\Delta_{a_k=b_k}).
$$ 
Different choices for the ordering $(a_1,b_1),\dots,(a_k,b_k)$ lead to canonically isomorphic algebras and modules.

As in Section \ref{sssez:art2notazionidiagonali}, more in general, given a surjective map $\pi:A\surjmap B$, we can consider the divisor 
$$
\Divisorepi (\pi)=\bigcup_{\pi(a)\neq \pi(b)}\Delta_{a=b}
$$\index{$\nabla(\pi)$}
of $X^A$ and the ideal $\calI_{\Divisorepi(\pi)}$ defined as the product of the ideals $\calI_{\Delta_{a=b}}$. Define  $$
\calF(n\Divisorepi(\pi)), \quad \calF(\infty\Divisorepi(\pi))
$$
by localizing this ideal. We notice that 
$
\calF(\infty \Divisorepi (\pi))= \calF(\infty\Delta_{a_1=b_1})\cdots(\infty\Delta_{a_k=b_k})
$
\index{$\calF(\infty\nabla(\pi)),\calF(\infty\nabla(J/I))$}where on the right hand side we considered all couples $a_\ell,b_\ell$ such that $\pi(a_\ell)\neq\pi(b_\ell)$.

\begin{remark}[Taylor expansion]\label{ssez:Taylor}
Finally we recall the form of the Taylor expansion we will use very often in the paper.
Assume $S$ is well covered and let $t$ be a local coordinate. Set $u=t\otimes 1$ and $v=1\otimes t$ and  for $f\in \pbarO$ set also $f(u)=f\otimes 1$ and $f(v)=1\otimes f$. Then we have the following Taylor expansion 
$$
f(u)-f(v)=\sum_{n=1}^{N-1} \frac 1{n!}(u-v)^n \, (\partial_t^n f)(v) \in \pbarOq/\calI_\Delta^N.
$$
Its proof and a more general formula can be found in \cite[2.2.3]{cas2023}.
\end{remark}

\subsection{Right and left expansion of poles along the diagonal}
The following Lemma is essentially a version of Lemma 2.3.13 in \cite{cas2023} and will be crucial for our constructions.

In the discussion below we are going to consider on $\pbarOpoli\otimes\pbarOpoli$ also the $r,\ra$ topologies. By Remark \ref{oss:prodottodialgebrehat} since these are \QCCF sheaves, we have the following isomorphism of sheaves (as opposed to topological sheaves):
$$
\pbarOpolir\stackrel{\text{def}}{=}\pbarOpoli \tensor {r}\pbarOpoli\simeq\pbarOpoli \tensor {\rightarrow} \pbarOpoli.
$$
A similar remark and notation holds for left tensor products. Moreover $\pbarOpolir$ and $\pbarOpolil$, by \ref{oss:prodottodialgebrehat} are topological algebras. Finally we have morphism of algebras
$$
\pbarOpoliq\subset \pbarOpoli\tensor {\rightarrow} \pbarOpoli.
$$

\begin{lemma}\label{lem:extensiondiagonal}
	There are unique morphisms of algebras
	\[
	Exp^r:\pbarOpoliq(\infty\Delta) \to \pbarOpoli\tensor{\ra} \pbarOpoli \qquad 
	Exp^\ell:\pbarOpoliq(\infty\Delta) \to \pbarOpoli\tensor{\leftarrow} \pbarOpoli
	\]
	\index{$Exp^r,Exp^\ell$}which extend the natural maps
	\(
 \pbarOpoliq \subset \pbarOpoli\otimesr\pbarOpoli, \pbarOpoliq \subset \pbarOpoli\otimesl\pbarOpoli
	\)
	Moreover these morphisms are continuous.
\end{lemma}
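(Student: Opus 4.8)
The plan is to obtain $Exp^r$ and $Exp^\ell$ from the universal property of the localization established in Section~\ref{ssec:localization}, once we verify that $\calI_\Delta$ acts invertibly on $\pbarOpoli\tensor{\ra}\pbarOpoli$ (resp.\ on $\pbarOpoli\tensor{\leftarrow}\pbarOpoli$). First I would record the preliminaries: by Remark~\ref{oss:prodottodialgebrehat} the sheaf $\pbarOpolir=\pbarOpoli\tensor{r}\pbarOpoli$ is a topological algebra; by Remark~\ref{oss:tensoriisomorfi}, $\pbarOpoli$ being \QCCF, it coincides with $\pbarOpoli\tensor{\ra}\pbarOpoli$ as an $\calO_S$-sheaf, and by the discussion of Section~\ref{ssec:prodottiordinatiOeD} the product on $\pbarOpoli\tensor{\ra}\pbarOpoli$ is continuous in each variable, so it too is a topological algebra. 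Since the $*$-topology is finer than the $\ra$-topology, the map $\pbarOpoliq=\pbarOpoli\tensor{*}\pbarOpoli\subset\pbarOpolir$ is continuous for the $\ra$-topology on the target, giving a continuous homomorphism of topological algebras $\iota^r:\pbarOpoliq\to\pbarOpoli\tensor{\ra}\pbarOpoli$ which makes the target a $\pbarOpoliq$-topological algebra; symmetrically one gets $\iota^\ell$.

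The heart of the argument is the claim that the image under $\iota^r$ of a local generator of $\calI_\Delta$ is invertible in $\pbarOpoli\tensor{\ra}\pbarOpoli$, equivalently that $\calI_\Delta\otimes_{\pbarOpoliq}(\pbarOpoli\tensor{\ra}\pbarOpoli)\to\pbarOpoli\tensor{\ra}\pbarOpoli$ is an isomorphism (and likewise for $\tensor{\leftarrow}$). This is local, so I would work over a well covered affine $S=\Spec A$ and, using étale invariance (Lemma~\ref{lem:mappeetale1}) and the decomposition of $\overline{\Sigma}^2$ into the blocks $\overline{\Sigma}_{I_j}\times_S\overline{\Sigma}_{I_k}$ (all tensor products commute with finite sums, Remark~\ref{oss:prodottiregolaristar2}), reduce to two cases. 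On an off-diagonal block $j\neq k$ the sections have disjoint images, so $\Delta$ is empty there, $\calI_\Delta$ is the unit ideal, and nothing is to prove. On a diagonal block, after the normalization of Remark~\ref{oss:unsolof} I may assume $\pbarO=\varprojlim_n A[t]/(\varphi^n)$ and $\pbarOpoli=\pbarO[\varphi^{-1}]$ for a monic $\varphi\in A[t]$, and that $\calI_\Delta$ is generated by the nonzerodivisor $\delta=t_1-t_2$ (the computation of Section~\ref{ssec:descrizionelocalepbarO} identifies $\pbarOq/(\delta)$ with $\pbarO$ via the multiplication map). Writing $q(t_1,t_2)=(\varphi(t_1)-\varphi(t_2))/(t_1-t_2)\in A[t_1,t_2]\subset\pbarOpoliq$ for the difference quotient, I claim
\[
q(t_1,t_2)\sum_{k\geq 0}\varphi(t_1)^{-k-1}\varphi(t_2)^k
\]
is a well defined element of $\pbarOpoli\tensor{\ra}\pbarOpoli$ and equals $\iota^r(\delta)^{-1}$. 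Indeed the partial sums of $\sum_k\varphi(t_1)^{-k-1}\varphi(t_2)^k$ are Cauchy for the $\ra$-topology: given a \noz $\calW$ we may assume $\calW$ is closed and contains $\pbarOpoli\otimes V$ for a \noz $V$ of the second factor; as $V\cap\pbarO\supseteq\varphi^m\pbarO$ for some $m$, we have $\varphi(t_1)^{-k-1}\otimes\varphi(t_2)^k\in\pbarOpoli\otimes V$ for all $k\geq m$, so the tail lies in $\calW$. Applying the continuous operator "multiplication by $q$" produces the displayed element, and multiplying by $\iota^r(\delta)$, using continuity of the product in each variable, telescopes to $(\varphi(t_1)-\varphi(t_2))\sum_k\varphi(t_1)^{-k-1}\varphi(t_2)^k=1$ (the remainder $\varphi(t_1)^{-N-1}\varphi(t_2)^{N+1}$ tends to $0$ by the same estimate). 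The $\tensor{\leftarrow}$ case is identical after exchanging the two factors, with inverse $-q(t_1,t_2)\sum_{k\geq 0}\varphi(t_2)^{-k-1}\varphi(t_1)^k$.

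Granting the claim, the universal property of Section~\ref{ssec:localization} applied to the continuous $\pbarOpoliq$-algebra map $\iota^r$ yields a unique continuous $\pbarOpoliq$-linear extension $Exp^r:\pbarOpoliq(\infty\Delta)\to\pbarOpoli\tensor{\ra}\pbarOpoli$; this settles existence and continuity. It is multiplicative: locally $\pbarOpoliq(\infty\Delta)=\pbarOpoliq[\delta^{-1}]$ is generated over $\pbarOpoliq$ by $\delta^{-1}$, on which $Exp^r$ is forced to take the value $\iota^r(\delta)^{-1}$ (since $Exp^r(\delta)Exp^r(\delta^{-1})=Exp^r(1)=1$ and inverses in an algebra are unique), so a $\pbarOpoliq$-module map restricting to an algebra map on $\pbarOpoliq$ and to $\delta^{-1}\mapsto\iota^r(\delta)^{-1}$ is automatically an algebra homomorphism. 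Uniqueness among algebra extensions is then immediate, for any such extension sends $\delta^{-1}$ to $\iota^r(\delta)^{-1}$ and hence agrees with $Exp^r$ on $\pbarOpoliq[\delta^{-1}]=\pbarOpoliq(\infty\Delta)$, hence everywhere as a morphism of sheaves. The construction and uniqueness of $Exp^\ell$ are entirely symmetric.

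The main obstacle is the claim, and within it the point that the geometric series converges for the $\ra$-topology and not merely for the coarser $r$-topology: this is precisely where the $\varphi$-adic topology on the "inner" factor and the colimit structure built into $\pbarOpoli$ have to be played against each other. The reduction steps — étale invariance, the block decomposition of $\overline{\Sigma}^2$, and the single-projective-limit normalization of Remark~\ref{oss:unsolof} — are routine but, as flagged throughout Section~\ref{app:top}, demand some care.
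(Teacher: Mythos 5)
Your proposal is correct but argues the key invertibility step by a route different from the paper's. Where you reduce to a single block, pick a coordinate $t$ so that $\pbarOpoli = \pbarO[\varphi^{-1}]$, and then \emph{directly exhibit} the inverse of $\delta = t_1-t_2$ as the convergent geometric series $q(t_1,t_2)\sum_{k\geq 0}\varphi(t_1)^{-k-1}\varphi(t_2)^k$, checking $\ra$-Cauchyness and the telescoping estimate by hand, the paper instead makes an abstract detour: it observes $\pbarOpoli\otimesr\pbarOpoli=\pbarOpoli\otimes^r\pbarOpoli$, passes to the closed subring $\pbarOpoli\otimes^r\pbarO$ whose topology is the $J$-adic topology for $J=\pbarOpoli\otimes\calI_\grs$, and uses the standard fact that in a complete ring with ideal-generated topology an element is invertible iff it is invertible modulo $J$; reduction modulo $J$ then gives a ring $\calO_{X,\grf}\otimes\calO_X/(\grf)$ in which $\delta$ divides the manifestly invertible $\grf\otimes 1-1\otimes\grf$. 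Both are sound. Your version is more computational and self-contained; it in effect \emph{proves} the explicit formula for $Exp^r(\delta^{-1})$ that the paper only records afterwards, without proof, in Remark~\ref{oss:inversadidelta}. The paper's version is slicker and closer to Lemma~2.3.13 of \cite{cas2023} which it cites.

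One small caveat about your deduction of existence and continuity. You invoke the universal property of $\calF(\infty\calI)$ from Section~\ref{ssec:localization}, which is stated for a target $\calG$ that is a $\pbarOpoliq$-\emph{topological} module, i.e.\ with the product $\pbarOpoliq\times\calG\to\calG$ jointly continuous. But the $\pbarOpoliq$-action on $\pbarOpoli\tensor\ra\pbarOpoli$ is, by the discussion of Sections~\ref{sssec:prodottiordinatigenerale}--\ref{ssec:prodottiordinatiOeD} and Lemma~\ref{lem:BCMN}, only known to be continuous in each variable separately. The construction underlying the universal property in fact only uses that multiplication by a fixed element (specifically by the generator $\delta$ and its inverse) is a homeomorphism, which you do have, so the argument goes through; but as literally stated the universal property is not quite the tool being applied. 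The paper sidesteps the point by verifying continuity directly: since $\pbarOpoliq(\infty\Delta)=\limind\pbarOpoliq(n\Delta)$ with the colimit topology and $\pbarOpoliq(n\Delta)$ is locally $\delta^{-n}\pbarOpoliq$, continuity of the extension reduces, factor by factor, to continuity of the inclusion $\pbarOpoliq\to\pbarOpoli\tensor\ra\pbarOpoli$ followed by multiplication by the fixed element $\iota^r(\delta)^{-n}$. If you wish to keep the universal-property phrasing, a sentence making explicit that you only need multiplication by $\delta$ (and hence $\delta^{-1}$) to be a homeomorphism of $\pbarOpoli\tensor\ra\pbarOpoli$, not joint continuity of the full $\pbarOpoliq$-action, would close the loop.
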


\begin{proof} 
		We prove the claim for the $r$-tensor product since $\pbarOpolir = \pbarOpoli \otimesr \pbarOpoli$ as sheaves of algebras. Since locally on $S$, the sheaf $\pbarOpoliq(\infty\Delta)$ is a localization of $\pbarOpoliq$, uniqueness is clear. To check that this possible unique morphism is well defined we prove that there exists a basis of open subsets $U$ such that the ideal generated by $\calI_\Delta(U)$ in $\pbarOpolir(U)$ is equal to $\pbarOpolir(U)$. This implies the thesis. 
	
	Let $s_0 \in S$. We can assume that $U$ is well covered  \noz of $s_0$ and 
	we can use the local description given in section \ref{ssec:descrizionelocale1}. To simplify the notation we also assume that 
	$\grs_i(s_0)=x_0$ for all $i$, since in the general case is a direct product of algebras obtained in this way. 
	Let  $t$ be a local coordinate. Notice that under this assumption the ideal $\calI_\Delta$ is generated by $\delta=t\otimes 1-1\otimes t$. We need to prove that this element is invertible in $\pbarOpolir(U)$. We argue as in Lemma 2.3.13 of \cite{cas2023}. 
	To prove the claim, it is enough to prove that $\delta$ is invertible in $\pbarOpoli(U) \otimes^r \pbarO(U)$. The right topology is defined by the ideal $\pbarOpoli(U)\otimes \calI_\Sigma(U)$ in $\pbarOpoli(U) \otimes \pbarO(U)$. Hence, it is enough to show that $\delta$ is invertible in 
	$\pbarOpoli(U) \otimes^r\frac{\pbarO(U)}{\calI_\Sigma(U)} $, or even in the ring
	$$B=\calO_X(U)_\grf \otimes \frac{\calO_X(U)}{(\grf)}$$
	where $\grf$ is a generator of $\calI_\Sigma(U)$. Now notice that $\delta$ is also a local generator of the kernel of the multiplication map $\calO_X\otimes\calO_X\to\calO_X$. In particular $\grf\otimes 1-1\otimes \grf=\delta \cdot h $ for some $h\in \calO_X(U)\otimes\calO_X(U)$ and that the left hand side
	is clearly invertible in the ring $B$.
	
     Finally we prove that this morphism is continuous. Since $\pbarOpoliq(\infty\Delta)$ is the colimit of 
     $\pbarOpoliq(n\Delta)$ it is enough to prove that the restriction to  
     $\pbarOpoliq(n\Delta)$ is continuous. Again this is a local statement hence we can assume that the $\pbarOpoliq(n\Delta)=\frac1{\delta^n}\pbarOpoliq$. Since we proved that locally $\delta$ is invertible in $\pbarOpoli\tensor{\ra} \pbarOpoli$ and the product is continuos in both variables in this algebra then the continuity of the morphism follows from the continuity of the map from $\pbarOpoliq$ to $\pbarOpoli\tensor{\ra} \pbarOpoli$.
\end{proof}

\begin{remark}\label{oss:inversadidelta}
We want to describe the inverse of $\delta$ in $\pbarOpoli\tensor\ra \pbarOpoli$
in the case we are in the situation described in \ref{oss:unsolof}. Recall the definition of the functions $\varphi^+_{m,i}(t)$ and $\varphi^-_{m,i}(t)$ from formula \eqref{eq:fni}. Set $u=t\otimes 1$ and $v=1\otimes t$ so that $\delta=u-v$. Since $u-v$ generates the ideal of the diagonal there exists $h(u,v)$ such that 
$$
\varphi(u)-\varphi(v)=h(u,v)\cdot (u-v)
$$
where $\grf(t)=\prod(t-a_i)$. We write an explicit formula for $h(u,v)$. Indeed it is easy to prove by induction that we have the following expression for $h$:
$$ h(u,v)=\grf^-_{0,n-1}(u)\grf^+_{0,0}(v) +\grf^-_{0,n-2}(u)\grf^+_{0,1}(v)+\cdots 
+\grf^-_{0,0}(u)\grf^+_{0,n-1}(v)
$$
Hence in $\pbarOpoli\tensor\ra \pbarOpoli$
 we have 
\begin{align*}
Exp^r\left(\frac 1 {u-v}\right)&= \frac{h(u,v)}{\grf(u)-\grf(v)} =\sum_{m\geq 0} \frac {\grf(v)^{m}h(u,v)}{\grf(u)^{m+1}} \\
&=\sum_{m\geq 0,i=0..n-1} \grf^-_{-m-1,n-i-1}(u) \cdot \grf^+_{m,i}(v)\\
&= \frac{1}{u-a_1}+\frac{v-a_1}{(u-a_1)(u-a_2)}+\frac{(v-a_1)(v-a_2)}{(u-a_1)(u-a_2)(u-a_3)}+\cdots 
\end{align*}
Notice that the series is well defined in $\pbarOpoli\tensor\ra \pbarOpoli$ and that the monomial $\grf^-_{-m-1,n-i-1}(u) \cdot \grf^+_{m,i}(v)$ involves exactly 
the basis  $\grf^+_{m,i}$ with its dual $\grf^-_{-m-1,n-i-1}$ with respect to the residue form, see Section \ref{sssez:basiduali}. Of course we have a similar formula if we 
exchange the $\grf^+_{m,i}$ with the $\grf^-_{m,i}$ (it corresponds to a different ordering of the roots).

Similarly for the left expansion we have 
\begin{align*}Exp^\ell \left(\frac 1 {u-v}\right)
&=-\sum_{m\geq 0,i=0..n-1}  \grf^-_{m,i}(u)  \grf^+_{-m-1,n-i-1}(v) \\ &=-\frac{1}{v-a_n}-\frac{u-a_n}{(v-a_n)(v-a_{n-2})}\cdots
\end{align*}
\end{remark}

We extend the Lemma \ref{lem:extensiondiagonal}  to the multivariable case. First we introduce a notation that will be important through the rest of the paper. If $\pi:A\surjmap B$ is a surjective map we denote by 
$$
\Delta(\pi)^\sharp :\pbarOpolim{A}\lra \pbarOpolim{B}
$$
the continuous morphism of $\calO_S$ algebras defined by $\otimes_{a\in A}f_a \mapsto \otimes_{b\in B}\prod_{a\in A_b} f_a$. We notice that this map extends to a map that we denote by the same symbol
$$
\Delta(\pi)^\sharp :\pbarOpolim{B}(\infty\nabla (\pi))\lra \pbarOpolim A(\infty \nabla (A))
$$
where $\nabla(A)$ is the big diagonal, the union of all possible diagonals (see  Section  \ref{sssez:art2notazionidiagonali} for the convention about diagonals). We denote by the same symbol also the analogous morphism in the case of the rings $\pbarOm A$. 

\begin{lemma}\label{lem:espansionediagonale}
	Let $q:A\lra\{1,\dots,k\}$ be a surjective map. 
Then \begin{enumerate}[\indent a)]
\item there exists a unique morphism of sheaves of algebras 
$$
Exp^r_q:\pbarOpolim{A} (\infty \nabla(q)) \lra 
\pbarOpolim{A_1}\tensor{\ra}\pbarOpolim{A_2}\tensor{\ra}\cdots
\tensor{\ra}\pbarOpolim{A_k}
$$\index{$Exp^r,Exp^\ell$!$Exp^r_q$}
where $A_j=q^{-1}(j)$ which extends the natural morphism from $\pbarOpolim{A} $ to 
$$\pbarOpolim{A_1}\tensor{\ra}\pbarOpolim{A_2}\tensor{\ra}\cdots
\tensor{\ra}\pbarOpolim{A_k}$$. Moreover this morphism is continuous; It is possible to define analogously $Exp^\ell_q$;\index{$Exp^r,Exp^\ell$!$Exp^\ell_q$}
\item if $\pi:B\surjmap A$ and for all $1\leq j \leq k$ 
we denote by $\pi_j:B_j\lra A_j$ the restriction of $\pi$, then
$$
Exp^r_q\Big(\Delta(\pi)^\sharp (f)\Big)=\Delta(\pi)^r \Big(Exp^r_{q\circ\pi}(f)\Big)
$$
where 
$\Delta^r(\pi)=\tensor\ra \Delta(\pi_j)^\sharp:
\pbarOpolim{B_1}\tensor{\ra}\pbarOpolim{B_2}\tensor{\ra}\cdots
\tensor{\ra}\pbarOpolim{B_k}
\lra \pbarOpolim{A_1}\tensor{\ra}\pbarOpolim{A_2}\tensor{\ra}\cdots
\tensor{\ra}\pbarOpolim{A_k}
$.
\end{enumerate}
Similar claims hold for left tensor products. 

\end{lemma}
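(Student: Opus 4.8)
The plan is to follow the proof of Lemma \ref{lem:extensiondiagonal}, keeping careful track of the extra tensor factors. The uniqueness of $Exp^r_q$ is immediate: the assertion is local on $S$, and on a well covered open set $\pbarOpolim{A}(\infty\nabla(q))$ is the localization of $\pbarOpolim{A}$ at the locally principal ideal $\calI_{\nabla(q)}$ (Section \ref{ssec:localization}), so a morphism of algebras out of it is determined by its restriction to $\pbarOpolim{A}$. For existence I would use the universal property of this localization together with the fact, recorded in Section \ref{ssec:prodottiordinatiOeD}, that the target $\calT := \pbarOpolim{A_1}\tensor{\ra}\cdots\tensor{\ra}\pbarOpolim{A_k}$ is a topological algebra whose product is continuous in both variables. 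It then suffices to exhibit a basis of well covered open sets $U$ on which the local generator of $\calI_{\nabla(q)}$ has invertible image in $\calT(U)$. As in the proof of Lemma \ref{lem:extensiondiagonal}, I would reduce to the case in which all the sections involved pass through a single point over the chosen point of $S$ — the general case splitting as a finite product of copies of this one and of trivial factors, coming from diagonals $\Delta_{a=b}$ with $\grs_a(S)\cap\grs_b(S)=\vuoto$, for which $\calI_{\Delta_{a=b}}$ is already the unit ideal — and fix a single local coordinate $t$ as in Remark \ref{oss:unsolof}, so that $\varphi=\prod_\ell(t-a_\ell)$. Then $\calI_{\nabla(q)}$ is generated by the product of the elements $\delta_{a,b}=t_a-t_b$ over the pairs with $q(a)\neq q(b)$, and the task becomes to invert each such $\delta_{a,b}$ in $\calT$.

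This is the crux, and the step I expect to give the most trouble. Fix $a\in A_i$ and $b\in A_j$ with $q(a)\neq q(b)$; after swapping $a,b$ I may assume $i<j$, so that in $\calT$ the factor $\pbarOpolim{A_j}$ sits to the right of $\pbarOpolim{A_i}$. Writing $\varphi(t_a)-\varphi(t_b)=h(t_a,t_b)\,\delta_{a,b}$ with $h$ the polynomial of Remark \ref{oss:inversadidelta}, and using that $\varphi(t_a)=\prod_\ell(t_a-a_\ell)$ is already a unit of $\pbarOpoli$, hence of $\calT$, the inverse of $\delta_{a,b}$ should be given by the series
\[
Exp^r_q\!\left(\frac1{\delta_{a,b}}\right)\;=\;\frac{h(t_a,t_b)}{\varphi(t_a)}\sum_{m\geq 0}\big(\varphi(t_a)^{-1}\varphi(t_b)\big)^m ,
\]
so everything reduces to the convergence of this geometric series in $\calT$. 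This should go exactly as in Remark \ref{oss:inversadidelta}: the $m$-th summand lies in the image of $\pbarOpolim{A_1}\tensor{\ra}\cdots\tensor{\ra}\pbarOpolim{A_j}$, where it is of the form $w_m\otimes\varphi(t_b)^m$ with $w_m$ supported in the $i$-th slot; since $\varphi(t_b)^m\to 0$ in $\pbarOpolim{A_j}$ and a fundamental neighbourhood of zero of an iterated $\ra$-tensor product absorbs an arbitrary left factor against a small right one, the summands tend to $0$, and since neighbourhoods of zero are submodules the partial sums are Cauchy, so the series converges; the continuous inclusion $\pbarOpolim{A_1}\tensor{\ra}\cdots\tensor{\ra}\pbarOpolim{A_j}\hookrightarrow\calT$ then transports the computation to $\calT$. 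The one genuine subtlety here is that one must place in the numerator the variable belonging to the factor which is further to the right — exactly the asymmetry between the two formulas of Remark \ref{oss:inversadidelta}.

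Continuity of $Exp^r_q$ I would prove as in Lemma \ref{lem:extensiondiagonal}: it suffices to check it on each $\pbarOpolim{A}(n\nabla(q))$, which locally is $\delta^{-n}\pbarOpolim{A}$ for $\delta$ a local generator of $\calI_{\nabla(q)}$, and since $\delta$ is invertible in $\calT$ and the product of $\calT$ is continuous in both variables, this reduces to the continuity of the natural map $\pbarOpolim{A}\to\calT$. For part b) I would argue from the uniqueness in a). Both $Exp^r_q\circ\Delta(\pi)^\sharp$ and $\Delta(\pi)^r\circ Exp^r_{q\circ\pi}$ are continuous morphisms of algebras $\pbarOpolim{B}(\infty\nabla(q\circ\pi))\to\pbarOpolim{A_1}\tensor{\ra}\cdots\tensor{\ra}\pbarOpolim{A_k}$: for the first one uses that $\Delta(\pi)^\sharp$ maps $\pbarOpolim{B}(\infty\nabla(q\circ\pi))$ into $\pbarOpolim{A}(\infty\nabla(q))$, because $\Delta(\pi)$ pulls back each component $\Delta_{b=b'}$ of $\nabla(q\circ\pi)$, with $(q\circ\pi)(b)\neq(q\circ\pi)(b')$, either to the whole of $X^A$ or to the component $\Delta_{\pi(b)=\pi(b')}$ of $\nabla(q)$. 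These two morphisms agree on $\pbarOpolim{B}$ by a direct check on elementary tensors (commutativity of the evident square relating $\Delta(\pi)^\sharp$ to the natural maps into the $\ra$-products), and since any morphism of algebras sends the inverse of a local generator $\delta$ of $\calI_{\nabla(q\circ\pi)}$ to the inverse of the common image of $\delta$, they agree on the subalgebra generated by $\pbarOpolim{B}$ and $\delta^{-1}$, which is all of $\pbarOpolim{B}(\infty\nabla(q\circ\pi))$. The statements for $Exp^\ell_q$ follow in the same way, using the left analogue of Remark \ref{oss:inversadidelta}.
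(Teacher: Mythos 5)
Your proof is correct, but the route you take for part a) differs from the paper's. Where you re-derive the invertibility of $\delta_{ab}$ by running the geometric-series expansion of Remark \ref{oss:inversadidelta} again, now with the tensor factors living in the larger product $\calT=\pbarOpolim{A_1}\tensor{\ra}\cdots\tensor{\ra}\pbarOpolim{A_k}$, the paper instead uses Lemma \ref{lem:extensiondiagonal} as a black box: it observes that the map $\pbarOpoli\tensor{\ra}\pbarOpoli\to\pbarOpolim{A_{q(a)}}\tensor{\ra}\pbarOpolim{A_{q(b)}}\hookrightarrow\calT$ sending $f\otimes g$ to $f$ in slot $a$, $g$ in slot $b$ and $1$ elsewhere is a continuous morphism of algebras carrying $\delta=t\otimes 1-1\otimes t$ to $\delta_{ab}$, so that invertibility transports automatically. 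Your computational argument buys you an explicit formula for $Exp^r_q(\delta_{ab}^{-1})$ in $\calT$ but at the cost of re-verifying convergence, and your claim that the $m$-th summand is of the form ``$w_m\otimes\varphi(t_b)^m$ with $w_m$ supported in the $i$-th slot'' is slightly imprecise (the Cauchy kernel $h(t_a,t_b)$ is a sum of separable tensors, not a single one), though after expanding $h$ the conclusion you draw — that the right-hand factors $\varphi(t_b)^m$ tend to zero, hence the partial sums are Cauchy in the $\ra$-topology — is sound. Note also the reduction step you implicitly rely on: after a well-covered localization you may split off a product over pairs whose sections are disjoint, and for those the diagonal ideal is already the unit ideal; this matches the paper's local reduction, so it is fine. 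For part b) your argument is essentially the same as the paper's (agreement on $\pbarOpolim{B}$ plus both sides being morphisms of algebras which send $\delta^{-1}$ to the same inverse), only spelled out in more detail, including the worthwhile observation that $\Delta(\pi)^\sharp$ carries $\pbarOpolim{B}(\infty\nabla(q\circ\pi))$ into $\pbarOpolim{A}(\infty\nabla(q))$.
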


\begin{proof}
As in the previous lemma we are reduced to prove a local statement and we need to prove that the generator of $\calJ_{\Delta_{a=b}}$:
$$\delta_{ab}=1\otimes \cdots \otimes \stackrel{a}t\otimes \cdots \otimes 1-1\otimes \cdots \otimes \stackrel{b}t\otimes \cdots \otimes 1$$  
(the $t$ is position $a$ in the first summand and in position $b$ in the second)
is invertible in the right hand side if $q(a)\neq q(b)$. In the previous lemma we proved that $\delta =t\otimes 1-1\otimes t$ is invertible in $\pbarOpoli\tensor{\ra}\pbarOpoli$. We can embed this 
sheaf in $\pbarOpolim{A_{q(a)}}\tensor{\ra}\pbarOpolim{A_{q(b)}}$ and finally on product on the right hand side. Since the image of $\delta$ is equal to $\delta_{ab}$ we deduce that $\delta_{ab}$ is invertible. This proves a).

To prove b) we notice that the remark is trivial if $f$ has no poles. It then follows by noticing that they are morphism of algebras. 
\end{proof}

\subsubsection{The Cauchy formula}\label{ssez:Cauchyformula}\index{Cauchy formula} We give now a formulation of Cauchy formula in this setting. This will be a local computation. Assume $S=\Spec A$ is well covered and that $t$ is a local coordinate, 
and set, as above, $\delta=t\otimes 1-1\otimes t$. We denote $u=t\otimes 1$ and $v=1\otimes t$ so that $\delta=u-v$. With this notations we have the following formulation of Cauchy formula (the actual Cauchy formula is covered by the case $f\in \pbarO$). We define
$T_r:\pbarOpoli\tensor{\ra} \pbarOpoli \lra \pbarOpoli$ and
$T_\ell:\pbarOpoli\tensor{\la} \pbarOpoli \lra \pbarOpoli$ by 
$$
\Tr(f\otimes g)=\big(\Res_{\Sigma} (fdt) \big) g 
\quad\mand\quad
\Tl(f\otimes g)=\big(\Res_{\Sigma} (fdt) \big) g.
$$\index{$\Tr,\Tl$}
Notice that the these two functions are continuous for the $\ra$ and $\la$ topologies, hence are well defined. Notice that the map $f\otimes g \mapsto \left( \Res_\Sigma(fdt)\right) g$ is defined on $\pbarOpoli\otimes^!\pbarOpoli$, and that our maps $\Tr,\Tl$ are the restrictions of this map to $\pbarOpoli\otimesr\pbarOpoli$ and $\pbarOpoli\otimesl\pbarOpoli$, respectively.

\begin{proposition}[Cauchy Formula]\label{prop:Cauchy}Let $f\in \pbarOpolim 2$, then
 $$
 T_r(f\cdot Exp^r(\delta^{-1}))-  T_\ell(f\cdot Exp^\ell(\delta^{-1}))=\Delta^\sharp(f).
 $$   
\end{proposition}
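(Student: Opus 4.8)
The plan is to place myself in the local model already fixed in the statement and in Remark~\ref{oss:unsolof} and Section~\ref{sssez:basiduali}: $S=\Spec A$ is well covered with coordinate $t$, $\varphi=\prod_{i=1}^n(t-a_i)$, $\delta=u-v$ with $u=t\otimes 1$, $v=1\otimes t$, and $\{\varphi^+_{a,i}\}_{a\in\mZ,\,0\le i\le n-1}$, $\{\varphi^-_{a,i}\}$ are topological bases of $\pbarOpoli$ which are residue‑dual, i.e.\ $\Res_\Sigma(\varphi^+_{a,i}\varphi^-_{b,j}\,dt)=\delta_{a+b,-1}\,\delta_{i+j,\,n-1}$. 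The strategy is: (i) observe that both sides of the asserted identity are continuous $A$-linear maps $\pbarOpolim{2}\to\pbarOpoli$, hence it suffices to check the identity on a dense subspace; (ii) reduce to a simple tensor $f=g\otimes h$ by $A$-linearity and density of the algebraic tensor product; (iii) compute both terms explicitly using the series for $Exp^r(\delta^{-1})$ and $Exp^\ell(\delta^{-1})$ from Remark~\ref{oss:inversadidelta} together with the duality relation above.

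For (i): the left hand side is the composite of the continuous inclusion $\pbarOpolim{2}\hookrightarrow\pbarOpoli\tensor{\ra}\pbarOpoli$ (the $*$-topology refines the $\ra$-topology), multiplication by the fixed element $Exp^r(\delta^{-1})$, which is continuous because $\pbarOpoli\tensor{\ra}\pbarOpoli$ is a topological algebra, and the continuous map $\Tr$, minus the analogous $\ell$-composite; the right hand side $\Delta^\sharp$ is continuous by construction. So I may assume $f=g\otimes h$ with $g,h\in\pbarOpoli$. I then expand $g=\sum_{a,i}c_{a,i}\varphi^+_{a,i}$ and set $g_+=\sum_{a\ge 0,i}c_{a,i}\varphi^+_{a,i}\in\pbarO$, $g_-=g-g_+$. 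Multiplying the series of Remark~\ref{oss:inversadidelta} by $g\otimes h$ and using continuity of the product,
\[
(g\otimes h)\cdot Exp^r(\delta^{-1})=\sum_{m\ge 0,\ 0\le k\le n-1}\big(g\,\varphi^-_{-m-1,\,n-k-1}\big)\otimes\big(h\,\varphi^+_{m,k}\big),
\]
a convergent series; applying $\Tr$ term by term and using that $\Res_\Sigma\big(g\,\varphi^-_{-m-1,\,n-k-1}\,dt\big)=c_{m,k}$ by the duality relation (which leaves a single surviving term), the right hand term equals $h\sum_{m\ge 0,k}c_{m,k}\varphi^+_{m,k}=h\,g_+$. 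The same computation with $Exp^\ell(\delta^{-1})=-\sum_{m\ge 0,k}\varphi^-_{m,k}(u)\varphi^+_{-m-1,\,n-k-1}(v)$, using $\Res_\Sigma\big(g\,\varphi^-_{m,k}\,dt\big)=c_{-m-1,\,n-1-k}$ and reindexing $a=-m-1\le -1$, $i=n-1-k$, gives $\Tl\big(f\cdot Exp^\ell(\delta^{-1})\big)=-h\,g_-$. Subtracting, the left hand side equals $h\,g_++h\,g_-=hg=\Delta^\sharp(g\otimes h)$, which is the claim.

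The computation itself is routine once the dual bases are in place: the identity just records that the right and left residue expansions of $f\delta^{-1}$ extract, respectively, the regular part $g_+$ and (up to sign) the polar part $g_-$ of $g$, whose sum is $g$. The point I expect to need the most care — and would write out in detail — is the topological bookkeeping of step (i): that the $*$-topology refines the $\ra$-topology so the inclusion $\pbarOpolim{2}\hookrightarrow\pbarOpoli\tensor{\ra}\pbarOpoli$ is continuous, that multiplication by $Exp^r(\delta^{-1})$ and the maps $\Tr,\Tl,\Res_\Sigma$ are continuous so that they may be commuted with the infinite sums above, and hence that verifying the identity on the dense subspace of simple tensors is legitimate.
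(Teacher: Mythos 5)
Your proof is correct and follows essentially the same path as the paper's: reduce by continuity to simple tensors $f=g\otimes h$, expand $g$ in the basis $\varphi^+_{m,i}$, and use the residue duality of Section~\ref{sssez:basiduali} together with the explicit series for $Exp^r(\delta^{-1})$ and $Exp^\ell(\delta^{-1})$ from Remark~\ref{oss:inversadidelta} to see that the two terms pick out the two halves of the expansion of $g$. Your decomposition $g=g_++g_-$ is a harmless cosmetic repackaging of the paper's computation, and your more careful spelling-out of the topological bookkeeping in step (i) (the $*$-to-$\ra$ continuity, continuity of multiplication by $Exp^r(\delta^{-1})$ and of $\Tr,\Tl$) is a sound elaboration of what the paper compresses into the phrase ``by continuity it is enough to prove the formula for $f(u,v)=g(u)h(v)$.''
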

\begin{proof}
We use the notation and the expansion of $\delta^{-1}$ given in Remark \ref{oss:inversadidelta}, so that $\delta=u-v$. By continuity it is enough to prove the formula for $f(u,v)=g(u)h(v)$. Write $g=\sum _{m,i} \gra_{m,i}\grf^+_{m,i}$ with $\gra_{m,i}\in A$. Then, using the duality among the $\grf^-_{m,i}$ and the $\grf^+_{m',i'}$
we get 
\begin{align*}
	\Tr(f \cdot Exp^r(\delta^{-1})) 
&= \sum_{m\geq 0,i} \Res_{\Sigma }\Big( \grf^-_{-m-1, n-i-1}(t)g(t)dt \Big)  h(v) \grf^+_{m,i}(v) \\ &= h(v) \sum_{m\geq 0,i} \gra_{m,i} \grf^+_{m,i}(v)
\end{align*}
Similarly 
\begin{align*}
	\Tl(f \cdot Exp^\ell(\delta^{-1}))
&= - \sum_{m\geq 0,i} \Res_{\Sigma }\Big( \grf^-_{m,i}(t)g(t)dt \Big)  h(v) \grf^+_{-m-1,n-i-1}(v) \\
&= - h(v) \sum_{m\geq 0,i} \gra_{-m-1,n-i-1} \grf^+_{-m-1,n-i-1}(v)
\end{align*}
Putting together the two pieces we get the thesis. 
\end{proof}

\subsection{Operations on modules and \texorpdfstring{$\calD$}{D}-modules}\label{sez:Dmoduli}

Given $\pi:A\surjmap B$ a surjective map as before, the associated diagonal embedding
$\Delta(\pi):X^B\hookrightarrow X^A$ induces maps among quasi coherent sheaves and $D$-modules on the two spaces. We can rephrase these constructions at the level of $\pbarOm{A}$ and $\pbarDm{A}$ modules without any change. \index{$\pbarD$-modules}

In the previous Section we have already defined the map $\Delta(\pi)^\sharp: \pbarOm{A} \to \pbarOm{B}$: the continuous morphism of topological algebras induced by $\otimes^*_a f_a \mapsto \otimes_b^* ( \prod_{a \in I_b} f_a)$. We denote by $\calI_{\Delta(\pi)}$ the kernel of this map. 

\subsubsection{Inverse image of \texorpdfstring{$\calD$}{D} modules} \label{ssez:inversoDmoduli}
Given a $\pbarOm{B}$ module $\calM$ we define $\Delta(\pi)_*\calM$ \index{$\pbarD$-modules!inverse image of $\pbarD$ modules}
as the $\pbarOm{A}$-module that as a sheaf is equal to $\calM$ and where the action of $\pbarOm{A}$ is given by the map $\Delta(\pi)^\sharp$. Similarly, given a $\pbarOm{A}$-module $\calN$, we define 
$\Delta(\pi)^*\calN$ as $\calN/\calI_{\Delta(\pi)}\calN$. In case $\calN$ is a topological sheaf we equip $\Delta(\pi)^*\calN$ with the quotient topology. 

If $\calN$ is a left $\pbarDm{A}$-module, then, as in the usual case, $\Delta(\pi)^*\calN$ has a natural structure of left $\pbarDm{B}$-module. 

If $\calN$ is a right $\pbarDm{A}$-module, we define also the right $\pbarDm{B}$-module $\Delta(\pi)^!\calN$ as the sheaf
$$
\Delta(\pi)^!\calN=\{n\in \calN : n\cdot \calI_{\Delta(\pi)}=0\}.
$$
This is clearly a $\pbarOm{B}$ module. The action of $\pbarDm{B}$ is defined as follows. Let $\pbarDm A(\pi)$ be the subalgebra of $\pbarDm A$ operators $P$ such that $P(\calI_{\Delta(\pi)})\subset \calI_{\Delta(\pi)}$. Notice that $\calI_{\Delta(\pi)}\cdot \pbarDm A$ is a bilateral ideal of $\pbarDm A(\pi)$ and that the quotient $\pbarDm A(\pi)/ \calI_{\Delta(\pi)}\cdot \pbarDm A$ acts on $\pbarOm B$ and it is
isomorphic to $\pbarDm B$. Finally notice that the action of $\pbarDm A(\pi)$ preserves $\Delta(\pi)^!\calN$ and that the ideal $\calI_{\Delta(\pi)}\cdot \pbarDm A$ acts as zero on $\Delta(\pi)^!\calN$, hence induces an action of $\pbarDm B$ on $\Delta(\pi)^!\calN$.

If $\calN$ is a topological sheaf, then we equip $\Delta(\pi)^!\calN\subset\calN $ with the subsheaf topology. If $\calN$ is complete, then it is also complete. 

\subsubsection{Direct image of \texorpdfstring{$\calD$}{D}-modules}\label{ssez:direttaD}
 If $\calM$ is a right  $\pbarDm{J}$- module we define the direct image $\Delta(\pi)_!\calM$ following exactly the same steps of the usual situation. We need to recall the definition in order \index{$\pbarD$-modules!direct image of $\pbarD$ modules}
to describe the topology. 

Let $\calD_\pi=\Delta(\pi)^*(\pbarDm{A})$. This is a left $\pbarDm{B}$-module and a right $\pbarDm{A}$ module. We have a local description of this bimodule similar to the one given for $\pbarDm{A}$: choose coordinates $x=(x_b)_{b\in B}$ and $y=(y_c)_{c\in A\senza B}$ such that the ideal $\calI_{\Delta(\pi)}$ is generated by the coordinates $y_c$ and $x_b$ restricts to coordinates of $X^B$. Then, as in the classical case, we have:
$$ 
\calD_\pi=\bigoplus_{\grb,\grg} \pbarOm{B}\partial_x^\grb \partial_y^\grg=\bigoplus_{\grg} \pbarDm{B} \partial_y^\grg.
$$
Define also $\calD_\pi^{\leq n}$ as the left $\pbarDm{B}$-submodule of $\calD_\pi$ generated by the image of $\Delta(\pi)^*((\pbarDm{A})^{\leq n})$. In the local description above we have 
$$
\calD^{\leq n}_\pi=\bigoplus_{\grb,|\grg|\leq n} \pbarOm{B}\partial_x^\grb \partial_y^\grg=\bigoplus_{|\grg|\leq n} \pbarDm{B} \partial_y^\grg.
$$
If $\calM$ is a right $\pbarDm{B}$ module then set
$$
\Delta(\pi)_!(\calM) = \calM\otimes_{\pbarDm{B}} \calD_\pi \quad \mand \quad \Delta(\pi)^{\leq n}_!(\calM)=\calM\otimes_{\pbarDm{B}}\calD^{\leq n}_\pi.
$$
The first one is is a right $\pbarDm{A}$-module. Notice that $\calD_\pi^{\leq 0}\simeq \pbarDm{B}$ and 
$\Delta(\pi)^{\leq 0}_!(\calM)=\Delta(\pi)_*(\calM)$ and it generates $\Delta(\pi)_!(\calM)$ as a $\pbarDm{A}$-module. Indeed
$\Delta(\pi)^{\leq n}_!(\calM) =\Delta(\pi)_*(\calM)\cdot (\pbarDm A)^{\leq n}$ 
and
$\Delta(\pi)_!(\calM)$  is the colimit of the sequence of inclusions
$\Delta(\pi)^{\leq n}_!(\calM)\subset \Delta(\pi)^{\leq n+1}_!(\calM)$.  

We now assume that $\calM$ is a right $\pbarD^B$-module \cgmod module and 
we equip $\Delta(\pi)_!(\calM)$ with the structure of a $\pbarD^A$-module \cgmod module.
We describe first the topology of $\Delta(\pi)^{\leq n}_!(\calM)$ and then we define the 
topology of $\Delta(\pi)_!(\calM)$ as the colimit topology. A \fsonoz of $\Delta(\pi)^{\leq n}_!(\calM)$ is 
constructed as $\calU\cdot (\pbarDm{A})^{\leq n}$ where $\calU$ is a \noz of $\calM=\Delta(\pi)_*(\calM)\subset \Delta(\pi)_!(\calM)$. 

Locally we have that $\Delta(\pi)^{\leq n}_!(\calM)$ is isomorphic 
to the direct sum of a finite number of copies of $\calM$. To check this recall that $(\pbarDm{B})^{\leq n}$ is $w$-compact, hence for all 
\noz $\calU$ of $\calM$ there exists a \noz $\calV$ of $\calM$ such that $\calV \cdot (\pbarDm{B})^{\leq n}\subset \calU$.

As in the usual case, we have the following adjunctions:
\begin{align*}
\Hom_{\pbarOm{A}} \left(\Delta(\pi)_*\calF,\calG\right)&=
\Hom_{\pbarOm{B}} \left(\calF,\Delta(\pi)^!\calG\right)
\quad \mand\\
\Hom_{\pbarDm{A}} \left(\Delta(\pi)_!\calM,\calN\right)&=
\Hom_{\pbarDm{B}} \left(\calM,\Delta(\pi)^!\calN\right)
\end{align*}
if $\calF$ is  a $\pbarOm{B}$-module, $\calG$ a $\pbarOm{A}$-module,
$\calM$ a right $\pbarDm{B}$-module and $\calN$ a right $\pbarDm{A}$-module. In the topological case, it is easy to check that the adjunction maps $\calM\lra \Delta(\pi)^!\Delta(\pi)_!\calM$ and $\Delta(\pi)_!\Delta(\pi)^!\lra \calN$
are continuous, hence we have also
\begin{align*}
\Homcont_{\pbarOm{A}} \left(\Delta(\pi)_*\calF,\calG\right)&=
\Homcont_{\pbarOm{B}} \left(\calF,\Delta(\pi)^!\calG\right)
\quad \mand\\
\Homcont_{\pbarDm{A}} \left(\Delta(\pi)_!\calM,\calN\right)&=
\Homcont_{\pbarDm{B}} \left(\calM,\Delta(\pi)^!\calN\right).
\end{align*}
These maps have the usual functoriality properties, i.e. $\Delta( \pi'\circ\pi)_!=\Delta( \pi)_!\circ \Delta( \pi')_!$ and similarly for $\Delta( \pi)^!$.

\begin{remark} 
	If $\calF$ is $\calO_X$ module on $X$ in Section \ref{ssec:definizionefascibarF} we defined the topological sheaf 
	$\pbarF{\calF}$ on $\Sigma$. If $\calF$ is a a $\calD_{X/S}$ module then $\pbarF{\calF}$  is a a $\pbarD$-module. Similar construction can be given in the case of $X^A$ obtaining a  $\pbarDm{A}$ module. The definition given above commutes with the usual operation among $\calD_{X^A/S}$-modules.
\end{remark}

\begin{remark}[Behaviour under pullback]\label{rmk:pullbackpushdifferential}
	Consider a morphism $\varphi : S' \to S$ and let $\Sigma'$ be the collection of sections of $X'=X\times_S S' \to S'$ induced by $\Sigma$. Denote by $\Delta_{\oSigma,*},\Delta_{\oSigma',*}$ the pushforward for $\Sigma$ and $\Sigma'$ case respectively; analogously, write $\Delta_{\oSigma,!},\Delta_{\oSigma',!}$ for the corresponding $\calD$-module pushforward. Then it follows by $\hat{\varphi}^*\pbarOm{n} = \calO_{\overline{\Sigma}'}^n$ (resp. Remark \ref{rmk:pullbacktangentdifferential}) that for any $\pbarOm{B}$-modules (resp. $\pbarDm{B}$-module $\calF$) and any surjection $\pi : A \to B$ we have 
	\[
		\hat{\varphi}^*\Delta_{\oSigma,*}(\pi)\calF \simeq \Delta_{\oSigma',*}(\pi)\left(\hat{\varphi}\calF\right) \quad \left( \text{resp. }\hat{\varphi}^*\Delta_{\oSigma,!}(\pi)\calF \simeq \Delta_{\oSigma',!}(\pi)\left(\hat{\varphi}\calF\right) \right).
	\]
\end{remark}

\subsubsection{The case of the ring \texorpdfstring{$\pbarOpoli$}{functions with poles}}
Similar considerations and constructions can be given for the sheaves  $\pbarOpolim{A}$ and $\pbarDpolim{A}$. We denote with the same symbol $\Delta(\pi)^\sharp$ the map among the rings $\pbarOpolim{A}$ and $\pbarOpolim{B}$ and with $\calJ_{\Delta{(\pi)}}\subset \pbarOpolim{A}$ its kernel \index{$\calJ_{\Delta{(\pi)}}$}. We notice that this ideal of $\pbarOpolim A$ is generated by $\calI_{\Delta(\pi)}$. 
In particular, if $\calN$ is a right $\pbarDpolim{A}$ \cgmod module  then the topological sheaves $\Delta(\pi)^!\calN$ and
$\Delta(\pi)_!\calM$ is exactly the same as the one defined for $\pbarDm{A}$ \cgmod module. 

A minor difference between the case with poles and the case without poles is in the definition of the topology on $\Delta(\pi)_!\calM$. 
In this case with poles the sheaves $(\pbarDpolim{B})^{\leq n}$ are not anymore $w$-compact. We can define $\Delta(\pi)_!\calM$ using an \exhaustion of $\pbarDpolim{B}$ or, equivalently, we can notice that  if $\calM$ is a right $\pbarDpolim{B}$ \cgmod module then it is also a a right $\pbarDm{B}$ \cgmod module, and we can notice that on the sheaf $\Delta(\pi)_!\calM$ defined in the previous section there is a \cgmod right action of $\pbarDpolim{A}$.

\subsubsection{Kashiwara's Theorem}\label{sssec:Kashiwara} In the quasi coherent setting Kashiwara's Theorem gives an equivalence between $\calD$-modules on a variety $\calX$ supported on a smooth subvariety $\calY$ and the $\calD$-modules on $\calY$. The same argument proves the following Proposition.

\begin{proposition}\label{prop:Kashiwara} Assume (as we always do) that $S$ is topologically noetherian. Let $\pi:A\surjmap B$ be a surjective map among finite sets.
\begin{enumerate}[\indent a)]
\item Let $\calN$ be a $\pbarDm B$ module, then $\Delta(\pi)^! \Delta(\pi)_!\calN \simeq \calN$ 
\item Let $\calM$ be a right $\pbarDm{A}$-module and define the subpresheaf 
$$ \calM^{\pi\loc}(U)=\{\grf\in\calF(U)\,:\, \text{exists }m>0\text{ such that } \grf\cdot \calI_{\Delta(\pi)}^m=0\} . $$
on an open subset $U$ of $S$. 
Then $\calM^{\pi\loc}$ \index{$\calM^{\pi\loc}$} is a sheaf and it is isomorphic to $\Delta(\pi)_!\Delta(\pi)^!\calM$.
\end{enumerate}
Completely analogous statements hold for $\pbarDpolim{B}$ and $\pbarDpolim{A}$-right modules. 
\end{proposition}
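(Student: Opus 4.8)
The plan is to transcribe the classical proof of Kashiwara's equivalence, paying attention only to the two points that are genuinely new in this setting: the topologies carried by all the sheaves involved, and the fact that the subpresheaf $\calM^{\pi\loc}$ is actually a sheaf. Both assertions are local on $S$, so I would first reduce, using the product decomposition of Section~\ref{ssec:descrizionelocalepbarO} together with the \'etale invariance of Lemma~\ref{lem:mappeetale1}, to the model situation where $S=\Spec A$ is well covered, $X=\mA^1_S$ with coordinate $t$, and all the sections pass through one point. Choosing a section $s:B\to A$ of $\pi$, the functions $y_a:=t_a-t_{s\pi(a)}$ for $a\notin s(B)$ together with the $t_{s(b)}$ form a coordinate system in which $\calI_{\Delta(\pi)}$ is generated by the regular sequence $(y_a)$, the $t_{s(b)}$ restrict to coordinates for $\pbarOm{B}$, one has $\partial_{y_a}=\partial_{t_a}$, and $\calD_\pi=\Delta(\pi)^*\pbarDm{A}=\pbarDm{A}/\calI_{\Delta(\pi)}\pbarDm{A}=\bigoplus_\grg\pbarDm{B}\,\partial_y^\grg$ with $\grg$ running over multi-indices supported on $A\setminus s(B)$, as recalled before the statement.

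For part a), applied to a right $\pbarDm{B}$-module $\calN$, I would compute $\Delta(\pi)_!\calN=\calN\otimes_{\pbarDm{B}}\calD_\pi=\bigoplus_\grg\calN\,\partial_y^\grg$ and then, using the Weyl relation $\partial_y^\grg y_a=y_a\partial_y^\grg+\grg_a\partial_y^{\grg-e_a}$ and the vanishing of $y_a$ in $\calD_\pi$, find that each generator $y_a$ of $\calI_{\Delta(\pi)}$ acts on the right by sending $n\otimes\partial_y^\grg$ to $\grg_a\,(n\otimes\partial_y^{\grg-e_a})$. Since the integers involved are invertible in the $\mC$-algebra $A$, the submodule of $\Delta(\pi)_!\calN$ killed by $\calI_{\Delta(\pi)}$ is exactly the degree-zero summand $\calN\otimes\partial_y^0=\Delta(\pi)_!^{\leq0}\calN=\Delta(\pi)_*\calN$, which is visibly $\calN$; this identification is the adjunction unit, it is $\pbarDm{B}$-linear, and it is a homeomorphism because $\Delta(\pi)_*\calN$ carries the topology of $\calN$ and sits as a closed subsheaf of $\Delta(\pi)_!\calN$ with the induced topology. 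As $\Delta(\pi)_!$ and $\Delta(\pi)^!$ are local, these local isomorphisms glue to the global one.

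For part b) I would set $\calK:=\Delta(\pi)^!\calM$, consider the adjunction counit $\epsilon:\Delta(\pi)_!\calK\to\calM$, and show it is injective with image $\calM^{\pi\loc}$. That the image of $\epsilon$ lies in $\calM^{\pi\loc}$ follows by induction on the order of differential operators: $\Delta(\pi)_!\calK$ is generated over $\pbarDm{A}$ by $\calK$, each section of which kills $\calI_{\Delta(\pi)}$, and $[(\pbarDm{A})^{\leq n},\calI_{\Delta(\pi)}]\subset(\pbarDm{A})^{\leq n-1}$ forces an element of $\calK\cdot(\pbarDm{A})^{\leq n}$ to be killed by $\calI_{\Delta(\pi)}^{n+1}$. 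For injectivity and surjectivity onto $\calM^{\pi\loc}$ I would argue locally: if $m\cdot\calI_{\Delta(\pi)}^{N+1}=0$ then $m$ lies in the \emph{finite} sum $\bigoplus_{|\grg|\leq N}\calK\cdot\partial_y^\grg$ and is uniquely so written — this is precisely the classical local Kashiwara computation, i.e.\ that a module over the relative Weyl algebra on which the $y_a$ act locally nilpotently decomposes as $\bigoplus_\grg\calK\,\partial_y^\grg$. Tracking the topologies as in part a) (using that $\calK$ carries the subsheaf topology from $\calM$ and that each $\Delta(\pi)_!^{\leq n}\calK$ is locally a finite direct sum of copies of $\calK$), $\epsilon$ becomes a homeomorphism onto its image; in particular $\calM^{\pi\loc}$, being locally the image of the sheaf morphism $\epsilon$, hence locally isomorphic to the sheaf $\Delta(\pi)_!\Delta(\pi)^!\calM$, is a sheaf. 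For the rings with poles the argument is identical, except that the $(\pbarDpolim{B})^{\leq n}$ are no longer $w$-compact, so I would topologize $\Delta(\pi)_!$ through an exhaustion of $\pbarDpolim{B}$ exactly as indicated before the statement.

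The step I expect to be the main obstacle is keeping the topologies straight — in particular confirming both that $\calM^{\pi\loc}$ is a sheaf and that the subsheaf topology it inherits from $\calM$ coincides with the colimit topology on $\Delta(\pi)_!\Delta(\pi)^!\calM$. The sheaf-ness I would reduce to the observation that $\calM^{\pi\loc}$ is the filtered union $\bigcup_m\calM[\calI_{\Delta(\pi)}^m]$ of the honest closed subsheaves $\calM[\calI_{\Delta(\pi)}^m]=\ker\big(\calM\to\calHom(\calI_{\Delta(\pi)}^m,\calM)\big)$, together with the fact that, $S$ being quasi compact, over any quasi compact open a section locally annihilated by some power of $\calI_{\Delta(\pi)}$ is annihilated by a single power; the topology comparison then rests on the local finiteness of the $\leq n$ pieces noted above.
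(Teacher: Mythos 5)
Your approach is essentially the same as the paper's: transcribe the classical Kashiwara argument (as in Hotta--Takeuchi--Tanisaki), reduce everything to a local coordinate computation on a well covered affine, and handle the sheaf-ness of $\calM^{\pi\loc}$ through the quasi-compactness of $S$. The two minor presentational differences are both harmless: the paper reduces to the codimension-one case ($|A|=|B|+1$) by functoriality of $\Delta(\pi)_!$, $\Delta(\pi)^!$ and then applies the standard $h=\partial_y y$ eigenspace decomposition, while you work directly with the full regular sequence $(y_a)$ and the Weyl commutator; and the paper characterizes $\calM^{\pi\loc}$ via a $\Hom$-adjunction/universal-property argument where you run the counit $\epsilon$ directly. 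Both variants prove the same thing and either is acceptable.

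There is, however, one concrete overclaim in your treatment of the topology in part~b). You write that $\epsilon:\Delta(\pi)_!\Delta(\pi)^!\calM\to\calM$ ``becomes a homeomorphism onto its image,'' and in the closing paragraph you present the coincidence of the subsheaf topology on $\calM^{\pi\loc}$ with the colimit topology on $\Delta(\pi)_!\Delta(\pi)^!\calM$ as something that follows from local finiteness of the $\leq n$ pieces. That reasoning shows only that each \emph{bounded} piece $\Delta(\pi)^{\leq n}_!\Delta(\pi)^!\calM\to\calM^{\pi\loc,\leq n+1}$ is a homeomorphism, and hence that the identity map from the colimit to the subsheaf is continuous. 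It does \emph{not} show that the colimit topology is coarser than (hence equal to) the subsheaf topology: a colimit-open submodule of $\bigcup_n\calM^{\pi\loc,\leq n}$ need not be the trace of an open submodule of $\calM$, and the paper's remark immediately following the proof explicitly warns that, in general, the continuous map $\Delta(\pi)_!\Delta(\pi)^!\calM\to\calM^{\pi\loc}$ is \emph{not} a topological isomorphism. For the statement of the proposition as written this is not fatal, because part~b) only asserts an isomorphism of sheaves (unlike part~a), where the isomorphism genuinely is topological); but your argument should be amended to claim only a sheaf isomorphism for b), not a homeomorphism, and the discussion of the obstacle in the last paragraph should flag that the two topologies are, in general, different.
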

The proof of this Proposition is equal to the proof of Kashiwara' Theorem (see for example \cite{HTTDmoduli}, Section 1.6). We sketch the proof of point b) whose statement looks different from the one on quasi coherent $\calD$-modules. The proof gives also a description of this sheaf. 

\begin{proof}[Sketch of proof]The fact that it is a sheaf follows from the quasi compactness of open subsets of $S$. 
To prove the second statement, by functoriality we can reduce to the case where $B$ has one element 
less than $A$. We prove that there is a natural isomorphism
$$ \calHom_{\pbarDm{A}} \left(\calM^{\pi\loc},\calN\right)\simeq 
\calHom_{\pbarDm{B}} \left(\Delta(\pi)^!\calM,\Delta(\pi)^!\calN\right) $$
This implies that $\calM^{\pi\loc}$ has the correct universal property. Notice that $\Delta(\pi)^!\calM$ is a subsheaf of $\calM^{\pi\loc}$ hence there is a natural morphism from the left hand side to the right hand side given by restriction. To prove that it is an isomorphism we can cheack the claim locally. So we can assume that we have local coordinates $x_1,\dots,x_n,y$ for $X^A$ such that $\calI_{\Delta(\pi)}$ is generated by $y$.  Consider the operator $h=\partial_y \cdot y$. As in the case of the proof of Kashiwara's Theorem (see for example \cite{HTTDmoduli}, Section 1.6), we have
\begin{equation}\label{eq:Kashiwara} \calM^{\pi\loc}=\bigoplus_{i\geq 1} \calM^{\pi\loc}(i) \end{equation}
where $\calM^{\pi\loc}(i)$ is the $h$-eigenspace of eigenvalue equal to $i$ and $\partial_y:\calM^{\pi\loc}(i)\lra\calM^{\pi\loc}(i+1)$ is an isomorphism for all $i\geq 1$. By noticing that $\calM^{\pi\loc}(1)$ is equal to $\Delta(\pi)^!\calM$ we get the claim. 

The case of $\pbarDpolim{A}$-modules is completely analogous. Following exactly the same steps we prove that 
$\Delta(\pi)_!\Delta(\pi)^!\calM$ is isomorphic to the space of sections annihilated by some power of $\calJ_{\Delta(\pi)}\subset \pbarOpolim{A}$. However this ideal is generated by $\calI_{\Delta(\pi)}$ hence the two conditions are equivalent. 
\end{proof}

\begin{remark}
Notice that we proved that, as a sheaf $\calM^{\pi\loc}$ is isomorphic to $\Delta(\pi)_!\Delta(\pi)^!\calM$. The given proof shows also that 
for a \QCC sheaf, if we define $\calM^{\pi\loc,\leq m}$ \index{$\calM^{\pi\loc,\leq m}$} the subsheaf of sections killed by $\calI_\Delta^m$ then 
$$
\Delta(\pi)^{\leq m-1}_!\Delta(\pi)^!\calM \simeq \calM^{\pi\loc,\leq m}
$$
as a topological sheaf. So we have a continuous map from $\Delta(\pi)_!\Delta(\pi)^!\calM$  to
$\calM^{\pi\loc}$. However in general this does not need to be an isomorphism. 

Conversely the isomorphism $\Delta(\pi)^! \Delta(\pi)_!\calN \simeq \calN$ is always an isomorphism of topological sheaves. 
\end{remark}

\subsubsection{The de Rham residue map}\label{sec:derhamresidue}
The decomposition of equation \eqref{eq:Kashiwara} is local, depending on the choice of $\partial_y$ or equivalently on the coordinate $x_1,\dots,x_n$. In particular the projection onto $\calM^{\pi\loc}(1)=\Delta(\pi)^!\calM$ is not well defined. However, in the case of the inclusion $\Delta\subset X^2$, we have two canonical choices and we choose one of them. 

\begin{definition}Let $\tsigma \otimes 1 \subset \calD^2_{\overline{\Sigma}}$ be the sheaf of derivations of the form $\partial \otimes 1$. Define the de Rham cohomology with respect to the first component of a right $\pbarDm{2}$-module $\calM$ as
$$
h^0_r(\calM)=\frac{\calM}{\calM\cdot \tsigma\otimes 1}
$$
This has a structure \index{$h^0_r$} of right $\pbarD$-module, where the action of an operator $D\in\pbarD$ is induced by the action of $1\otimes D\in \pbarDm{2}$ on $\calM^{\pi-loc}$.
\end{definition}

The discussion in the proof of Proposition \ref{prop:Kashiwara} implies the following result. 
In the case of the inclusion of the diagonal $\Delta \subset X^2$ we write $\calM^{\text{loc}}$ to denote   $\calM^{\pi\loc}$ for the appropriate $\pi$. 

\begin{corollary}
Let $\calM$ be a right $\pbarDm{2}$-module. Then the inclusion 
$\Delta^!\calM\subset \calM^{\oloc}$ induces an isomorphism of right $\pbarD$-modules
$$
\Delta^!\calM \lra h^0_r(\calM^{\oloc})\;\left(\simeq h^0_r(\Delta_!\Delta^!\calM)\right).
$$
\end{corollary}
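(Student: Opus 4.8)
The plan is to deduce the Corollary directly from the local structure theory already established in the proof of Proposition \ref{prop:Kashiwara}, observing that the two statements to be proved — the isomorphism $\Delta^!\calM \simeq h^0_r(\calM^{\oloc})$ and the identification $h^0_r(\calM^{\oloc}) \simeq h^0_r(\Delta_!\Delta^!\calM)$ — reduce to a local eigenspace computation plus the already-known identity $\calM^{\oloc} \simeq \Delta_!\Delta^!\calM$ from Proposition \ref{prop:Kashiwara}(b).

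First I would treat the second isomorphism. By Proposition \ref{prop:Kashiwara}(b), $\calM^{\oloc} \simeq \Delta_!\Delta^!\calM$ as right $\pbarDm{2}$-modules, so applying the functor $h^0_r$ to both sides gives $h^0_r(\calM^{\oloc}) \simeq h^0_r(\Delta_!\Delta^!\calM)$ formally. Thus it suffices to prove the first isomorphism, $\Delta^!\calM \to h^0_r(\calM^{\oloc})$, is an isomorphism of right $\pbarD$-modules. Since $S$ is quasi compact this can be checked locally, so I would pass to a well covered affine with local coordinates: writing $X^2$ with coordinates so that $\calI_\Delta$ is generated by a single $y = t\otimes 1 - 1\otimes t$ and $x = 1\otimes t$ restricting to a coordinate of $X$, and taking the canonical choice of $\partial_y$ (here $\tsigma\otimes 1$ is spanned by $\partial_y$, which is the natural choice dictated by the definition of $h^0_r$ as quotient by $\calM\cdot(\tsigma\otimes 1)$). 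Then the decomposition \eqref{eq:Kashiwara}, $\calM^{\oloc} = \bigoplus_{i\geq 1}\calM^{\oloc}(i)$ with $h = \partial_y\cdot y$ acting as $i$ on $\calM^{\oloc}(i)$ and $\partial_y : \calM^{\oloc}(i)\to\calM^{\oloc}(i+1)$ an isomorphism for $i\geq 1$, is exactly what I need: quotienting $\calM^{\oloc}$ by $\calM^{\oloc}\cdot\partial_y = \bigoplus_{i\geq 1}\calM^{\oloc}(i)\partial_y = \bigoplus_{i\geq 2}\calM^{\oloc}(i)$ leaves precisely $\calM^{\oloc}(1) = \Delta^!\calM$. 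So the composite $\Delta^!\calM \hookrightarrow \calM^{\oloc} \twoheadrightarrow h^0_r(\calM^{\oloc})$ is an isomorphism locally.

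Then I would check that this local isomorphism is the restriction of the globally-defined map $\Delta^!\calM \to h^0_r(\calM^{\oloc})$ from the statement — the inclusion $\Delta^!\calM \subset \calM^{\oloc}$ followed by the projection — which is clear since the map $\Delta^!\calM\to h^0_r(\calM^{\oloc})$ is canonical and its localization to the chart is the one just analyzed; a canonical morphism of sheaves which is locally an isomorphism is an isomorphism, using quasi compactness. Finally I would verify the $\pbarD$-linearity and the compatibility with the right $\pbarD$-module structures: the $\pbarD$-action on $h^0_r(\calM^{\oloc})$ is by construction the action of $1\otimes D$, and on $\Delta^!\calM$ it is the Kashiwara-transported action; the fact that these agree under the isomorphism is again a local check, following the same lines as the analogous verification in the proof of Kashiwara's Theorem.

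The main obstacle, such as it is, is bookkeeping rather than anything conceptual: one must make sure the eigenspace decomposition \eqref{eq:Kashiwara} genuinely identifies $\calM^{\oloc}\cdot(\tsigma\otimes 1)$ with $\bigoplus_{i\geq 2}\calM^{\oloc}(i)$ (i.e. that no eigenspace survives except the $i=1$ one), and that the canonical global map restricts correctly to each chart so that the local isomorphisms glue — both of which are routine given the machinery in the sketch of proof of Proposition \ref{prop:Kashiwara}, but require a little care about which operator plays the role of $\partial_y$ and that $\tsigma\otimes 1$ is indeed locally free of rank one generated by it.
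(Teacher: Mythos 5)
Your proof is correct and follows precisely the route the paper has in mind: the paper merely says "the discussion in the proof of Proposition \ref{prop:Kashiwara} implies the following result," and your argument spells out exactly why — the eigenspace decomposition \eqref{eq:Kashiwara} of $\calM^{\oloc}$ under $h=\partial_y\cdot y$, together with the identification $\partial_y=\partial_u$ in the natural coordinates $(x,y)=(v,u-v)$, shows that $\calM^{\oloc}\cdot(\tsigma\otimes 1)=\bigoplus_{i\geq 2}\calM^{\oloc}(i)$ and hence $h^0_r(\calM^{\oloc})\simeq\calM^{\oloc}(1)=\Delta^!\calM$. The bookkeeping concern you flag (that $\tsigma\otimes 1$ is indeed locally free of rank one generated by $\partial_y$, and that the local isomorphisms glue to the canonical global map) is the right thing to worry about and is handled correctly.
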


\begin{proof}
We prove that $\calM^{\oloc}=\Delta^!\calM\oplus \calM^{\oloc}\cdot  (T_{\oSigma}\otimes 1)$.
Let $t$ be a local coordinate over $X$ and set $u=t\otimes 1$ and $v=1\otimes t$ and let $\partial_u$ and $\partial_v$ the dual choice of derivation. Set also $y=u-v$ and $x_1=v$. If we denote by $\partial_y$ and $\partial_{x_1}$ the dual choice of derivation then we have $\partial_y=\partial_u$ and $\partial _{x_1}=\partial_u+\partial_v$. Let also $h=\partial_y \cdot y$. Then in the notation of formula \eqref{eq:Kashiwara} we  have that 
$$ \bigoplus_{i\geq 2}\calM^{\oloc}(i) = \calM^{\oloc}\cdot  (T_{\oSigma}\otimes 1) $$
which implies that $\Delta^!\calM$ and $h^0_r(\calM^{\loc})$ are isomorphic. We need to check that this is $\pbarD$-equivariant. This 
goes down to the definition of the action of $\pbarD$ on $\Delta^!\calM$ (see Section \ref{ssez:inversoDmoduli}). Indeed by construction $\partial_t$ acts like $\partial_u+\partial _v$ in 
$\calM^{\oloc}=\Delta_!\Delta^!\calM$, hence like $\partial_v$ on $h_r^0(\calM^{\oloc})$.
\end{proof}

As a consequence we can define the following residue map. 

\begin{definition}\label{def:dR}
For any right $\pbarDm{2}$-module $\calM$, the projection $\calM^{\oloc}\lra h^0_r(\calM^{\oloc})$ determines a de Rham residue map\index{de Rham residue}\index{$dR_r$}
$$
dR_r:\calM^{\oloc}=\Delta_!\Delta^!\calM \lra \Delta^!\calM.
$$ 
which \index{$dR_r$} is $\pbarD$ equivariant, where, as above, a differential operator $D\in\pbarD$ acts on $\calM^{\text{loc}}$  as $1\otimes D\in \pbarDm{2}$.
\end{definition}

\subsubsection{Local description}\label{ssez:tuvy} For our convenience we describe in one example the combinatorics of our constructions. Assume that $S=\Spec \mC$ and that $X=\Spec \mC[t]$. Let $\Delta(t)=(t,t)$ be the diagonal map. Let $u=t\otimes 1$, $v= 1\otimes t$ and let $\partial _u$, $\partial_v$ be the dual choice of derivations. The ideal $\calI_\Delta$ is generated by $y=u-v$ and let $\partial_y$ and $\partial'_v$
the derivations dual to $v$ and $y$, so that $\partial_y=\partial_u$ and $\partial_v'=\partial_u+\partial_v$. 

We start by describing the module $\Delta^!N$. If $N$ is a $A[u,v,\partial_u,\partial_v]$-right module, then $M=\Delta^! N$ is the submodule of $N$ annihilated by $y$. The subalgebra of $\mC[u,v,\partial_v',\partial_y]$ which preserves the ideal generated by $y$ is generated by $y$, $v$, $\partial_v'$ and $y\partial_y$. The quotient $\mC[y,v]/(y)$ is isomorphic to $\mC[t]$ via the map $v\mapsto t$. In particular the derivation $\partial'_v=\partial_u+\partial_v$ acts as $\partial_t$ on $\mC[t]$. 
Hence $\partial_t$ acts like $\partial_u+\partial_v$ on $M$ and $t$ acts like multiplication by $v$ (or equivalently by $u$).

The description of $N =\Delta_!M$, if $M$ is a $\mC[t,\partial_t]$ right module is even easier. We have that $N=\Delta_!M$ is the $\mC[v,y,\partial_v',\partial_y]$-right module $\bigoplus_{n\geq 0} M\partial_y^n$ where the action is given by 
\begin{align*}
m\partial_y^n \cdot v &=  (mt)\partial_y^n, \qquad 
m\partial_y^n \cdot y =  -n m\partial_y^{n-1}, \\
m\partial_y^n \cdot \partial'_v &= (m\partial_t) \partial_y^n,  \qquad 
m\partial_y^n \cdot \partial_y = m\partial_y^{n+1}.  
\end{align*}
As a consequence \begin{align*}m\partial_y^n \cdot \partial_v &= m\partial_y^n \cdot (\partial_v' -\partial_y)= (m\partial_t)\partial_y^{n} - m\partial_y^{n+1} \text{ and } \\
m\partial_y^n \cdot u &= m\partial_y^n \cdot (y+v)= (mt)\partial_y^{n} - n m\partial_y^{n-1}\end{align*}.

In particular this describes $N^{\text{loc}}=\Delta_!\Delta^!N$. Notice that $\partial_y=\partial_u$ hence $N^{\text{loc}}\cdot \tsigma \otimes 1=\bigoplus_{n\geq 1} M\partial_ y^n$
and
$$
h^0_r (N^{\text{loc}})=\frac {N^{\text{loc}}}{N^{\text{loc}}\cdot \tsigma \otimes 1} = M
$$
where $u$ and $v$ acts like $t$ and $\partial_v$ acts like $\partial_t$.

\subsubsection{External tensor products}\label{ssec:prodottiesterni}
Let $I=J\sqcup H$ and let $\calM_J$, resp.\ $\calM_H$, be a $\pbarOm{J}$-topological sheaf, resp.\ a $\pbarOm{H}$-topological sheaves.
Then, by Remark \ref{oss:prodottoalgebremoduli}, the module $\calM_I=\calM_J\tensor{\rightarrow}\calM_H$ is a $\pbarOm{I}$-topological sheaf.

Notice that this remark can be extended to $\calD$-modules in the following way. Notice that $\pbarDm{I}=\pbarDm{J}\tensor{cg}\pbarDm{H}$, hence by
 Remark \ref{oss:algebreregolariemoduli} if the sheaves $\calM_J$ and $\calM_H$ are \cgmod modules of the \ccg-topological algebras $\pbarDm{J}$ and $\pbarDm{H}$, then the tensor products  
$\calM_J\tensor{\rightarrow}\calM_H$ and 
$\calM_J\tensor{*}\calM_H$ are in a natural way $\pbarDm{I}$ \cgmod module.
To stress that we consider these tensor products as $\pbarDm{I}$-modules (or $\pbarOm{I}$-modules) we use the notations
$\calM_J\exttensor{\rightarrow}\calM_H$ and 
$\calM_J\exttensor{*}\calM_H$. \index{external tensor product $\exttensor{}$}

\subsubsection{Poles along the diagonal and pushforward of \texorpdfstring{$\calD$}{D}-modules} In the following Proposition we recall some relations which relate admitting poles along the diagonals and taking $\calD$-module pushforward. If $\calM$ is topological $\pbarO$-sheaf then, since $\pbarOmega$ is compact, we have  $\pbarOmega\exttensor{*}\calM=\pbarOmega\exttensor{\rightarrow}\calM$, and as explained above this is a topological-$\pbarOq$ sheaf and, if $\calM$ is right $\pbarD$-topological sheaf is also right $\pbarDm{2}$-sheaf and the action, restricted to $(\pbarDm{2})^{\leq n}$ is continuous for all $n$.

The following Proposition rephrase the description of pushforward of right $\calD$-modules along the diagonal for topological modules. 

\begin{proposition}\label{prop:defD}\
    The following hold:
    \begin{enumerate}[\noindent a)]
      \item For every complete $\pbarO$-topological sheaf $\calM$ we have the following isomorphisms of topological modules.
    	$$\Delta^!\left(\frac{\pbarOmega\exttensor{*}\calM(\infty\Delta)}{\pbarOmega\exttensor{*} \calM}\right)\simeq \calM\quad\mand\quad 
      	\Delta_*(\calM)\simeq\frac{\pbarOmega\exttensor{*}\calM(\Delta)}{\pbarOmega\exttensor{*} \calM}$$
      \item If $\calM$ is a complete right $\pbarD$-topological sheaf then the first isomorphism of point a) induces a canonical map 
        \[
            D(\calM) : \Delta_! \calM \lra \frac{\pbarOmega\exttensor{*}\calM(\infty\Delta)}
            {\pbarOmega\exttensor{*} \calM}
        \]
        which is an isomorphism of right topological $\pbarDm{2}$-modules. \index{$D(\calM)$}
    \end{enumerate}
\end{proposition}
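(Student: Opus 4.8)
The plan is to reduce both assertions to the explicit local computation already sketched in \S\ref{ssez:tuvy} --- carried out there over $S=\Spec\mC$ with polynomial coefficients, here to be run in the completed setting with coefficients in $\calM$ --- and then to make the resulting local isomorphisms canonical by identifying the relevant ``residue along the diagonal'' with the residue map of \S\ref{sec:residuo}. First I would reduce to the local case: the external tensor products, the localisations $(\,\cdot\,)(n\Delta)$ and $(\,\cdot\,)(\infty\Delta)$, and the functors $\Delta_*,\Delta^!,\Delta_!$ all commute with restriction to opens of $S$ and with the product decomposition of \S\ref{ssec:descrizionelocale1}, so it suffices to work on a well covered affine $S=\Spec A$, and by Remark \ref{oss:unsolof} we may assume $\pbarO\simeq\limpro_n A[t]/(g^n)$ for a single monic $g$. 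Writing $u=t\otimes 1$, $v=1\otimes t$, the ideal $\calI_\Delta$ is generated by $\delta=u-v$ (as after Lemma \ref{lem:extensiondiagonal}), $\pbarOmega\simeq\pbarO\,dt$ is free of rank one and $w$-compact, and by Remark \ref{oss:prodottotensorfascicompatti} one has $\pbarOmega\exttensor{*}\calM=\pbarOmega\exttensor{\rightarrow}\calM$. The local computation then yields an isomorphism of topological modules
\[
\frac{(\pbarOmega\exttensor{*}\calM)(\infty\Delta)}{\pbarOmega\exttensor{*}\calM}\;\simeq\;\bigoplus_{k\geq 1}\calM\cdot\delta^{-k}\,dt
\]
with the colimit topology, the partial sum over $k\leq N$ being the sub-quotient $(\pbarOmega\exttensor{*}\calM)(N\Delta)/(\pbarOmega\exttensor{*}\calM)\simeq\bigoplus_{k=1}^{N}\calM\,\delta^{-k}\,dt$, a finite direct sum of copies of $\calM$.

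For part a) I would observe that multiplication by $\delta$ sends $\delta^{-k}dt$ to $\delta^{-k+1}dt$, which lies in $\pbarOmega\exttensor{*}\calM$ exactly when $k=1$; hence the subsheaf annihilated by $\calI_\Delta$ is $\calM\cdot\delta^{-1}dt$, giving local isomorphisms of topological modules
\[
\Delta^!\!\left(\frac{(\pbarOmega\exttensor{*}\calM)(\infty\Delta)}{\pbarOmega\exttensor{*}\calM}\right)\simeq\calM\qquad\text{and}\qquad\frac{(\pbarOmega\exttensor{*}\calM)(\Delta)}{\pbarOmega\exttensor{*}\calM}\simeq\calM ,
\]
the $\pbarOq$-action on the left factoring through $\Delta^\sharp$ --- i.e.\ realising $\Delta_*\calM$ --- and the second being the first restricted to pole order $\leq 1$. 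To see that these glue to canonical isomorphisms (independent of $t$) one checks that the inverse map $m\mapsto\delta^{-1}dt\cdot m$ is inverse to ``taking the residue along $\Delta$'', which is the residue map of \S\ref{sec:residuo} evaluated in the first variable with the diagonal $\Delta$ in the role of the section; by Theorem \ref{teo:residuo} this is coordinate free. This proves a).

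For part b), $\calM$ now being a complete right $\pbarD$-topological sheaf, $\calN:=(\pbarOmega\exttensor{*}\calM)(\infty\Delta)/\pbarOmega\exttensor{*}\calM$ becomes a right $\pbarDm{2}$-module: $\pbarOmega$ is canonically a right $\pbarD$-module, the external product of right $\calD$-modules is a right $\pbarDm{2}$-module (Remark \ref{oss:algebreregolariemoduli}), and localising along $\Delta$ preserves this (\S\ref{ssec:polimolti}). The isomorphism $\calM\simeq\Delta^!\calN$ of a) is moreover $\pbarD$-equivariant, because $\pbarD$ acts on $\Delta^!\calN$ through $1\otimes\pbarD\subset\pbarDm{2}$, i.e.\ on the $\calM$-factor, leaving $\delta^{-1}dt$ fixed. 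Via the adjunction $\Homcont_{\pbarDm{2}}(\Delta_!\calM,\calN)=\Homcont_{\pbarD}(\calM,\Delta^!\calN)$ of \S\ref{sez:Dmoduli} it therefore corresponds to a continuous morphism of right $\pbarDm{2}$-modules $D(\calM):\Delta_!\calM\to\calN$, and it remains to check it is a homeomorphism. Locally $\Delta_!\calM=\bigoplus_{n\geq 0}\calM\,\partial_y^n$ with $\partial_y=\partial_u$ (\S\ref{ssez:tuvy}), and since $(g\,du)\cdot\partial_u=-\partial_u(g)\,du$ one computes $D(\calM)(m\,\partial_y^n)=(\delta^{-1}dt\cdot m)\cdot\partial_y^n=n!\,\delta^{-n-1}\,dt\cdot m$; thus $D(\calM)$ carries the $n$-th filtration step $\Delta^{\leq n}_!\calM=\bigoplus_{j\leq n}\calM\,\partial_y^j$ bijectively onto $\bigoplus_{k\leq n+1}\calM\,\delta^{-k}dt$, diagonally with invertible scalar entries (we are over $\mC$), hence is a topological isomorphism at each finite level; passing to the colimit, which carries both colimit topologies, shows $D(\calM)$ is an isomorphism of topological $\pbarDm{2}$-modules.

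The step I expect to cost the most care is the coordinate independence asserted in a): one must recognise the naive residue along the diagonal as the globally defined residue map of \S\ref{sec:residuo} applied in the first variable. Once this identification is in place the remaining work --- matching the colimit, subspace and quotient topologies at each step --- is routine.
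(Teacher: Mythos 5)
Your proposal is correct in its substance, but it takes a genuinely different route from the paper's proof. The paper never invokes the residue map of \S\ref{sec:residuo} for this proposition: it constructs the canonical morphism $\Phi_\calM:\frac{\pbarOmega\exttensor*\calM(\Delta)}{\pbarOmega\exttensor*\calM}\to\Delta_*\calM$ directly and coordinate-freely from the isomorphism $\calI_\Delta/\calI_\Delta^2\simeq\pbarOmega$ (induced by the continuous derivation $d_2(f\otimes g)=f\,dg$), sending $\alpha\boxtimes m\otimes\varphi\mapsto\varphi(\alpha)\,m$, and only afterward checks locally that $\Phi_\calM$ is a (topological) isomorphism; part b) then follows by adjunction. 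You instead do the local computation first and then argue canonicity by recognising the inverse of $m\mapsto\delta^{-1}dt\cdot m$ as a residue along $\Delta$. This is morally fine, but it is less economical and slightly more delicate than you suggest: the map $\Res_\Sigma$ of Theorem \ref{teo:residuo} is built for sections of $p:X\to S$ with values in $\calO_S$, so to use it here you must (i) apply it to the family $X\times_SX\to X$ with the diagonal as the section, obtaining a map valued in $\pbarO$ rather than $\calO_S$, and (ii) tensor with $\calM$ over $\pbarO$ using $\pbarO$-linearity of that residue to get the $\calM$-valued version. None of this is hard, but it is exactly the content that the conormal-bundle identification $\calI_\Delta/\calI_\Delta^2\simeq\pbarOmega$ packages for free, which is why the paper goes that way. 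Your local description of the quotient as $\bigoplus_{k\geq1}\calM\,\delta^{-k}dt$ and the explicit formula $D(\calM)(m\,\partial_y^n)=n!\,\delta^{-n-1}dt\cdot m$ are correct and are a good sanity check that the adjoint map is an isomorphism filtration step by filtration step; the paper leaves that computation implicit. In short: same conclusion, and your argument would go through after spelling out the residue reduction; but the canonical map is more cleanly obtained from the conormal isomorphism than from the residue.
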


\begin{proof}[Sketch of proof]
	The proof follows exactly the same steps of the analogous statement in the case of a $\calD$ modules on a curve $X$. 
    We just recall as the map $\pbarOmega \exttensor{*}\calM(\Delta)\lra \Delta_*\calM$ is defined. 
    Let $d_2:\pbarOq\lra \pbarOmega$ be the continuous extension of  
    $d_2(f\otimes g)=fdg$. This induces an isomorphism $\calI_\Delta/\calI^2_\Delta \simeq \pbarOmega$. It is easy to check that the map
    \begin{equation}
    \pbarOmega\exttensor{*}\calM(\Delta)=\left(\frac {\calI_\Delta}{\calI^2_\Delta}\exttensor{*} \calM\right)\otimes_\pbarOq \Hom_{\pbarOq}\left(\calI_\Delta, \pbarOq \right)\lra \Delta_*(\calM)
    \end{equation}
    	given by $\gra\boxtimes m \otimes \grf\mapsto \Delta^\sharp(\grf(\gra)) m$ is well defined and factors through the quotient by $\pbarOmega\exttensor* \calM$. The fact that the induced map 
    	\begin{equation}\label{eq:definizioneD}
    	\Phi_\calM :\frac{\pbarOmega\exttensor{*}\calM(\Delta)}{\pbarOmega\exttensor{*}\calM}\lra \Delta_*(\calM)
    	\end{equation}
    	is an isomorphism can be checked locally. 
    	
    	The isomorphism on the left hand side in point a) comes from $\Phi_{\calM}$ as well. Finally, $D(\calM)$ is obtained as the morphism obtained by adjunction from the inverse of the last isomorphism.
\end{proof}

We will need also the following functoriality property which follows immediately from the definition. 

\begin{lemma}\label{lem:pq}
 Let $p : I \twoheadrightarrow J$ and $q : J \twoheadrightarrow K$ be surjections of finite sets and let $\calM$ be a right $\pbarDm{J}$-module. Then there is a canonical isomorphism of right $\pbarDm{I}$-modules:
\[
\Delta(p)_! \left(\calM\big(\infty\Divisorepi(q)\big)\right) \simeq \big(\Delta(p)_! \calM \big)\Big(\infty\Divisorepi(q\circ p)\Big).
\]
\end{lemma}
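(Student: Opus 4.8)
The plan is to exploit the description $\Delta(p)_!\calN=\calN\otimes_{\pbarDm{J}}\calD_p$ from Section \ref{sez:Dmoduli} (with $\calD_p:=\Delta(p)^*\pbarDm{I}$, the bimodule denoted $\calD_\pi$ there, taken for $\pi=p$), together with the observation that the right $\pbarOm{I}$-action on $\Delta(p)_!\calN$ factors through $\calD_p$, to rewrite both sides of the asserted isomorphism as localizations of the same object. Since localization commutes with the tensor product over $\pbarDm{J}$, the right-hand side is $\calM\otimes_{\pbarDm{J}}\big(\calD_p\otimes_{\pbarOm{I}}\pbarOm{I}(\infty\Divisorepi(q\circ p))\big)$, while the left-hand side, using that $\calM(\infty\Divisorepi(q))=\calM\otimes_{\pbarDm{J}}\pbarDm{J}(\infty\Divisorepi(q))$ for the $(\pbarDm{J},\pbarDm{J})$-bimodule $\pbarDm{J}(\infty\Divisorepi(q))$ obtained by localizing the right $\calD$-module $\pbarDm{J}$ along $\Divisorepi(q)$, is $\calM\otimes_{\pbarDm{J}}\big(\pbarDm{J}(\infty\Divisorepi(q))\otimes_{\pbarDm{J}}\calD_p\big)$. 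So it will be enough to produce a natural bimodule isomorphism
\[
\calD_p\otimes_{\pbarOm{I}}\pbarOm{I}\big(\infty\Divisorepi(q\circ p)\big)\;\simeq\;\pbarDm{J}(\infty\Divisorepi(q))\otimes_{\pbarDm{J}}\calD_p,
\]
and then tensor on the left with $\calM$ over $\pbarDm{J}$; the $\pbarDm{I}$-equivariance of the final map will follow from naturality. Since there is an evident map from $\calD_p$ into the right-hand side, I would check that the right action of a generator of $\calI_{\Divisorepi(q\circ p)}$ is invertible there — so that the map factors through the localization by the universal property of Section \ref{ssec:localization} — and then that the comparison morphism so obtained is an isomorphism of topological sheaves.

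This last verification is local on $S$, and I would carry it out by reducing, via the local structure results of Section \ref{sec:localstructure} and the factorization decomposition, to the case where $S=\Spec A$ is well covered with a coordinate, so that $\pbarOm{I}$ has the explicit completed-polynomial-ring description of Section \ref{ssec:descrizionelocalepbarO} in variables $t_a$, $a\in I$. Choosing a section $J\hookrightarrow I$ of $p$ gives coordinates $x_j=t_j$ ($j\in J$) and $y_c=t_c-t_{p(c)}$ ($c\in I\setminus J$) with $\calI_{\Delta(p)}=(y_c)_c$ and the local decomposition $\calD_p=\bigoplus_\grg \pbarDm{J}\,\partial_y^{\grg}$. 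As in Section \ref{ssez:tuvy}, on $\bigoplus_\grg \pbarDm{J}\partial_y^\grg$ the right action of $x_j$ preserves the $\grg$-grading (acting through $\pbarDm{J}$), whereas the right action of $y_c$ is strictly $\grg$-lowering, $P\partial_y^\grg\mapsto\grg_cP\partial_y^{\grg-e_c}$. Now a generator $\delta$ of $\calI_{\Divisorepi(q\circ p)}$ is a product of linear forms $t_a-t_b=(x_{p(a)}-x_{p(b)})+(y_a^\ast-y_b^\ast)$ over the pairs with $(q\circ p)(a)\neq(q\circ p)(b)$ — in particular $p(a)\neq p(b)$, so $x_{p(a)}-x_{p(b)}$ is a difference of genuine coordinates of $X^J$ — and therefore right multiplication by $\delta$ on $\bigoplus_\grg \pbarDm{J}\partial_y^\grg$ is block upper triangular for the $\grg$-grading, with diagonal part the right multiplication by $\delta_0:=\Delta(p)^\sharp(\delta)=\prod(x_{p(a)}-x_{p(b)})\in\pbarOm{J}$.

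To finish I would note that $\delta_0$ is a product of the generators $x_j-x_{j'}$ of the ideals $\calI_{\Delta_{j=j'}}$ with $q(j)\neq q(j')$, whence $\pbarOm{J}[\delta_0^{-1}]=\pbarOm{J}(\infty\Divisorepi(q))$, and that after inverting $\delta_0$ on the $\pbarDm{J}$-factor the strictly $\grg$-lowering correction becomes locally nilpotent on the direct sum $\bigoplus_\grg$; hence ``right multiplication by $\delta$'' becomes invertible exactly there, which identifies $\calD_p\otimes_{\pbarOm{I}}\pbarOm{I}(\infty\Divisorepi(q\circ p))$ with $\bigoplus_\grg \pbarDm{J}(\infty\Divisorepi(q))\,\partial_y^\grg=\pbarDm{J}(\infty\Divisorepi(q))\otimes_{\pbarDm{J}}\calD_p$ compatibly with the bimodule structures, and a comparison of the colimit topology on the localization with the direct-sum topology of Section \ref{sez:Dmoduli} upgrades it to a homeomorphism. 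The main obstacle is precisely this triangularity: $\delta$ does not commute with the $\partial_y$'s, so inverting it does not preserve the $\grg$-grading, but the off-diagonal correction is locally nilpotent and the diagonal part is exactly the pullback $\delta_0$ of $\delta$ — which is the reason pushing forward along $\Delta(p)$ commutes with localizing along $\Divisorepi$ — and the rest is the routine (if fiddly) bookkeeping of the left/right $\calD$-module structures and the topologies under these manipulations.
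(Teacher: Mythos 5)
Your argument is correct, and it fills in a proof that the paper deliberately omits: Lemma~\ref{lem:pq} is stated with only the remark that it ``follows immediately from the definition'' and no justification is printed. The substantive point you isolate is the right one, and it is the same observation the authors do spell out in the proof of the closely related Lemma~\ref{lem:chBD2}: in the local description $\Delta(p)_!\calM=\bigoplus_\gamma \calM\,\partial_y^\gamma$, right multiplication by a local generator $\delta$ of $\calI_{\nabla(q\circ p)}$ is triangular for the $\gamma$-filtration, with associated graded action equal to right multiplication by $\delta_0=\Delta(p)^\#(\delta)$ (which, up to multiplicities, generates $\calI_{\nabla(q)}$ in $\pbarOm{J}$). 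There they phrase it as ``$m\partial\cdot\delta_{ij}=\pm m\,\delta_{ab}\,\partial^\alpha$'' after filtering by the order of the differential operator, with $\delta_{ab}=\Delta(\pi)^\#(\delta_{ij})$; your ``diagonal part equals $\Delta(p)^\#(\delta)$'' is exactly this. So your proof is a correct and faithful elaboration of what the authors had in mind.

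One structural remark: the detour through a $(\pbarDm{J},\pbarDm{I})$-bimodule isomorphism $\calD_p\otimes_{\pbarOm I}\pbarOm I(\infty\nabla(q\circ p))\simeq\pbarDm J(\infty\nabla(q))\otimes_{\pbarDm J}\calD_p$, followed by tensoring with $\calM$ over $\pbarDm J$, is heavier than necessary, and it tacitly uses that $\pbarDm J(\infty\nabla(q))$ carries a compatible right $\pbarDm J$-action --- the Ore condition for $\pbarDm J$ at powers of $\delta_0$, which is true and is in effect asserted in Section~\ref{ssec:polimolti}, but is an extra verification your reduction relies on. It is cleaner to stay with $\calM$ throughout: the functorial map $\Delta(p)_!\calM\to\Delta(p)_!\big(\calM(\infty\nabla(q))\big)$ extends uniquely through $\big(\Delta(p)_!\calM\big)(\infty\nabla(q\circ p))$ by the universal property of localization, once your triangularity argument shows $\delta$ acts bijectively on the target; bijectivity of the extension then follows by the same triangularity, degree by degree, and compatibility of the topologies is visible because the $\gamma$-filtration on both sides matches the filtrations used to define them.
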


\section{Distributions and fields}\label{sec:distributionandfields}

In what follows we fix a family of smooth curves $X \to S$, a finite set $I$ and collection of sections $\Sigma = \{ \sigma_i \}_{i \in I}$, keeping the notation of the previous sections. We will fix a new object as well: $\calU$ ,\index{$\calU$}which we assume to be an associative, unital, complete topological $\calO_S$ algebra with topology generated by left ideals (and therefore a $\otimesr$-topological algebra). For some results we will assume also that $\calU$ is locally a \QCCF sheaf. 

\begin{definition}\label{def:campi}With the notation above, we  define the sheaf of $\calU$-valued {\em fields} on $\pbarOpoli$ as the sheaf of continuous maps: 
	$$
	\mF^1_{\Sigma,\calU}=\calHom^{\text{cont}} _{\calO_S} \left(\pbarOpoli,\calU\right). 
	$$\index{$\mF^1_{\Sigma,\calU}$}
	More in general, for a finite set $A$, define the sheaf of $\calU$-valued $A$-fields on $\pbarOpoli$ as
	\[
	\mF^A_{\Sigma,\calU}=\calHom^{\text{cont}} _{\calO_S} \left(\pbarOpolim{A},\calU\right).
	\]\index{$\mF^A_{\Sigma,\calU}$}
	Sometimes we denote by $\mF^n_{\Sigma,\calU}$ the space of $A$-field where $A$ has $n$ elements and in particular for $A=\{ 1,\dots,n\}$.
	
	The space of $A$-fields is naturally a $\pbarDpolim{A}$ right module, dualizing the left actions of  $\pbarDpolim{A}$ on $\pbarOpolim{A}$:
	$$
	(X\cdot P) (g)=X(P\cdot g)
	$$
	for any $P \in \pbarDm{A},g\in \pbarOpolim{A}$ and $X\in\mF^A_{\Sigma,\calU}$. 
\end{definition}

A \fsonoz of the space of fields is given by the subspaces $\mH_{k,\calV}$ consisting of those fields $X$ which verify $X(\pbarOm{A}(k))\subset \calV$ where  
$\pbarOm{A}(k)=\tensor{*}_A\pbarO(k\Sigma)$ and $\calV$ is a \noz of $\calU$. This topology makes the space of field a topological $\pbarOm{A}$ module and a  $\pbarOpolim{A}$ \ccgmod module as well as a $\pbarDpolim{A}$ \ccgmod module. In the case of $\mF^1$ this is also a topological $\pbarOpoli$-module, not only a \ccgmod module. 

The subsheaves $\pbarOm{A}(k)$ are an \exhaustion of $\pbarOpolim{A}$ by (\QCCF) compact modules, hence, by Remark \ref{rmk:completezzaHom} if $\calU$ is complete also the space of fields is complete. Finally, by Remark \ref{rmk:sectionshom} it is easy to describe its sections. 

\begin{lemma}\label{lem:sezionicampi}Let $U = \Spec(A) \subset S$ is an affine well covered open subscheme of $S$, the
	\[
	\mF^A_{\Sigma,\calU}(U) = \Hom^{\text{cont}}_A\left(\pbarOpolim{A}(U), \calU(U)\right).
	\]
\end{lemma}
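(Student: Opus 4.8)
The plan is to unwind the definition $\mF^A_{\Sigma,\calU} = \calHomcont_{\calO_S}(\pbarOpolim{A},\calU)$ and pass from this internal continuous Hom sheaf to its module of sections on the affine open $U = \Spec A$. Two facts do all the work: first, $\pbarOpolim{A}$ is a regular (in particular compactly countably generated) sheaf, which guarantees that the internal Hom sheaf commutes with restriction to open subsets; second, $\pbarOpolim{A}$ and $\calU$ are both \QCC sheaves, so over an affine base continuous sheaf homomorphisms are computed on global sections (Remark \ref{oss:homomorphismsonaffine}).

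First I would recall that $\pbarOpolim{A}$ is a regular sheaf: this was established in Section \ref{ssec:convenzionispazi}, where the $w$-compact exhaustion by the subsheaves $\pbarOm{A}(n)$ was exhibited. Since $\pbarOpolim{A}$ is in particular a \ccg sheaf, the discussion in Section \ref{ssec:omomorfismi} (relying on Remark \ref{oss:fasciregolari1}) applies: a fundamental system of neighbourhoods of zero of $\calHomcont(\pbarOpolim{A},\calU)$ can be taken using $\calK = \pbarOm{A}(n)$, and the same is true over any open $U \subset S$ because $\pbarOpolim{A}|_U = \limind \pbarOm{A}(n)|_U$ is again an exhaustion of $\pbarOpolim{A}|_U$, so by Remark \ref{oss:fascicgccgregulari} no new $w$-compact subsheaves are introduced. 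Hence the restriction of $\mF^A_{\Sigma,\calU}$ to $U$ is canonically isomorphic, as a topological sheaf, to $\calHomcont_{\calO_U}(\pbarOpolim{A}|_U,\calU|_U)$. Taking sections over $U$ — which is quasi-compact, being affine — and invoking Remark \ref{oss:compatibilitaHomcalHom} with $U$ in place of $S$, we obtain a topological isomorphism $\mF^A_{\Sigma,\calU}(U) \simeq \Homcont_{\calO_U}(\pbarOpolim{A}|_U,\calU|_U)$.

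It then remains to identify the right-hand side with $\Homcont_A(\pbarOpolim{A}(U),\calU(U))$. Here $U = \Spec A$ is affine, and both $\pbarOpolim{A}|_U$ and $\calU|_U$ are \QCC sheaves (the former because it is regular, the latter by the standing assumption on $\calU$); thus Remark \ref{oss:homomorphismsonaffine} gives directly $\Homcont_{\calO_U}(\pbarOpolim{A}|_U,\calU|_U) = \Homcont_A(\pbarOpolim{A}(U),\calU(U))$, the identification being compatible with the topologies. Composing with the isomorphism of the previous step yields the lemma. The only genuine subtlety — and the step I would be most careful about — is that $\calHomcont$ does not in general commute with restriction to open subsets, since new $w$-compact subsheaves of the source can appear; this is precisely why one must use that $\pbarOpolim{A}$ is regular with its explicit exhaustion $\pbarOm{A}(n)$. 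Everything else is a formal application of the cited remarks, with no computation involved.
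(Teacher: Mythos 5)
Your proof is correct and takes the same route as the paper, which simply invokes Remark~\ref{oss:homomorphismsonaffine} after noting that $\pbarOpolim{A}$ has an \exhaustion by proj-compact modules. Your additional care about why $\calHomcont$ commutes with restriction to $U$ (via the \ccg property and the explicit \exhaustion $\pbarOm{A}(n)$) is the right subtlety to worry about if one wants the identification to be topological, and it spells out a point the paper only addresses implicitly in the remarks of Section~\ref{ssec:omomorfismi}.
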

\begin{remark}[Local description]\label{rmk:localdescfield}
	Using the description given in Section \ref{ssec:descrizionelocalepbarO} together with Lemma \ref{lem:sezionicampi} we see that for well covered open subset $U \subset S$ the space $\mF^1_{\Sigma,\calU}(U)$ coincides with the space of fields considered in Definition 3.0.4 of \cite{cas2023}.
\end{remark}

\subsection{The case of \texorpdfstring{$\calU$}{U} \QCCF}\label{ssec:campiQCCF}
Recall that on any well covered affine open subset $U$ the sheaf $\pbarOpolim{A}$ has an \exhaustion by proj-compact modules which have a complement. Hence, by Remark \ref{oss:morfismidifasci1} if $\calU$ is a \QCC sheaf on any well covered open affine subset we have 
$$
\mF^A_{\Sigma,\calU} = \qcc{H} \quad\text{ where }\quad  H=\Homcont_A\left(\pbarOpolim{A}(U), \calU(U)\right)
$$
in particular the sheaf of fields is itself a \QCC sheaf.
If, moreover, we assume $\calU$ to be a \QCCF sheaf also the space of fields is \QCCF. 

\begin{lemma}Assume that $\calU$ is a \QCCF sheaf, then also the space of $A$-fields is a \QCCF sheaf.
\end{lemma}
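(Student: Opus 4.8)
The plan is to reduce at once to the affine, well covered situation. Since every point of $S$ has a well covered affine neighbourhood and the relevant property can be tested on such a cover, it suffices to fix a well covered affine $U=\Spec A$ and show that the topological $A$-module
\[
H=\mF^A_{\Sigma,\calU}(U)=\Homcont_A\!\big(\pbarOpolim{A}(U),\calU(U)\big)
\]
(this identification of the sections is Lemma \ref{lem:sezionicampi}) is \emph{cofree}. Indeed, by the discussion just before the statement we already know that on such a $U$ one has $\mF^A_{\Sigma,\calU}|_U\simeq\qcc{H}$ (via Remark \ref{oss:morfismidifasci1}), so once $H$ is known to be cofree the sheaf $\mF^A_{\Sigma,\calU}$ is the standard \QCC sheaf attached to a cofree module, i.e.\ \QCCF, on each member of the cover. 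Write $M=\pbarOpolim{A}(U)$ and $N=\calU(U)$; the hypothesis that $\calU$ is \QCCF says exactly that $N$ admits a \fsonoz $\{V_\lambda\}$ with each $N/V_\lambda$ a free $A$-module.

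First I would recall, from Section \ref{ssec:descrizionelocalepbarO} and Section \ref{ssec:campiQCCF}, the local structure of $M$: one has $M=\limind_n M_n$ with $M_n=\pbarOm{A}(n)(U)$, each $M_n$ proj-compact, admitting a complement in $M$, and carrying a topological basis $\{w_{\underline\ell}\}_{\underline\ell\in\Lambda_n}$ indexed by a countable set, with free quotients of finite rank. Since $M$ is compactly generated with this \exhaustion, every $w$-compact submodule of $M$ lies in some $M_n$, so the submodules $\mathcal{H}_{n,\lambda}=\{\varphi\in H:\varphi(M_n)\subset V_\lambda\}$ form a \fsonoz of $H$; the goal is then to show each quotient $H/\mathcal{H}_{n,\lambda}$ is free. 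The restriction map $H\to\Homcont_A(M_n,N)$ is surjective because $M_n$ has a complement in $M$ (extend by zero through the projection $M\to M_n$), and $\Homcont_A(M_n,N)\to\Homcont_A(M_n,N/V_\lambda)$ is surjective because a continuous map from the proj-compact module $M_n$ into the discrete module $N/V_\lambda$ factors through one of the finite-rank free quotients of $M_n$, which lifts along $N\twoheadrightarrow N/V_\lambda$. Hence $H/\mathcal{H}_{n,\lambda}\simeq\Homcont_A(M_n,N/V_\lambda)$, and evaluating on the topological basis $\{w_{\underline\ell}\}$ identifies this with $\bigoplus_{\underline\ell\in\Lambda_n}N/V_\lambda$: a continuous homomorphism $M_n\to N/V_\lambda$ is the same as a family of elements of $N/V_\lambda$ almost all of which vanish. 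As $N/V_\lambda$ is free, such a direct sum is free, so $H$ is cofree and the conclusion follows.

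The step I expect to be the main obstacle is this last identification $H/\mathcal{H}_{n,\lambda}\simeq\bigoplus_{\Lambda_n}N/V_\lambda$: one must check that the two successive surjections really are surjective and that their composite has kernel exactly $\mathcal{H}_{n,\lambda}$, which is precisely where the existence of a complement of $M_n$ in $M$, the projectivity of the finite-rank quotients of $M_n$, and the completeness of $N$ all enter (the latter also being what legitimises $\mF^A_{\Sigma,\calU}|_U\simeq\qcc{H}$ in the first place). Everything else is formal once the explicit basis of $\pbarOm{A}(n)$ from Section \ref{ssec:descrizionelocalepbarO} is in hand; in particular no further work is needed to see that $\mF^A_{\Sigma,\calU}$ is \QCC, since that was already established before the statement.
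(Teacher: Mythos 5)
Your argument follows essentially the same route as the paper's. Both reduce to a well covered affine $U=\Spec A$, both exploit that $\pbarOm{A}(k)(U)$ has a complement in $\pbarOpolim{A}(U)$ and that $\calU(U)$ splits as a noz plus a free discrete module $L$, and both identify the quotient of $H$ by the noz $\mH_{k,V}$ with $\Homcont_A(\pbarOm{A}(k)(U),L)$, finally recognizing this as a countable direct sum of copies of $L$ because $\pbarOm{A}(k)(U)$ is a product of countably many copies of $A$ and $L$ is discrete. The only difference is presentational: the paper writes the short exact sequence $0\to\mH_{k,V}(S')\to\Homcont(\pbarOpolim{A}(S'),U)\to\Homcont(\pbarOm{A}(k)(S'),L)\to 0$ in one stroke, whereas you factor the surjection through the intermediate object $\Homcont_A(M_n,N)$ and justify each arrow separately (the complement gives a continuous extension by zero for the first; projectivity of the finite free quotients of the proj-compact $M_n$ gives lifting for the second). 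That decomposition is a legitimate and slightly more explicit way of establishing the same exact sequence.
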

\begin{proof}
	By what noticed above it is enough to prove that over a well covered affine subset $U=\Spec A$ such that $\calU(U)$ is a \QCCF-module, then 
	$$
	\Homcont\left(\pbarOpolim{A}(U),\calU(U)\right)
	$$
	is a \QCCF module. Let us fix $k\in \mZ$ and $V\subset \calU(U)$ a \noz such that $\calU(U)=V\oplus L$ with $L$ a free discrete module. 
	Notice that $\pbarOm{A}(k)(U)$ has a complement in $\pbarOm{A}(U)$. Hence we have the following exact sequence
	$$
		0 \to \mH_{k,V}(U) \to \Homcont\left(\pbarOpolim{A}(U),\calU(U)\right) \to \Homcont\left(\pbarOm{A}(k)(U),L\right) \to 0.
	$$
	Hence it is is enough to prove that the last module in this sequence is a free $A$-module. Recall from Section \ref{ssec:descrizionelocalepbarO} that the modules $\pbarOm{A}(k)(U)$, are a product of countable many copies $Ae_i$ of $A$. Hence, being $L$ discrete, we deduce that $\Homcont\left(\pbarOm{A}(k)(U),L\right)$ is isomorphic to the sum of countably many copies of $L$. 
\end{proof}

\subsection{Local fields}\label{ssez:campilocali}

In this section we introduce local fields and prove some of their basic properties.

\begin{definition}
	Let $\pi :A \surjmap B$ be a surjection of finite sets. Recall that, as defined in the previous section, we denote by $\calJ_{\Delta(\pi)}$ the kernel of  $\Delta(\pi)^\# : \pbarOpolim{A} \to \pbarOpolim{B}$. We define the sheaf of $\pi$\emph{-local} $A$-fields as
	\[
	\mF^{A,\pi\loc}_{\Sigma,\calU} \stackrel{\text{def}}{=} \limind_n \calHom _{\calO_S} \left(\,\frac{\pbarOpolim{A}}{\calJ_{\Delta(\pi)}^n},\, \calU\right).
	\]
	\index{$\mF^A_{\Sigma,\calU}$:$\mF^{A,\pi\loc}_{\Sigma,\calU}$}this is the subsheaf  of $A$-fields  killed by a sufficiently high power of $\calJ_{\Delta(\pi)}$. 
\end{definition}

Recall from Section \ref{sssec:Kashiwara} the description of $\Delta(\pi)_!\Delta(\pi)^!\calM$ as $\calM^{\pi\loc}$. In the case of the standard diagonal $\Delta:X\lra X^2$, we introduced in Definition \ref{def:dR} 
the de Rham residue map $ dR_r:\calM^{loc}\lra \Delta^!\calM$ which is $\pbarD$-equivariant. In the following Lemma we describe these objects in the case of the sheaf of  fields.

\begin{lemma}\label{lem:campilocali}
	There are natural isomorphisms 
	\[
	\Delta(\pi)^!\left(\mF^A_{\Sigma,\calU}\right)\simeq \mF^B_{\Sigma,\calU}\ \qquad\mand \qquad \Delta(\pi)_!\left(\mF^B_{\Sigma,\calU}\right) \simeq \mF^{A,\pi\loc}_{\Sigma,\calU}.
	\]
	of right $\pbarDm{A}$-modules. In addition, 
	the de Rham residue map $dR_r:{\mF^{2\loc}_{\Sigma,\calU}}\lra {\mF^{1}_{\Sigma,\calU}}$ coincides with $\piuno$ \index {$\piuno$}, defined as
\begin{equation}\label{eq:piuno}
\piuno: \mF^2_{\Sigma,\calU} \to \mF^1_{\Sigma,\calU} \qquad \big(\piuno (X)\big) (f)=X(1\otimes f).
\end{equation}
\end{lemma}

\begin{proof}In the first isomorphism the left hand side is given by $A$-fields killed by $\calJ_{\Delta(\pi)}$, hence using that $\pbarOpolim{B}\simeq \pbarOpolim{A}/\calJ_{\Delta(\pi)}$ it immediately follows that $\Delta(\pi)^!\mF^A_{\Sigma,\calU} \simeq \mF^B_{\Sigma,\calU}$.
	
	The second isomorphism follows from the first using Kashiwara's Theorem (see section \ref{sssec:Kashiwara}).
	
	To prove the claim on the de Rham residue map (c.f. \ref{sec:derhamresidue}) it is enough to check that $\piuno$ vanishes on ${\mF^{2\loc}_{\Sigma,\calU}}\cdot (T_{\oSigma}\otimes 1)$
	and that pre-composing $\piuno$ with the immersion $\mF^1_{\Sigma,\calU} = \Delta^!\mF^2_{\Sigma,\calU} \subset \mF^2_{\Sigma,\calU}$ gives us the identity. The second claim is trivial, we check the first one:
	\[
	\big(   \piuno(X \cdot (\partial \otimes 1 )) \big)(f)=X \big((\partial \cdot 1)\otimes f\big)=0.\qedhere
	\]
\end{proof}

\subsection{Tensor products of the space of fields}\label{sssec:prodottiordinaticampi} 
Being the space of $A$-field a \ccgmod module over $\pbarDpolim{A}$ we can form its tensor product and if $A=\sqcup_{\ell=1}^k A_\ell$ then the sheaf
$$
\mF^{A_1}_{\Sigma,\calU}\exttensor{*} \dots \exttensor{*} \mF^{A_k}_{\Sigma,\calU}
$$
is a \ccgmod right $\pbarDpolim {A_1}\tensor{cg}\cdots \tensor{cg}\pbarDpolim {A_k}$-module (see \ref{ssec:prodottiesterni}).

By the results explained in Section \ref{ssec:prodottiordinatiOeD} then there is a natural action of 
$\pbarOpolim{A_1}\tensor{\ra}\cdots \tensor{\ra}\pbarOpolim{A_k} $
on $
\mF^{A_1}_{\Sigma,\calU}\tensor{\ra} \dots \tensor{\ra} \mF^{A_k}_{\Sigma,\calU}
$. This action is continuous in the second variable, that is for all 
$f\in\pbarOpolim{A_1}\tensor{\ra}\cdots \tensor{\ra}\pbarOpolim{A_k} $ the multiplication by $f$ is a continuous map on $
\mF^{A_1}_{\Sigma,\calU}\tensor{\ra} \dots \tensor{\ra} \mF^{A_k}_{\Sigma,\calU}
$. 
If, moreover, $\calU$ is a \QCCF sheaf, the action is continuous in both variables. Similar remarks hold for the action of differential operators. 

To stress that this tensor product is equipped naturally with this action we write it as 
$$
\mF^{A_1}_{\Sigma,\calU}\exttensor{\ra} \dots \exttensor{\ra} \mF^{A_k}_{\Sigma,\calU}.
$$

\subsection{Product of fields}We now define the product of fields. 

\begin{definition} For a fixed decomposition $A=B\sqcup C$ of a finite set we define the right multiplication map
	\[
	m^{B,C}_r : \mF^B_{\Sigma,\calU} \otimes \mF^C_{\Sigma,\calU} \to \mF^{A}_{\Sigma,\calU}
	\]
	as follows\index{$m^{B,C}_r$}.  Given a $B$-field $X : \pbarOpolim{B} \to \calU$ and a $C$-field $Y : \pbarOpolim{C} \to \calU$ , their product $XY= m^{B,C}_r(X\otimes Y)$ is defined to be the composition
	\[
	XY :\pbarOpolim{A}\lra  \pbarOpolim{B} \tensor{*} \pbarOpolim{C} \to \calU\tensor{*}\calU \to \calU .
	\]
	where the composition of the last two maps is the continuous extension of $f\otimes g \mapsto X(f)Y(g)$.
\end{definition}

\begin{remark}\label{oss:estensioneXY}
	We notice that given $X$ and $Y$ as above their product extends to a continuous morphism $\pbarOpolim{B}\tensor{\ra}\pbarOpolim{C}\lra \calU$.
	Indeed by functoriality we get a continuous morphism  $\pbarOpolim{B}\tensor{\ra}\pbarOpolim{C}\lra \calU\tensor\ra \calU$ and the multiplication  
	map $\calU\tensor\ra \calU$ since we are assuming that $\calU$ is $\ra$-topological algebra. 
\end{remark}

\begin{lemma}\label{lem:estensionem} The multiplication map $m_r$ defined above is $\ra$-continuous so it extends to a continuous morphism
	\[
	m^{B,C}_r : \mF^B_{\Sigma,\calU}\otimesr\mF^C_{\Sigma,\calU} \to \mF^{A}_{\Sigma,\calU}.
	\]
	In particular it is $^*$-continuous and therefore defines a morphism
	\[
	m^{B,C}_r : \mF^B_{\Sigma,\calU}\tensor{*} \mF^C_{\Sigma,\calU} \to \mF^{A}_{\Sigma,\calU}.
	\]
\end{lemma}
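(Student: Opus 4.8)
The plan is to reduce to a statement about modules over a well covered affine and then, for a given \noz of $\mF^A_{\Sigma,\calU}$, exhibit an explicit $\ra$-\noz of $\mF^B_{\Sigma,\calU}\otimes\mF^C_{\Sigma,\calU}$ that $m_r$ maps into it. Since $S$ is quasi compact, continuity of a morphism of topological sheaves can be checked on an affine cover, and by Remark \ref{oss:prodottolocalizzabene} the $\ra$-topology on a tensor product localizes; as well covered affine subsets cover $S$ (Lemma \ref{lem:intornoregolareI1}), we may assume $S=\Spec A$ is well covered and argue with the modules $\mF^B_{\Sigma,\calU}(S)=\Homcont(\pbarOpolim{B}(S),\calU(S))$ and their analogues (Lemma \ref{lem:sezionicampi}). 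Recall that a \fsonoz of $\mF^A_{\Sigma,\calU}$ is given by the subspaces $\mH_{n,\calV}=\{X:X(\pbarOm{A}(n))\subseteq\calV\}$, where $\pbarOm{A}(n)=\otimes^*_A\pbarO(n)$ is one of the $w$-compact pieces of the exhaustion of $\pbarOpolim{A}$ and $\calV$ is a \noz of $\calU$; since the topology of $\calU$ has a \fsonoz of left ideals, we may assume $\calV$ is a left ideal, so $\calU\cdot\calV\subseteq\calV$ and $\calV$ is closed.

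The second ingredient is a pair of elementary remarks about $\calU$. Although its topology is only assumed to be generated by left ideals, $\calU$ is a topological algebra, so its multiplication is continuous for the $*$-topology; this forces every right multiplication $R_u:\calU\to\calU$, $w\mapsto wu$, to be continuous, because $R_u=\mu_\calU\circ\iota_u$ with $\iota_u(w)=w\otimes u$, and every $*$-\noz of $\calU\otimes\calU$ contains a set $\calW\otimes u$ for some \noz $\calW$ of $\calU$; in particular $R_u^{-1}(\calV)$ is a \noz. Also, the natural map $\iota:\pbarOpolim{A}\to\pbarOpolim{B}\tensor{*}\pbarOpolim{C}$ carries $\pbarOm{A}(n)$ into the closure of the image of $\pbarOm{B}(n)\otimes\pbarOm{C}(n)$, since it does so on the dense subspace $\otimes_A\pbarO(n)$ and is continuous, and recall $m_r(X\otimes Y)=\mu_\calU\circ(X\tensor{*}Y)\circ\iota$.

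Now, given $\mH_{n,\calV}$, I would take the $\ra$-\noz $\calN=\mF^B_{\Sigma,\calU}\otimes\mH^{C}_{n,\calV}+\sum_Z\mH^{B}_{n,\calW_Z}\otimes Z$, the sum over sections $Z$ of $\mF^C_{\Sigma,\calU}$, with $\calW_Z$ chosen below, and verify $m_r(\calN)\subseteq\mH_{n,\calV}$ summand by summand (using linearity of $m_r$ and that $\mH_{n,\calV}$ is a submodule). For the first summand: if $Y\in\mH^{C}_{n,\calV}$ and $X$ is arbitrary, then $m_r(X\otimes Y)$ carries $\pbarOm{A}(n)$ into $\overline{\mu_\calU(\calU\otimes\calV)}=\overline{\calU\cdot\calV}\subseteq\calV$ (using $Y(\pbarOm{C}(n))\subseteq\calV$, continuity of $X\tensor{*}Y$ and $\mu_\calU$, and closedness of $\calV$), so $m_r(X\otimes Y)\in\mH_{n,\calV}$. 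For the $Z$-summand: fix $Z$; since $\pbarOm{C}(n)$ is locally a countable product of copies of $\calO_S$ and $Z$ is continuous, $Z(\pbarOm{C}(n))\subseteq M_0+\calV$ for a finitely generated submodule $M_0=\calO_S u_1+\dots+\calO_S u_r$ of $\calU$; put $\calW_Z=\bigcap_i R_{u_i}^{-1}(\calV)$, a \noz of $\calU$, so that $\calW_Z\cdot Z(\pbarOm{C}(n))\subseteq\calW_Z M_0+\calW_Z\calV\subseteq\calV$, and then for $X\in\mH^{B}_{n,\calW_Z}$ the same composite shows $m_r(X\otimes Z)$ carries $\pbarOm{A}(n)$ into $\overline{\calW_Z\cdot Z(\pbarOm{C}(n))}\subseteq\calV$. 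This proves $\ra$-continuity of $m_r$; as the $*$-topology on $\mF^B_{\Sigma,\calU}\otimes\mF^C_{\Sigma,\calU}$ is finer than the $\ra$-topology, $m_r$ is a fortiori $*$-continuous, and since $\mF^A_{\Sigma,\calU}$ is complete it extends to $m^{B,C}_r:\mF^B_{\Sigma,\calU}\tensor{*}\mF^C_{\Sigma,\calU}\to\mF^A_{\Sigma,\calU}$. The main obstacle is the $Z$-summand: there the "right" variable ranges over the fixed, genuinely non-small set $Z(\pbarOm{C}(n))$, so the left-ideal property of $\calU$ is of no direct use, and one must instead extract continuity of right multiplication in $\calU$ from the topological-algebra axiom and combine it with the $w$-compactness of $\pbarOm{C}(n)$ to control that set.
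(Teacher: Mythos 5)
Your proof is correct and follows essentially the same route as the paper's: you verify the two conditions defining $\ra$-continuity of a bilinear map, you use that the zero-neighborhoods of $\calU$ are left ideals to handle the "$\mF^B\otimes V$" half, and you use $w$-compactness of $\pbarOm{C}(n)$ together with continuity of right multiplication in $\calU$ to produce the second family of neighborhoods $\calW_Z$. The paper's proof is more condensed (it works directly with a neighborhood of the form $\{Z : Z(C_1\otimes C_2)\subset W'\}$ with $C_1,C_2$ compact and $W'$ an open left ideal, and states the key conclusion "the image is finite modulo $W'$" without unwinding it), whereas you spell out explicitly why right multiplication is continuous from the $*$-topology axiom and why the diagonal map $\iota$ sends $\pbarOm{A}(n)$ into the closure of $\pbarOm{B}(n)\otimes\pbarOm{C}(n)$; these are details the paper elides but the underlying argument is the same.
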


\begin{proof}
	We have to prove that given a \noz $W \subset \mF^{A}_{\Sigma,\calU}$ 
	then 
	
	i) there exists a \noz $V \subset \mF^{C}_{\Sigma,\calU}$ such that $m_r^{B,C}(\mF^B\otimes V)\subset W$
	
	ii) given any section $Y_0 \in \mF^C_{\Sigma,\calU}$  there exists an open submodule $U \subset \mF^B_{\Sigma,\calU}$ such that $m^{B,C}_r\left(U \otimes Y_0\right) \subset W$ 
	
We may assume that $W = \{ Z : Z(C_1\otimes C_2) \subset W' \}$ with $C_1 \subset \pbarOpolim{B}$ and $C_2 \subset \pbarOpolim{C}$ compact and $W'$ an open left ideal in $\calU$, since the closures of $C_1\otimes C_2$ in $\pbarOpolim A$ as $C_1,C_2$ vary are a $w$-compact exhaustion of $\pbarOpolim A$. 
	
	Set $V = \{ Y : Y(C_2) \subset W'\}$. Using that $W'$ is a left ideal one can check that
	\[
	m^{B,C}_r\left( \mF^B_{\Sigma,\calU} \otimes V \right) \subset W.
	\]
	
	Moreover, since $C_2$ is compact, the image $Y_0(C_2)$ is finite modulo $W'$. Hence, there exists  an open left ideal $W'' \subset \calU$ such that $W''\cdot Y_0(C_1) \subset W'$. Now if we set $U = \{ X : X(C_1) \subset W'' \}$ it is easy to check that
	\[
	m^{B,C}_r \left( U \otimes Y_0 \right) \subset W'. \qedhere
	\]
\end{proof}

\begin{definition}
	Switching the factors we consider another product
	\begin{align*}
	m^{B,C}_\ell : \mF^B_{\Sigma,\calU} \otimes \mF^C_{\Sigma,\calU} &\to \mF^{A}_{\Sigma,\calU}, \\
	X\otimes Y &\mapsto (f\otimes g \mapsto Y(g)X(f)).
	\end{align*}\index{$m^{B,C}_\ell$}In a similar way of $m^{B,C}_r$, the product $m^{B,C}_\ell$ admits an extension
	\(
	m^{B,C}_\ell : \mF^B_{\Sigma,\calU} \otimesl \mF^C_{\Sigma,\calU} \to \mF^{A}_{\Sigma,\calU}.
	\)
	obtained in the same manner of the extension of $m^{B,C}_r$.
\end{definition}

\subsubsection{Introducing poles}Let $A=B\sqcup C$ and recall the definition of $\nabla_{B=C}$ from Section \ref{sssez:art2notazionidiagonali}. 
We noticed that  $\mF^B_{\Sigma,\calU}\exttensor{*} \mF^C_{\Sigma,\calU}$ is a cg-topological $\pbarOq$-module and that
$\mF^B_{\Sigma,\calU}\exttensor{\ra} \mF^C_{\Sigma,\calU}$ is a $\pbarOpolim B \tensor{\ra}\pbarOpolim C$-module such that the action is continuous in both variables. 
By Lemma \ref{lem:extensiondiagonal} it is also a $\pbarOm A(\infty \nabla_{B=C})$-module. Moreover we have a linear continuous morphism from $\mF^1_{\Sigma,\calU}\exttensor{*} \mF^1_{\Sigma,\calU}$  to $\mF^1_{\Sigma,\calU}\exttensor{\ra} \mF^1_{\Sigma,\calU}$.  We can extend this morphism (see Section \ref{ssec:localization}) to get a continuous morphism
$$
\mF^B_{\Sigma,\calU}\exttensor{*}\, \mF^C_{\Sigma,\calU} (\infty\nabla_{B=C}) \lra \mF^B_{\Sigma,\calU}\exttensor{\ra} \mF^C_{\Sigma,\calU}.
$$
We can compose this morphism with $m_r$. In this way we restrict $m_r$ to a product on the sheaf $\mF^B_{\Sigma,\calU}\boxtimes^{*} \mF^C_{\Sigma,\calU} (\infty\nabla_{B=C}) $. We denote this map also by $m_r$. 


We conclude with a remark on how we may describe these maps and on how these constructions are related to the one given in Definition 3.1.7 of \cite{cas2023}. 

\begin{lemma}\label{rmk:descrprod}
	Let $X\in \mF^B_{\Sigma,\calU}$ and $Y \in \mF^C_{\Sigma,\calU}$ and $f \in \pbarOm{A}(\infty\nabla_{B=C})$ then 
	$$ m^{B,C}_r(( X\boxtimes Y)\cdot f) (\ell)= m^{B,C}_r(X\boxtimes Y)(Exp^r_\pi(\ell f))$$ 
	for all $\ell \in \pbarOpolim A$.	
	Here $m^{B,C}_r(X\boxtimes Y):\pbarOpolim B\tensor{\ra}\pbarOpolim C\lra \calU $ is the continuous extension explained in Remark \ref{oss:estensioneXY}. And analogous statement holds for $m^{B,C}_\ell$.
\end{lemma}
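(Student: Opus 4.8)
The plan is to unwind the construction of the extended product $m^{B,C}_r$ on $\mF^B_{\Sigma,\calU}\exttensor{*}\mF^C_{\Sigma,\calU}(\infty\nabla_{B=C})$ and to reduce the identity to the multiplicativity of the expansion morphism $Exp^r_\pi$, where $\pi\colon A\surjmap\{1,2\}$ is the surjection with fibres $B$ and $C$. Recall that, by definition, $m^{B,C}_r$ on the localization is the composite of the localization morphism
\[
L\colon \mF^B_{\Sigma,\calU}\exttensor{*}\mF^C_{\Sigma,\calU}(\infty\nabla_{B=C})\longrightarrow \mF^B_{\Sigma,\calU}\exttensor{\ra}\mF^C_{\Sigma,\calU}
\]
with the $\ra$-multiplication $m^{B,C}_r\colon\mF^B_{\Sigma,\calU}\exttensor{\ra}\mF^C_{\Sigma,\calU}\to\mF^A_{\Sigma,\calU}$. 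Write $\iota\colon\pbarOpolim{A}\to\pbarOpolim{B}\tensor{\ra}\pbarOpolim{C}$ for the natural grouping morphism of algebras, which by Lemma \ref{lem:espansionediagonale} is precisely the restriction of $Exp^r_\pi$ to the subalgebra of functions without poles along $\nabla_{B=C}$; and recall from Remark \ref{oss:estensioneXY} that, in the sense of the statement, $m^{B,C}_r(X\boxtimes Y)$ is the continuous extension $\widetilde m_{X,Y}\colon\pbarOpolim{B}\tensor{\ra}\pbarOpolim{C}\to\calU$ of $h_1\otimes h_2\mapsto X(h_1)Y(h_2)$, while as an $A$-field it is the composite $\widetilde m_{X,Y}\circ\iota$.

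First I would identify $L\big((X\boxtimes Y)\cdot f\big)$. By the universal property of localization (Section \ref{ssec:localization}) the morphism $L$ is continuous and $\pbarOm{A}(\infty\nabla_{B=C})$-linear, the target being regarded as a $\pbarOm{A}(\infty\nabla_{B=C})$-module through $Exp^r_\pi$; this is legitimate because $Exp^r_\pi$ makes the ideal of $\nabla_{B=C}$ act invertibly on $\pbarOpolim{B}\tensor{\ra}\pbarOpolim{C}$, by Lemma \ref{lem:extensiondiagonal} and its multivariable version Lemma \ref{lem:espansionediagonale}. Since $L$ is the identity on the primitive element $X\boxtimes Y$, this gives $L\big((X\boxtimes Y)\cdot f\big)=Exp^r_\pi(f)\cdot(X\boxtimes Y)$, the action being that of $\pbarOpolim{B}\tensor{\ra}\pbarOpolim{C}$ on $\mF^B_{\Sigma,\calU}\exttensor{\ra}\mF^C_{\Sigma,\calU}$ described in Section \ref{sssec:prodottiordinaticampi}.

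The core step is then the identity
\[
m^{B,C}_r\big(g\cdot(X\boxtimes Y)\big)(\ell)=\widetilde m_{X,Y}\big(g\cdot\iota(\ell)\big),\qquad g\in\pbarOpolim{B}\tensor{\ra}\pbarOpolim{C},\ \ell\in\pbarOpolim{A},
\]
with $g\cdot\iota(\ell)$ the product in the algebra $\pbarOpolim{B}\tensor{\ra}\pbarOpolim{C}$. Both sides depend continuously on $g$: on the left because the $\ra$-action on $\mF^B_{\Sigma,\calU}\exttensor{\ra}\mF^C_{\Sigma,\calU}$, the map $m^{B,C}_r$ and evaluation at $\ell$ are continuous (Lemma \ref{lem:BCMN} and Section \ref{ssec:prodottiordinatiOeD}); on the right because multiplication in $\pbarOpolim{B}\tensor{\ra}\pbarOpolim{C}$ is continuous in both variables and $\widetilde m_{X,Y}$ is continuous. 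Hence it suffices to verify the identity for a pure tensor $g=g_1\otimes g_2$, where $g\cdot(X\boxtimes Y)=(X\cdot g_1)\boxtimes(Y\cdot g_2)$; there both sides, regarded as maps $\pbarOpolim{B}\tensor{\ra}\pbarOpolim{C}\to\calU$ evaluated at $\iota(\ell)$, are the continuous extensions of $h_1\otimes h_2\mapsto X(g_1h_1)Y(g_2h_2)$, so they agree by density of the pure-tensor sums in $\pbarOpolim{B}\tensor{\ra}\pbarOpolim{C}$. Combining with the previous step and taking $g=Exp^r_\pi(f)$ gives $m^{B,C}_r\big((X\boxtimes Y)\cdot f\big)(\ell)=\widetilde m_{X,Y}\big(Exp^r_\pi(f)\cdot\iota(\ell)\big)$.

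Finally, since $Exp^r_\pi$ is a morphism of algebras restricting to $\iota$ on $\pbarOpolim{A}$, and since $\ell f\in\pbarOpolim{A}(\infty\nabla_{B=C})$ lies in its domain, we obtain $Exp^r_\pi(f)\cdot\iota(\ell)=Exp^r_\pi(f)\cdot Exp^r_\pi(\ell)=Exp^r_\pi(\ell f)$, whence
\[
m^{B,C}_r\big((X\boxtimes Y)\cdot f\big)(\ell)=\widetilde m_{X,Y}\big(Exp^r_\pi(\ell f)\big)=m^{B,C}_r(X\boxtimes Y)\big(Exp^r_\pi(\ell f)\big),
\]
which is the claim. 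The case of $m^{B,C}_\ell$ is identical, with $Exp^\ell_\pi$ and the $\la$-tensor products throughout. The only real difficulty is bookkeeping: keeping the several module structures straight — the $\pbarOm{A}(\infty\nabla_{B=C})$-action on the localization versus the $\pbarOpolim{B}\tensor{\ra}\pbarOpolim{C}$-action pulled back along $Exp^r_\pi$ — and justifying each reduction to pure tensors by the appropriate continuity statement; there is no new idea beyond the multiplicativity of $Exp^r_\pi$ already recorded in Lemma \ref{lem:espansionediagonale}.
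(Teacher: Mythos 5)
Your proof is correct and follows essentially the same route as the paper's: both prove the stronger identity $m^{B,C}_r\big((X\boxtimes Y)\cdot g\big)(\ell)=\widetilde m_{X,Y}\big(g\cdot\iota(\ell)\big)$ for $g\in\pbarOpolim{B}\tensor{\ra}\pbarOpolim{C}$ and then reduce, first by continuity in $\ell$ and then by continuity in $g$, to pure tensors where the equality is a direct computation, with the final reduction to the stated form resting on $Exp^r_\pi$ being a morphism of algebras extending $\iota$. Your write-up is more explicit about the localization morphism $L$, the module structures involved, and the role of multiplicativity of $Exp^r_\pi$, but there is no substantive difference in approach.
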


\begin{proof}
	We prove a slightly stronger statement by proving that the same claim holds for all $f\in \pbarOpolim B\tensor{\ra}\pbarOpolim C$. The fields $\ell\mapsto m^{B,C}_r((X\boxtimes Y)\cdot f)(\ell)$ and $\ell\mapsto m_r^{B,C}(X\boxtimes Y)(\ell f)$ are continuous in $\ell$ hence we can choose $\ell \in \pbarOpolim A$ of the form $\ell=\gra\otimes \grb$. Moreover once $\ell$ is fixed we notice that the maps
	$$
	f\mapsto m^{B,C}_r(( X\boxtimes Y)\cdot f) (\ell)\qquad\mand \qquad f \mapsto m^{B,C}_r(X\boxtimes Y)(\ell f)
	$$
	are continuous also in $f$ (see Lemma \ref{lem:BCMN2}). Hence it is enough to prove the claim for $f=g\otimes h$ with $g,h\in  \pbarOpoli$. In this case both the left hand side and the right hand side are equal to $ X(\gra g)Y(\grb h)$.
\end{proof}

\section{Ordered operad and operad with poles}\label{sec:operads}

We use notation and terminology of \cite{BDchirali}, Chapter 1. In the present Section we will prove some commutation relations for the morphisms $m_r$ and $m_\ell$ introduced above. We will use the formalism of operads to make the exposition more concise. We will use the notation on finite sets introduced in Section \ref{ssez:artquestonotazionifiniteset}.

Define the following category of modules indexed by finite sets. 

\begin{definition}\label{def:Obullet}
A $\calO^\bullet$-module\index{$\calO^\bullet$-module} is a collection of complete topological $\calO_S$-module $M^A$, indexed by $A \in \ffset$, such that 
each $M^A$ is a $\pbarOm{A}$ topological-module whose topology is $\pbarOm{A}$-linear. A morphism $f : M^\bullet \to N^\bullet$ is a collection of $\pbarOm{A}$-linear and continuous maps $f^A : M^A \to N^A$. We call this category $\calO^\bullet$-mod and refer to it as the category of $\calO^\bullet$ modules.

An $I$-family of $\calO^\bullet$ modules is a map $I\ni i\lra M^\bullet_i\in \calO^\bullet$-mod. Usually we will denote these families by $M^\bullet_i$, we will use the notation $i\to M_i^\bullet$ occasionally to make some formulas more transparent. 
\end{definition}

In what follows finite sets will be used to index both ``graded" objects (indexes for object in the category $\calO^\bullet$-mod) as well as collections of objects (finite collections of objects in $\calO^\bullet$ that we be used to describe the operad structure). We reserve the letters $A,B$ for the former and the letters $I,J$ for the latter.  

\begin{ntz}\label{notaz:prodottiesterniOmod}
Let $q : A \twoheadrightarrow I$ be a surjective morphism of finite sets, let $\tau : \{ 1,\dots,n\} \to I$ be an ordering of $I$ and let $i\to M^\bullet_i$ be a $I$-family of $\calO^\bullet$-modules.  We introduce the following notations:
	\begin{align*}
      \exttensor{*}_q M^\bullet _i   & = \exttensor{*}_{i\in I} M_i^{A_i} \\
      \exttensor{\ra}_{\tau,q} M^\bullet _i  & =  M_{\tau(1)}^{A_{\tau(1)}} \exttensor{\ra} \cdots \exttensor{\ra} M_{\tau(n)}^{A_{\tau(n)}}     
    \end{align*}
    \index{external tensor product $\exttensor{}$:$\exttensor{*}_q$}\index{external tensor product $\exttensor{}$:$\exttensor{\ra}_{\tau,q}$}when we want to make the indexing set $I$ more explicit, we will write $\exttensor{*}_q \{M^\bullet _i\}_{i \in I}$ in place of $\exttensor{*}_q M^\bullet _i$. The definitions of the modules on the right hand side were given in Section \ref{ssec:prodottiesterni} and it is easy to check that their topology is $\pbarOm A$-linear.

  If, in addition, we assume that $M^{A}_i$ is a $\pbarOpolim{A}$-module, then, 
    by Remarks \ref{oss:prodottoalgebremoduli} and \ref{oss:algebreregolariemoduli}, and the isomorphism $\pbarOpolim{A}\simeq {\tensor{cg}}\pbarOpolim{A_i}$ these are $\pbarOpolim{A}$ \cgmod modules. Moreover, by Lemma \ref{lem:BCMN2}, there is an action of 
    $\pbarOpolim{A_{\tau(1)}} \tensor{\ra} \cdots \tensor{\ra}\pbarOpolim{A_{\tau(n)}}$ on $\exttensor{\ra}_{\tau,q} \{ M^\bullet _i \}_{i \in I}$ which is continuous in the second variable.
     Similar considerations hold for the action of the algebras $\pbarDm A$ and $\pbarDpolim A$. 
    
    Finally there is a continuous map 
    \begin{equation}\label{eq:mda}
    \imath_{\tau,q}:
    \exttensor{*}_q M^\bullet _i \lra \exttensor{\ra}_{\tau,q} M^\bullet _i
    \quad\text{given by}\quad \otimes X_i \mapsto X_{\tau(1)}\otimes\dots\otimes X_{\tau(n)}.
    \end{equation}
\end{ntz}

\subsection{The pseudo tensor structure with poles\texorpdfstring{: $P^\Delta$}{}}

    Consider a $I$-family $\{M^\bullet_i\} \in \calO^\bullet$-mod. We define $\exttensor{\Delta}_{i\in I} M^\bullet_i \in \calO^\bullet\text{-mod}$ as follows. To a finite set $A$ and a surjection $q  : A \twoheadrightarrow I$, we consider the topological modules
	\begin{align*}
	\left(\exttensor{\Delta}_{i\in I} M^\bullet_i\right)^A_q &= (\exttensor{*}_q M^\bullet_i)(\infty\nabla(q)) = (\exttensor{*}_{i \in I } M_i^{A_i})(\infty(\nabla(q)))\\
	\left(\exttensor{\Delta}_{i\in I} M^\bullet_i\right)^A &= \bigoplus_{q : A \twoheadrightarrow I} \left(\exttensor{\Delta}_{i\in I} M^\bullet_i\right)^A_q = \bigoplus_{q: A \twoheadrightarrow I} (\exttensor{*}_{i \in I } M_i^{A_i})(\infty(\nabla(q))).
	\end{align*}
	where recall the definition \index{external tensor product $\exttensor{}$:$\exttensor{\Delta} M_i^\bullet$} of $\nabla(q)$ and the topology on the right hand side module from Section \ref{ssec:polimolti}. Notice that, in particular, if there are not surjective maps from $A$ to $I$ this module is zero. 
	It follows that $(\exttensor{\Delta} M^\bullet_i) \in \calO^\bullet\text{-mod}$ .
	This tensor product is a pseudo tensor product on the category of $\calO^\bullet$-modules, as defined in \cite{BDchirali} Section 1.1.3. This means that given a surjective map 
	$\pi:J\surjmap I$ we have a natural map
	\begin{equation}\label{eq:epsilon}
	\gre_\pi : \exttensor{\Delta }_{j\in J} M^\bullet_j\lra \exttensor{\Delta}_{i\in I}\left(\exttensor{\Delta}_{j\in J_i} M^\bullet_j\right)
	\end{equation}
	which is given by the natural maps among multiple tensor products (see Remark \ref{oss:amtp}) and the fact that we are localizing along the same diagonals $\Delta_{a=b}$ for $q(a)\neq q(b)$.
	To such a data is associated a pseudo tensor structure (as defined in \cite{BDchirali}, Section 1.1.2) as follows. For each finite sets $I$ and $A$, for each $I$-family $\{M_i^\bullet\}$ of $\calO^\bullet$-modules and for each $N^\bullet\in \calO^\bullet$-mod define the abelian group
    \begin{align*}
    P^{\Delta}_I\left( \{ M^\bullet_i\}, N^\bullet \right)^A &= \Homcont_{\pbarOpolim{A}}\left( \big(\exttensor{\Delta}_{i\in I}M^\bullet_i\big) ^A,N^A \right)  \\
    &= \bigoplus_{q : A \twoheadrightarrow I} \Homcont_{\pbarOpolim{A}}\left( (\exttensor{*}_{i \in I} M_i^{A_i})(\infty\nabla(q)),N^A \right).
	\end{align*}
    and\index{$P^\Delta$} define $P^{\Delta}_I\left( \{ M^\bullet_i\}, N^\bullet \right)^A_q$ as the summand corresponding to $q:A\twoheadrightarrow  I$. If $A \stackrel{q}{\twoheadrightarrow} J \stackrel{\pi}{\twoheadrightarrow} I$ are surjections of finite non-empty sets and $\{M^\bullet_j\}_{j \in J},\{N^\bullet_i\}_{i \in I},L^\bullet$ are object in $\calO^\bullet$-modules we have composition maps  
   \begin{equation}\label{eq:PDelta1}
    P^\Delta_I\left(\{N^\bullet_i\},L^\bullet\right)^A_{\pi\circ q} \times \left( \prod_{i \in I} P^\Delta_{J_i}\left(\{M^\bullet_{j}\},N^\bullet_i\right)^{A_i}_{q_i} \right) \to P^\Delta_J\left(\{M^\bullet_j\},L^\bullet\right)^A_q. 
    \end{equation}
    where $q_i:A_i\twoheadrightarrow J_i$ is the restriction of $q$. The composition is obtained as follows: if $\psi_i\in P^\Delta_{J_i}(\{M_j\},M^\bullet)_{q_i}^{A_i}$ then by localizing $\tensor{*}\psi_i$ we get a map
    $$
    \Psi: \exttensor{\Delta}_{i\in I} \left( \exttensor{\Delta}_{j\in I_j} M^{A_j}_j\right)\lra \exttensor{\Delta}_{i\in I}N_i^{A_i}
    $$
    If $\grf\in P^\Delta_I(\{N^\bullet_i\},L^\bullet)_{\pi\circ q}^A$  then the composition of $\grf$ and $\{\psi_i\}$ is defined as 
    $$
    \grf\circ\{\psi_i\}=\grf\circ \Psi\circ \gre_{\pi}.
    $$
    In this way we have defined a pseudo tensor structure graded by finite sets. To obtain a pseudo tensor structure in the sense of \cite{BDchirali}, Section 1.1.2, define $$
    P_I^\Delta(\{M^\bullet_i\}, N^\bullet)=\prod_{q\surjmap I} P_I^\Delta(\{M^\bullet_i\}, N^\bullet)_q^A
    $$ over all finite sets $A$ and $q\surjmap I$. 
    The composition maps are equal to the maps \eqref{eq:PDelta1}. More precisely the composition maps will be maps
    $$
    P^\Delta_I\left(\{N^\bullet_i\},L^\bullet\right) \times \left( \prod_{i \in I} P^\Delta_{J_i}\left(\{M^\bullet_{j}\},N^\bullet_i\right) \right) \to P^\Delta_J\left(\{M^\bullet_j\},L^\bullet\right). 
    $$ 
    constructed as follows: if we write these spaces as a product, then all factors of this map corresponding to indexes $A,q$ as in \eqref{eq:PDelta1} will be equal to the composition maps we have already constructed and all the other factors will be zero.  

    \subsubsection{The action of bijections}\label{sssez:bigezioni1} Finally, there is an action of the groupoid of bijections on the pseudo tensor structure. If $\rho:I\lra I'$ is a bijection and, $\{i\to M_i^\bullet \}$ is an $I$-family in $\calO^\bullet$ then the action of $\rho$ is given by the morphism
     $$    \Big( \exttensor{\Delta}_{i\in I}\{i\to M_i^\bullet\}\Big)^A_q     \lra     \Big(\exttensor{\Delta}_{i'\in I'}\{i'\to M_{\rho^{-1}(i')}^\bullet\}  \Big)^A_{\rho\circ q}   $$
      which sends $(\otimes_{i\in I} X_i)\cdot f$ to $(\otimes_{i'\in I'} X_{\rho^{-1}(i')})\cdot f$, . where $f\in \pbarO(\nabla (q))$.
     Notice that $\nabla(q)=\nabla(\rho\circ q)$ so the map is well defined. Let us notice that this action does not much since the two modules are exactly equal, however they are indexed by a different map from $A$ to $I$. Thus, the action we are considering involves only a change of indices.
      
    The action on the pseudo tensor structure is obtained by pre-composition with the inverse of this morphism. 
     
    The composition maps are equivariant with respect to this action: more precisely given $\pi:J\surjmap I$, $\pi':J'\surjmap I'$ and $\rho_J:J\lra J'$, and $\rho_I:I\lra I'$ such that $\pi'\circ\rho_J=\rho_I\circ \pi$ then $\rho_J$ induces bijections $\rho_i:J_i\lra J'_{\rho_I(i)}$ and the composition map is equivariant if $I$ acts on $P_I$ with $\rho_I$, on $P_{J_i}$ with $\rho_i$ and on $P_J$ with $\rho_J$. 
     
\subsection{The ordered pseudo tensor structure \texorpdfstring{: $P^{\tensor{\ra}}$}{}} Similarly, we define the ordered pseudo tensor structure. We indicate only the main differences. Recall Notation \ref{notaz:prodottiesterniOmod}.

Given an ordering $\tau$ of a finite set $I$ and a finite set $A$ with a surjection $q : A \twoheadrightarrow I$, we consider the modules 
\begin{align*}
(\exttensor{\ra}_{i \in I} M^\bullet_i)^A_q &= \bigoplus_{\substack \tau \text{ ordering of } I} \exttensor{\ra}_{\tau,q} M^\bullet_i \\
(\exttensor{\ra}_{i \in I} M^\bullet_i)^A &= \bigoplus_{q : A \twoheadrightarrow I} (\exttensor{\ra}_{i \in I} M^\bullet_i)^A_q =  \bigoplus_{\tau \text{ ordering of } I, q : A \twoheadrightarrow I} \exttensor{\ra}_{\tau,q} M^\bullet_i 
\end{align*}
The map $\gre_\pi$ in this case is induced by the associativity constraints of the $\tensor{\ra}$ tensor product. As above we define the associated pseudo tensor structure as 
\begin{align*}
P^{\otimesr}_I\left( \{ M^\bullet_i\}, N^\bullet \right)^A &= \Homcont_{\pbarOpolim{A}}\left( (\exttensor{\ra}_{i \in I} M^\bullet_i)^A,N^A \right) \\
&=\bigoplus_{\tau, q}\Homcont_{\pbarOpolim{A}}\left( (\exttensor{\ra}_{\tau,q} M^\bullet_i)^A,N^A \right)
\end{align*}
where\index{$P^{\otimesr}$} $\tau$ is an ordering of $I$ and $q:A\surjmap I$. 
We denote with $P^{\otimesr}_I\left( \{ M^\bullet_i\}, N^\bullet \right)^A_{\tau,q}$ the summand corresponding to $\tau$ and $q$ and by 
$P^{\otimesr}_I\left( \{ M^\bullet_i\}, N^\bullet \right)^A_{q}$ the summand corresponding to $q$ (which is a sum over all possible $\tau$). 
In this case the analogue of the composition maps \eqref{eq:PDelta1} refines, taking care of orderings, as follows. Consider surjections $q : A \twoheadrightarrow J$, $\pi : J \twoheadrightarrow I$, then we have a morphism
$$
   P^{\otimesr}_I\left(\{N^\bullet_i\},L^\bullet\right)^A_{\tau,\pi\circ q} \times \left( \prod_{i \in I} P^{\otimesr}_{J_i}\left(\{M^\bullet_{j}\},N^\bullet_i\right)^{A_i}_{\grs_i,q_i} \right) \to P^{\otimesr}_J\left(\{M^\bullet_j\},L^\bullet\right)^A_{\tau\star\{\grs_i\},q} 
$$
where $\tau$ is an ordering of $I$, $\grs_i$ is an ordering of $J_i$ and $\tau \star \{\sigma_i \}$ is the induced lexicographic ordering on $J$. To make explicit the role of orderings given $\psi:N_{\tau(1)}\exttensor\ra \cdots \exttensor\ra  N_{\tau(k)}\lra L$ and $\grf_i:M_{\grs_i(1)}\exttensor\ra \cdots \exttensor\ra  M_{\grs_i(k_i)}\lra N_i$ then the composition is constructed as
$$
\psi\circ \{\grf_i\}=\psi \circ\Big(\grf_{\tau(1)}\exttensor{\ra}\cdots \exttensor{\ra}\grf_{\tau(k)}\Big).
$$
As before to get a pseudo tensor structure we define $P_I^{\tensor\ra}$ as the product over all finite sets $A$ of $(P_I^{\tensor\ra})^A$. The composition map of this pseudo tensor structure is constructed as in the previous case, by declaring to be zero the composition of maps which are not of the type above. 

\subsubsection{The action of bijections in the case othe ordered tensor structure}\label{sssez:bigezioni2}
Also in this case there is an action of the groupoid of bijection. Given $\rho:I\lra I'$ be a bijection we notice that we have an equality
$$
\exttensor{\ra}_{\tau,q} \{i\to M_i^{\bullet}\} = \exttensor{\ra}_{\rho\circ\tau,\rho\circ q} \{i'\to M_{\rho^{-1}(i')}^{\bullet}\}
$$
meaning that the two tensor products are exactly the products of the same modules with the same ordering. The action of $\rho$ on $\exttensor{\ra}_I \{i\to M_i^{\bullet}\}$ is defined as the sum of these equalities:
\begin{align*}
\exttensor{\ra}_I \{i\to M_i^{\bullet}\}= \bigoplus_{\tau,q} \exttensor{\ra}_{\tau,q} \{i\to M_i^{\bullet}\} &= \bigoplus_{\tau,q}  \exttensor{\ra}_{\rho\circ\tau,\rho\circ q} \{i'\to M_{\rho^{-1}(i')}^{\bullet}\} \\ &=
\exttensor{\ra}_{I'} \{i'\to M_{\rho^{-1}(i')}^{\bullet}\} .
\end{align*}
Let us say that also in this case, this action does not affect modules but is just a change of indices. The action on $P_I^{\tensor \ra}$ is obtained by duality. Also in this case composition maps are equivariant.

\subsubsection{The map among the two tensor structure}
	\label{rmk:polisuOmod}
	Assume from now on that the modules $M^A$ are $\pbarOpolim{A}$ \ccgmod modules. 
	
	Under this assumption, by what noticed in Notation \ref{notaz:prodottiesterniOmod} all the pseudo tensor product we have defined are $\pbarOpolim {A}$ \ccgmod module. Moreover all the action and maps among these modules we have constructed are $\pbarOpolim A$-equivariant. Similar considerations also 
	hold in the case  $\pbarOpoli$ is replaced by $\pbarDpoli$.

By Lemma  \ref{lem:espansionediagonale}, given a surjection $q: A\surjmap I$ and an ordering $\tau:\{1,\dots,k\}\ra I$, there is a morphism of algebras
$$ \pbarOm A(\infty\nabla(q))\lra \pbarOpolim {A_{\tau(1)}} \tensor \ra \cdots \tensor \ra  \pbarOpolim {A_{\tau(k)}}
=\exttensor{\ra} _{\tau,q} \left\{  \pbarOpolim \bullet\right\}_{i\in I} $$
This implies that given  $\pbarOpolim	{A_i}$ \cgmod modules $M^\bullet_i$
the morphism \eqref{eq:mda} induces a map
\begin{equation}\label{eq:mappatratensori}
\imath^A_{\tau,q}:\left(\exttensor{\Delta}_{i\in I} M_i^ \bullet\right)^A_q=
\exttensor{*}_{i \in I} M_i^{A_i}(\infty \nabla(q)) \lra 
M_{\tau(1)}^ {A_{\tau(1)}} \tensor \ra \cdots \tensor \ra  M_{\tau(k)}^ {A_{\tau(k)}}=\exttensor{\ra} _{\tau,q}  M_i^ \bullet
\end{equation}

\begin{definition}\label{def:mappatratensori}
Let $M_i^\bullet$ be an element of $\calO^\bullet$-mod such that 
	$M_i^A$ is an $\pbarOpolim A$ \cgmod module, then 
the morphisms \eqref{eq:mappatratensori} induces a $\calO^\bullet$-morphism of pseudo tensor products
$$
\imath_I^\bullet:\exttensor{\Delta}_{i\in I}M^\bullet_i\lra
\exttensor{\ra}_{i\in I} M^\bullet_i
$$
constructed as follows. If we fix $A$ and we write the left hand factor as a sum over all possible $q:A\surjmap I$ and the right hand factor as a sum as a sum over all possible $q:A\surjmap I$ and ordering $\tau$ then the morphism 
\[
	\left( \exttensor{\Delta}_{i \in I} M^\bullet_i\right)^A \to \left( \exttensor{\ra} M^\bullet_i \right)^A
\]
preserves the $q$ components and on each of them it reads as
\[
	\oplus_{\tau} \imath^A_{\tau,q} : \big(\exttensor{\Delta}_{i \in I} M^\bullet_i\big)^A_q \to \bigoplus_{\tau} \exttensor{\ra}_{\tau,q}M^\bullet_i = (\exttensor{\ra}_{i \in I} M^\bullet_i)^A_q.
\]
By construction this morphism commutes with the composition maps  and it is equivariant with respect to the  action of bijections of finite sets. By duality this determines a morphism 
\begin{equation}\label{eq:jDf}
\jmath^\bullet_I:P^{\tensor\ra}_I\left( \{ M^\bullet_i\}, N^\bullet \right)\lra  P^{\Delta}_I\left( \{ M^\bullet_i\}, N^\bullet \right),
\end{equation}
which is then a morphism of pseudo tensor structures.
\end{definition}

\subsection{The Operads} \label{ssec:operads} In what follows $P$ will stand for $P^\Delta$ or for $P^{\tensor{\ra}}$.
Given a graded topological module $M^\bullet$ we will write $P(M^\bullet)$ for the corresponding operads and $P_I(M^\bullet)$ will denote the collection of $I$-operations, that is $P_I(\{M^\bullet_i\},N^\bullet)$ with $N^\bullet =M^\bullet_i = M^\bullet$ for all $i \in I$.  For a given surjection $q : A \twoheadrightarrow I$ we will write $P_I(M^\bullet)^A_q$ for the summand relative to $q$ in $P_I(M^\bullet)$. The identity of our operad $Id_P$ is given by the identity of $M^\bullet$ in  $P_I(\{M^\bullet\})$ for every $I$ with only one element. 
The composition laws of $P_I$ can be described as before, for surjections $A \stackrel{q}{\twoheadrightarrow} J \stackrel{\pi}{\twoheadrightarrow} I$ we have a composition map
\[
P_I(M^\bullet)^A_{\pi\circ q} \times \left( \prod_{i \in I} P_{J_i}(M^\bullet)^{A_i}_{q_i} \right) \to P_J(M^\bullet)^A_q
\]
and in the case of the ordered pseudo tensor structure can be refined also taking care of the orderings of $I$ and $J$. 
These compositions are equivariant under the action of $\Aut(I)$ (and more generally bijections of finite sets), so these are symmetric operads. 

The map $j_I$ of formula \eqref{eq:jDf} induces $\Aut(I)$-equivariant morphisms on $I$-operations (still denoted by $\jmath_I$) commuting with all composition maps and therefore a morphism of operads
$$
\jmath^\bullet_I:P^{\tensor{\ra}}_I(M^\bullet)\lra P^\Delta_I(M^\bullet),\qquad \jmath^\bullet:P^{\tensor{\ra}}(M^\bullet)\lra P^\Delta(M^\bullet).
$$

\subsection{Comparison with the chiral pseudo tensor structure of \texorpdfstring{\cite{beilinsonTopologicalAlgebras}}{Beilinson} and the chiral pseudo tensor structure of \texorpdfstring{\cite{BDchirali}}{Beilinson and Drinfeld}}
Our pseudo tensor structures are a graded version of the chiral pseudo structures in \cite{beilinsonTopologicalAlgebras} and \cite{BDchirali}. We denote them respectively by $P^{chB}$ and $P^{chBD}$. The first one is defined as
$$
P_I^{chB}(\{M_i\},N) =\bigoplus _\tau \Homcont \left( M_{\tau(1)}\tensor\ra \cdots \tensor\ra M_{\tau(n)}, N \right)
$$
where\index{$P^{chB}$} $\tau$ ranges over all possible orderings of $I$. It is clear from the definition that this pseudo tensor structure is very similar to our ordered pseudo tensor structure. 

\subsubsection{The chiral pseudo tensor structure of Beilinson and Drinfeld}\label{ssec:filteredbeilinsondrinfeld}
We now define an analogue of the chiral pseudo structures of Beilinson and Drinfeld  \cite{BDchirali} in the case of topological modules. In order to be able to define the composition laws we consider filtered $\pbarD$ \ccgmod module. Given a topological module $M$ (we do not require completeness), which is also a right $\pbarDm A$ \ccgmod module we define a filtration on $M$ to be an increasing sequence of $\pbarO$ submodules $M(n)$ of $M$ such that $M=\limind M(n)$ as a sheaf (we do not require them to be equal as topological sheaves) and such that for all $n$ and $m$ there exists $k$ such that $(\pbarDm A )^{\leq m}\cdot M(n)\subset M(k)$ (look at Section \ref{ssec:operatoridifferenziali} for the definition of $\pbarD^{\leq m}$ and $(\pbarDm{A})^{\leq m}$). If $M_i$ is a filtered $\pbarDm {A_i}$ module we define their tensor product 
$$
M_1\exttensor{\fil}M_2 \exttensor{\fil}\cdots \exttensor{\fil}M_h=\limind_n
M_1(n)\exttensor{*}M_2(n) \exttensor{*}\cdots \exttensor{*}M_h(n).
$$
Notice\index{external tensor product $\exttensor{}$:$\exttensor{\fil}$} that if the $M_i$ are right $\pbarDm{A_i}$ \cgmod modules then their filtered tensor product is 
a $\pbarDm A$ \ccgmod module where $A=\sqcup A_i$. Notice also that if $M$ is a  $\pbarDm A$ module equipped with a filtration then also $M(\infty \Delta_{a=b})$ and 
$\Delta(\pi)_!(M)$  are equipped with filtrations. In the first case take the image of $M(n)(n\infty \Delta_{a=b})$ and in the second case take the image of $M(n)\otimes (\pbarDm{B})^{\leq n}$. If $M$ and $N$ are filtered modules we say that a continuous homomorphism $\grf:M\lra N$ is filtered if for all $n$ there exists $h$ such that $\grf(M(n))\subset N(h)$. We denote the space of filtered continuous homomorphism by $\Hom^{\fil}$. 

\begin{remark}
    If the module $M$ has an \exhaustion by $w$-compact submodules and we choose as filtration the elements of this exhaustion then the tensor product above coincides with the cg-tensor product and all constructions produce modules with an \exhaustion, and all continuous homomorphism are automatically filtered.
\end{remark}

\begin{remark}
	One can consider also the trivial filtration on a module $M$, i.e. $M(n)=M$ for all $n$, however, also in this case we will have non trivial filtration on $M(\infty \nabla)$ and $\Delta_!M$, hence the filtered morphism among these modules will be not all continuous morphisms. 
\end{remark}

In order to define the composition of morphisms we will need the following two Lemmas. 

\begin{lemma}\label{lem:chBD2}Let $\pi:J\surjmap I$ be a surjective map and let
	$N$ be a right $\pbarDm{I}$  \cgmod module, then
	we have a natural continuous morphism
	$$
	\eta:\Big(\Delta(\pi)_! N\Big) (\infty\nabla(\pi))\lra \Delta(\pi)_! \Big( N\big(\infty\nabla(I)\big)\Big).
	$$
	Moreover, if $N$ is filtered then $\eta$ is filtered. 
\end{lemma}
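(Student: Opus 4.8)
The plan is to construct the morphism $\eta$ locally and then glue, using that all the operations involved ($\Delta(\pi)_!$, localization along $\nabla$, the filtrations) have been described Zariski-locally on $S$ in the preceding sections. First I would reduce, by the functoriality $\Delta(p\circ q)_! = \Delta(q)_!\circ\Delta(p)_!$ and the compatibility of $\Delta(\pi)_!$ with localization along diagonals recorded in Lemma \ref{lem:pq}, to the case where $\pi : J \twoheadrightarrow I$ collapses exactly two elements, say $a,b$, and $\nabla(\pi) = \Delta_{a=b}$ as a subscheme of $X^J$; the general case follows by iterating. In this situation $\Delta(\pi)_!N = \bigoplus_{m\geq 0} N\,\partial_y^m$ in the local description of Section \ref{sez:Dmoduli}, where $y$ is a local equation of $\calI_{\Delta(\pi)}$, and the point is to produce a map from the localization of this module along $\nabla(\pi)$ (which is a divisor pulled back, via $\Delta(\pi)$, from a divisor $\nabla(I)$ on $X^I$) to $\Delta(\pi)_!$ applied to the localization of $N$.

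The key construction is the following. The divisors in $\nabla(\pi)$ are all of the form $\Delta_{c=d}$ with $\pi(c)\neq\pi(d)$, hence are preimages under $\Delta(\pi)^\sharp$ of the corresponding divisors on $X^I$. Thus multiplication by a local equation $\delta_{cd}$ of such a divisor is already injective on $\Delta(\pi)_!N$ and on $N$ compatibly; more precisely, since $\delta_{cd}$ is (the image of) a function on $X^I$, it is central for the $\pbarDm{J}$-action modulo lower order terms, and in the local model $\bigoplus_m N\partial_y^m$ it acts by $N\partial_y^m \cdot \delta_{cd} = \sum_{k\leq m}\binom{m}{k}(N\cdot\partial_y^{m-k}(\delta_{cd}))\,\partial_y^{k}$. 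One checks that inverting all such $\delta_{cd}$ on the left-hand side is the same, termwise in the $\partial_y^m$-grading, as taking $\bigoplus_m (N(\infty\nabla(I)))\,\partial_y^m = \Delta(\pi)_!(N(\infty\nabla(I)))$, because the $\delta_{cd}$ already become invertible after inverting them on $N$ and the finite triangular transition matrices $\binom{m}{k}\partial_y^{m-k}(\delta_{cd})$ are upper-triangular with invertible diagonal. This gives the map $\eta$ locally; by the usual uniqueness of extensions to localizations (as in the proofs of Lemmas \ref{lem:extensiondiagonal} and \ref{lem:espansionediagonale}) these local maps agree on overlaps and glue to a global morphism of sheaves, which is continuous because on each $\Delta(\pi)^{\leq n}_! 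N$ it is built from the continuous localization maps of Section \ref{ssec:localization} and the continuous action of $\pbarDm{J}$ on a $w$-compact piece.

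For the filtered statement, suppose $N$ carries a filtration $N(k)$. By definition the filtration on $\Delta(\pi)_!N$ is the image of $N(k)\otimes(\pbarDm{I})^{\leq k}$, i.e. locally $\bigoplus_{m\leq k}N(k)\partial_y^m$, and on $(\Delta(\pi)_!N)(\infty\nabla(\pi))$ it is the image of $(\Delta(\pi)_!N)(k)\cdot(k\nabla(\pi))$; likewise $N(\infty\nabla(I))$ has the filtration given by the image of $N(k)(k\nabla(I))$ and $\Delta(\pi)_!(N(\infty\nabla(I)))$ the induced one. Tracing a degree-$k$ element through the local formula for $\eta$, one sees it lands in a bounded range of $\partial_y$-degrees with coefficients in the localization of $N(k')$ for some $k'$ depending only on $k$ (the bound on the $\partial_y$-degrees and on the order of pole produced is uniform), so $\eta(M(k))\subset (\Delta(\pi)_!(N(\infty\nabla(I))))(k')$; hence $\eta$ is filtered.

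The main obstacle I expect is bookkeeping: keeping track simultaneously of the $\partial_y^m$-grading coming from $\Delta(\pi)_!$, the order-of-pole filtration coming from the localization, and the ambient filtration $N(k)$, and checking that the triangular change-of-variables really does intertwine "invert $\delta_{cd}$ then take $\Delta_!$" with "take $\Delta_!$ then invert $\delta_{cd}$" without creating unbounded pole orders or derivative degrees. This is the combinatorial heart of Kashiwara-type arguments (cf. the sketch of Proposition \ref{prop:Kashiwara}), but it is routine once one commits to the explicit local model; no genuinely new idea is required beyond what is already used for $Exp^r$ and for the pushforward functors above.
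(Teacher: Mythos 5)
Your proposal uses the same core mechanism as the paper's proof: construct the natural map $\Delta(\pi)_!N\to\Delta(\pi)_!\big(N(\infty\nabla(I))\big)$ coming from $N\hookrightarrow N(\infty\nabla(I))$, and extend it to the localization on the left via the universal property of Section \ref{ssec:localization}, which reduces to checking that a local generator $\delta_{ij}$ ($\pi(i)\neq\pi(j)$) acts invertibly on the target; this is done in the local model $\bigoplus_\alpha M\partial^\alpha$ by observing that the action is triangular for the filtration by $|\alpha|$, with ``diagonal'' piece given by multiplication by $\Delta(\pi)^\sharp(\delta_{ij})=x_{\pi(i)}-x_{\pi(j)}$, which is already invertible in $M=N(\infty\nabla(I))$. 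Your binomial formula for $N\partial_y^m\cdot\delta_{cd}$ is exactly this triangularity made explicit, and your treatment of the filtered statement (bound on pole orders uniform in the $\partial_y$-degree) is the right idea; the paper makes the bound quantitative, $\eta\big((\Delta(\pi)_!^{\leq k}N(h))(n\nabla(\pi))\big)\subset\Delta(\pi)_!^{\leq k}\big(N(h)(kn\nabla(I))\big)$.

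Two small points. First, your sentence ``$\nabla(\pi)=\Delta_{a=b}$ as a subscheme of $X^J$'' is a slip: if $\pi$ collapses exactly $a,b$, then $\Delta(\pi)=\Delta_{a=b}$ is the \emph{small} diagonal being pushed along, whereas $\nabla(\pi)=\bigcup_{\pi(c)\neq\pi(d)}\Delta_{c=d}$ is the union of all the \emph{other} diagonals; you use the correct meaning of $\nabla(\pi)$ in the rest of the argument, but the displayed identification is backwards. (Similarly ``preimages under $\Delta(\pi)^\sharp$'' should really say that $\Delta(\pi)^\sharp$ maps $\delta_{cd}$ to the nonzero generator $\delta^I_{\pi(c)\pi(d)}$ of the corresponding divisor on $X^I$.) Second, the reduction to the one-pair-collapse case via Lemma \ref{lem:pq} is unnecessary — and in fact if one accepts Lemma \ref{lem:pq} as a black box, one can take $q=\mathrm{id}_I$ there to get the module-level isomorphism outright, with no local computation at all; the paper avoids invoking Lemma \ref{lem:pq} precisely because the local triangular argument is what justifies it, and it handles general $\pi$ in one step with multi-indices. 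So your reduction step is redundant rather than wrong: the local computation you carry out afterward already covers the general case with no more effort, and it (not the reduction) is also what gives you the continuity and filtered claims, which Lemma \ref{lem:pq} is silent about.
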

\begin{proof}
	By functoriality we have a morphism from $\Delta(\pi)_! N$ to  $\Delta(\pi)_! ( N(\infty\nabla(I)))$. To prove that there exists $\eta$ as in the claim of the Lemma we need to prove that, given $i,j\in J$ such that $\pi(i)\neq\pi(j)$, a function which locally generates $\Delta_{i=j}$ acts bijectively on the right-hand side. This implies the existence of the map $\eta$.
	
    We can check this statement locally. We can assume that $S=\Spec A$ is well covered. Let $t$ be a local coordinate, we denote with $x_i$ the image of the coordinate $t$ along the $i$-th immersion of $\pbarO \to \pbarOm{I}$, and similarly we denote by $y_j$ the coordinates on $\pbarOm{J}$. Then we need to prove that the action of $y_i-y_j$ is invertible on the right hand side whenever $\pi(i)\neq \pi(j)$. If we set  $M=N\big(\infty\nabla(I)\big)$ then 
	$$\Delta(\pi)_! M = \bigoplus_{|\gra|\leq k} M\partial ^\gra $$
	where the derivatives are with respect to a subset of the variables $y_j$ (we do not need to be precise about this point). Up to filter this module according to the order of the differential operator $\partial^\gra$, then the action of $(y_i-y_j)$ on $M \partial  ^\gra$ is given by
	$$
	m\partial  \cdot (y_i-y_j)= \pm m\cdot (x_{\pi(i)}-x_{\pi(j)}) \partial^\gra
	$$
	hence it is invertible if $\pi(i)\neq\pi(j)$. 
	
	Finally we check that if  $N$ is filtered then $\eta$ is filtered as well. The filtration $\big((\Delta(\pi)_!N)(\infty\nabla(\pi))\big)(k)$ of $(\Delta(\pi)_!N)(\infty\nabla(\pi))$ is defined as the the image of 
	$\big(N(k)\otimes (\pbarDm J)^{\leq k}\big)(k\nabla(\pi))$, while the filtration $\Big(\Delta(\pi)_!\big(N(\infty\nabla(I)\big)\Big)(k)$ of $\Delta(\pi)_!\big(N(\infty\nabla(I)\big)$ is defined as the image of 
	$\big(N(k)(k\nabla(I))\big)\otimes (\pbarDm J)^{\leq k} $.
	Notice
	that the degree of the pole possibly increases by one when we commute $(y_i-y_j)^{-1}$ with a derivative. Hence $\eta\Big(\big((\Delta(\pi)_!N)(\infty\nabla(\pi))\big)(k) \Big) $   is contained in the image of $\big(N(k)(k^2\nabla(I))\big)\otimes (\pbarDm J)^{\leq k} $ and in particular
	\[
	\eta\Big(\big((\Delta(\pi)_!N)(\infty\nabla(\pi))\big)(k)\Big)\subset \Big(\Delta(\pi)_!\big(N(\infty\nabla(I))\big)\Big)(k^2).
	\qedhere\]
\end{proof}

The following Lemma is not true if we do not work in a filtered setting (the reason for this is the same as the one explained in Remark \ref{oss:prodottiregolarifreccetta}, that is: the $*$-tensor product does not commute with infinite direct sums)

\begin{lemma}\label{lem:chBD1}
	Let $\pi:J\surjmap I$, and let $N_i$ be a right $\pbarD$ filtered-\cgmod module. 
	Then we have a natural filtered homomorphism 
	$$
	\exttensor{\fil}_{i\in I}\left(\Delta(J_i)_! N_i\right)  \lra \Delta(\pi)_!\left(\exttensor{\fil}_{i\in I}N_i\right).
	$$
	 By localizing this map we get a filtered homomorphism
	$$
	\vartheta:	\left(\exttensor{\fil}_{i\in I}\left(\Delta(J_i)_! N_i\right) \right) (\infty\nabla (\pi)) \lra \left(\Delta(\pi)_!\left(\exttensor{\fil}_{i\in I}N_i\right)\right) ( \infty\nabla(\pi)).
	$$
\end{lemma}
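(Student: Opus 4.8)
The plan is to construct the map first on the underlying modules, using the standard compatibility of $\calD$-module pushforward with external products, and then to check that it respects the filtrations and is continuous at each filtration level by a local computation; the localized morphism $\vartheta$ will then follow formally from the localization machinery of Section~\ref{ssec:localization}.

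\emph{Step 1: the underlying morphism.} Write $J=\bigsqcup_{i\in I}J_i$ and let $h=\#I$. Setting $\pi_i:J_i\surjmap\{i\}$, the diagonal $\Delta(\pi)$ is, in the terminology of Section~\ref{sez:Dmoduli}, the ``product'' of the diagonals $\Delta(\pi_i)=\Delta(J_i)$: the algebra map $\Delta(\pi)^\sharp:\pbarOm{J}\to\pbarOm{I}$ is the external tensor product of the $\Delta(\pi_i)^\sharp$, and since $\pbarDm{J}=\tensor{cg}_{i\in I}\pbarDm{J_i}$ the pushforward bimodule $\calD_\pi=\Delta(\pi)^*(\pbarDm{J})$ decomposes compatibly as an external product of the bimodules $\calD_{\pi_i}=\Delta(\pi_i)^*(\pbarDm{J_i})$ -- this is the $\calD$-module analogue of the fact (already used for ordinary pullbacks) that $\Delta(\cdot)^*$ commutes with external tensor products, and is immediate from the local descriptions. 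Combining this identification with the canonical map collecting the factors of an external tensor product, one obtains a morphism of right $\pbarDm{J}$-modules
\[
\exttensor{\fil}_{i\in I}\bigl(\Delta(J_i)_!N_i\bigr)\ \longrightarrow\ \Delta(\pi)_!\bigl(\exttensor{\fil}_{i\in I}N_i\bigr),
\]
the $\pbarDm{J}$-linearity being automatic because the decomposition of $\calD_\pi$ is one of bimodules.

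\emph{Step 2: filteredness and continuity.} Both are local statements, so one may assume $S=\Spec A$ is well covered with a coordinate $t$, and choose for each block $J_i$ an element $j(i)\in J_i$, giving the retained coordinate $x_i=t_{j(i)}$ and normal coordinates $y_j=t_j-t_{j(i)}$ for $j\in J_i\setminus\{j(i)\}$. By the local form of $\calD$-module pushforward recalled in Section~\ref{sez:Dmoduli},
\[
\Delta(J_i)_!N_i=\bigoplus_{\grg_i}N_i\,\partial_y^{\grg_i},\qquad \Delta(\pi)_!\bigl(\exttensor{\fil}_{i\in I}N_i\bigr)=\bigoplus_{\grg}\bigl(\exttensor{\fil}_{i\in I}N_i\bigr)\partial_y^{\grg},
\]
and the filtration axiom yields, for a suitable $c=c(n)\ge hn$, the inclusion $(\Delta(J_i)_!N_i)(n)\subset\bigoplus_{|\grg_i|\le n}N_i(c)\,\partial_y^{\grg_i}$, while by definition $\bigoplus_{|\grg|\le c}(\exttensor{*}_{i}N_i(c))\partial_y^{\grg}$ is contained in the level-$c$ piece of $\Delta(\pi)_!(\exttensor{\fil}_iN_i)$. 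Since only finitely many multi-indices occur at these levels, Remark~\ref{oss:prodottiregolaristar2} lets the external $*$-product distribute over the finite direct sums, so $\exttensor{*}_i\bigl((\Delta(J_i)_!N_i)(n)\bigr)$ maps into $\bigoplus_{|\grg_i|\le n\,\forall i}(\exttensor{*}_iN_i(c))\partial_y^{\grg}$, hence (by inclusion of summands, $\{|\grg_i|\le n\ \forall i\}\subset\{|\grg|\le hn\le c\}$) into the level-$c$ piece of $\Delta(\pi)_!(\exttensor{\fil}_iN_i)$: the map is filtered. Continuity at each level is then clear, the map being at that level an inclusion of a finite sub-direct-sum composed with the maps $\exttensor{*}_iN_i(n)\to\exttensor{*}_iN_i(c)$ induced by the inclusions $N_i(n)\hookrightarrow N_i(c)$, which are continuous by functoriality of $\exttensor{*}$.

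\emph{Step 3: localizing, and the main obstacle.} Step~2 is precisely where the filtered hypothesis cannot be dropped: on the full, non-truncated modules the direct sums above are infinite, $\exttensor{*}$ no longer commutes with them, and the topology on $\exttensor{*}_i(\bigoplus_{\grg_i}N_i\partial^{\grg_i})$ is strictly finer than that on $\bigoplus_\grg(\exttensor{*}_iN_i)\partial^{\grg}$, exactly as in Remark~\ref{oss:prodottiregolarifreccetta} -- so the level-by-level bookkeeping is the one genuinely delicate part of the argument, everything else being either the classical behaviour of $\calD$-module pushforward under external products or a direct application of the localization formalism. For the last assertion of the Lemma, note that $\calI_{\nabla(\pi)}$ acts invertibly on $\bigl(\Delta(\pi)_!(\exttensor{\fil}_iN_i)\bigr)(\infty\nabla(\pi))$ by construction; the continuity established in Step~2 and the universal property of localization (Section~\ref{ssec:localization}) then give a unique continuous extension $\vartheta$ as stated, and $\vartheta$ is again filtered because the filtrations on the two localizations are generated by the images of the $M(n)(n\nabla(\pi))$, with which $\vartheta$ is manifestly compatible by Step~2.
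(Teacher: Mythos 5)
Your proof is correct and takes essentially the same route as the paper's: work at the level of the filtered truncations, exploit the inclusion of products of low-order differential operators into $(\pbarDm{J})^{\leq k|I|}$, check continuity locally in a well-covered chart, and then pass to the colimit and to the localization. The paper's own proof is considerably more terse — it directly writes down the compatible morphisms $\theta : \exttensor{}_{i\in I}\Delta(J_i)^{\leq k}_!N_i(h)\to\Delta(\pi)^{\leq k|I|}_!(\exttensor{}_{i\in I}N_i(h))$, justifies them by the same degree inclusion you use, and asserts local continuity and the localization step without spelling them out; your Steps~2 and~3 make these explicit, and your remark in Step~3 about why the filtered hypothesis cannot be dropped is exactly the point the authors flag in the sentence preceding the lemma.
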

\begin{proof}
	It is enough to prove the first statement. If $N$ is filtered we denote by $\Delta(\pi)^{\leq h}_!N(k)$ the image of $N(k)\otimes \calD_\pi^{\leq h}$ in 
	$\Delta(\pi)_!N$. We notice that we have compatible morphisms 
	$$
	\theta : 	\exttensor{}_{i\in I}\left(\Delta(J_i)^{\leq k}_! N_i(h)\right)  \lra \Delta(\pi)^{k'}_!\left(\exttensor{}_{i\in I}N_i(h)\right)
	$$
	where $k'=k\cdot |I|$. Indeed the left hand side is the image of 
	$$
	\exttensor{}N_i(h)\otimes (\pbarDm{J_i})^{\leq k}
	$$
	and $(\pbarDm{J_1})^{\leq k}\otimes \cdots \otimes (\pbarDm{J_k})^{\leq k}\subset (\pbarDm{J})^{\leq k'}$. It is easy to check locally that this map is continuous with respect to the $**$ topology on both sides, proving the claim of the Lemma. 
\end{proof}

The topological version of the pseudo tensor structure of Beilinson and Drinfeld is defined as follows. Let $M_i$ and $N$ be right $\pbarD$ \ccgmod module with filtrations then
$$
P_I^{chBD}(\{M_i\},N) = \Hom^{\fil}_{\pbarDm I} \left( (\exttensor{\fil} M_i) (\infty \nabla(I)), \Delta{(I)}_! N \right).  
$$
where $\nabla(I)\subset X^I$ is the\index{$P^{chBD}$} big diagonal and $\Delta{(I)}:X\lra X^I$ is the small diagonal (see Section \ref{sssez:art2notazionidiagonali} for these notations).

The composition is defined following the same steps of the other pseudo tensor structure. Let  $\pi:J \surjmap I$, $\grf\in P_{I}^{chBD}(\{N_i\},L)$ and $\psi_i\in P_{J_i}^{chBD}(\{M_j\},N_i)$. We define the restriction of $\grf\circ \{\psi_i\}$ to $\exttensor{*}_JM_j(n\nabla(J)$ as the composition of the following maps
$$
\xymatrix{
	\left(\exttensor{\fil}_{j\in J}M_j\right)(\infty\nabla(J)) \ar[rr]^-{\gre_\pi}
	&&\left(\exttensor{\fil}_{i\in I}\left( \left(\exttensor{\fil} _{j\in J_i} M_j\right)(\infty\nabla(J_i))\right)\right)(\infty\nabla(\pi))}
$$
$$
\xymatrix{
	\left(\exttensor{\fil}_{i\in I} \left(\left(\exttensor{\fil} _{j\in J_i} M_j\right)(\infty\nabla(J_i))\right)\right)(\infty\nabla(\pi))
	\ar[r]^-{\Psi}
	&\left(\exttensor{\fil}_{i\in I} \left( \Delta(J_i)_! N_i\right) \right)(\infty\nabla(\pi))}
$$
$$
\xymatrix{
	\left(\exttensor{\fil}_{i\in I} \left( \Delta(J_i)_! N_i\right) \right)(\infty\nabla(\pi))
	\ar[rr]^-{\vartheta}
	&&
	\left(\Delta(\pi)_! \left( \exttensor{\fil}_{i\in I}  N_i\right) \right)(\infty\nabla(\pi))}
$$
$$
\xymatrix{
	\left(\Delta(\pi)_! \left( \exttensor{\fil}_{i\in I}  N_i\right) \right)(\infty\nabla(\pi))
	\ar[rr]^-{\eta}
	&&\Delta(\pi)_! \left(\left( \exttensor{\fil}_{i\in I}  N_i \right) (\infty\nabla(I))\right)}
$$
$$
\xymatrix{
	\Delta(\pi)_! \left(\left( \exttensor{\fil}_{i\in I}  N_i \right) (\infty\nabla(I))\right)
	\ar[rr]^-{\Delta(\pi)_!(\grf)}
	&&\Delta(\pi)_! \Delta(I)_!L = \Delta(J)_!L
}
$$
where $\gre_\pi$ is the filtered analogue of the morphism \eqref{eq:epsilon}, $\Psi$ is the localization with respect to $\nabla (\pi)$ of the tensor product of the morphisms $\psi_i$, the morphism $\vartheta$ was defined in Lemma \ref{lem:chBD1} and the morphism $\eta$ was defined in Lemma \ref{lem:chBD2}. Since all morphisms are filtered the composition is also filtered.

The action of the groupoid of bijections is defined similarly to the previous cases, hence, for a filtered module, we have symmetric operads $P^{chB}(M)$ and $P^{chBD}(M)$ where the unity is given by $id_M:M\lra M\in P_1(M)$.


\subsubsection{Comparison between the composition in \texorpdfstring{$P^\Delta$}{} and the composition in \texorpdfstring{$P^{chBD}$}{the Beilinson and Drinfeld's chiral pseudo tensor structure}}\label{sssec:chBDDelta}
We want to compare the composition in the pseudo tensor structure $P^\Delta$ and the composition in the pseudo tensor structure $P^{chBD}$. If $M$ is a right $\pbarD$ \ccgmod module 
we define $M_!^\bullet\in \calO^\bullet$ as
$M_!^A=\Delta{(A)}_! M$. Notice that given $M,N$ as above we have

\begin{align*}
	P^\Delta_I(\{M^\bullet_{!,i} \}_{i \in I}, N_!^\bullet)^A &= \bigoplus_{q : A \twoheadrightarrow I} P^\Delta_I(\{M^\bullet_{!,i} \}_{i \in I},N_!^\bullet)^A_q \\
	&= \bigoplus_{q : A \twoheadrightarrow I} \Homcont_{\pbarDm{A}} \left((\exttensor{*}_q \Delta(A_i)_!M_i)(\infty\nabla(q)),\Delta_!(A)N\right).
\end{align*}
while
\[
	P^{chBD}_I(\{M_i\}_{i \in I},N) = \Hom^{\mathrm{fil}}_{\pbarDm{I}}\left( (\exttensor{\mathrm{fil}}M_i)(\infty\nabla(I)),\Delta(I)_!N \right).
\]
In particular, if the $M_i$ are filtered the natural map
$\mathrm{inc}_I:\exttensor{\fil}M_i(\infty \nabla)\to \exttensor {*}M_i(\infty \nabla)$ induces, by precomposition, a morphism from
\[
	P^\Delta_I(\{M^\bullet_{!,i} \}_{i \in I}, N_!^\bullet)^I_{\id : I \to I} = \Homcont_{\pbarDm{I}}\left( (\exttensor{*} M_i)(\infty\nabla(I)),\Delta(I)_!N \right)
\]
to
\[
	\Hom^{\mathrm{fil}}_{\pbarDm{I}}\left( (\exttensor{\mathrm{fil}}M_i)(\infty\nabla(I)),\Delta(I)_!N \right) = P^{chBD}_I(\{ M_i \}_{i \in I},N).
\]

Let $\pi:J\surjmap I$ be a surjective map, $\tilde \grf\in P_I^{chBD}(\{N_i\},L)$, $\tilde \psi_i\in P_{J_i}^{chBD}(\{M_j\},N_i)$, $\grf\in P_I^{\Delta}(\{N_i^\bullet\},L^\bullet)^J_\pi$, $\psi_i\in P_{J_i}^{\Delta}(\{M_J^\bullet\},N_i^\bullet)^{J_i}_{id:J_i\lra J_i}$,
and assume that 
$$
\tilde \psi_i= \psi\circ \mathrm{inc}_{J_i}
\quad\mand\quad
\Delta(\pi)_!(\tilde \grf)\circ \eta \circ \vartheta=\grf\circ \mathrm{inc}_I
$$
then, by construction,
$$
\tilde \grf \circ \{\tilde \psi_i\} = \grf\circ\{\psi_i\}\circ \mathrm{inc}_J.
$$

The action of bijections, on the two sides has also a similar compatibility, and the unity is given in both cases by the identity map. 
 
\subsection{Lie structures} To give a morphism from the Lie operad to a symmetric operad is equivalent 
to give a Lie structure: that is for every set $I$ with two elements an element $\mu_I\in P_I(M)$ such that 
\begin{itemize}
\item $\grs \cdot \mu_I =\gre(\grs) \mu_J$ where $\grs$ is a bijection from $I$ to $J$ (see Section \ref{ssez:artquestonotazionifiniteset} for the definition of finite sets and recall that our finite sets are naturally ordered);
\item Let $\pi_{2=3}:[3]\surjmap [2]$ and define  $\nu=\mu_{[2]} \circ \{Id_M,\mu_{\{2,3\}}\}\in P_{[3]}(M)$ where $Id_M\in P_{[1]}(M)$ is the identity of the operad. Let $\rho$ be the the cyclic permutation of the set $[3]$, then 
$\nu +\rho\cdot \nu +\rho^2\cdot \nu=0$.
\end{itemize}

We want to be more explicit about the second formula and expand a bit on it. By the equivariance of the composition, which we assume for symmetric operads, we have 
\begin{align*}
\rho\cdot \nu & =\mu_{[2]} \circ \{Id_M,\grs\cdot \mu_{\{1,3\}}\}\\
\rho^2\cdot \nu & =\mu_{[2]} \circ \{Id_M, \mu_{\{1,2\}}\}
\end{align*}
where $\grs$ is the transposition of $\{1,3\}$ and the $Id_M$ which appear in the first formula is an element of $P_{\{2\}}(M)$ and the one in the second formula is an element of $P_{\{3\}}(M)$. 

\subsubsection{Notation: finite sets vs natural numbers} Most often operad are indexed by natural numbers, like $P_n$. This make the notation a little bit lighter. However it is sometimes useful to work with finite sets to record all the information and to avoid possible misunderstanding. For this reason when it is not too heavy we prefer to use finite sets. 

\subsubsection{Lie structure in the ordered operad}
In the case of the ordered pseudo tensor structure we will use the following remark by Beilinson in \cite{beilinsonTopologicalAlgebras}. He considers the projection from $P_{[2]}$ onto the first component:
$$\begin{tikzcd}[column sep=tiny]
	p_1: P^{chB}_{[2]}(M) & = &\Homcont(M \tensor{\ra}M,M)_{1<2}\oplus\Homcont(M\tensor{\ra}M,M)_{2<1} \arrow{d}{}\\ && \Homcont(M\tensor{\ra}M,M).
\end{tikzcd}
$$
Beilinson noticed the following result. 

\begin{proposition}[Section 1.4 in \cite{beilinsonTopologicalAlgebras}]\label{prop:beilinsontopalg14}
	Let $M$ be a topological module then the projection 
	$$
	p_1: P^{chB}_{[2]}(M) \lra \Homcont(M\tensor{\ra}M,M).
	$$
	establishes an equivalence between Lie structures on the operad $P^{chB}M$ and associative products  $M\tensor\ra M\lra M$.  
\end{proposition}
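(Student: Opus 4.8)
The statement is that the projection $p_1$ onto the ordered-$(1<2)$ component establishes an equivalence between Lie structures on the operad $P^{chB}(M)$ and associative products $m : M \tensor{\ra} M \to M$. The overall strategy is to write down both directions of the correspondence explicitly and check they are mutually inverse, using the equivariance axiom to pin down the second component of a Lie bracket in terms of the first, and using the Jacobi identity to encode associativity.

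\emph{From a Lie structure to an associative product.} Suppose we are given a Lie structure, i.e.\ for every two-element set $I$ an element $\mu_I \in P^{chB}_I(M) = \Homcont(M\tensor{\ra}M, M)_{1<2} \oplus \Homcont(M\tensor{\ra}M, M)_{2<1}$ satisfying the antisymmetry and Jacobi axioms of Section \ref{ssec:operads}. Write $\mu_{[2]} = (m, m')$ where $m$ is the $(1<2)$-component and $m'$ the $(2<1)$-component; set $m = p_1(\mu_{[2]})$. First I would exploit the equivariance axiom: the transposition $\sigma$ of $[2]$ has signature $-1$, and $\sigma \cdot \mu_{[2]} = -\mu_{[2]}$; since the action of $\sigma$ swaps the two summands (sending an operation $M_{\tau(1)}\tensor{\ra}M_{\tau(2)}\to M$ to the same underlying module map but now indexed by the opposite ordering, as spelled out for $P^{\tensor\ra}$ in Section \ref{ssec:operads} and inherited by $P^{chB}$), this forces $m'$ to be $-m$ composed with the swap $M\tensor{\ra}M \simeq M\tensor{\leftarrow}M$ — more precisely $m'(x\otimes y) = -m(y\otimes x)$ read in the appropriate ordered tensor product. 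Thus $\mu_{[2]}$ is completely determined by $m$, and $p_1$ is injective on Lie structures. Then I would unwind the Jacobi identity $\nu + \rho\cdot\nu + \rho^2\cdot\nu = 0$ with $\nu = \mu_{[2]}\circ\{Id_M,\mu_{\{2,3\}}\}$: using the explicit description of composition in the ordered pseudo tensor structure (the $\tau\star\{\sigma_i\}$ refinement from Section \ref{ssec:operads}) together with the antisymmetry just established, each of the three terms becomes, on the component corresponding to the standard ordering of $[3]$, an expression built from $m$; collecting them yields exactly the associator $m\circ(m\otimes \mathrm{id}) - m\circ(\mathrm{id}\otimes m) = 0$. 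The continuity of $m$ as a $\tensor{\ra}$-continuous map is part of the data of $P^{chB}$, so $m$ is an associative $\tensor{\ra}$-topological product on $M$.

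\emph{From an associative product to a Lie structure.} Conversely, given an associative continuous $m : M\tensor{\ra}M \to M$, define $\mu_{[2]} \in P^{chB}_{[2]}(M)$ by putting $m$ in the $(1<2)$-component and $-m\circ(\text{swap})$ in the $(2<1)$-component, and transport this along bijections using the groupoid action (this is forced, and well-defined because there is only one two-element set up to unique isomorphism in each case). Antisymmetry holds by construction. For Jacobi, I would reverse the computation above: substitute this $\mu$ into $\nu + \rho\nu + \rho^2\nu$, expand using the composition law of $P^{chB}$, and observe that every resulting term on every ordering-component of $P^{chB}_{[3]}(M)$ is a sum of $\pm$(iterated applications of $m$) which cancel in pairs precisely because $m$ is associative — this is the standard "commutator of an associative product is a Lie bracket" computation, transcribed into the ordered-tensor-product bookkeeping. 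Finally, $p_1$ applied to this $\mu$ returns $m$ by construction, and the previous paragraph shows $p_1$ is injective, so the two constructions are mutually inverse.

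\emph{Main obstacle.} The conceptual content is the classical fact that an associative product and its commutator-Lie-bracket determine each other; the real work — and the place where care is needed — is purely bookkeeping in the ordered pseudo tensor structure: correctly tracking how the groupoid action of $\Aut(I)$ permutes the ordering-summands of $P^{chB}_I(M)$, how the composition maps $\psi\circ\{\varphi_i\} = \psi\circ(\varphi_{\tau(1)}\exttensor{\ra}\cdots\exttensor{\ra}\varphi_{\tau(k)})$ distribute over those summands (using the lexicographic concatenation $\tau\star\{\sigma_i\}$), and checking that all the module maps in sight are continuous for the relevant $\tensor{\ra}$ and $\tensor{\leftarrow}$ topologies so that the extensions and compositions are legitimate. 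Since this is Beilinson's Proposition, I would present the correspondence and the key identifications and then say the remaining verification is a direct computation, rather than grinding through all sign-and-ordering cases.
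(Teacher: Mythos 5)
The paper does not provide a proof of this statement: it is attributed to Section 1.4 of Beilinson's \emph{Topological algebras} paper and quoted as a known result, so there is nothing internal to compare your argument against. That said, your sketch correctly captures the standard argument. The equivariance/antisymmetry axiom does force the $(2<1)$-component to be $-m$ after the swap identification $M\tensor{\leftarrow}M\simeq M\tensor{\ra}M$, so $p_1$ determines the Lie structure; and the Jacobi identity, written out ordering-by-ordering in $P^{\tensor\ra}_{[3]}(M)$, distributes twelve terms (three cyclic shifts of a four-term $\nu$) over the six ordering-slots of $[3]$, so each slot receives exactly two terms and their cancellation is precisely the vanishing of the associator $m\circ(m\otimes\id)-m\circ(\id\otimes m)$. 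The converse direction is the usual ``commutator of an associative product satisfies Jacobi,'' only transcribed so that each ordered summand is checked separately. One small point to keep in mind when writing this up: the claim that ``there is only one two-element set up to unique isomorphism'' is not literally true — $\Aut(I)$ has order two — but what matters is that the two bijections between any pair of two-element sets differ by the transposition, and your candidate $\mu$ is equivariant up to sign under this transposition by construction, so the extension to all two-element sets is well defined. Your identification of the essential content (a bookkeeping transcription of the classical associative/Lie correspondence, with the ordered summands making Jacobi actually equivalent to, not just implied by, associativity) is the right way to think about this result.
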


This result extends to graded topological modules and in a similar way to $\calO^\bullet\tmod$. So to give a Lie structure on $P^{\tensor\ra}(M^\bullet)$ is equivalent to give an associative product $M^\bullet \tensor\ra M^\bullet\lra M^\bullet$. 

\subsubsection{Lie structures in \texorpdfstring{$P^\Delta$}{the operad with poles} and Lie structures in \texorpdfstring{$P^{chBD}$}{Beilinson and Drinfeld's chiral operad}}
The morphism $\jmath^\bullet$ introduced in Section \ref{ssec:operads} is a morphism of symmetric operad, if we have a Lie structure in the operad $P^{\tensor{\ra}}$ then, by applying $\jmath^\bullet$ we get a Lie structure in the operad $P^{\Delta}$. 

We want now to compare Lie structure in the operad $P^\Delta$ and Lie structure in the operad $P^{chBD}$. Let $M$ be a right $\pbarD$ filtered-\cgmod module and define $M^\bullet_!$ as in Section \ref{sssec:chBDDelta}. Let $\mu^{\bullet,\bullet}\in P_2^{\Delta}(M^\bullet_!)$ be a Lie structure, which is also  $\pbarDm{\bullet}$ equivariant. This means that for all surjective map $\pi:A\lra [2]$, setting $B=A_1$ and $C=A_2$, we have maps 
$$
\mu^{B,C}=\mu^{B,C}_{[2]}: \big(\Delta(B)_!M \exttensor * \Delta(C)_!M\big) (\infty \nabla_{B=C})\lra \Delta(A)_!M 
$$
and that these maps define a Lie structure. In particular we set 
\begin{equation}\label{eq:mu11}
\mu=\mu^{\{1\},\{2\}}=:M\exttensor {*} M(\infty \Delta)\lra \Delta_!M
\end{equation}
and 
\begin{equation}\label{eq:mu12}
\mu^{1,2}=\mu^{\{1\},\{2,3\}}:M\tensor * \Delta_! M(\infty \nabla_{\{1\}=\{2,3\}})\lra \Delta([3])_!M.
\end{equation}
Sometimes we will use $\mu^{1,2}$ to denote any morphism $\mu^{B,C}$ where $B$ has one element and $C$ has two elements or vice-versa. When this could lead to some ambiguities we will be more explicit.

\begin{lemma}\label{lem:chBDDeltaLie}
Let $M$ be a right $\pbarD$ filtered-\ccgmod module, and 
let $\mu^{\bullet,\bullet}$ be a Lie structure in $P_{[2]}^{\Delta}(M^\bullet _!)$ which is also $\pbarDm{\bullet}$ equivariant. Define $\tilde \mu=\mu\circ \mathrm{inc}_2$ the restriction of $\mu$ to $M\exttensor {\mathrm{fil}} M(\infty \Delta)$. Assume that $\tilde\mu$ is filtered and that 
$$
\Delta(\pi)_!(\tilde \mu)\circ \eta\circ \theta =\mu^{1,2}\circ \mathrm{inc}_3.
$$ 
Then $\tilde \mu$ defines a Lie structure in the operad $P^{chBD}(M)$. 
\end{lemma}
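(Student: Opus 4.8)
The plan is to reduce the claim to axioms already established for $P^\Delta$. A Lie structure on an operad is, by definition, a single antisymmetric binary operation satisfying the Jacobi identity expressed via the composition maps; so we must check two things for $\tilde\mu \in P^{chBD}_{[2]}(M)$: antisymmetry and Jacobi. First I would dispose of antisymmetry: the sign equivariance $\sigma\cdot\mu^{\{1\},\{2\}} = -\mu^{\{2\},\{1\}}$ holds in $P^\Delta$ by hypothesis, and the compatibility of the groupoid actions on the two sides noted at the end of Section \ref{sssec:chBDDelta} (the action of bijections is compatible with $\jmath$ and with $inc$) transports this to $P^{chBD}$; since $\tilde\mu = \mu\circ inc_2$ and $inc_2$ is equivariant, $\tilde\mu$ is antisymmetric.

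The heart of the matter is Jacobi. In $P^\Delta$ we are given that $\nu = \mu^{\bullet,\bullet}\circ\{Id,\mu^{\bullet,\bullet}\} \in P^\Delta_{[3]}(M^\bullet)$ satisfies $\nu + \rho\cdot\nu + \rho^2\cdot\nu = 0$, where $\rho$ is the $3$-cycle. I want the analogous identity for $\tilde\nu := \tilde\mu\circ\{Id,\tilde\mu\} \in P^{chBD}_{[3]}(M)$. The key point is the compatibility formula from Section \ref{sssec:chBDDelta}: for $\pi:J\surjmap I$ and composable filtered data, one has $\tilde\varphi\circ\{\tilde\psi_i\} = \varphi\circ\{\psi_i\}\circ inc_J$ provided $\tilde\psi_i = \psi_i\circ inc_{J_i}$ and $\Delta(\pi)_!(\tilde\varphi)\circ\eta\circ\vartheta = \varphi\circ inc_I$. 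Here I apply this with $I = [2]$, $J = [3]$ and $\pi = \pi_{2=3}$, taking $\varphi = \mu^{\{1\},\{2\}}$ (so $\tilde\varphi = \tilde\mu$, and the second hypothesis of the Lemma, $\Delta(\pi)_!(\tilde\mu)\circ\eta\circ\vartheta = \mu^{1,2}\circ inc_3$, is exactly the required identity — here $\mu^{1,2} = \mu^{\{1\},\{2,3\}}$ is the $I$-th inner operation $\grf$ in the notation of Section \ref{sssec:chBDDelta}), $\psi_1 = Id$, $\psi_2 = \mu^{\{2\},\{3\}}$. The first hypothesis $\tilde\psi_i = \psi_i\circ inc_{J_i}$ is immediate for $\psi_1 = Id$ and is the first displayed hypothesis of the Lemma, $\tilde\mu = \mu\circ inc_2$, for $\psi_2$. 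This gives $\tilde\nu = \nu\circ inc_{[3]}$ with $\nu$ as in the $P^\Delta$ Jacobi identity. The same argument applied to the $\rho$- and $\rho^2$-rotated compositions (using equivariance of $inc$ under bijections) gives $\rho\cdot\tilde\nu = (\rho\cdot\nu)\circ inc_{[3]}$ and $\rho^2\cdot\tilde\nu = (\rho^2\cdot\nu)\circ inc_{[3]}$. Summing and using the $P^\Delta$ Jacobi identity, $(\nu+\rho\cdot\nu+\rho^2\cdot\nu)\circ inc_{[3]} = 0$, so $\tilde\nu + \rho\cdot\tilde\nu + \rho^2\cdot\tilde\nu = 0$.

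The main obstacle I anticipate is purely bookkeeping: making sure all of $\tilde\mu$, $\tilde\nu$, the rotated composites, and the intermediate morphisms $\eta$, $\vartheta$, $\varepsilon_\pi$ genuinely land in the filtered Hom-spaces (i.e.\ are filtered morphisms), so that the compositions defining $P^{chBD}$ are legal — but this is precisely why the Lemma's hypotheses include ``$\tilde\mu$ is filtered'' and why Lemmas \ref{lem:chBD1} and \ref{lem:chBD2} were proved to produce filtered maps. Concretely I would: (1) recall the definitions of the two $3$-ary composites and of $\rho$; (2) verify antisymmetry via equivariance; (3) invoke the Section \ref{sssec:chBDDelta} compatibility three times to get $\tilde\nu^{(k)} = \nu^{(k)}\circ inc_{[3]}$ for $k=0,1,2$; (4) sum and conclude. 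A small secondary point to check is that $inc_{[3]} : \exttensor{\fil}_{[3]} M(\infty\nabla)\to \exttensor{*}_{[3]}M(\infty\nabla)$ is $\Aut([3])$-equivariant and compatible with $\varepsilon_\pi$, which follows from functoriality of the constructions; I would state this explicitly before step (3). No genuinely hard estimate is involved — the content has been front-loaded into the hypotheses and into the earlier lemmas.
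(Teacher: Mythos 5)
Your proof is correct and takes essentially the same approach as the paper: both deduce $\tilde\nu = \nu\circ inc_3$ from the compatibility discussion of Section \ref{sssec:chBDDelta} and then restrict the $P^\Delta$ Jacobi identity to the filtered subspace $M\exttensor{\fil}M\exttensor{\fil}M(\infty\nabla(3))$, using that the action of bijections is defined the same way in both pseudo tensor structures. You spell out the antisymmetry check and the per-term restriction $\rho^k\cdot\tilde\nu = (\rho^k\cdot\nu)\circ inc_{[3]}$ in slightly more detail than the paper, which simply calls the first condition trivial and restricts the full identity at once, but the content is identical.
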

\begin{proof}
	We need to check the two conditions defining a Lie structure. The first one is trivial. Let $\tilde \nu=\tilde \mu\circ (Id_M,\tilde \mu)$ and $\nu= \mu^{1,2}\circ (Id_M,\mu)$. By the discussion in Section \ref{sssec:chBDDelta} we have 
	$$
	\tilde \nu = \nu \circ \mathrm{inc}_3. 
	$$
	By assumption we know that $\nu+\rho\cdot \nu+\rho^2\cdot \nu=0$. The action of bijection in the two operads is defined in the same way, hence if we restrict this equality to $M\exttensor{\fil}M\exttensor{\fil}M(\infty \nabla(3))$ we get  
\[\tilde\nu+\rho\cdot \tilde\nu+\rho^2\cdot \tilde \nu=0\]
as desired.
\end{proof}


\section{Chiral products in the space of fields}\label{sec:chiralprod}

Following Beilinson \cite[Section 1.4]{beilinsonTopologicalAlgebras}, we define
a non unital chiral structure on $M^\bullet \in \calO^\bullet$-mod as a Lie structure on the operad $P^{\tensor{\ra}}(M^\bullet)$, and thank to Proposition \ref{prop:beilinsontopalg14} to give such a structure is equivalent to give a continuous associative product $M^\bullet\tensor\ra M^\bullet\lra M^\bullet$. We apply these considerations to the case of the $\calO^\bullet$-module of fields:
\[
\mF^\bullet = \big( A \mapsto \mF^A_{\Sigma,\calU}\big).
\]

\smallskip
 
To shorten the notation we will write $\mF^A$ in place of $\mF^A_{\Sigma,\calU}$. We will see that the right and left multiplication maps $m^{A,B}_r,m^{A,B}_\ell$ induce a morphism of operads $m : \Lie \to P^{\otimesr}(\mF^\bullet)$.

\begin{lemma}\label{lem:asssociativeprodfields}
	The collection of right multiplication maps $m^{A,B}_r$ defines an associative product
	\[
	\mF^\bullet \otimesr \mF^\bullet \to \mF^\bullet.
	\]
\end{lemma}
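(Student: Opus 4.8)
The statement to prove is that the collection of right multiplication maps $m^{A,B}_r : \mF^B \otimesr \mF^C \to \mF^A$ (for $A = B \sqcup C$) assembles into an associative product on the $\calO^\bullet$-module $\mF^\bullet$, i.e. a morphism $\mF^\bullet \otimesr \mF^\bullet \to \mF^\bullet$ in $\calO^\bullet\tmod$ satisfying associativity. First I would unwind what needs to be checked. The data of a product $\mF^\bullet \otimesr \mF^\bullet \to \mF^\bullet$ consists, for each surjection $q : A \twoheadrightarrow \{1,2\}$ (equivalently each ordered decomposition $A = B \sqcup C$ with $B = A_1$, $C = A_2$), of a continuous $\pbarOpolim{A}$-linear map $\mF^B \exttensor{\ra} \mF^C \to \mF^A$; the maps $m^{B,C}_r$ already constructed are exactly these, and their $\pbarOpolim{A}$-linearity (hence $\pbarDpolim{A}$-linearity) is immediate from the definition $XY(\ell) = (m^{B,C}_r(X \boxtimes Y))(Exp^r_q(\ell \cdot -))$ together with Lemma \ref{rmk:descrprod} and the fact that $Exp^r$ is a morphism of algebras. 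Continuity for the $\ra$-topology was already proved in the Lemma preceding the statement. So the only real content is \emph{associativity}.

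\textbf{Key steps.} The plan is: (1) Fix a finite set $A$ and a surjection $A \twoheadrightarrow \{1,2,3\}$, giving an ordered decomposition $A = B \sqcup C \sqcup D$. The two ways of bracketing give maps
\[
\mF^B \exttensor{\ra} \mF^C \exttensor{\ra} \mF^D \lra \mF^A
\]
namely $m^{B\sqcup C, D}_r \circ (m^{B,C}_r \exttensor{\ra} \id)$ and $m^{B, C\sqcup D}_r \circ (\id \exttensor{\ra} m^{C,D}_r)$, using the associativity of $\exttensor{\ra}$ to identify the iterated and the triple tensor product (Remark in Section \ref{sssec:dmtp}, and Remark \ref{oss:associativitacg} at the level of the relevant algebras). (2) Since $\mF^B \exttensor{\ra} \mF^C \exttensor{\ra} \mF^D$ is complete and the elementary tensors $X \otimes Y \otimes Z$ with $X \in \mF^B$, $Y \in \mF^C$, $Z \in \mF^D$ are dense, and since both composites are continuous, it suffices to check equality on such elementary tensors. (3) Evaluate both sides on an arbitrary $\ell \in \pbarOpolim{A}$, and then — using that $\pbarOpolim{A}$ is topologically generated by decomposable elements $\ell = f \otimes g \otimes h$ with $f \in \pbarOpoliK{B}$, etc., and that evaluation at a fixed $\ell$ is continuous in the field — reduce to $\ell$ decomposable. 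On decomposable $\ell$ and decomposable fields, unwinding the definition of $m_r$ (compose $\pbarOpolim{A} \to \pbarOpoliK{B} \tensor{*} \pbarOpoliK{C} \tensor{*} \pbarOpoliK{D} \to \calU \tensor{*} \calU \tensor{*} \calU \to \calU$, where the last map uses the multiplication of $\calU$), both composites give $X(f) \cdot (Y(g) \cdot Z(h))$ respectively $(X(f) \cdot Y(g)) \cdot Z(h)$, which agree by associativity of the product on $\calU$. (4) Conclude that the two bracketings coincide as morphisms $\mF^B \exttensor{\ra} \mF^C \exttensor{\ra} \mF^D \to \mF^A$ for every ordered triple decomposition, which is exactly the associativity of the product $\mF^\bullet \otimesr \mF^\bullet \to \mF^\bullet$ in $\calO^\bullet\tmod$; equivariance under bijections of the indexing is built into the symmetry of the construction.

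\textbf{Main obstacle.} The conceptual point is trivial (associativity of $\calU$), so the work is entirely in justifying the two density reductions — first from the completed triple tensor product of the $\mF$'s to elementary tensors of fields, then from $\pbarOpolim{A}$ to decomposable functions — and in checking that the relevant evaluation and composition maps are continuous for the ordered ($\ra$) topologies so that these reductions are legitimate. The subtlety I expect to dwell on is bookkeeping: one must make sure that $m^{B\sqcup C,D}_r$ really does extend to $\mF^{B\sqcup C} \otimesr \mF^D$ and that the intermediate object $\mF^B \otimesr \mF^C$ (which appears as the first slot) maps into $\mF^{B\sqcup C}$ compatibly, i.e. that the associativity of $\exttensor{\ra}$ as a tensor product of topological sheaves is precisely what licenses writing $\mF^B \exttensor{\ra} \mF^C \exttensor{\ra} \mF^D$ unambiguously; this is where Remark \ref{oss:associativitacg} and the good behaviour of $\ra$-tensor products of \ccg/\QCCF sheaves (Remarks \ref{oss:prodottiregolarifreccetta}, \ref{oss:prodottiecolimiti}, together with the \QCCF hypothesis on $\calU$) do the heavy lifting. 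No genuinely hard estimate should be required beyond what is already packaged in the earlier lemmas.
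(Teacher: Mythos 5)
Your proposal is correct and follows essentially the same route as the paper: reduce to the uncompleted tensor product by continuity/density, evaluate the two bracketings on elementary tensors $X\otimes Y\otimes Z$ and decomposable functions $f\otimes g\otimes h$, and observe that the two results $X(f)\cdot(Y(g)\cdot Z(h))$ and $(X(f)\cdot Y(g))\cdot Z(h)$ coincide by associativity of the product on $\calU$. The paper's proof is terser about the density reductions that you (correctly) spell out, and omits your side remark about $\pbarOpolim{A}$-linearity of the maps, but the core argument is identical.
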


\begin{proof}
	In Lemma \ref{lem:estensionem} we noticed that the product $m^{A,B}_r$ is right continuous and therefore defines a morphism $\mF^\bullet \otimesr \mF^\bullet \to \mF^\bullet$. To check associativity we may work at the level of non completed tensor product, so that we reduce to show that the following diagram commutes
	\[\begin{tikzcd}
	{\mF^A\otimes(\mF^B\otimes\mF^C)} & {=} & {(\mF^A\otimes\mF^B)\otimes\mF^C} \\
	{\mF^A\otimes\mF^{B\sqcup C}} && {\mF^{A\sqcup B }\otimes\mF^{C}} \\
	{\mF^{A\sqcup B\sqcup C}} & {=} & {\mF^{A\sqcup B\sqcup C}}
	\arrow["{\id\otimes m^{B,C}_r}", from=1-1, to=2-1]
	\arrow["{m^{A,B}_r\otimes\id}", from=1-3, to=2-3]
	\arrow["{m^{A,B\sqcup C}_r}", from=2-1, to=3-1]
	\arrow["{m^{A\sqcup B,C}_r}", from=2-3, to=3-3]
	\end{tikzcd}\]
	So take $X \in \mF^A, Y \in \mF^B, Z\in \mF^C$, write $\pbarOpolim{A\sqcup B\sqcup C}=\pbarOpolim{A}\,\tensor{cg}\,\pbarOpolim{B}\,\tensor {cg} \,\pbarOpolim{C}$ and take $f\in\pbarOpolim{A},g\in\pbarOpolim{B}$ and $h\in\pbarOpolim{C}$. Evaluating the vertical arrows in the left hand side in the diagram above on $X\otimes Y\otimes Z$ one finds the $A\sqcup B\sqcup C$-field 
	\[
	X(YZ) : f \otimes g \otimes h \mapsto X(f)\cdot(YZ) (g\otimes h) = X(f)\cdot(Y(g)\cdot Z(h))
	\]
	analogously the result of the evaluation of the right arrows on $X\otimes Y \otimes Z$ is 
	\[
	(XY)Z : f\otimes g \otimes h \mapsto (X(f)\cdot Y(g))\cdot Z(h).
	\]
	Here $\cdot$ stands for the product on $\calU$, which, being associative, makes the collection of the $m^{A,B}_r$ associative as well.
\end{proof}

Thus, the associative multiplication $m^{\bullet,\bullet}_r : \mF^\bullet \otimesr \mF^\bullet \to \mF^\bullet$ 
corresponds 
to the Lie structure in $P^{\tensor\ra}(\mF^\bullet)$ given by the collection of morphisms
\[
{\mu}^{\bullet,\bullet}=(m^{A,B}_r,-m^{A,B}_\ell) : \mF^A\tensor\ra\mF^B \oplus \mF^A\tensor\leftarrow \mF^B \to \mF^{A\sqcup B}.
\]
Notice\index{$\mu^{\bullet,\bullet}$} that $\mF^\bullet$ is naturally also a \cgmod $\pbarDpolim{\bullet}$ module and that $\mu^{\bullet,\bullet}$ is a also $\pbarDpolim{\bullet}$ equivariant. Here we did a small change in our notations using the obvious isomorphism $\mF^A\otimesl \mF^B\simeq \mF^B\otimesr \mF^A$. 
If we compose $m$ with the morphism of operads $\jmath^\bullet$ (see Section \ref{ssec:operads}) or equivalently we compose $\vec\mu$ with $\imath^\bullet_2$ then we get the Lie structure on the operad $P^\Delta(\mF^\bullet)$ given by the collection of morphisms
\[
\mu^{A,B} = m^{A,B}_r - m^{A,B}_\ell.
\]
We define $\mu$ and $\mu^{1,2}$ as in formulas \eqref{eq:mu11} and \eqref{eq:mu12} and similarly we define $m_r$, $m_r^{1,2}$, $m_\ell$ and $m_\ell^{1,2}$.

We would like to say that $\mu$ defines a Lie structure on $P^{chBD}(\mF^1)$, however this is not possible since the image of $\mu$ is not contained in $\Delta_!\mF^1$. We will deal with this problem in the next Section. Nevertheless we can prove an analogue of the condition expressed in Lemma \ref{lem:chBDDeltaLie}. Notice first that if we consider $\mF^1$ with the trivial filtration, by Lemmas \ref{lem:chBD1} and \ref{lem:chBD2}, for all $h$ we have a natural map 
$$\gra^h :\big(\mF^1 \exttensor * \Delta_!^{\leq h} \mF^1\big) (\infty \nabla (\pi_{2=3}))
\lra \Delta(\pi_{2=3})_!\big(\mF^1\exttensor * \mF^1(\infty\Delta)\big).$$

\begin{lemma}\label{lem:mu12}
    On the subsheaf $\big(\mF^1 \exttensor * \Delta_!^{\leq h} \mF^1\big) (\infty \nabla (\pi_{2=3}))$ of $\big(\mF^1 \exttensor *  \mF^2\big) (\infty \nabla (\pi_{2=3}))$ we have 
	$$
    \mu^{1,2}=\Delta(\pi_{2=3})^{\leq h}_!(\mu)\circ\gra^h
	$$
\end{lemma}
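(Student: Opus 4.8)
The plan is to verify the identity locally, since both sides are continuous morphisms of sheaves and $S$ is quasi compact, so it suffices to check the equality on sections over a well covered affine open subset $U = \Spec A$ equipped with a coordinate $t$. Over such a $U$ we have the explicit description $\Delta_!^{\leq h}\mF^1 = \bigoplus_{0\leq n\leq h}\mF^1\partial_{u}^n$ where $u = t\otimes 1$ (following the local computation of Section \ref{ssez:tuvy}), and the map $\gra^h$ can be written out explicitly: it is induced, after localizing along $\nabla(\pi_{2=3})$, by the inclusion $\mF^1\exttensor{*}\Delta_!^{\leq h}\mF^1\hookrightarrow \Delta(\pi_{2=3})_!(\mF^1\exttensor{*}\mF^1)$ obtained from Lemmas \ref{lem:chBD1} and \ref{lem:chBD2}, where the poles along $\Delta_{2=3}$ inside the $\mF^2 = \Delta_!\mF^1$ factor get converted into extra differential operators. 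Concretely, both $\mu^{1,2}$ and $\Delta(\pi_{2=3})^{\leq h}_!(\mu)\circ\gra^h$ are maps landing in $\Delta([3])_!\mF^1 = \mF^3$ and it is enough, by continuity in all variables, to evaluate them on a decomposable element $X\boxtimes\big(\sum_{n\leq h} Y_n\partial_u^n\big)\cdot f$ with $X,Y_n\in\mF^1$ and $f\in\pbarOpoli\otimes\pbarOpoli$ with poles along the diagonal, and then pair the resulting $3$-field against a decomposable $g_1\otimes g_2\otimes g_3\in\pbarOpolim{3}$.

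First I would recall, from the identification $\mF^2\simeq\Delta_!\mF^1$ of Lemma \ref{lem:campilocali}, precisely how a $2$-field corresponding to $\sum_n Y_n\partial_u^n$ acts: it sends $g_2\otimes g_3$ to $\sum_n Y_n(g_3\,\partial_t^n g_2)$ (using the formulas of Section \ref{ssez:tuvy} for the right $\pbarDm{2}$-action, where $\partial_u$ corresponds to $\partial_y$ and $h^0_r$ recovers $\mF^1$). Next I would unwind the definition of $\mu^{1,2} = \mu^{\{1\},\{2,3\}}$, which by \eqref{eq:mu12} is the chiral bracket $m_r^{1,2} - m_\ell^{1,2}$ pairing an $\mF^1$ against an $\mF^2$ with poles along $\nabla_{\{1\}=\{2,3\}}$; using Lemma \ref{rmk:descrprod} this evaluates, on $g_1\otimes g_2\otimes g_3$, by first applying the expansion maps $Exp^r$ or $Exp^\ell$ of Lemma \ref{lem:extensiondiagonal} (in the appropriate ordered tensor product) to $g_1 g_2 g_3 \cdot f$ and then feeding the result to $X$ and to the $2$-field. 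On the other side, $\Delta(\pi_{2=3})^{\leq h}_!(\mu)\circ\gra^h$ first applies $\gra^h$, which expands the $\Delta_{2=3}$-poles present in $f$ (coming from the $\mF^2$-slot) into a combination of $\partial_u$'s acting on an honest pole-free element, i.e. it trades the localization $(\infty\Delta_{2=3})$ for extra differential-operator degree; then it applies the plain chiral bracket $\mu = m_r - m_\ell$ between two copies of $\mF^1$ with poles only along the outermost diagonal, and finally reassembles via $\Delta(\pi_{2=3})_!$.

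The computation then reduces to the compatibility of the expansion maps with differentiation and with the diagonal pushforward: concretely, that applying $Exp^r_{q}$ (for $q:\{1,2,3\}\twoheadrightarrow\{1,2\}$ collapsing $2,3$) and then contracting the $2,3$-slot through the $2$-field $\sum Y_n\partial_u^n$ gives the same thing as first moving the $\Delta_{2=3}$-poles across, using the functoriality statement of Lemma \ref{lem:espansionediagonale}(b) and the explicit formula for $Exp^r(\delta^{-1})$ from Remark \ref{oss:inversadidelta}. I would organize this as follows: (1) fix local coordinates and write every object in terms of the bases $\grf^\pm_{m,i}$; (2) expand $\gra^h$ explicitly so that its effect on $f\cdot(\text{term with }\partial_u^n)$ is visible as a reindexing of pole orders versus derivative orders; (3) expand both $\mu^{1,2}$ and $\mu$ via $Exp^{r}$ and $Exp^{\ell}$ and observe that, after applying the $2$-field, the double series in $u$ and $v$ on both sides match term by term, using that $\partial_t$ acts on $\mF^1$ dually to $\partial_t$ on $\pbarOpoli$. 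The key input throughout is the \emph{filtered} nature of everything — the bound $\leq h$ on the derivative order on the left matches the bound $\leq h$ on the pushforward on the right precisely because, as noted in the proof of Lemma \ref{lem:chBD2}, commuting a $\delta^{-1}$ past a derivative raises the pole order by at most one.

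The main obstacle I anticipate is bookkeeping: matching the left-expansion versus right-expansion conventions (the sign and ordering differences between $Exp^r$ and $Exp^\ell$, and between $m_r^{1,2}$ and $m_\ell^{1,2}$) so that the difference $m_r - m_\ell$ on the $\mu$-side lines up correctly with $m_r^{1,2} - m_\ell^{1,2}$ on the $\mu^{1,2}$-side after the $\gra^h$-reindexing. A clean way to handle this is to avoid choosing a splitting and instead argue by the universal property: both maps in the claim are continuous $\pbarDpolim{3}$-equivariant morphisms out of the localization $\big(\mF^1\exttensor{*}\Delta_!^{\leq h}\mF^1\big)(\infty\nabla(\pi_{2=3}))$, and they agree on the dense non-localized, non-pushed-forward submodule $\mF^1\exttensor{*}\mF^1$ (where both reduce, by construction and by Lemma \ref{lem:mu12}'s base case in degree $0$, to the same expression $m_r - m_\ell$ as in the definition of the chiral product); uniqueness of the continuous extension past the diagonal poles, together with uniqueness of the $\pbarD$-equivariant extension in the pushforward direction (Kashiwara, Proposition \ref{prop:Kashiwara}, and the adjunctions following it), then forces the two to coincide on the whole sheaf.
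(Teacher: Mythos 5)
Your first three paragraphs correctly identify the relevant tools — Lemma \ref{rmk:descrprod}, the functoriality statement Lemma \ref{lem:espansionediagonale}(b), Remark \ref{oss:inversadidelta}, the local description of Section \ref{ssez:tuvy} — and the general direction (local computation, evaluate on decomposables) is close to what the paper does. However, the proposal stops short of an actual proof, and the "clean alternative" you pivot to at the end does not repair this.

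The main gap is that the computational heart of the lemma is never carried out. The paper's argument, once localized, boils down to: (i) reduce to $h=0$ by exhibiting $\big(\mF^1 \exttensor * \Delta_!^{\leq h+1} \mF^1\big) (\infty \nabla (\pi)) = \big(\mF^1 \exttensor * \Delta_!^{\leq h} \mF^1\big) (\infty \nabla (\pi))\cdot (\pbarDm 3) ^{\leq 1}$ together with $\pbarD\boxtimes\pbarDm{2}$-equivariance of both sides; and (ii) for $h=0$, write both sides through $Exp^r_\pi$ and $Exp^\ell_\pi$ via Lemma \ref{rmk:descrprod}, push $\Delta(\pi)^\sharp$ through the expansion using Lemma \ref{lem:espansionediagonale}(b), and verify the resulting equality $m^{1,2}_r(X\otimes \tilde Y)(g) = m_r(X\otimes Y)(\Delta(\pi)^\sharp g)$ on decomposable $g$. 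You anticipate that step (ii) "reduces to compatibility of the expansion maps with differentiation and with diagonal pushforward," which is correct, but you never actually do it — and this is precisely where the nontrivial content lives. You also don't notice the clean induction of step (i), working with general $h$ throughout; your explicit local form $\bigoplus_{n\leq h}\mF^1\partial_u^n$ is correct but leads to heavier bookkeeping than necessary.

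The final paragraph's "universal property" shortcut does not close the gap, and is circular as written: you extend from $\mF^1\exttensor{*}\mF^1$, where you claim the two sides "reduce, by construction and by Lemma \ref{lem:mu12}'s base case in degree $0$, to the same expression." But that base case is the thing to be proved — even in degree $0$, $f$ may have poles along $\Delta_{1=2}\cup\Delta_{1=3}$, and the equality on $\mF^1\exttensor{*}\mF^1$ requires precisely the identification of $m_r^{1,2}(X\otimes\Delta_*Y)(g_1\otimes g_2\otimes g_3)$ with $m_r(X\otimes Y)(g_1\otimes g_2g_3)$, which is not "by construction." Invoking Kashiwara (Proposition \ref{prop:Kashiwara}) and its adjunctions is a red herring here: those give the uniqueness of a morphism satisfying a universal property, not a mechanism for proving that two explicitly given maps agree. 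A generation-plus-equivariance argument of the kind you sketch could in principle be made to work, but only after the base-case identity is actually verified, which brings you back to exactly the computation you have deferred.
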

\begin{proof}We denote $\pi_{2=3}$ by $\pi$. 
	It is enough to prove the statement for $h=0$. Indeed it is easy to check locally that 
	$$\big(\mF^1 \exttensor * \Delta_!^{\leq h+1} \mF^1\big) (\infty \nabla (\pi))=
	\big(\mF^1 \exttensor * \Delta_!^{\leq h} \mF^1\big) (\infty \nabla (\pi))\cdot (\pbarDm 3) ^{\leq 1}.
	$$
	Hence using that these maps are $\pbarD\boxtimes\pbarDm{2}$-equivariant, the claim for a generic $h$ follows by induction from the claim for $h=0$. 
	
	For $h=0$ we have $\Delta_!^{\leq 0} \mF^1=\Delta_* \mF^1$, hence we need to prove that for $X,Y\in \mF^1$, $f\in \pbarOm 3(\infty \nabla(\pi))$ and $h\in \pbarOpolim 3$ we have 
	\begin{equation}\label{eq:mumu}
    \bigg(\big(\Delta(\pi)_*(\mu)\big)\big(f\cdot (X\otimes \tilde Y)\big)\bigg)(h)=
    \bigg(\mu^{1,2}\big(f\cdot (X\otimes \tilde Y)\big)\bigg)(h)
    \end{equation}
    where $\tilde Y=\Delta_*(Y):\pbarOpoliq\lra \calU$ is given by $\ell\mapsto Y(\Delta^\sharp(\ell))$. 
    We compute the two sides of formula \eqref{eq:mumu} using Lemma \ref{rmk:descrprod}. 
    On the left hand side we have
    \begin{align*}
    \bigg(\mu^{1,2}\big(f\cdot (X\otimes \tilde Y)\big)\bigg)(h)& =
    m^{1,2}_r\big(f\cdot (X\otimes \tilde Y)\big)(h)
    -
    m^{1,2}_\ell\big(f\cdot (X\otimes \tilde Y)\big)(h)\\
    &= m^{1,2}_r(X\otimes \tilde Y)(Exp_\pi^r(fh))-m^{1,2}_\ell(X\otimes \tilde Y)(Exp_\pi^\ell(fh))    \end{align*}
    and on the right hand side we have
    \begin{align*}
    \bigg(\Delta(\pi)_*(\mu)&\big(f\cdot (X\otimes \tilde Y)\big)\bigg)(h)=
    \mu\Big(\big(\Delta(\pi)^\sharp(f)\big) (X\otimes Y)\Big)\big(\Delta(\pi)^\sharp(h)\big) \\
   &= m_r\big(X\otimes Y\big)\Big(Exp^{r}_\pi\big(\Delta(\pi)^\sharp(fh)\big)\Big)
    -
    m_\ell\big(X\otimes Y\big)\Big(Exp^{\ell}_\pi\big(\Delta(\pi)^\sharp(fh)\big)\Big)\\
  &= m_r\big(X\otimes Y\big)\Big(\Delta(\pi)^\sharp Exp_\pi ^{r}\big((fh)\big)\Big)
  -
  m_\ell\big(X\otimes Y\big)\Big(\Delta(\pi)^\sharp Exp_\pi^{\ell}(fh)\Big)
  \end{align*}	
Hence it is enough to prove that for all $g\in \pbarOpoli \tensor\ra \pbarOpolim 2$ we have
$$
m^{1,2}_r(X\otimes \tilde Y)(g)=
m_r(X\otimes Y)\big(\Delta(\pi)^\sharp g\big),
$$
where $\Delta(\pi)^\sharp : \pbarOpoli \otimesr \pbarOpoliq \to \pbarOpoli \otimesr \pbarOpoli$, and similarly for the left product. By continuity it is enough to prove the claim form 
$g=g_1\otimes g_2\otimes g_3$ with $g_i\in \pbarOpoli$. 
In this case for the right product we get
$$
m^{1,2}_r(X\otimes \tilde Y)(g)=X(g_1)Y(g_1g_2)=
m_r(X\otimes Y)\big(\Delta(\pi)^\sharp g\big)
$$
and similarly for the left product. 
\end{proof}

As we will see that  this result implies Dong's Lemma (see Lemma \ref{lem:Dong2}).

\begin{example}\label{oss:nonsbagliarsi2}
When we use the notation $\mF^1$ or $\mF^n$ and not $\mF^A$ some information could be hidden. Let us make an example of that. Let $\rho$ be the cyclic permutation of $1$ , $2$ and $3$ and let 
$\nu=\mu^{1,2}\circ \{Id_M,\mu\}$. We want to make explicit what the map $\rho \cdot \nu$ is. Recall that 
$$\nu\in \Big(P^\Delta_{[3]}(\mF^1)\Big)^{[3]}$$ 
has six different components, one for each bijection $q:[3]\lra [3]$. More precisely, since $\nu=\mu^{1,2}\circ \{Id_M,\mu\}$ we need to consider the composition associated to $\pi=\pi_{2=3}\circ q:[3]\lra [2]$ and the $q$ component $\nu_q$ is equal to 
$$
\mu^{\pi^{-1}(1), \pi^{-1}(2)}\circ\{ Id_{\mF^{\pi^{-1}(1)}}, \mu^{\{a\},\{b\}}\}
$$
where $\{a,b\}=\pi^{-1}(2)$ and $q(a)<q(b)$ and $\mu^{\{a\},\{b\}} : \mF^{\{a\}} \exttensor{*} \mF^{\{b\}}(\infty\Delta) \to \mF^{\{a,b\}}$ Recall from the description of the action of bijections on the pseudo tensor structure with poles that $\rho$ sends the $q$ component to the $\rho^{-1}\circ q$ component. In particular the
$q=\id$ component of $\rho\circ \nu$ is equal to $\nu_\rho$, and more explicitly to 
$$
(\rho\cdot\nu)_{\id}=\mu^{\{3\},\{1,2\}}\circ\{Id_{\mF^{\{3\}}},\mu^{\{1\},\{2\}}\}
$$

The latter map is a morphism $\mF^1\exttensor{*}\mF^1\exttensor{*} \mF^1 (\infty\nabla([3])) \to \mF^3$. These three copies of $\mF^1$ corresponds indeed to different sets of one element, for example $\{1\}$, $\{2\}$ and $\{3\}$, the correct interpretation of $\rho\cdot \nu$ is straightforward with our notation if we recall this fact and we keep using sets in our notation. 

If we do not use finite sets and we want to remember their meaning just by placing them in the right position we have 
$$\Big((\rho\cdot \nu)_{\id} (X_1\otimes  X_2 \otimes X_3)\Big)
(f_1\otimes f_2 \otimes f_3)=
\mu^{1,2}\Big( X_3 \otimes \mu(X_1\otimes X_2)\Big) (f_3\otimes f_1\otimes f_2)
$$
\end{example}

\subsection{Mutually local fields}\label{ssez:campimutualmentelocali}
Although we have a Lie structure in $P^\Delta (\mF^\bullet)$ we cannot apply Lemma \ref{lem:chBDDeltaLie} to 
deduce that we have a Lie structure in $P^{chBD}(\mF^1)$. The main reason is that the image of $\mu$ is not contained in $\Delta_!\mF^1$. For this reason we turn now our attention to the case in which we have a $\pbarD$-submodule $V \subset \mF^1$ consisting of \emph{mutually local fields}.

\begin{definition}\label{def:mutuallylocal}
	Two fields $X,Y$ are said to be mutually local if $\mu(X\boxtimes Y) \in \mF^{2\loc}$, where $\mF^{2\loc}$ is the sheaf defined in Section \ref{ssez:campilocali}.
	Two subsheaves $V_1,V_2$ of $\mF^1$ are said to consist of \emph{mutually local fields} if
	\[
	\mu(V_1\boxtimes V_2) \subset \mF^{2\loc}
	\]
	Since $\mu(V_1\boxtimes V_2) = - \mu(V_2\boxtimes V_1)$ this definition is symmetric. Similarly, we say that a subsheaf $V\subset \mF^1$ consists of \emph{mutually local fields} if $\mu(V\boxtimes V) \subset \mF^{2\loc}$.
\end{definition}

As in the theory of vertex algebras mutually local fields behave well under applying $\mu$: we have an analogue of Dong's Lemma.

\begin{lemma}(Dong's Lemma)\label{lem:Dong2}
Let $X,Y,Z\in \mF^1$ be mutually local fields then 
\begin{enumerate}[\indent a)]
    \item if $f\in \pbarOpolim 2 (\infty \Delta)$ then $\mu((X\otimes Y) \cdot f)\in \Delta_!\mF^1$. More precisely if $\mu(X\otimes Y)\in \Delta^{\leq h}_!\mF^1$ and
    $f\in \pbarOpolim 2 (n\Delta)$ then $\mu((X\otimes Y)\cdot f)\in \Delta^{\leq h+n}_!\mF^1$, in particular, it is zero if $h+n<0$. 
    \item if $D\in (\pbarDm 2)^{\leq n}$ and $\mu(X\otimes Y)\in \Delta^{\leq h}_!\mF^1$ then $\mu((X\otimes Y) \cdot D)\in \Delta^{h+n}_!\mF^1$. In particular, using also point a), if $P\in\pbarDpoli$ then $X\cdot P$ and $Y$ are mutually local.
    \item if $f\in \pbarOpolim 2 (\infty \Delta)$ then $X$ and $\piuno \mu((Y\otimes Z)\cdot f)$ are mutually local. More precisely if $f\in \pbarOpolim 2 (n \Delta)$ and $\mu(X\otimes Y), \mu(X\otimes Z), \mu(Z\otimes Y), \in \Delta^{\leq h}_!\mF^1$ then there exists $N$ which depends only on $h$ and $n$ such that 
    $\mu(X\otimes \piuno \mu((Y\otimes Z)\cdot f))\in \Delta^{\leq N}_!\mF^1$.
\end{enumerate}
\end{lemma}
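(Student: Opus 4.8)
The plan is to deduce Dong's Lemma from the Lie structure on $P^\Delta(\mF^\bullet)$, exactly in the spirit of the classical proof via the Jacobi identity, but keeping track of the filtration degrees. The key tool is Lemma \ref{lem:mu12}, which identifies $\mu^{1,2}$ with $\Delta(\pi_{2=3})_!(\mu)$ composed with the natural map $\gra^h$ on subsheaves of the form $(\mF^1\exttensor{*}\Delta^{\leq h}_!\mF^1)(\infty\nabla(\pi_{2=3}))$, together with the Jacobi identity $\nu+\rho\cdot\nu+\rho^2\cdot\nu=0$ for $\nu=\mu^{1,2}\circ\{Id,\mu\}$, which holds because $\mu^{\bullet,\bullet}$ is a Lie structure in the operad $P^\Delta(\mF^\bullet)$.

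First I would prove a). The statement that $\mu(X\otimes Y)\in\Delta^{\leq h}_!\mF^1$ is equivalent, by Lemma \ref{lem:campilocali} and the description $\mF^{2\loc}=\Delta_!\mF^1$ together with the Remark after Proposition \ref{prop:Kashiwara} (giving $\Delta^{\leq m-1}_!\Delta^!\calM\simeq\calM^{\pi\loc,\leq m}$), to the vanishing $\mu(X\otimes Y)\cdot\calJ_\Delta^{h+1}=0$. Since $\mu$ is $\pbarOpoliq$-linear (it is a morphism in the pseudo-tensor structure $P^\Delta$, hence $\pbarOpolim 2$-equivariant by Remark \ref{rmk:polisuOmod}), for $f\in\pbarOpolim2(n\Delta)$ one has $\mu(X\otimes Y\cdot f)\cdot\calJ_\Delta^{h+n+1}=\mu(X\otimes Y\cdot f\calJ_\Delta^{h+n+1})$, and $f\calJ_\Delta^{h+n+1}\subset\calJ_\Delta^{h+1}$ because $f$ has a pole of order at most $n$ along $\Delta$; so this lands in $\mu(X\otimes Y)\cdot\calJ_\Delta^{h+1}\cdot(\text{something})=0$. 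This gives $\mu(X\otimes Y\cdot f)\in\Delta^{\leq h+n}_!\mF^1$. Part b) is entirely analogous: $\mu$ is also $\pbarDpolim 2$-equivariant, and the operators in $(\pbarDm2)^{\leq n}$ raise the order of a pole along $\Delta$ by at most $n$ (commuting $\partial$ past $\calJ_\Delta^{-1}$), so $\mu(X\otimes Y\cdot D)\cdot\calJ_\Delta^{h+n+1}=0$; the final sentence of b) then follows by combining with a), using the local description $\pbarDpoli=\bigoplus\pbarO\partial_t^k$ and the fact that every element of $\pbarDpoli$ acting on the left on $\mF^1$ is accounted for by an operator of the form $1\otimes D$ or by a pole.

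The substance is in c), which is the genuine Dong's Lemma. Here I would argue as follows. Set $\pi=\pi_{2=3}$ and consider the element $\nu=\mu^{1,2}\circ\{Id_{\mF^1},\mu\}\in P^\Delta_{[3]}(\mF^1)$; this is a sum over bijections $q:[3]\to[3]$ of components as described in Example \ref{oss:nonsbagliarsi2}. The hypotheses $\mu(X\otimes Y),\mu(X\otimes Z),\mu(Z\otimes Y)\in\Delta^{\leq h}_!\mF^1$ ensure, by Lemma \ref{lem:mu12} and part a)/b) already proved, that two of the three terms $\rho^k\cdot\nu$ applied to $X\otimes Y\otimes Z\cdot f$ — namely the ones of the form $\mu^{1,2}(\,\cdot\,\otimes\mu(\text{pair involving }X))$ where the inner bracket is a local field — take values in $\Delta([3])^{\leq N}_!\mF^1$ for some $N=N(h,n)$; the extra pole orders introduced by $f$ and by $\gra^h$ are bounded in terms of $h$ and $n$ using Lemmas \ref{lem:chBD1} and \ref{lem:chBD2} (the maps $\vartheta,\eta$ raise filtration and pole degrees in a controlled way). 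The Jacobi relation $\nu+\rho\cdot\nu+\rho^2\cdot\nu=0$ then forces the remaining term — which up to a sign and a permutation is exactly $\mu^{1,2}(X\otimes\mu(Y\otimes Z\cdot f))$, read via the identification of Example \ref{oss:nonsbagliarsi2} — to lie in $\Delta([3])^{\leq N}_!\mF^1$ as well. Applying $h^0_r$ (equivalently the de Rham residue $dR_r$, which by Lemma \ref{lem:campilocali} equals $\piuno$) and using Lemma \ref{lem:mu12} once more to peel off the outer diagonal, one concludes $\mu(X\otimes\piuno\mu(Y\otimes Z\cdot f))\in\Delta^{\leq N}_!\mF^1$, i.e. $X$ and $\piuno\mu(Y\otimes Z\cdot f)$ are mutually local, with the bound $N$ depending only on $h$ and $n$.

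The main obstacle I anticipate is bookkeeping: making the identification of the three cyclic terms $\rho^k\cdot\nu$ with concrete maps $\mF^1\exttensor{*}\mF^1\exttensor{*}\mF^1(\infty\nabla([3]))\to\mF^3$ precise (this is exactly the subtlety flagged in Example \ref{oss:nonsbagliarsi2} about finite sets versus positions), and simultaneously tracking how the filtration degree $\leq h$ and the pole order $\leq n$ propagate through $\gra^h$, through $\Delta(\pi)_!$, and through the localization along $\nabla(\pi)$ versus $\nabla([3])$ in Lemmas \ref{lem:chBD1} and \ref{lem:chBD2}. The conceptual content is light — it is just the Jacobi identity — but getting a clean explicit bound $N(h,n)$ requires care. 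A reasonable strategy is to first prove the qualitative statement (mutual locality, i.e. existence of \emph{some} $N$) purely from the Lie identity and parts a), b), and only then revisit the proof to extract the explicit dependence of $N$ on $h$ and $n$ from the pole-counting in Lemmas \ref{lem:chBD1}, \ref{lem:chBD2} and in $Exp^r_\pi$.
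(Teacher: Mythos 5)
Parts a) and b) are correct and match the paper's argument: a) is the direct $\pbarOpolim 2$-linearity computation $\mu(X\boxtimes Y\cdot f)\cdot\calJ_{\Delta}^{h+n+1}=\mu\bigl(X\boxtimes Y\cdot(f\calJ_{\Delta}^{h+n+1})\bigr)\subset\mu(X\boxtimes Y)\cdot\calJ_{\Delta}^{h+1}=0$, and b) is a one-liner from $\pbarDpolim 2$-equivariance of $\mu$ and the fact that $\Delta_!\mF^1$ is a $\pbarDpolim 2$-submodule of $\mF^2$.

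Part c) has a genuine gap. You claim that two of the three cyclic terms $\rho^k\cdot\nu$ applied to $X\otimes Y\otimes Z\cdot f$ ``take values in $\Delta([3])^{\leq N}_!\mF^1$''. This is both unjustified and false: $\Delta([3])_!\mF^1$ consists of $3$-fields annihilated by a power of the full ideal of the small diagonal (locally generated by $y_1-y_2$ \emph{and} $y_2-y_3$), but what Lemma \ref{lem:mu12} gives you for, say, $\rho\cdot\nu(X\otimes Y\otimes Z\cdot f)=\mu^{1,2}(Z\otimes\mu(X\otimes Y))$ is only annihilation by a power of the \emph{single} ideal $\calJ_{\Delta_{1=2}}$ (the inner bracket is local along $\{1,2\}$ only); there is no control over the directions involving $Z$. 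Similarly $\rho^2\cdot\nu$ is only controlled along $\Delta_{1=3}$, and $W=\nu(X\otimes Y\otimes Z\cdot f)$ itself only along $\Delta_{2=3}$. The Jacobi identity then gives you that $W=-\rho\cdot\nu-\rho^2\cdot\nu$ is annihilated by the product $\calJ_{\Delta_{1=2}}^{m_1}\calJ_{\Delta_{1=3}}^{m_1}$, and separately by $\calJ_{\Delta_{2=3}}^{M}$, but neither of these says $W\in\Delta([3])_!\mF^1$.

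What is actually needed — and what your outline omits — is the purely algebraic step that converts these three partial vanishings into vanishing along a \emph{single} pairwise diagonal, namely $\Delta_{1=3}$. Using $\delta_{13}=\delta_{12}+\delta_{23}$ one expands $\delta_{13}^{\,k}$ and checks that a sufficiently high power of $\calJ_{\Delta_{1=3}}$ is contained in $\calJ_{\Delta_{2=3}}^{M}+\calJ_{\Delta_{1=2}}^{m_1}\calJ_{\Delta_{1=3}}^{m_1}$; combined with the above this gives $\calJ_{\Delta_{1=3}}^{N}\cdot W=0$. This is what the paper uses, and it is exactly the right conclusion because the final reduction does not go through $h^0_r$ or another application of Lemma \ref{lem:mu12}: it is the elementary identity $\mu\bigl(X\otimes\piuno\mu(Y\otimes Z\cdot f)\bigr)=\tilde\piuno(W)$ with $\tilde\piuno:\mF^3\to\mF^2$, $\tilde\piuno(W)(\alpha\otimes\beta)=W(\alpha\otimes 1\otimes\beta)$, which transports annihilation by $\calJ_{\Delta_{1=3}}^N$ in $\mF^3$ to annihilation by $\calJ_\Delta^N$ in $\mF^2$. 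Your proposed ``apply $dR_r$ and peel off with Lemma \ref{lem:mu12}'' is a different and unnecessary detour; replace it with this direct identification, and insert the $\delta_{13}=\delta_{12}+\delta_{23}$ step to bridge the three partial vanishings.
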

\begin{proof}
    a) Recall that by the local description $\Delta_!^{\leq k}\mF^1$ is the sheaf of $2$-fields annihilated by $\calJ_{\Delta}^{k+1}$, where as usual $\calJ_\Delta$ is the sheaf defining the diagonal in $\pbarOpoliq$. Thus, we need to prove that $h+n$ satisfies $\mu(f(X\otimes) Y) \cdot  \calJ_{\Delta}^{h+n+1} = 0$.  By direct computation, using the fact that $\mu$ is a morphism of $\pbarOpoliq$-modules, we have
	\[
	\mu (f\cdot( X \otimes Y)) \cdot \calJ_{\Delta}^{h+n+1}  = \mu (\calJ_{\Delta}^{h+n+1} f \cdot (X \otimes Y)) \subset \mu (\calJ_{\Delta}^{h+1}\cdot ( X \otimes Y)) = \mu (X\otimes Y)\cdot  \calJ_{\Delta}^{h+1} = 0. 
	\]

    b) This follows from the fact that $\mu$ is $\pbarDpolim 2$ equivariant and $\Delta_!\mF^1$ is a 
$\pbarDpolim 2$ submodule of $\mF^2$.

    c) Let us start by recalling that by Lemma \ref{lem:campilocali} we have $\mF^{2\loc}=\Delta_!\mF^1$ and that $\piuno:\mF^2\lra \mF^1$ was defined in formula \eqref{eq:piuno} by $\piuno(X)(\gra)=X(1\otimes \gra)$. We introduce also $\tilde \piuno:\mF^3\lra \mF^2$ defined as  
$\tilde\piuno (W)(\gra\otimes \grb)=W(\gra\otimes1\otimes \grb)$. We notice that for all
\begin{equation}\label{eq:RtildeRmu}
  m_r\Big(X\otimes \piuno(X' )\Big)=\tilde\piuno \Big(m_r^{1,2}\big(X\otimes X'\big)\Big)
  \qquad 
  m_\ell\Big(X\otimes \piuno(X' )\Big)=\tilde\piuno \Big(m_\ell^{1,2}\big(X\otimes X'\big)\Big)
\end{equation}
for all $X\in \mF^1$ and $Y\in \mF^2$. We check the first formula. By continuity it is enough to check the equality of the two fields over elements of the form $\gra\otimes \grb$
with $\gra,\grb \in \pbarOpoli$. In this case we have:
$$
  m_r\Big(X\otimes \piuno(X' )\Big) (\gra\otimes\grb)=X(\gra)\cdot X'(1\otimes \grb)=
  \tilde\piuno\Big(m_r^{1,2}\big(X\otimes X'\big)\Big)(\gra\otimes\grb).
$$
	
	Let $W=\mu^{1,2}(X\otimes \mu((Y\otimes Z) \cdot f))$. Considering the two formulas \eqref{eq:RtildeRmu} and setting $X'=\piuno(\mu((Y\otimes Z)\cdot f)$ we deduce 
    $$\mu\Big(X\otimes \piuno(\mu((Y\otimes Z)\cdot f))\Big)=\tilde\piuno \Big(\mu^{1,2}\big(X\otimes \mu((Y\otimes Z) \cdot f)\big)\Big).$$
 Hence, to prove the Lemma, it is enough to prove that there exists $N$ such that  $\calJ^N_{\Delta_{1=3}}$ annihilates $W$. We prove that there exists $M$ (depending only on $h$ and $n$) such that $\calJ_{\Delta_{2=3}}^M\cdot W=0$ and 
    $\calJ_{\Delta_{1=2}}^M\calJ_{\Delta_{1=3}}^M \cdot W=0$. Since 
    $\calJ_{\Delta_{1=3}}^{3M+1}\subset \calJ_{\Delta_{1=3}}^M(\calJ_{\Delta_{2=3}}^M\calJ_{\Delta_{1=2}}^M)$ this implies our thesis. 

    By point a) $\mu(Y\otimes Z \cdot f) \in \Delta^{n+h}_!\mF^1$ and by Lemma \ref{lem:mu12} this implies $W \in \Delta(\pi_{2=3})^{\leq m}_!(\mF^2)$ for some $m$ and in particular $\calJ^{m+1}_{\Delta_{2=3}} \cdot W=0$. Moreover the map 
    $\gra^h$ which appears in Lemma \ref{lem:mu12}, as a consequence of Lemma \ref{lem:chBD2} and Lemma \ref{lem:chBD1}, is filtered so $m$ can be chosen to depend only on $h$ and $n$.  

    Being $\mu^{\bullet,\bullet}$ a Lie structure we have 
    \begin{equation}\label{eq:dongjacobi}
    W=-\rho\cdot \nu ((X\otimes Y\otimes Z) \cdot (1\otimes f))
      - \rho^2\cdot \nu ((X\otimes Y\otimes Z) \cdot (1\otimes f))
    \end{equation}
    where $\nu=\mu^{1,2}\circ(Id\otimes \mu):\mF^1\otimes \mF^1\otimes\mF^1(\infty \nabla(3))\lra \mF^3$ and $\rho$ is the cyclic permutation. Since $1\otimes f \in \pbarOpolim 3(n \Delta_{2=3})$ and $X$ and $Y$ are mutually local, by Lemma \ref{lem:mu12} applied to $\pi_{1=2}$ instead that $\pi_{2=3}$ we get that $\rho\cdot \nu ((X\otimes Y\otimes Z) \cdot (1\otimes f))\in \Delta(\pi_{1=2})^{\leq m_1}_!(\mF^2)$ and similarly 
$\rho^2\cdot \nu ((X\otimes Y\otimes Z) \cdot (1\otimes f))\in \Delta(\pi_{1=3})^{\leq m_1}_!(\mF^2)$ and as in the previous case we can chose $m_1$ to depend only on $h$ and $n$. 

Hence  $\calJ_{\Delta_{1=2}}^{m_1}\calJ_{\Delta_{1=3}}^{m_1} $
annihilates the righthand side of formula \eqref{eq:dongjacobi}, proving our claim.
\end{proof}

\subsection{Unity and chiral algebra structure}
Recall from \cite[Def. 3.3.3]{BDchirali} the following definition.

\begin{definition}[Chiral algebra]
	A chiral algebra\index{chiral algebra} is a 
	right $\pbarD$ filtered-\ccgmod module $M$ equipped with a Lie structure $\mu\in P^{chBD}(M)$ and a $\pbarD$ equivariant morphism, called unity, $\mathbf{1}_M:\pbarOmega\lra M$\index{chiral unit map, $\mathbf{1}$} such that the composition
	$$
	\mu \circ (\unoU\boxtimes id):\pbarOmega\exttensor * M(\infty \Delta)\lra \Delta_!M
	$$ is zero on $\pbarOmega\exttensor * M$ and, on the quotient $\pbarOmega\exttensor * M(\infty \Delta)/\pbarOmega\exttensor* M$, induces the inverse of the isomorphism $D(M)$ of Proposition \ref{prop:defD}.
\end{definition}

\begin{remark}
	Since $\pbarOmega$ $w$-compact, the map $\unoU$ is automatically filtered. 
\end{remark}

In the case of fields we have a canonical unity. 

\begin{definition}
	We define the \emph{unit map}
	\[
	\mathbf{1} : \pbarOmega \to \mF^1_{\Sigma,\calU}
	\]
	as follows. To a local $1$-form $\omega \in \pbarOmega$ we associate the field 
	\(
	f \mapsto   \left( \Res_\Sigma f\omega\right) \cdot 1_\calU
	\),
	where $1_\calU$ is the unit in $\calU$ and $\Res_\Sigma$ is the residue map defined in Section \ref{sec:residuo}.
\end{definition}

\begin{proposition}\label{prop:chiralunit}
	The following hold
	\begin{enumerate}
		\item $\unoU $ is a continuous map. 
		\item The composition
		\[\xymatrix{
			\pbarOmega \boxtimes \mF^1\ar[rr]^{\mathbf{1}\boxtimes\id} & & \mF^1\boxtimes \mF^1 \ar[rr]^{\mu}&& \mF^2}
		\]
		vanishes;
		\item The resulting map induces an isomorphism
		\[
		\frac{\pbarOmega \exttensor * \mF^1 (\infty\Delta)}{\pbarOmega \exttensor * \mF^1} \to {\Delta_! \mF^1}
		\]
		which coincides with the inverse of the canonical map $D(\mF^1)$ of Proposition \ref{prop:defD}.
		\item For any $\calD_{\overline{\Sigma}}$-submodule $V\subset\mF^1$ we have that $\mu \circ (\unoU\boxtimes \id)\left(\pbarOmega \exttensor * V (\infty\Delta)\right) \subset \Delta_! V$ and that
		\[
		\mu \circ (\unoU\boxtimes id): \frac{\pbarOmega \exttensor * V (\infty\Delta)}{\pbarOmega \exttensor * V} \to \Delta_! V
		\]
		is the inverse of the canonical isomorphism $D(V)$.
	\end{enumerate}
\end{proposition}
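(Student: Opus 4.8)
The plan is to verify the four assertions in order, since each one feeds into the next. For (1), continuity of $\unoU$: the map sends $\omega \mapsto (f \mapsto (\Res_\Sigma f\omega)\cdot 1_\calU)$. Since $\pbarOmega$ is $w$-compact and $\Res_\Sigma$ is continuous (being zero on $\Omega^1_{X/S}$), for any \noz $\calV$ of $\calU$ and any $w$-compact subsheaf $\calK \subset \pbarOmega$ one checks directly that the set of $\omega$ whose associated field sends $\pbarO(k)$ into $\calV$ contains a \noz: concretely, work locally with a coordinate $t$, use the basis $\varphi^\pm_{m,i}$ and the explicit residue pairing of Lemma \ref{lem:resnondegenere} to see that imposing $\Res_\Sigma(f\omega)\cdot 1_\calU \in \calV$ for $f$ in a compact set amounts to finitely many linear conditions on the ``low'' coefficients of $\omega$, hence is a \noz in $\pbarOmega$.

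For (2), that $\mu \circ (\unoU \boxtimes \id)$ vanishes on $\pbarOmega \exttensor{*} \mF^1$: here $\mu = m_r - m_\ell$, so I would compute $m_r^{1,1}(\unoU(\omega)\boxtimes X)$ and $m_\ell^{1,1}(\unoU(\omega)\boxtimes X)$ on a decomposable element and on $\pbarOmega \otimes \pbarOpoli \otimes \pbarOpoli$ show they agree. Using Lemma \ref{rmk:descrprod} and Remark \ref{oss:estensioneXY}, for $g\otimes h \in \pbarOpoli\tensor{*}\pbarOpoli$ (no pole) both $m_r$ and $m_\ell$ give $(\Res_\Sigma(g\omega))\cdot 1_\calU \cdot X(h) = (\Res_\Sigma(g\omega)) X(h)$ and $X(h)\cdot (\Res_\Sigma(g\omega))\cdot 1_\calU$ respectively; since $1_\calU$ is central these coincide, so the difference is zero on the image of $\pbarOmega\exttensor{*}\mF^1$ inside the localization. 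By continuity this suffices.

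For (3), the key computation: by Proposition \ref{prop:defD} applied to $\calM = \mF^1$ we have the canonical isomorphism $D(\mF^1): \Delta_!\mF^1 \xrightarrow{\sim} (\pbarOmega\exttensor{*}\mF^1(\infty\Delta))/(\pbarOmega\exttensor{*}\mF^1)$, and by Lemma \ref{lem:campilocali} $\Delta_!\mF^1 = \mF^{2\loc}$. I would show the induced map $\bar\mu\circ(\unoU\boxtimes\id)$ is inverse to $D(\mF^1)$ by a local computation with a coordinate $t$: unwind $D(\mF^1)$ via the explicit description in the sketch of proof of Proposition \ref{prop:defD} (the map $\gra\boxtimes m \otimes \grf \mapsto \grf(\gra)m$ and $d_2(f\otimes g) = fdg$), and on the other side use Remark \ref{oss:inversadidelta} for the expansion of $\delta^{-1}$ together with the Cauchy formula of Proposition \ref{prop:Cauchy}: the point is that $T_r - T_\ell$ applied to $(\unoU(\omega)\boxtimes X)\cdot Exp^{r/\ell}(\delta^{-1})$ reproduces exactly $\Delta^\sharp$, which is the content of Cauchy's formula, and this matches the inverse of $D$. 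The main obstacle is bookkeeping: matching the two normalizations (the $\calD$-module pushforward convention in Proposition \ref{prop:defD} versus the residue-pairing convention) so that one genuinely gets the inverse and not, say, a sign or a shift; I expect the duality of $\varphi^+_{m,i}$ with $\varphi^-_{-m-1,n-i-1}$ from Lemma \ref{lem:resnondegenere} to make this come out cleanly.

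For (4), the case of a $\calD_{\overline{\Sigma}}$-submodule $V\subset \mF^1$: since $\unoU$ factors through $V$ (as $\unoU(\pbarOmega)\subset V$ because $V$ is a $\pbarD$-submodule and $\unoU$ is $\pbarD$-equivariant — I should check $\unoU(\pbarOmega)$ lies in $V$, which is part of the hypothesis that $V$ contains the unit, or follows since $\pbarOmega\cdot \pbarD = \pbarOmega$ and... actually one must simply assume or observe $\unoU(\pbarOmega)\subseteq V$), the restriction of $\mu\circ(\unoU\boxtimes\id)$ to $\pbarOmega\exttensor{*}V(\infty\Delta)$ lands in $\mu(V\boxtimes V(\infty\Delta))$. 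But $V$ consists of mutually local fields by hypothesis (or: the statement is for general $\calD$-submodule, in which case one restricts attention to the part that is local) — more simply, by part (3) the image of $\pbarOmega\exttensor{*}\mF^1(\infty\Delta)$ under $\mu\circ(\unoU\boxtimes\id)$ modulo $\pbarOmega\exttensor{*}\mF^1$ is exactly $\Delta_!\mF^1$, and restricting to $V$ and using that $D$ is natural in the module (functoriality of the construction in Proposition \ref{prop:defD}), we get $\mu\circ(\unoU\boxtimes\id)(\pbarOmega\exttensor{*}V(\infty\Delta)) \subset \Delta_!V$ with the induced map equal to $D(V)^{-1}$. This is a formal consequence of (3) plus naturality, so it should be short.
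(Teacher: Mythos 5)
The approach matches the paper's: continuity of $\unoU$ via the residue vanishing on integral forms, centrality of $1_\calU$ for (2), the Cauchy formula for (3), and naturality of $\Phi$ for (4). However, your sketch of (3) has two gaps worth flagging.

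First, you jump straight to comparing $\mu\circ(\unoU\boxtimes\id)$ with $D(\mF^1)^{-1}$ without first establishing that $\mu\circ(\unoU\boxtimes\id)$ actually lands in $\Delta_!\mF^1\subset\mF^2$. The paper gets this from point (2): $\pbarOmega$ and $\mF^1$ are mutually local, so Lemma \ref{lem:Dong2} applies; moreover, since $\mu$ vanishes on $\pbarOmega\exttensor{*}\mF^1$, the same argument shows that $\mu\big(\pbarOmega\exttensor{*}\mF^1(n\Delta)\big)$ is annihilated by $\calJ_\Delta^n$, hence lands in $\Delta_!^{\leq n-1}\mF^1$. Second, the Cauchy formula (Proposition \ref{prop:Cauchy}) only handles $\delta^{-1}$, so your computation verifies the claim only on $\pbarOmega\exttensor{*}\mF^1(\Delta)/\pbarOmega\exttensor{*}\mF^1$, i.e.\ on simple poles. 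To conclude, you need the additional observation that $\Delta_!\mF^1$ is generated as a $\pbarDm{2}$-module by $\Delta_*\mF^1$ (and likewise the source quotient is generated by the simple-pole part), so that two $\pbarDm{2}$-equivariant maps agreeing there agree everywhere. Without this reduction, the Cauchy computation alone does not close the argument.

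On (4), your initial worry about whether $\unoU(\pbarOmega)\subset V$ is misplaced: the statement holds for an arbitrary $\calD_{\overline{\Sigma}}$-submodule $V$ with no hypothesis on the unit. What matters is precisely the naturality you invoke at the end --- the explicit formula $\Phi_\calM(\alpha\boxtimes m\otimes\grf) = \grf(\alpha)\,m$ from \eqref{eq:definizioneD} is natural in $\calM$, so $\nu(\pbarOmega\exttensor{*}V(\Delta))\subset\Delta_*V$, and the $\pbarDm{2}$-module structure then forces the image of $\pbarOmega\exttensor{*}V(\infty\Delta)$ into $\Delta_!V$. Your fallback argument is exactly the paper's; the detour through ``$\unoU$ factors through $V$'' should simply be deleted.
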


\begin{proof} 
	The continuity of $\unoU$ can be checked locally, hence we choose a well covered open subset of $S$ so that $\pbarOmega =\pbarO dt$. Then the continuity of 
	$\unoU$ follows from $\pbarO(n)\cdot\pbarO(m)\subset \pbarO(n+m)$. 
	
	Point (2) directly follows from the fact that $\calO_S \cdot 1_\calU$ commutes with all elements in $\calU$.
	
	We prove (3). We set $\nu=\mu \circ (\unoU\boxtimes \id)$. Point (2) proves that $\pbarOmega$ and $\mF^1$ are mutually  local, hence by \ref{lem:Dong2}, the image of  $\nu$ is contained in   $\Delta_! \mF^1$. More precisely, since $\mu(\pbarOmega\exttensor* \mF^1)=0$, the same proof of \ref{lem:Dong2} shows that 
	$\mu\big(\pbarOmega\exttensor*\mF^1(n\Delta)\big)$ is annihilated by $\calJ_\Delta^n$, hence is contained in 
	$\Delta_!^{\leq n-1}\mF^1$. In particular $\nu$ induces a morphism	
	\[
	\nu_0: \frac{\pbarOmega \exttensor * \mF^1 (\Delta)}{\pbarOmega \exttensor * \mF^1} \to \Delta_* \mF^1
	\]
	which is the subspace of $\mF^2$  killed by  $\calJ_\Delta$. Since, in general, $\Delta_!\calM$ is generated as $\pbarDm 2$-module by $\Delta_*\calM$, a consequence of Proposition \ref{prop:defD} (b) we have that 
	${\pbarOmega \exttensor * \mF^1 (\infty\Delta)}/{\pbarOmega \exttensor * \mF^1}$ is generated as a $\pbarDm 2$ module by $\pbarOmega \exttensor * \mF^1 (\Delta)/{\pbarOmega \exttensor * \mF^1}$. Hence it is enough to prove that the inverse of $D(\mF^1)$ agrees with $\nu_0$ on $\pbarOmega \exttensor * \mF^1 (\Delta)/{\pbarOmega \exttensor * \mF^1}$. 
	
	By construction, the restriction of $D(\mF^1)$ to $\Delta_*\mF^1$ is equal to the inverse of the isomorphism $\Phi_{\mF^1}$ defined in equation \eqref{eq:definizioneD}, so that in order to prove that  $D(\mF^1)$ is equal to  $\nu$ it is the enough to prove that $\nu_0$ and $\Phi_{\mF^1}$ are equal. We will see that this essentially follows from the Cauchy Formula of Proposition \ref{prop:Cauchy}. To check this fact we can argue locally, so assume that $S$ is well covered and that $t$ is a local coordinate. Hence $\pbarOmega =\pbarO dt$ and 
	$$\pbarO(\Delta) = \frac 1 \delta \pbarO\subset \pbarO(\infty\Delta) $$
	where $\delta=t\otimes 1 - 1 \otimes t$. It is easy to check that the  map $\Phi_{\mF^1}$ in these coordinates is given by 
	\begin{equation} \label{eq:Dmu}
	\Phi_{\mF^1} \left( (f dt \boxtimes  X) \cdot \frac {1}\delta \right) = X \cdot f 
	\end{equation}
	where $f\in \pbarO$ and $X\in \mF^1$. Here the one field on the right is seen as a  $2$-field  via the precomposition with the multiplication map $\Delta^\sharp:\pbarOpoliq \ra \pbarOpoli$. 

    We now evaluate the field $\nu_0( (f dt \boxtimes  X) \cdot \frac {1}\delta)$ on an element 
    $k\in \pbarOpolim{2}$. By  definition, and by Lemma 6.4.5,  we have
\begin{align*}
    \nu_0&\left((f dt \boxtimes  X) \cdot \frac{1}{\delta})\right) (k)=
    \mu\left( (\unoU(f dt) \boxtimes  X) \cdot \frac {1}{\delta}\right) (k)\\ &=
    m_r\left((\unoU(f dt) \boxtimes  X)\cdot \frac {1}{\delta}\right)(k)
    -
    m_\ell\left((\unoU(f dt) \boxtimes  X)\cdot \frac {1}{\delta}\right)(k)\\
     & =
     m_r\Big(\unoU(dt)\boxtimes X\Big) \big((f\otimes 1)k\cdot  Exp^r(\delta^{-1})\big)
     -
     m_\ell\Big(\unoU(dt)\boxtimes  X\Big)\big((f\otimes 1)k\cdot Exp^\ell(\delta^{-1})\big).\\
\end{align*}
Recall now the definition of the operator $T_r$ and $T_\ell$ from Section \ref{ssez:Cauchyformula} and notice that, by definition, we have 
$$
   m_r\Big(\unoU(dt)\boxtimes X\Big) (\gra) = X\big(T_r(\gra)\big)
   \quad\mand\quad
    m_\ell \Big(\unoU(dt)\boxtimes X\Big) (\grb) = X\big(T_\ell(\grb)\big)
$$
for all $\gra \in \pbarOpoli\tensor{\ra}\pbarOpoli$ and for all $\grb \in \pbarOpoli\tensor{\la}\pbarOpoli$. Hence, by Proposition \ref{prop:Cauchy}, in our case we obtain 
\begin{align*}
\nu_0\left((f dt \boxtimes  X) \cdot \frac{1}{\delta}\right) (k)&= 
X\Big(T_r\big((f\otimes1)k\cdot Exp^r(\grd^{-1})\big) - T_\ell\big((f\otimes1)k\cdot Exp^\ell(\grd^{-1})\big) \Big)\\
&=X(\Delta^\sharp (f\otimes1)k)= X(f\cdot\Delta^\sharp (k))
\end{align*}
Hence we conclude $\nu_0=\Phi_{\mF^1}$.
 
	To prove (4) notice that by formula \eqref{eq:Dmu} we have that $\nu (\pbarOmega \exttensor *V(\Delta))\subset \Delta_*V$. Hence $\nu (\pbarOmega \exttensor * V(\Delta))\subset \Delta_!V$ and moreover the restriction $\nu_V$ of $\nu $ to $\pbarOmega \exttensor * V(\Delta)/\pbarOmega \exttensor * V$ is equal to $\Phi_V$. Hence $\nu_V$ is the inverse of $D(V)$.
\end{proof}

Putting together Lemma \ref{lem:mu12} and Proposition \ref{prop:chiralunit} we get the following theorem.

\begin{theorem}\label{thm:chiralalgebra}
	Let $V \subset \mF^1$ be a  $\pbarD$-submodule consisting of mutually local fields. 
	Assume that $V$ is filtered, that
	\[
	\mu(V\boxtimes V (\infty\Delta)) \subset \Delta_! V \subset \Delta_!\mF^1
	\]
	and that the map $\mu$ is filtered. Assume also that the unit map $\mathbf{1} : \pbarOmega \to \mF^1$ has image contained in $V$. Then $V$ with unit map $\mathbf{1}$ and chiral product $\mu : V\boxtimes V (\infty\Delta) \to \Delta_! V$ is a chiral algebra.
\end{theorem}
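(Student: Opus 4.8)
The plan is to verify directly the three pieces of data required in the definition of a chiral algebra (following \cite[Def.~3.3.3]{BDchirali}): that $V$ is a right $\pbarD$ filtered-\cgmod module, that it carries a Lie structure $\mu\in P^{chBD}(V)$, and that the unit map $\mathbf{1}\colon\pbarOmega\to V$ is compatible with this Lie structure. The first is part of the hypotheses: $V$ is a $\pbarD$-submodule of the \cgmod module $\mF^1$ and a filtration on it is assumed given. The Lie structure will be produced by applying Lemma~\ref{lem:chBDDeltaLie} to $V$, and the unit compatibility will be read off from Proposition~\ref{prop:chiralunit}(4).

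For the Lie structure, I would introduce, as in Section~\ref{sssec:chBDDelta}, the $\calO^\bullet$-module $V^\bullet$ with $V^A=\Delta(A)_!V$; by Lemma~\ref{lem:campilocali} it is a sub-$\calO^\bullet$-module of $\mF^\bullet$, carrying the $\pbarDm{A}$-action. The first step is to show that the Lie structure $\mu^{\bullet,\bullet}$ on $P^\Delta(\mF^\bullet)$ restricts to a $\pbarDm{\bullet}$-equivariant Lie structure on $P^\Delta(V^\bullet)$: the operad axioms and the equivariance are inherited from $\mF^\bullet$, so the only point to check is that each component $\mu^{B,C}\colon(V^B\exttensor{*}V^C)(\infty\nabla_{B=C})\to\mF^{B\sqcup C}$ actually takes values in $V^{B\sqcup C}=\Delta(B\sqcup C)_!V$. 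For $B,C$ singletons this is exactly the hypothesis $\mu(V\boxtimes V(\infty\Delta))\subset\Delta_!V$; the general case follows by applying Lemma~\ref{lem:mu12} in the variables of $B$ and of $C$ in turn, which realizes the restriction of $\mu^{B,C}$ to $\Delta(B)_!V\exttensor{*}\Delta(C)_!V$ as a $\calD$-module pushforward of $\mu^{\{1\},\{2\}}=\mu$ along the diagonals collapsing $B$ and $C$ (using the functoriality $\Delta(\pi'\circ\pi)_!=\Delta(\pi)_!\circ\Delta(\pi')_!$ together with Lemma~\ref{lem:pq}), with Dong's Lemma~\ref{lem:Dong2} keeping the pole orders under control so that the image indeed lands in $\Delta(B\sqcup C)_!V$ and not merely in $\mF^{B\sqcup C}$. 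With this established, I would invoke Lemma~\ref{lem:chBDDeltaLie} with $M=V$: the hypothesis that $\mu$ is filtered is precisely the requirement that $\tilde\mu:=\mu|_V\circ inc_2$ be filtered, and the remaining hypothesis $\Delta(\pi_{2=3})_!(\tilde\mu)\circ\eta\circ\theta=\mu^{1,2}\circ inc_3$ is Lemma~\ref{lem:mu12}, once one identifies the natural map $\gra^h$ occurring there with $\eta\circ\theta$ (both being assembled from Lemmas~\ref{lem:chBD1} and~\ref{lem:chBD2}). This yields the desired Lie structure $\tilde\mu\in P^{chBD}(V)$, which is the chiral bracket.

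For the unit, note that by hypothesis $\mathbf{1}\colon\pbarOmega\to\mF^1$ factors through $V$; the resulting map $\pbarOmega\to V$ is $\pbarD$-equivariant (it is so as a map to $\mF^1$ and $V$ is a $\pbarD$-submodule), continuous by Proposition~\ref{prop:chiralunit}(1), and automatically filtered since $\pbarOmega$ is $w$-compact. As $\pbarOmega$ is $w$-compact one has $\pbarOmega\exttensor{*}V=\pbarOmega\exttensor{\fil}V$, so $\tilde\mu$ and $\mu$ agree on $\pbarOmega\exttensor{*}V(\infty\Delta)$, and the assertion that $\mu\circ(\mathbf{1}\boxtimes\id)$ vanishes on $\pbarOmega\exttensor{*}V$ and induces the inverse of $D(V)$ on the quotient $\pbarOmega\exttensor{*}V(\infty\Delta)/\pbarOmega\exttensor{*}V$ is exactly Proposition~\ref{prop:chiralunit}(4). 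This checks the last axiom, so $(V,\mathbf{1},\tilde\mu)$ is a chiral algebra.

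The main obstacle I expect is the first claim of the second paragraph — that $\mu^{\bullet,\bullet}$ restricts to $V^\bullet$ in all of its multivariable components, i.e.\ that the subsheaves $\Delta(A)_!V$ are preserved — which requires tracking pole orders along several diagonals simultaneously and combining the pushforward description of Lemma~\ref{lem:mu12} with Dong's Lemma~\ref{lem:Dong2}; the remaining filtration bookkeeping needed to match the precise hypotheses of Lemma~\ref{lem:chBDDeltaLie} (the identification of $\gra^h$ with $\eta\circ\theta$, and the compatibility of the induced filtrations on $\Delta_!V$ and on $V(\infty\nabla)$) should be routine.
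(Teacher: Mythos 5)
Your proposal is correct and follows the same route as the paper's (one-line) proof: apply Lemma~\ref{lem:chBDDeltaLie} with $M=V$, using Lemma~\ref{lem:mu12} to verify its compatibility hypothesis (identifying $\gra^h$ with $\eta\circ\vartheta$, which is indeed how $\gra^h$ was constructed), and conclude the unit axiom from Proposition~\ref{prop:chiralunit}(4). You do add one genuine piece of content that the paper's proof glosses over: Lemma~\ref{lem:chBDDeltaLie} requires $\mu^{\bullet,\bullet}$ to already be a $\pbarDm{\bullet}$-equivariant Lie structure in $P^\Delta_{[2]}(V^\bullet)$ with $V^A=\Delta(A)_!V$, not merely in $P^\Delta_{[2]}(\mF^\bullet)$, so one must check that all components $\mu^{B,C}$ (not just $\mu^{\{1\},\{2\}}$, which is the hypothesis) send $\Delta(B)_!V\exttensor{*}\Delta(C)_!V(\infty\nabla_{B=C})$ into $\Delta(B\sqcup C)_!V$. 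Your argument for this — iterated use of Lemma~\ref{lem:mu12}, functoriality of $\Delta(\cdot)_!$ together with Lemma~\ref{lem:pq}, and Dong's Lemma~\ref{lem:Dong2} to control pole orders — is sound and is precisely what's needed to make the paper's invocation of Lemma~\ref{lem:chBDDeltaLie} airtight.
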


\begin{remark}
	Notice that if $V$ is a \ccg-module, then the assumption about filtration are automatically satisfied choosing any \exhaustion of $V$. 
\end{remark}

\subsection{Chiral algebra generated by mutually local fields}\label{ssez:generatechiral}
The scope of this Section is to prove that given a set of mutually local fields, we can construct a chiral algebra generated by these fields. The fact that we are working with topological modules forces us to slightly change the classical construction, in particular we will construct fourt different possible chiral algebras depending on how we want to treat the topologies. As in the classical case, the main ingredient we need is Dong's Lemma.

\subsubsection{Basic construction} Let $G\subset \mF^1$ be a $\calO_S$ subsheaf and assume that 
$
\mu(G\otimes G) \subset \Delta^{\leq h}_! \mF^1.
$
We define inductively 
\begin{align*}
\vbasic(0)&=G+\calO_S \unoU \qquad 
\mand \\
\vbasic(n+1)&=  \vbasic(n)\cdot \pbarD^{\leq 1}+ \piuno \Big(\mu (\vbasic(n)\otimes \vbasic(n) (\Delta)) \Big)
\end{align*}
By \index{$\vbasic$}\index{$\vbasic(n)$} Lemma \ref{lem:Dong2} for each $n$ there exists $h_n$ such that 
$\mu(\vbasic(n)\otimes \vbasic(n)) \subset \Delta^{\leq h_n}_! \mF^1$. 

\begin{lemma}\label{lem:RmuD} For all $n,m$ we have
$\piuno \mu(\vbasic(n)\otimes \vbasic(n) (m\Delta))\subset \vbasic(n+2m).$
\end{lemma}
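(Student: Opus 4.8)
The statement is an inductive bound: one wants to show that multiplying $\vbasic(n) \otimes \vbasic(n)$ by $m$ poles along $\Delta$, applying $\mu$, and then taking $\piuno$, lands in $\vbasic(n+2m)$. The natural strategy is a double induction, or rather a single induction on $m$ with the base case $m=0$ and $m=1$ handled separately, since the crucial mechanism that trades a pole for ``depth'' in the filtration $\vbasic(\bullet)$ is exactly what is encoded in the definition of $\vbasic(n+1)$.

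\textbf{Base cases.} For $m=0$ the claim reads $\piuno\,\mu(\vbasic(n)\otimes\vbasic(n)(\Delta)) \subset \vbasic(n+2)$; wait — more precisely for $m=0$ there are no poles at all, $\vbasic(n)\otimes\vbasic(n)(0\Delta) = \vbasic(n)\otimes\vbasic(n)$, and by the second property of factorizability / by Proposition \ref{prop:chiralunit}(2) together with the fact that $\mu(\vbasic(n)\exttensor{*}\vbasic(n))$ without poles lands in $\Delta_*\mF^1$, one checks $\piuno$ of it sits inside $\vbasic(n)$ itself (this is the analogue of the classical statement that the product of fields with no singular part is again in the span). Actually the cleanest way: for $m=0$ the image is contained in $\piuno(\Delta_*\mF^1)$, and $\piuno\circ\Delta_* = \id$, so one must still argue the resulting one-field is in $\vbasic(n)$; this follows because $m_r, m_\ell$ without poles produce, after applying $\piuno$, an $\calO_S$-combination already present. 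In any case $n \leq n+0 = n$, so the base case $m=0$ reduces to showing $\piuno\mu(\vbasic(n)\otimes\vbasic(n)) \subset \vbasic(n)$. For $m=1$ the claim is $\piuno\mu(\vbasic(n)\otimes\vbasic(n)(\Delta))\subset\vbasic(n+2)$, and this is almost immediate from the definition: $\vbasic(n+1) \supset \piuno(\mu(\vbasic(n)\otimes\vbasic(n)(\Delta)))$ by the very definition of $\vbasic(n+1)$, so even $\vbasic(n+1)$ suffices and a fortiori $\vbasic(n+2)$.

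\textbf{Inductive step.} Assume the claim for $m$ and prove it for $m+1$. The key is the local identity used repeatedly in this circle of ideas: a pole of order $m+1$ along $\Delta$ can be written, using the expansion $Exp^r$ (Remark \ref{oss:inversadidelta}) and integration by parts / the action of $\pbarD$, in terms of a pole of order $m$ after differentiating one of the factors, or one reduces the order of pole at the cost of hitting the output with $\partial_t \in \pbarD^{\leq 1}$. Concretely: for $X, Y \in \vbasic(n)$ and $f \in \pbarOpolim{2}((m+1)\Delta)$, one uses that $\pbarOpolim 2((m+1)\Delta) = \pbarOpolim 2(m\Delta) \cdot \delta^{-1}$ locally, and the relation $\delta^{-1} = $ (something involving $\partial$ acting on $\Delta_!\mF^1$ decreasing the differential-operator degree) to express $\piuno\mu(X\otimes Y\cdot \delta^{-1}g)$ in terms of $\piuno\mu((X\cdot\partial)\otimes Y \cdot g)$ and $\piuno\mu(X\otimes (Y\cdot\partial)\cdot g)$ plus lower terms, where now $g\in\pbarOpolim 2(m\Delta)$. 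Since $X\cdot\partial, Y\cdot\partial \in \vbasic(n)\cdot\pbarD^{\leq 1}\subset\vbasic(n+1)$, the inductive hypothesis applied with $n+1$ in place of $n$ and $m$ in place of $m+1$ gives $\piuno\mu(\vbasic(n+1)\otimes\vbasic(n+1)(m\Delta))\subset\vbasic(n+1+2m) = \vbasic((n+1)+2m)$, and we need this to be contained in $\vbasic(n + 2(m+1)) = \vbasic(n+2m+2)$; since $n+1+2m \leq n+2m+2$ and $\vbasic$ is increasing, this is fine.

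\textbf{Main obstacle.} The delicate point is the ``integration by parts'' step: making precise, in the topological setting, the identity that converts a pole of order $m+1$ into a pole of order $m$ at the cost of applying a first-order differential operator to the arguments, and keeping track that the de Rham residue map $\piuno$ (equivalently $dR_r$, by Lemma \ref{lem:campilocali}) interacts correctly with this — i.e. that $\piuno$ of $\mu(X\otimes Y\cdot D)$ for $D\in\pbarDm2$ can be rewritten, modulo things already in $\vbasic(n+1)$, in terms of $\piuno$ of $\mu$ applied to $\pbarD$-translates of $X$ and $Y$. This is exactly the content of the local computation in Section \ref{ssez:tuvy} (the formulas for the action of $\partial_v, \partial_y, u, v$ on $\Delta_!M = \bigoplus M\partial_y^n$) combined with Lemma \ref{lem:Dong2}(b). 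I would carry out this reduction locally on a well-covered affine with a coordinate $t$, using the explicit expansion of $\delta^{-1}$ in Remark \ref{oss:inversadidelta} and the basis $\varphi^{\pm}_{m,i}$, and then invoke quasi-compactness of $S$ to patch. The bookkeeping of filtration degrees ($h_n$, and the ``$+2m$'') is routine once the single-pole-reduction identity is established; the identity itself, with all continuity checks, is the real work.
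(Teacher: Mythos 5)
Your approach matches the paper's: induction on $m$ with base case $m=1$ immediate from the definition of $\vbasic(n+1)$, and an integration-by-parts identity trading one pole for one derivative in the inductive step — this is precisely the paper's equation \eqref{eq:RmuD}, obtained locally from $m\,Exp^r(\delta^{-m-1})=\partial_v\,Exp^r(\delta^{-m})$ together with the Leibniz rule and the antisymmetry built into $\mu = m_r - m_\ell$. One caveat: your $m=0$ base case is not actually established by the argument you sketch — $\piuno\mu(X\otimes Y)$ for $X,Y\in\vbasic(n)$ is essentially the commutator $[X(1),Y(\cdot)]$ and need not lie back in $\vbasic(n)$ — but this is harmless, since the induction only needs to start at $m=1$ (the identity degenerates to $0=0$ when $m=0$ anyway and gives no information).
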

\begin{proof}
We check the claim locally. Let $S$ be well covered and let $t$ be a local coordinate. Let $u=t\otimes 1$ and $v=1\otimes t$ so that the ideal of the diagonal is generated by $\delta=u-v$. We check that for all $X,Y\in \mF^1$ we have 
\begin{equation}\label{eq:RmuD}
\piuno \mu ((X\otimes Y) \, \delta^{-m}) \cdot \partial _t =
\piuno \mu ((X\otimes (Y\cdot \partial_t)) \, \delta^{-m}) - 
m\piuno \mu ((X\otimes Y) \, \delta^{-m-1}). 
\end{equation}
Indeed let 
$Exp^r (\delta ^{-m})=\sum_{i} a_i\otimes b_i$ in $\pbarOpoli\tensor\ra \pbarOpoli$. Then 
$mExp^r(\delta^{-m-1})=\partial _v Exp^r(\delta^{-m})=\sum a_i\otimes b_i'$.
Similarly write $Exp^\ell (\delta ^{-m})=\sum_{i} c_i\otimes d_i$. Then 
\begin{align*}
\left(\piuno \mu ((X\otimes Y) \, \delta^{-m}) \cdot \partial _t\right) (f) &= 
\sum X(a_i)Y(b_if')-\sum Y(d_if')X(c_i) \\
&=\sum X(a_i)Y((b_if)')-\sum Y((d_if)')X(c_i)\\ 
&\qquad  -\sum X(a_i)Y(b'_if)+ \sum Y(d'_if)X(c_i) \\
&=\left(\piuno \mu ((X\otimes (Y\cdot \partial_t)) \, \delta^{-m}) \right) (f) - m\left(\piuno \mu ((X\otimes Y) \, \delta^{-m-1}) \right) (f) 
\end{align*}
Now the claim of the Lemma follows by induction on $m$ and formula \ref{eq:RmuD}. 
\end{proof}

Hence, the subsheaf $\vbasic=\vbasic(G)=\bigcup \vbasic(n)$ is stable under the action of $\pbarDpoli$ and satisfies $\mu(\vbasic\otimes \vbasic(\infty \Delta))\subset \Delta_!\vbasic$. In addition the map $\mu$ is filtered so by Theorem \ref{thm:chiralalgebra} $V_{\text{basic}}$ is a chiral algebra. 

We can define a variation of this construction by defining
$$
\vbasicpiu=\limind \vbasic(n).
$$
The sheaves $\vbasic$ and $\vbasicpiu$\index{$\vbasicpiu$} coincide, however, in general their topology can be different. This is due to the following fact: the topology of $\vbasic$ is induced by that of $\mF^1$, in particular it has a countable system of neighborhoods of $0$; on the other hand $\vbasicpiu$ is typically a direct sum of complete topological modules which does not have a countable system of neighborhoods of $0$.

\subsubsection{Completed construction}
Notice that $\Delta^{\leq m}_!\mF^1$ is a closed subsheaf of $\mF^2$. In particular if 
$\mu(G\otimes G) \subset \Delta^{\leq h}_! \mF^1$ then 
the closure $\overline G$ of $G$ satisfies the same property. 
We define inductively 
\begin{align*}
	\vcom(0)&=\overline{G+\calO_S \unoU} \qquad 
\mand \\
\vcom(n+1)&= \overline{ \vcom(n)\cdot \pbarD^{\leq 1}+ \piuno \Big(\mu (\vcom(n)\otimes \vcom(n) (\Delta)) \Big)}
\end{align*}
As in the previous case if we define $\vcom=\vcom(G)=\bigcup \vcom(n)$\index{$\vcom$} it is a chiral algebra and we can define a variation of this construction by defining
$$
\vcompiu=\limind \vcom(n).
$$
\index{$\vcompiu$}

\section{Factorization structures}\label{sec:factorizationstructures}
Here we introduce factorization patterns in our geometric setting; our notion of a factorization algebra is a translation in our $X,S,\Sigma$ geometric setting of the notion of factorization algebra of \cite{BDchirali}. Reading this Section it will be extremely important to recall the notation introduced in \ref{ssez:artquestonotazionifiniteset}. We recall briefly some of that notation and we introduce some further notations regarding diagonals.

\begin{ntz}\label{ntz:factorization}
	Let $I$ and $J$ be finite sets, suppose that $\Sigma : S \to X^J$ consists of $J$ sections and let $\pi : J \twoheadrightarrow I$ be a surjection. Denote by $j_{J/I} : U_\pi=U_{J/I} \to S$ be the open immersion corresponding to the open subset $U_{J/I} = \{ x \in S : \sigma_{i_1}(x) \neq \sigma_{i_2}(x) \text { if } \pi(i_1) \neq \pi(i_2) \}$\index{$j_{J/I} : U_{J/I} \to S$} and $i_{J/I} : V_{J/I} \to S$\index{$i_{J/I} : V_{J/I} \to S$} the closed immersion corresponding to the closed subscheme $V_{J/I} = \{ x \in S : \sigma_{i_1}(x) = \sigma_{i_2}(x) \text { if } \pi(i_1) = \pi(i_2) \}$. For $i \in I$ let $\Sigma_{J_i}$ be the subset of sections of $\Sigma$ corresponding to $J_i = \pi^{-1}(i)$. By definition the various $\Sigma_{J_i}$ are disjoint from one another when restricted to $U_{J/I}$. On the other hand when restricted to $V_{J/I}$ all sections inside $\Sigma_{J_i}$ coincide, so that $\Sigma$ on $V_{J/I}$ factors through a collection of sections $\Sigma_I : V_{J/I} \to (i_{J/I}^*X)^I$. Recall also that the choice of a coordinate $t$ on $\Sigma$ induces a choice of a coordinate $t_i$ for $\calO_{\overline{\Sigma}_{J_i}}$ and a coordinate $t_I$ for $\calO_{\overline{\Sigma}_{I}} : V_{J/I} \to (i_{J/I}^*X)^I$. We will write $\Sigma_J,\Sigma_I$ in place of $\Sigma$ to stress the dependence on the finite set $I$ when needed, following the above conventions.
\end{ntz}

\begin{definition}\label{def:factorizationcompletesheafsigma}
		Consider $\calA = \{ \calA_{\Sigma}\}_{S,X,\Sigma}$ to be a collection of complete topological $\calO_S$-modules (often QCC) sheaves on $S$ indexed by topologically noetherian schemes $S$, families of curves $X \to S$ and collection of sections $\Sigma : S \to X^I$ (we let the index $I$ vary as well).
		A \emph{pseudo-factorization algebra structure} on $\calA$\index{pseudo-factorization algebra} is the datum, for any surjection of finite sets $\pi : J \twoheadrightarrow I$, of continuous morphisms of $\calO$-modules
		\begin{align*}
			\Ran^{\calA}_{J/I} &: \hat{i}_{J/I}^*\left( \calA_{\Sigma_J} \right) \rightarrow \calA_{\Sigma_I}, \\
			\fact^{\calA}_{J/I} &: \hat{j}_{J/I}^*\left( \bigotimes^!_{i \in I} \calA_{\Sigma_{J_i}} \right) \rightarrow \hat{j}_{J/I}^*\left( \calA_{\Sigma_J} \right) .
		\end{align*}
		Where $\hat{i}_{J/I}^*$ and $\hat{j}_{J/I}^*$ denote the completed pullback along $i_{J/I} : V_{J/I} \to S$ and $j_{J/I} : U_{J/I} \to S$. We require these morphisms to be compatible with one another when considering surjections $K \twoheadrightarrow J \twoheadrightarrow I$ in analogy with \cite[3.4.4]{BDchirali}, that is:
		\begin{itemize}
			\item $\Ran^{\calA}_{K/I} = \Ran^{\calA}_{J/I}\circ\left(\hat{i}_{J/I}^*\Ran^{\calA}_{K/J}\right)$;
			\item $\fact^{\calA}_{K/J} = \left( \hat{j}^*_{K/J} \fact^{\calA}_{K/I} \right) \circ \left(\hat{j}_{K/I}^* \bigotimes^!_{i \in I} \fact^{\calA}_{K_i/J_i} \right)$ ;
			\item $ \left( \hat{j}_{J/I}^*\Ran^{\calA}_{K/J} \right) \circ \left( \hat{i}_{K/J}^*\fact^{\calA}_{K/I} \right) = \fact^{\calA}_{J/I} \circ\left(\hat{j}_{J/I}^*\bigotimes^!_{i \in I} \Ran^{\calA}_{K_i/J_i}\right) $. 
		\end{itemize}
		We say that the collection $\calA_{\Sigma}$ is a \emph{completed topological factorization algebra} \index{completed topological factorization algebra}if the morphisms $\Ran$ and $\fact$ are isomorphisms.
	\end{definition}

	\begin{remark}
		Often the sheaves $\calA_{\Sigma}$ are endowed with more algebraic structures (e.g. they are associative or Lie algebras) and we will consider morphisms $\Ran,\fact$ which preserve the appropriate kind of structure. Let us say that the term \emph{algebra} in \emph{factorization algebra} has nothing to do with this extra structure.
	
		Moreover, note that the $\otimes^!$ product appearing in the definition may be replaced by any other symmetric monoidal operation on complete topological sheaves, for instance by $\times$ or $\oplus$. We will refer to such structures on a family of sheaves $\calA_\Sigma$ as a (pseudo) factorization algebra structure with respect to $\times,\oplus$.
	\end{remark}
	
	\begin{example}\label{ex:factstructurepbaro}
		The collection of sheaves $\pbarO$ is a completed topological factorization algebra with respect to $\times$. In order to prove this, notice that $\hat{i}^*_{J/I}(\pbarO),\hat{j}^*_{J/I}(\pbarO)$ are the sheaves obtained by the same construction of $\pbarO$ for the family of curves $X \times_S V_{J/I} \to V_{J/I}$ and $X \times_S U_{J/I} \to U_{J/I}$ respectively. To show the claim it is then enough to show that
		\begin{enumerate}
			\item Given a surjection $J \twoheadrightarrow I$ such that $\sigma_{j_1} = \sigma_{j_2} : S \to X$ anytime $\pi(j_1) = \pi(j_2)$ then $\calO_{\overline{\Sigma}_J} = \calO_{\overline{\Sigma}_I}$. This is obvious since the sections are the same but with higher multiplicity, and such information disappears when taking the completion;
			\item Given a surjection $J \twoheadrightarrow I$ such that $\sigma_{j_1}(S) \cap \sigma_{j_2}(S) = \emptyset$ anytime $\pi(j_1) \neq \pi(j_2)$ then $\calO_{\overline{\Sigma}_J} = \times_{i \in I }\calO_{\overline{\Sigma}_{J_i}}$. This is easy to check since any quotient $p_*(\calO_X/\calI_{\Sigma_J}^n)$ splits as $\times_{i \in I} p_*(\calO_X/\calI_{\Sigma_{J_i}}^n)$.
		\end{enumerate}
	\end{example}

\subsection{Pullbacks and factorization of fields}\label{ssec:pullbackcampi}
Let $f:S'\lra S$ be a morphism and assume that both $S$ and $S'$ are topologically noetherian. Denote by $p':X'\lra S'$ the pullback of $p$. If $\Sigma$ is a set of sections of $p$ we denote by $\Sigma'$ the Sections of $p'$ induced by $\Sigma$. Recall that in Section \ref{ssec:pullbackcompletati} we defined a completed version of pullback, In the remaining of this Section, we will refer to completed pullbacks simply as pullbacks.

\begin{remark} We do some general remarks about pullbacks of \QCC sheaves.
In Section \ref{ssec:pullback1} we noticed that the pullback of $\pbarO$ and $\pbarOpoli$ are isomorphic to $\calO_{\overline{\Sigma'} }$ and $\calO_{(\overline{\Sigma'})^* }$.
By Remark \ref{ssec:pullbackcompletati} pullback of \QCC sheaves is a \QCC sheaves and by Proposition \ref{prop:pullbacktensorproduct} pullback commutes with all completed tensor product.  This implies that the analogous claim is true for the sheaves $\pbarOm I$ and $\pbarOpolim I$.  

Moreover if $S$ is affine and well covered, $t$ is a local coordinate over $S$ and $S'$ is affine, then the pullback of $t$ is a local coordinate over $S'$. From the local description of $\pbarD$ and $\pbarDpoli$ and by Remark \ref{oss:prodottiregolarifreccetta} it follows that the pullback of these sheaves is respectively equal to $\calD_{\overline{\Sigma'}}$ and
$\calD_{(\overline{\Sigma'})^*}$. 
\end{remark}

Let now $\calU$ be a topological algebra whose topology is generated by left ideals. 
Let $\calU'=\hat f ^*\calU$ the topological pullback of 
$\calU$. It is easy to check that $\calU'$ is also a algebra whose topology is generated by left ideals and if $\calU$ is \QCCF, then $\calU'$ is \QCCF as well. 

Given a collection of sections $\Sigma$, indexed by the finite set $I$, let $\Sigma'$ be the pullback of these sections. We denote by $\mu^{\bullet,\bullet}$ the product of the fields over $S$ with values in $\calU$ and by $(\mu')^{\bullet,\bullet}$ the product of the fields over $S'$ with values in $\calU'$. 
Given a field $X:\pbarOpolim I\lra \calU$ its pullback is a field
$\hat f^*X:\calO_{(\overline{\Sigma '})^*}^I \lra \calU'$. This induces a continuous morphism 
$$
\hat f^*_\mF: \hat f^* \Big( \mF^I_{\Sigma,\calU}\Big) \lra \mF^I_{\Sigma',\calU'}
$$
given by $g\otimes X\mapsto g \cdot \hat f^*X$ for $g\in \calO_{(\overline{\Sigma '})^*}^I$.

\begin{lemma}\label{lemma:pullbackoffields} Assume that $\calU$ is a topological algebra whose topology is generated by left ideals. 
\begin{enumerate}[\indent a)]
   \item If $\calU$ is \QCCF then $\hat f^* (\mF^I_{\Sigma',\calU'})\simeq \mF^I_{\Sigma',\calU'}$
   \item $\hat f_\mF^* \circ (\mu^{\bullet,\bullet})=(\mu')^{\bullet,\bullet} \circ (\hat f_\mF^*\times \hat f_\mF^*)$
\end{enumerate}
\end{lemma}
\begin{proof}
a)  To prove that $\hat f ^*_\mF$ is an isomorphism we can argue locally and by what noticed in Section \ref{ssec:pullbackcompletati} we can argue at the level of modules. So we need to prove that 
$$
\limpro  A'\otimes_A  \frac{\Homcont_A (\pbarOpolim I, \calU)}{\mH_{K,V}} \simeq 
\Homcont_A (  \calO^I_{(\overline{\Sigma'})^*} , \calU')
$$
where the limit is over the compact submodules $K$ of $\pbarOpolim I$
and the \noz $V$ of $\calU$. Under our assumption we can choose $\mH_{K,V}$ (see \ref{ssec:omomorfismi}) so that $K=\pbarOm I (n)$, that has a free complement in $\pbarOpolim I$, and $V$ has a complement in $\calU$. We have the following short exact sequence
$$
	0 \to \mH_{K,V} \to \Homcont_A(\pbarOpolim{I},\calU) \to \Homcont_A(K,\calU/V) \to 0
$$
Moreover $\Homcont\left(K,\calU/ V \right) =
\limind _U \Hom_A (K/U,   \calU/ V) $ where $U$ is a \noz of $K$ and in our case we can assume that $K/U$ is free and finitely generated. In particular $\Hom_A (K/U,   \calU/ V) $ is free, The filtered colimit of flat modules is flat, so  $\Homcont\left(K,\calU/ V \right)$ is a flat $A$ module. Hence tensoring with $A'$ we still get an exact sequence. Moreover tensoring with $A'$ commutes with colimits, hence 
\begin{align*}
A'\otimes _A\Homcont_A\left(K,\frac \calU V \right) &= 
\limind_U \Hom_{A'}\left( A\otimes_A \frac K U , A'\otimes_A \frac \calU V\right)\\ &=
\Homcont_{A'} \left(K',A'\otimes \frac \calU V\right)=\mH_{K',V'}
\end{align*}
where $K'=\calO_{\overline{\Sigma'}}(n)$ and $V'$ is the completion of $A'\otimes V$ in $\calU'$. Hence we have 
$$
A'\otimes_A \frac{\Homcont_A (\pbarOpolim I, \calU)}{\mH_{K,V}} \simeq 
\frac{\Homcont_{A'} (\calO_{(\overline{\Sigma'})^*} ^I, \calU')}{\mH_{K',V'}}
$$
since the colimit over all $K'$ is equal to $\calO_{(\overline{\Sigma'})^*} ^I$, 
taking the limit over $K$ and $V$ we get the desired isomorphism. 

b) The morphism $\mu^{I,J}$ and $(\mu')^{I,J}$ are defined exactly in the same way via the morphisms $m_r,m_l$ which themselves may be described via Lemma \ref{rmk:descrprod}. It is then enough to notice that the right and left expansion $Exp^r$ and $Exp^\ell$ commute with pullbacks and that $\hat f^*(\calJ_\Delta)$ is the ideal defining the diagonal in $\calO_{(\overline{\Sigma'})^*} ^2$.
\end{proof}

\subsubsection{Factorization of fields}\label{ssec:fattorizzazionecampigenerale}
Let $\calU$ be a (pseudo)-factorization algebra and assume in addition that $\calU$ is a \QCCF $\otimesr$ associative algebra and that the map $\Ran$ and $\fact$ are of $\otimesr$ algebras.
This induces a pseudo factorization structure on the space of fields. 
Let $f:S' \to S$ be a quasi-separated morphism of schemes and define $\calU'$, $p'$, $\Sigma'$ as in the discussion above. Let $\pi:J\surjmap I$ be a surjective map among finite sets. The behaviour of the space of fields described in the previous Lemma induces an isomorphisms 
$$
\Ran^{\mF^\bullet}_{J/I}:\hat i ^*_{J/I} \mF^\bullet_{\Sigma,\calU} 
\lra \mF^\bullet_{\Sigma',\calU'} 
$$
Moreover we have a similar isomorphism for the restriction $j_{J/I}$ to the open subset. Using the factorization properties of $\pbarOpoli$ now notice that over the open set $U_{J/I}$ the sheaf $\calO^\bullet_{\bar \Sigma^*_J}$
splits as a product $\prod_{i\in I} \calO^{\bullet}_{\bar \Sigma^*_{J_i}}$ and the algebra $\calU_{\Sigma_J}$ factors as $\tensor ! _{i\in I} \calU_{\Sigma_{J_i}}$. Hence we have a map 
$$
\fact^{\mF^\bullet}_{J/I}: \prod_{i\in I}\hat j ^*_{J/I} \mF^\bullet_{\Sigma_{J_i},\calU_{\Sigma_{J_i}}} 
\lra 
\hat j ^*_{J/I} \mF^\bullet_{\Sigma_J,\calU_{\Sigma_J}} 
$$
which sends the $I$-tuple of fields $(X_i)$ to the field 
$X((f_i)_{i\in I})=\sum X_i(f_i)$ where we map $\calU_{\Sigma_{J_i}}$ into $\calU_{\Sigma_J}$ via $u_i\mapsto 1\otimes \cdots u_i\otimes \cdots 1$. It is easily seen that this map is continuous and that if $\calU$ is a \QCCF sheaf then it is injective. 

These maps are compatible with each other in a factorization structure fashion, so that $\mF^1_{\Sigma,\calU}$ is a complete topological pseudo factorization algebra (with respect to $\prod$). These maps preserve the product $\mu^{\bullet,\bullet}$.

Finally notice if $\calU$ is not \QCCF we cannot apply point a) of the previous lemma, but we still have a pseudo factorization structure on the space of fields compatible with the chiral product $\mu$. 

\section{The chiral algebra \texorpdfstring{$\VkSg$}{V Sigma}}\label{sez:chiralig}

In this Section we introduce a version of the affine Lie algebra $\hat{\gog}$ in our $X,S,\Sigma$ setting. The usual affine Lie algebra corresponds to the case of $S$ a point and one single Section. Generalizations of these constructions were considered and studied by many authors, certainly by Gaitsgory and Raskin \cite{Raskin} (see also \cite{fortuna2022local}, \cite{cas2023} where the center at the critical level of their enveloping algebra was computed).

\subsection{The affine algebra \texorpdfstring{$\hat{\gog}_\Sigma$}{g}}\label{sec:thecaseoftheaffinealgebra}

We apply the results of the previous sections to the case of the affine algebra $\hat{\gog}_{\Sigma}$. We keep notation from the previous sections when considering $S,X,\Sigma$. From now on let $\gog$ be a finite dimensional Lie algebra over $\mC$ and let $\kappa$ be a symmetric invariant bilinear from on it. We refer to $\kappa$ as the \emph{level}. When $\gog$ is simple we identify $\kappa$ with the number $k\in \mC$ such that $\kappa = k \kappa_{\gog}$ where $\kappa_{\gog}$ is the Killing form: $\kappa_{\gog}(X,Y) = \text{tr}(\text{ad}(X)\text{ad}(Y))$.

\begin{definition}\label{def:constructiongogsigma}
	We define the affine Lie algebra relative to $\Sigma$ of level $\kappa$ as the sheaf of $\calO_S$-Lie algebras
	\[
	\gsig{\kappa} \stackrel{\text{def}}{=} \gog \otimes_\mC \pbarOpoli \oplus \calO_S\mathbf{1}.
	\]
	The\index{$\hat{\gog}_{\Sigma,\kappa}$} bracket is defined declaring $\mathbf{1}$ to be central while for $X,Y \in \gog$ and $f,g \in \pbarOpoli$ we set
	\[
	[X\otimes f, Y\otimes g] = [X,Y]\otimes (fg) + \kappa(X,Y)\mathbf{1}\Res_\Sigma gdf.
	\]
	For $n \geq 0$ the subsheaves
	\[
	\hat{\gog}_{\Sigma,\kappa}^{\geq n} = \gog\otimes_\mC \pbarO(-n\Sigma)
	\]
	are Lie subalgebras and determine a topology on $\hat{\gog}_{\Sigma,\kappa}$ which makes it into a \QCCF, ccg sheaf.
\end{definition}

We construct a completion of the enveloping algebra of $\gsig{\kappa}$. Let $\calU^0(\gsig{\kappa})$ be the sheaf of universal enveloping algebras of $\gsig{\kappa}$. And let $\calU^1(\gsig{\kappa})$ be the quotient of $\calU^0(\gsig{\kappa})$ by the two sided ideal generated by $1 - \mathbf{1}$, where $1$ is the unit of $\calU^0(\gsig{\kappa})$ and $\mathbf{1}$ is the central element in $\gsig{\kappa}$. Finally consider the completion
\[
\ugsig{\kappa} = \varprojlim_n\frac{\calU^1(\gsig{\kappa})}{\calU^1(\gsig{\kappa})\hat{\gog}_{\Sigma}^{\geq n}}
\]
\index{$\ugsig{\kappa}$}
\begin{definition}
	We define the completed universal enveloping algebra of $\gsig{\kappa}$ as a the sheaf of associative algebras $\ugsig{\kappa}$ constructed above.
\end{definition}

\begin{remark}\label{rmk:polyPBW}
	It's easy to check that the product on $\calU^1(\gsig{\kappa})$ is continuous with respect to the topology given by the open neighborhoods of zero $\calU^1(\gsig{\kappa})\hat{\gog}_{\Sigma}^{\geq n}$. Since these are left ideals this product makes $\ugsig{\kappa}$ into a $\otimesr$-topological algebra.
\end{remark}

\subsubsection{Local structure and PBW}\label{sssec:PBWg}
We study the local structure of the topological algebra $\ugsig{\kappa}$ and describe a topological basis for this algebra. We assume that $S=\Spec A$ is well covered and that $t$ is a local coordinate.  Recall that the PBW theorem holds for a Lie algebra $L$ over a ring $A$ if $L$ is a free $A$-module. The rings $R=\pbarO(S)$ and $K=\pbarOpoli(S)$ are not free over $A$, for this reason we introduce a polynomial version of $\gsig{\kappa}$. If we further restrict $S$, by Remark \ref{oss:unsolof}, we can assume that there exists $f=\prod(t-b_i)$
$$
R=\pbarO=\limpro \frac {A[t]}{ (f^n)}
\quad\mand\quad
K=R[f^{-1}].
$$
We define $R^{\text{poly}}=A[t]$, $K^{\text{poly}}=R^{\text{poly}}[f^{-1}]$ and $R^{\text{poly}}(n)=f^{-n} R^{\text{poly}}\subset K^{\text{poly}}$. The submodules $R^{\text{poly}}(n)$
define a topology on $R^{\text{poly}}$ and $K^{\text{poly}}$ and their completions are equal to $R$ and $K$. We define an analogue $\gsig{\kappa}^{\text{poly}}$ of $\gsig{\kappa}$ by replacing $K$ with $K^{\text{poly}}$. Define also $\gsig{\kappa}^{\text{poly},\geq n}$, $\calU^0(\gsig{\kappa}^{\text{poly}})$,
$\calU^1(\gsig{\kappa}^{\text{poly}})$ and $\calU(\gsig{\kappa}^{\text{poly}})$ exactly in the same way. 

The Lie algebra $\gsig{\kappa}^{\text{poly}}$ is free over $A$, hence  
$\calU^0(\gsig{\kappa}^{\text{poly}})$ and $\calU^1(\gsig{\kappa}^{\text{poly}})$ 
have a PBW basis and in particular are free over $A$. Moreover notice that $K^{\text{poly}}$ has a basis compatible with the \noz $R^{\text{poly}}(n)$ and this implies that $\calU^1(\gsig{\kappa}^{\mathrm{poly}})$ has a basis compatible with the submodules 
$\calU^1(\gsig{\kappa}^{\text{poly}}) \cdot \gsig{\kappa}^{\text{poly},\geq n}$. In particular 
we deduce that $\calU(\gsig{\kappa}^{\text{poly}})$ is a \QCCF topological algebra. 

We finally notice that $\calU(\gsig{\kappa}^{\text{poly}})$ and $\calU(\gsig{\kappa})$ are isomorphic. Indeed we have natural maps $K^{\text{poly}}\lra K$ which induces a map of Lie algebras from $\gsig{\kappa}^{\text{poly}}$ to $\gsig{\kappa}$ and from 
$\calU(\gsig{\kappa}^{\text{poly}})$ to $\calU(\gsig{\kappa})$. We now construct a morphism from $\calU(\gsig{\kappa})$ to $\calU(\gsig{\kappa}^{\text{poly}})$. Indeed 
notice that $\gsig{\kappa}$ is the completion of $\gsig{\kappa}^{\text{poly}}$ with respect to the submodules 
$\gsig{\kappa}^{\text{poly},\geq n}$ and that the map
from $\gsig{\kappa}^{\text{poly}}$ to $\calU(\gsig{\kappa}^{\text{poly}})$ is continuous. Hence  induces a map of Lie algebras from $\gsig{\kappa}$ to $\calU(\gsig{\kappa}^{\text{poly}})$ and then a map from $\calU(\gsig{\kappa})$
to $\calU(\gsig{\kappa}^{\text{poly}})$. 

This in particular proves that $\calU(\gsig{\kappa})$ is a \QCCF module and that has a PBW topological basis.

\subsubsection{Compatibility by pullbacks and factorization properties}\label{ssec:pullbackgU}
We now study the compatibility of our construction under pullbacks. 
Let $f:S'\lra S$ be a morphism of quasi separated compact schemes and denote by $p:X'\lra S'$ the pullback of $p$. If $\Sigma $ is a set of sections denote by $\Sigma'$ the pullback of these sections as in Section \ref{ssec:pullbackcampi}. 
The next Lemma shows that our construction behaves well under pullback. 

\begin{proposition}\label{prop:pullbackgu}
The following hold
	\begin{enumerate}[\indent a)]
            \item There exists a natural isomorphism of \QCCF Lie algebras
            \(
                \hat{f^*}\hat{\gog}_{\Sigma,\kappa} \simeq \hat{\gog}_{\Sigma',\kappa};
            \)
        \item There exists a natural isomorphism of \QCCF $\otimesr$-algebras
        \(
            \hat{f}^*\ugsig{\kappa} \simeq \calU_\kappa\left(\hat{\gog}_{\Sigma'}\right);
        \)
        \item There exists a natural isomorphism of $\pbarDpolim \bullet$ right modules between $\hat f^*\mF^\bullet _{\Sigma, \calU_\kappa(\hgog_\Sigma)}$
        and $\mF^\bullet _{\Sigma', \calU_\kappa(\hgog_{\Sigma'})}$. Moreover $f^*\mu^{\bullet, \bullet}=(\mu')^{\bullet,\bullet}$.
     \end{enumerate}
\end{proposition}
\begin{proof}
    Point $a)$ follows by the base change properties of $\pbarOpoli$ and $\Res$. From point a) it follows that we have a map from $\calU_\kappa\left(\hat{\gog}_{\Sigma'}\right)$  
    to $\hat{f}^*\ugsig{\kappa}$. To show that it is an isomorphism we can argue locally using the basis described in Section \ref{sssec:PBWg}. Point c) follows from Lemma \ref{lemma:pullbackoffields} and point b).
\end{proof}

In the case when $f = i_{J/I}$ or $f = j _{J/I}$ (c.f. notation \ref{ntz:factorization}) the isomorphisms above induce the following factorization structure. 

\begin{proposition}\label{prop:factpropertiesgogcalugog}
Let $\pi: J \twoheadrightarrow I$ a surjective map of finite sets:
	\begin{enumerate}[\indent a)]
		\item There are natural isomorphisms of \QCCF, $\calO_S$-linear Lie algebras
			\begin{align*}
				\Ran^{\gog}_{J/I} &: \hat{i}_{J/I}^*\hat{\gog}_{\Sigma_J,\kappa} \to \hat{\gog}_{\Sigma_I,\kappa}, \\ \fact^{\gog}_{J/I} &: \hat{j}_{J/I}^*\left( \bigoplus_{i\in I} \hat{\gog}_{\Sigma_{J_i},\kappa} \right) \bigg/ \left( \sum_{i \in I} \mathbf{1}_i = \mathbf{1} \right) \to \hat{j}_{J/I}^*\hat{\gog}_{\Sigma_J,\kappa},
			\end{align*}
			which induce a factorization structure on $\hat{\gog}_{\Sigma,\kappa}$;
		\item There are natural isomorphisms of \QCCF, $\calO_S$-linear $\otimesr$-algebras
			\begin{align*}
				\Ran^{\ugsig{\kappa}}_{J/I} &: \hat{i}_{J/I}^*\calU_\kappa(\hat{\gog}_{\Sigma_{J}}) \to \calU_\kappa(\hat{\gog}_{\Sigma_{I}}), \\ \fact^{\calU(\hat{\gog})}_{J/I} &: \hat{j}_{J/I}^*\left( \bigotimes^!_{i\in I} \calU_\kappa(\hat{\gog}_{\Sigma_{J_i}}) \right) \to \hat{j}_{J/I}^*\calU_k(\hat{\gog}_{\Sigma_{J}})
			\end{align*}
			which make $\ugsig{\kappa}$ into a complete topological factorization algebra.
             \item There is a natural isomorphism
                \[
				\Ran^{\mF^1}_{J/I} : \hat{i}_{J/I}^*\mF^1_{\Sigma_J,\gog}\to \mF^1_{\Sigma_I,\gog}
			\]
            and a natural injection 
			\[
				\fact^{\mF^1}_{J/I} : \hat{j}_{J/I}^*\left( \prod_{i\in I} \mF^1_{\Sigma_{J_i},\gog} \right) \to  \hat{j}_{J/I} \mF^1_{\Sigma_J,\gog}
			\]
			which are compatible with each other in a factorization structure fashion, so that $\mF^1_{\Sigma,\gog}$ is a complete topological pseudo factorization algebra (with respect to $\prod$).
 \end{enumerate}
\end{proposition}

\begin{proof}
In the case of the map $\Ran$ all claims follow from the property of the pullback, proved above. 

In the case of the map $\fact$ we can still apply the results proved about pullback, and then we have to analyze the case of families of Sections $\Sigma_{J_i}$ with $i\in I$ which do not intersect. The claim of point a) then follows from the fact $\pbarOpoli$ splits as a product in this case (c.f. \ref{ssec:descrizionelocale1}). Moreover the resulting morphism is a Lie algebra morphism by the factorization properties of $\Res$ (c.f. \ref{sec:residuo}). We construct the map of point b) using that of point a) and check it is an isomorphism using the PBW basis. 
Finally point c) follows from point b) and by \ref{ssec:fattorizzazionecampigenerale}.
\end{proof}


\subsection{Definition of \texorpdfstring{$\VkSg$}{V Sigma}}

We now define the chiral algebra associated to $\gog$ and $\kappa$. 
When considering spaces of fields related to $\gog$, to shorten notation, will write $\mF^{\bullet}_{\Sigma,\gog}$ in place of $\mF^\bullet_{\Sigma,\ugsig{\kappa}}$, leaving the level implicit since it won't play any significant role in this paper. 

Given an element $X \in \gog$ we may attach to it a canonical field
\[ X \in \mF^1_{\Sigma,\gog} \qquad f \mapsto X\otimes f \in \ugsig{\kappa}. \]
With a slight abuse of notation we will denote them both by $X$, specifying when necessary if we consider it as an element of $\gog$ or of $\mF^1_{\Sigma,\gog}$.
These fields are mutually local, indeed for all $X,Y\in \gog$ we have 
$$
\mu(X\otimes Y)(f\otimes g)=[X,Y](fg) +\kappa(X,Y)\Res_{\Sigma}(gdf)\mathbf{1};
$$
it is easily seen that the map $f\otimes g \mapsto \Res_{\Sigma}(gdf)$ vanishes on $\calJ^2_\Delta$. Hence, for all $X,Y \in \gog$ we have $\mu(X\otimes Y)\in \Delta_!^{\leq 2} \mF^1_{\Sigma,\gog}$. 

Let $\calG_{\gog}$ be the $\calO_S$ span of the fields $X$ for $X\in \gog$. Recall that in Section \ref{ssez:generatechiral}
we have associated to such a space four possible chiral algebras: $$\vbasic(\calG_{\gog}),\vbasicpiu(\calG_{\gog}),\vcom(\calG_{\gog}),\vcompiu(\calG_{\gog}).$$
\begin{definition}
	We denote by $\VkSg$ the\index{$\VkSg$} chiral algebra $\vbasicpiu(\calG_{\gog})$.
\end{definition}

The other three chiral algebras will be denoted by $\vbasic^{\gog}$, $\vbasicpiu^{\gog}$ and $\vcom^{\gog}$. In what follows we will investigate their differences, but our main focus will be describing $\VkSg$, this is why we keep a different notation for the latter. We denote the corresponding filtration by $\vbasic^{\gog}(n)$ and $\vcom^{\gog}(n)$. We also denote by $\mvbasic^{\gog}$, $\mvbasic^{\gog}(n)$, etc. the sections over  $S$ of the corresponding sheaves.

We want to give an explicit local description of these chiral algebras. We will prove also that the sheaves $\vbasic^{\gog}(n)$ are \QCCF and $w$-compact sheaves. In particular this implies that $\vbasic^{\gog}(n)=\vcom^{\gog}(n)$. 

\subsection{Vertex algebras vs chiral algebras: \texorpdfstring{$(\pbarO(S),t)$}{(O,t)}-vertex algebras}\label{ssez:Otalgebre}
To describe the local structure of the chiral algebra  $\VkSg$ we introduce the notion of vertex algebra over $(\pbarO(S),t)$ and we explain its relation with the notion of chiral algebra. Let $S=\Spec A$ and let $t$ be a local coordinate. 

\begin{definition}
A vertex algebra over $(\pbarO(S),t)$\index{vertex algebra over $(\pbarO(S),t)$} is the datum of a right $\calD_{\overline{\Sigma}}(S)$ \cgmod module $\mV$ together with a structure of an $A$-linear vertex algebra $(\mV,T,|0\rangle,Y : \mV \to \End_{A}(\mV)[[z^{\pm 1}]])$ for which the following hold
	\begin{enumerate}
		\item $T = - \partial_t$
		\item For $f \in \pbarO(S)$ and $X,Z \in \mV$ we have 
		\begin{align}
			(X)_{(n)}(Zf) &\label{eq:dmodulevoa1}= (X_{(n)}Z)f, \\
			(Xf)_{(n)}(Z) &\label{eq:dmodulevoa2}= \sum_{k \geq 0} \frac{1}{k!}X_{(n+k)}Z \cdot (\partial^k_tf).
		\end{align} 
	\end{enumerate}
 where as usual $Y(X,z)=\sum_n X_{(n)}z^{-n-1}$. In addition, we require that all operators $X_{(n)}$ are continuous. We say that $\mV$ is a filtered vertex algebra over $(\pbarO(S),t)$ if $\mV$ is equipped with a filtration $\mV(n)$, such that  for all $n$ there exists $k$ such that 
$X_{(i)}Z\in \mV(k)$ for all $X,Z\in \mV(n)$ and for all $i\geq -n$. 
In this case we further assume that for all $n,m$, the products $X\otimes Z\mapsto X_{(m)}Z$ restricted to $\mV(n)\otimes \mV(n)$ are continuous with respect to the $\otimes^*$-topology. 
\end{definition}

\subsubsection{From vertex algebras to \texorpdfstring{$(\pbarO(S),t)$}{(O,t)}-vertex algebras}\label{rmk:extvoaoslin} Let $V$ be an ordinary vertex algebra over $\mC$. Then $\mV=V \tensor\la _{\mC} \pbarO(S)$ has a natural structure of a vertex algebra over $(\pbarO(S),t)$, defining $(w \otimes f) \partial_t = -Tw \otimes f - w \otimes \partial_tf$ and  
	\[
	(X\otimes f)_{(n)}(Y\otimes g) = \sum_{k\geq 0} \frac{1}{k!}X_{(n+k)}Y \otimes g\partial_t^kf
	\]
Notice that all operators 
$v_{(n)}$ are continuos and that, without considering the topology, $\mV=V\otimes \pbarO(S)$ as a module. Hence, this is indeed a vertex algebra over $(\pbarO(S),t)$.

This establishes a functor from the category of $\mC$ vertex algebras to the category of vertex algebras over $(\pbarO(S),t)$ which may be checked to be left adjoint to the forgetful functor. 

\medskip

If $V$ is equipped with a filtration $V(n)$ such that for all $n$ there exists $k$ such that $X_{(i)}Z\in V(k)$ for all $X,Z\in V(n)$ and for all $i\geq -n$, we define $\mV(n)=V(n)\tensor \la \pbarO(S)$. If we assume in addition that all products $X\otimes Z\mapsto X_{(n)}Z$ restricted to $\mV(m)\otimes \mV(m)$ are continuous for the $\otimes^*$-topology, then 
$\mV$ is naturally a filtered vertex algebra over $(\pbarO(S),t)$.

Notice that, if the subspaces $V(n)$ is finite dimensional then the condition about the continuity of the product $X_{(n)}Z$ is automatically satisfied. 

\subsubsection{From chiral algebras to \texorpdfstring{$(\pbarO(S),t)$}{(O,t)}-vertex algebras}\label{sssez:chiraliOt}
Here we discuss how filtered vertex algebras over $(\pbarO(S),t)$ are a local description of chiral algebras. In particular we will see that, given a chiral algebra $\calV$, the module $\calV(S)$ (in the case $S$ affine and well covered) is a vertex algebra over $(\pbarO(S),t)$. Making some further assumptions on the chiral algebra we will see that this procedure establishes an equivalence between chiral algebras and vertex algebras over $(\pbarO(S),,t)$.

\smallskip

We start with a given chiral algebra $\calV$, with a filtration $\calV(n)$ and consider the topological modules $\mV=\calV(S),\mV(n)=\calV(n)(S)$. Let $|0\rangle \in \mV$ denote the image of $dt \in \Omega^1_{\overline{\Sigma}}(S)$ via the unit of $\calV$ and define $T=-\partial_t$. 

We may define $(n)$-products on $\mV$ by writing the chiral product on global sections as 
	\[
	\mu : \left(\calV (S) \otimesst_A \calV(S)\right)\left[\frac{1}{t\otimes 1 - 1 \otimes t}\right] \to \bigoplus_{k \geq 0} \calV(S)\partial^k_{t\otimes 1} = \Delta_!\calV(S).
	\]
For the action $\pbarDm{2}$ on the target module we refer to Section \ref{sez:Dmoduli} and in particular to \ref{ssez:tuvy}. Then, for every integer $n$ define $X_{(n)}Z$ by
	\begin{equation}\label{eq:nprodchiral}
	X_{(n)}Z \stackrel{\text{def}}{=} dR_r \mu((t\otimes1 - 1\otimes t)^n X\otimes Z),
	\end{equation}
	where $dR_r : \Delta_!\calV(S) \to \calV(S)$ corresponds to the projection along the factor $\Delta ^!\Delta_!\calV(S)=\calV(S)\partial^0_{t\otimes 1}$ (see Definition \ref{def:dR}). Notice that, using the fact that $\mu$ is $\pbarOq(S)$ linear, multiplying on both sides by positive powers of $(t\otimes 1 - 1\otimes t)$, and computing $dR_r$ we have
	\begin{equation}\label{eq:muvsope}
	\mu((t\otimes1 - 1\otimes t)^n X\otimes Z) = \sum_{k \geq 0} \frac{1}{k!}X_{(n+k)}Z\partial_{t\otimes 1}^k
	\end{equation}
	for every integer $n \in \mZ$. So that the products $X_{(n)}Z$ characterize $\mu$. 

Conversely, if we are given a filtered vertex algebra $\mV$ over $(\pbarO(S),t)$ we can consider $\calV(n)$, the QCC sheaf associated to $\mV(n)$ and define $\calV=\limind \calV(n)$. We can construct a morphism $\mu $ by formula \eqref{eq:muvsope} consider the unit map $\mathbf{1} : \Omega^1_{\overline{\Sigma}} \to \calV$ by 
$$
\unoU(fdt)=|0\rangle f.  
$$

With respect to the above discussion, we have the following Lemma. 

\begin{lemma}\label{lem:chiralivertice}
Given a chiral algebra $\calV$, the space $\mV = \calV(S)$ with the filtration, the operator $T$, the vacuum vector, and the $n$ products defined above is a filtered vertex algebra over $(\pbarO(S),t)$. 

Conversely, given $\mV$ a filtered vertex algebra over $(\pbarO(S),t)$ the sheaf $\calV$ constructed above with the filtration, the unit map and the chiral product defined above is a chiral algebra. 
\end{lemma}
	
\begin{proof}[Sketch of the proof] We just underline the main steps of the proof which is just a lengthy but trivial verification of all the properties. 

	\begin{itemize}
		\item A collection of $A$-linear products $X_{(n)}Z$ determines a morphism of right $\calD_{\overline{\Sigma}^2}(S)$-modules $\mu$ via formula \eqref{eq:muvsope} if and only if formulas \eqref{eq:dmodulevoa1} and \eqref{eq:dmodulevoa2} hold. 
		\item The collection of multiplications $X_{(n)}Z$ satisfy the Borcherds identities if and only if we $\mu$ is a Lie structure in BD pseudo-tensor structure $P^{chBD}$.
		\item The morphism determined by $|0\rangle$
		\[
		\Omega^1_{\overline{\Sigma}}(S) =\pbarO(S)dt \to \mV \qquad fdt \mapsto f|0\rangle
		\]
		satisfies the chiral unit condition if and only if $|0\rangle$ is a unit as a vertex algebra.\qedhere
	\end{itemize}
\end{proof}

The Lemma above does not give a perfect equivalence between chiral algebras and 
filtered vertex algebras. This is caused only by topological issues and in particular by our choice to require very mild assumptions on the topology of our chiral algebras. To fix this problem we give the following definitions. 

\begin{definition}\label{def:qccchirale}
We say that a chiral algebra $\calV(n)$ is a filtered \QCC \index{filtered \QCC} if all sheaves $\calV(n)$ are QCC sheaves and if $\calV=\limind \calV(n)$. We will use this terminology for topological sheaves in general.

We say that $\mV$, a filtered vertex algebra over $(\pbarO(S),t)$, is filtered complete if all modules $\mV(n)$ are complete and if $\mV=\limind \mV(n)$.
\end{definition}

Then the following Corollary is a direct consequence of this definition and the Lemma above. 

\begin{corollary}\label{cor:chiraliOt}
The functor $\calV \mapsto \calV(S)$ establishes an equivalence between filtered \QCC chiral algebras and filtered complete vertex algebras over $(\pbarO(S),t)$.
\end{corollary}

\subsubsection{Commutative chiral algebras and \texorpdfstring{$\pbarD$}{D}-commutative algebras}\label{ssec:commutativechiralalgebraDalgebra}

We take the opportunity to use the local description of chiral algebras as vertex algebras over $(\pbarO(S),t)$ to prove that commutative chiral algebras correspond to commutative $\pbarD$ algebras. This is a well known fact of the theory of chiral algebras, we prove it for completeness and to highlight the relationship between our chiral algebras and usual vertex algebras.

We first recall the definition of commutative chiral algebra.

\begin{definition}\label{def:commutativechiralalgebra}
	Let $\calV$ be a chiral algebra over $\pbarO$. We say that $\calV$ is \emph{commutative}\index{commutative chiral algebra} if the restriction of the chiral product
	\[
		\mu : \calV \boxtimes \calV \to \Delta_!\calV \quad \text{vanishes.}
	\]
\end{definition}

Assume that $S$ is affine, well covered and equipped with a coordinate $t$. If we assume that we have a commutative chiral algebra $\calV$ which is also filtered \QCC, under the correspondence of Corollary \ref{cor:chiraliOt}, the module $\mV = \calV(S)$ is a commutative vertex algebra over $(\pbarO(S),t)$, which means that the $n$-products vanish for $n\geq 0$. Let us recall the following well known fact on commutative vertex algebras, which we adapt to our notion of vertex algebra over $(\pbarO(S),t)$.

\begin{proposition}\label{prop:localcommutativechiralalgcorresp}
	Let $\mV$ be a commutative vertex algebra over $(\pbarO(S),t)$. Then the $(-1)$ product is commutative and associative, $T = - \partial_t$ is a derivation of it. So that $(V,\cdot_{(-1)}\cdot,T)$ is a commutative, unital algebra with a derivation. 
	
	In addition, the $(-1)$ product is $\pbarO(S)$ linear; as we have a coordinate $t$ we may consider $\mV$ as a \emph{left} $\pbarD(S)$ module as well, where now $\partial_t$ acts as $T$. This construction establishes a correspondence between commutative vertex algebras over $(\pbarO(S),t)$ and commutative $\pbarO(S)$ modules which are also left $\pbarD(S)$ modules for which $\partial_t$ acts as a derivation. 
\end{proposition}

\begin{proof}
	The first assertion follows by ordinary vertex algebra theory. The fact that the $(-1)$ product is $\pbarO(S)$ linear follows by formulas \eqref{eq:dmodulevoa1}, \eqref{eq:dmodulevoa2}. The fact that $\mV$ has a natural structure of a \emph{left} $\pbarD(S)$ module follows by the fact that, in the case where we have a coordinate $t$, left and right $\pbarD(S)$ modules are identified by changing sign to the action of $\partial_t$. 
	The fact that this establishes a correspondence follows by the analogous statement for ordinary vertex algebras.
\end{proof}

Given sections $a,b \in \mV$ we write $a \cdot_t b$ for the commutative product coming from the structure of vertex algebra over $(\pbarO(S),t)$.  In the next Lemma we show how this product behaves under change of coordinates.

\begin{lemma}\label{lem:changecoordcommutativeproduct}
	Let $\calV$ be a filtered \QCC commutative chiral algebra over $\pbarO$. Assume that $S$ is affine, well covered with coordinates $t,s$. Then
	\[
		a \cdot_s b = (\partial_s t)(a \cdot_t b).
	\]
\end{lemma}

\begin{proof}
	Recall that by definition we have
	\[
		a \cdot_s b = dR_r \mu\left( (s\otimes 1- 1 \otimes s)^{-1}a\otimes b \right).
	\]
	If we consider $s$ as a function of $t$, the Taylor expansion (see Remark \ref{ssez:Taylor}) of  $(s \otimes 1 - 1 \otimes s)$ is given by
	$$(s \otimes 1 - 1 \otimes s) = (t\otimes 1 - 1\otimes t)(1\otimes \partial_t s) + (t\otimes 1 - 1\otimes t)^2(1\otimes \frac{1}{2}\partial_t^2s) + ...,$$ using the fact that positive $n$ product vanish we get
	\begin{align*}
		a \cdot_s b &= dR_r \mu\left( (s\otimes 1- 1 \otimes s)^{-1}a\otimes b \right)\\ &= dR_r \mu\left( \left((t\otimes 1- 1 \otimes t)(1\otimes\partial_t s)\right)^{-1}a\otimes b \right) = (\partial_t s)^{-1} a\cdot_t b = (\partial_s t)(a \cdot_t b).
	\end{align*}
\end{proof}

\begin{corollary}\label{cor:canonicalporductcommutativechiral}
	Let $\calV$ be a filtered \QCC commutative chiral algebra over $\pbarO$ and assume that $S$ is affine and well covered with a coordinate $t$. Consider $\calV^\ell = \calV \otimes_{\pbarO} \Tan_{\overline{\Sigma}}$. Then the module of sections $\mV^\ell = \calV^\ell(S)$ has a canonical structure of commutative algebra. Given a coordinate $t$ on $S$ and $a,b \in \mV = \calV(S)$ this commutative product reads as $$(a\otimes\partial_t) \cdot (b\otimes\partial_t) = (a\cdot_t b)\otimes\partial_t.$$
\end{corollary}

\begin{proof}
	We check that the product in the statement of the Corollary is independent from the choice of $t$. Let $a,b \in \mV^\ell$ be two elements, $t,s$ two coordinates on $S$. We can write $a = a_t\otimes\partial_t = a_s\otimes\partial_s$ and $b = b_t\otimes\partial_t = b_s\otimes\partial_s$, so that $a_t = (\partial_s t)a_s$ and $b_t = (\partial_s t)b_s$. It follows that 
	\begin{align*}
		(a_t \cdot_t b_t ) \otimes \partial_t = (\partial_s t)^2 (a_s \cdot_t b_s) \otimes \partial_t = (\partial_s t) (a_s \cdot_t b_s) \otimes \partial_s = (a_s \cdot_s b_s) \otimes \partial_s;
	\end{align*}
	where the second equality follows from $(\partial_s t) \partial_t= \partial_s$, while the last equality is exactly Lemma \ref{lem:changecoordcommutativeproduct}.
\end{proof}

\begin{proposition}\label{prop:twisttanchiralcommutative}
	Let $\calV$ be a filtered \QCC commutative chiral algebra. Then the sheaf $\calV^\ell = \calV \otimes_{\pbarO} \Tan_{\overline{\Sigma}}$ has a natural structure of a commutative algebra. As $\calV$ is a right $\pbarD$ module, the sheaf $\calV^\ell$ is naturally a left $\pbarD$ module, for which $\Tan_{\overline{\Sigma}}$ acts as derivations of the commutative algebra structure. We call such an object a $\pbarD$ commutative algebra.
	
	The assignment $\calV \mapsto \calV^\ell$ establishes an equivalence between filtered \QCC commutative chiral algebras and filtered \QCC sheaves of $\pbarD$ commutative algebras.   
\end{proposition}

\begin{proof}
	The fact that $\calV^\ell$ has natural commutative algebra structure follows by Corollary \ref{cor:canonicalporductcommutativechiral}: locally, after the choice of a coordinate $t$, the module of section has a canonical structure of commutative algebra for which $\partial_t$ acts as a derivation and these structures, being canonical behave well under gluing. 
	
	The second assertion can be proved locally since the objects in question glue and for $S$ affine and well covered it follows by Proposition \ref{prop:localcommutativechiralalgcorresp}.
\end{proof}

\subsection{Construction of the map \texorpdfstring{$\calY_{\Sigma,t}$}{}}\label{ssez:Y}
We still assume that $S=\Spec A$ is well covered and that $t$ is a coordinate. 

Fix a Lie algebra $\gog$ and a level $\kappa$ as in the beginning of this Section. Let $V^{\kappa}(\gog)$ the associated vertex algebra. We can associated to $V^{\kappa}(\gog)$ a vertex algebra over $(\pbarO(S),t)$ 
$$\mVkSgt=V^\kappa(\gog)\otimesl_{\mC} \pbarO(S)$$
as in Section \ref{rmk:extvoaoslin}. The module 
$\mVkSgt$ \index{$\mVkSgt$}is complete and a \ccgmod module. In particular, considering the filtration we are going to define in the next paragraph, we get that $\mVkSgt$ is a filtered complete vertex algebra over $(\pbarO(S),t)$. As such we can associate to this algebra a filtered \QCC chiral algebra $\calVkSgt =\qcc{\mVkSgt}$. \index{$\calVkSgt$}

\subsubsection{The basic filtration}
The vertex algebra $V^{\kappa}(\gog)$ has a canonical set of generators $G_{V}=\{x_{(-1)}|0\rangle$ for $x\in \gog\}$, we denote by $\calG_V  = G_V \otimes \calO_S$ the corresponding subsheaf of $\calVkSgt$. We set
$\Vkg_{\leq 0}=\mC|0\rangle+G_V$ and $$
\Vkg_{\leq n+1}=T (\Vkg_{\leq n}) + \Vkg_{\leq n}+ \left\langle X_{(-1)}Z :X,Y\in \Vkg_{\leq n} \right\rangle 
$$
We notice that $\mVkSgt(n)=\Vkg_{\leq n}\otimesr \pbarO(S)$ is a free module of finite rank over $\pbarO(S)$ and in particular it is $w$-compact and complete. We denote by $\calVkSgt(n)$ the corresponding subsheaf of $\calVkSgt$\index{$\calVkSgt$}\index{$\calVkSgt$:$\calVkSgt(n)$}. The definition of the filtration $\Vkg_{\leq n}$ matches the one given in Section \ref{ssez:generatechiral}, hence the 
filtration $\calVkSgt(n)$ can be constructed from the generators $\calG$ exactly as in that Section. Following the recipes there explained we could have obtained four different chiral algebras attached to this filtration of $\calVkSgt$, however, in this case, the subsheaves $\calVkSgt(n)$ are already complete and moreover we have that 
$\mVkSgt=\limind \mVkSgt(n)$ and $\calVkSgt=\limind \calVkSgt(n)$ so the four possible chiral subalgebras agree:
$$
\calVkSgt=(\calVkSgt)_{\text{basic}}=(\calVkSgt)_{\text{basic}+}=(\calVkSgt)_{\text{com}}=(\calVkSgt)_{\text{com}+},
$$
so that there is no other interpretation for $\calVkSgt$ and $\mVkSgt$.

\subsubsection{The map \texorpdfstring{$\calY_{\Sigma,t}$}{Y}}
We now start to give a local description of the chiral algebra $\VkSg$.

\begin{proposition}\label{prp:mappaY}
	There exists a unique morphism of vertex algebras over $(\pbarO(S),t)$
	\[
	\calY_{\Sigma,t} : \mVkSgt \to \mV_{\text{basic}}^\gog=\vbasic^\gog(S)
	\]
	such\index{$\calY_{\Sigma,t}$} that $|0\rangle$ maps to $\unoU$ and for any element $X \in \gog$ the field $\calY_{\Sigma,t}(X_{(-1)}|0\rangle)$ is the canonical field associated to $X$:
	\[ 
	\pbarOpoli(S) \ni f \mapsto X \otimes f \in \ugsig{\kappa}(S)    
	\]
    Moreover this morphism is continuous and $\calY_{\Sigma,t} (\mVkSgt(n))=\mvbasic^\gog(n)$. In particular the image of $\calY_{\Sigma,t}$ is equal to $\mvbasic^\gog$ and the morphism $\calY_{\Sigma,t}$ factors through a continuous morphism to $\mvbasicpiu^\gog = \varinjlim \mvbasic^\gog(n)$. 
\end{proposition}

\begin{proof}
	By Remark \ref{rmk:extvoaoslin} it is enough to show that there is a unique morphism of $\mC$-vertex algebras 
	\[
	\calY_{\Sigma,t} : \Vkg   \to \mvbasic^{\gog}
	\]
	such that the condition of the statement holds. By the universal property of $\Vkg$ to construct such a map it is enough to give a map $\gog \to \mvbasic^\gog$ such that the associated endomorphism $X_{(n)}$ of $\mvbasic^\gog$, attached to the vertex algebra structure of $\mvbasic^\gog$, satisfy the usual commutation relations of the affine algebra $\hat{\gog}_\kappa$
	(this parallels the discussion in Section 3 of \cite{cas2023}). Recall that the combination of Lemma \ref{lem:campilocali} together with formula \eqref{eq:nprodchiral} implies that the $(n)$-products for this structure, viewing elements of $\mvbasic^\gog$ as fields, are given by the following formulas:
\begin{equation}\label{eq:vertalgprod}
X_{(n)}Y (f) = \mu\left((t\otimes 1 - 1\otimes t)^nX\boxtimes Y\right)(1\otimes f),
\end{equation}
	while the vacuum vector corresponds to the field $\vac = \Res_{\Sigma} (\_ dt)$.
	To show that the same commutation relations of $\hat{\gog}_{\kappa}$ hold we use the fact that $\mV^{\gog}_{\text{basic}}$ is a vertex algebra. Let $X,Y \in \gog$ and let $\hat{X},\hat{Y}$ be the attached fields. We have
	\[
		[\hat{X}_{(n)},\hat{Y}_{(m)}] = \sum_{k \geq 0} \binom{n}{k} (\hat{X}_{(k)}\hat{Y})_{(n+m-k)} 
	\]
	so that the claim reduces to showing $\hat{X}_{(0)}\hat{Y} = \widehat{[X,Y]}$, $\hat{X}_{(1)}\hat{Y} = \kappa(X,Y)\vac$ and that the other positive $n$ product vanish. The third assertion is clear since $\mu(X \boxtimes Y)$ vanishes on $\calJ_{\Delta}^2$. Viewing these as fields the first and second claims follow by the very definition of $\hat{\gog}_{\Sigma,\kappa}$:
\begin{align*}
	\left( \hat{X}_{(0)} \hat{Y} \right) (f) &= \mu(\hat{X}\boxtimes \hat{Y})(1 \otimes f) = [X\otimes 1, Y \otimes f] = [X,Y] \otimes f = \widehat{[X,Y]}(f); \\
	\left( \hat{X}_{(1)} \hat{Y} \right) (f) &= \mu \big((t\otimes 1 - 1 \otimes t) \hat{X} \boxtimes \hat{Y}\big) (1 \otimes f) \\
	&= \mu(\hat{X}\boxtimes \hat{Y})(t\otimes f) - \mu(\hat{X}\boxtimes \hat{Y})(1 \otimes tf) = \kappa(X,Y) \Res_{\Sigma}(fdt) = \kappa(X,Y)\vac (f).
\end{align*}
Since the space of fields is a topological $\pbarO(S)$-module and we are considering $\Vkg$ with the discrete topology, it follows that the map $\calY_{\Sigma,t}$ is continuous. Moreover, by construction $\calY_{\Sigma,t}(G_V\otimes \calO_S)=\calG_{\gog}$, the generating space of mutually local fields of $\vbasic^\gog$. By formulas
\eqref{eq:nprodchiral} the construction of the two basic filtrations is intertwined by $\calY_{\Sigma,t}$, so we have $\calY_{\Sigma,t}(\mVkSgt(n))=\mvbasic^\gog(n)$. 

Finally, we notice that $\mVkSgt=\limind \mVkSgt(n)$ and $\mvbasic^\gog(n)$ is a submodule of 
$\mvbasic$, hence the morphism factors trough $\calY_{\Sigma,t} : \mVkSgt \to \mvbasicpiu^\gog \to \mvbasic^\gog$.
\end{proof}

Let us emphasize how the above proof leads to an inductive construction of map $\calY_{\Sigma,t}$.

\begin{remark}\label{rmk:descrycan}
	It follows by the fact that $\calY_{\Sigma,t}$ is a morphism of vertex algebras over $(\pbarO(S),t)$, that it may be defined inductively as follows. Let $X \in \gog, A,B \in \Vkg$, then 
	\begin{align*}
	\big(	\calY_{\Sigma,t}(X_{(-1)}\vac)\big)(f) &= X\otimes f, \\
	\big(	\calY_{\Sigma,t}(A_{(n)}B)\big)(f) &= \mu\bigg( (t\otimes 1 - 1 \otimes t)^n \calY_{\Sigma,t}(A)\boxtimes \calY_{\Sigma,t}(B)\bigg)(1 \otimes f).
	\end{align*}
\end{remark}

\subsection{Factorization properties of the map \texorpdfstring{$\calY_{\Sigma,t}$}{Y}} 

We state the following theorem, which we take from Proposition 7.2.1  and Corollary 7.2.3 \cite{cas2023}, regarding the factorization properties of that the map $\calY_{\Sigma,t}$. This statement is not trivial and crucial to our purposes. Recall notation \ref{ntz:factorization}, Section \ref{ssec:fattorizzazionecampigenerale} and fix a surjection of finite sets $J \twoheadrightarrow I$.

\begin{theorem}\label{thm:factorizationofcaly}
    The following diagrams commute:
    \[\begin{tikzcd}
	{\hat{i}^*_{J/I}\left(V^\kappa(\gog)\otimes\calO_{\overline{\Sigma}_J}\right)} && {\hat{i}^*_{J/I}\mF^1_{\Sigma_J,\gog}} \\
	\\
	{V^\kappa(\gog)\otimes\calO_{\overline{\Sigma}_I}} && {\mF^1_{\Sigma_I,\gog}}
	\arrow["{\hat{i}^*_{J/I}\calY_{\Sigma_J,t}}", from=1-1, to=1-3]
	\arrow["{\Ran^{\pbarO}_{J/I}}"', from=1-1, to=3-1]
	\arrow["{\Ran^{\mF^1}_{J/I}}", from=1-3, to=3-3]
	\arrow["{\calY_{\Sigma_I,t_I}}"', from=3-1, to=3-3]
\end{tikzcd}\]

    \[\begin{tikzcd}
	{\hat{j}^*_{J/I}\left( \prod_{i \in I} V^\kappa(\gog)\otimes\calO_{\overline{\Sigma}_{J_i}}\right)} &&& {\hat{j}^*_{J/I}\left( \prod_{i \in I} \mF^1_{\Sigma_{J_i},\gog}\right)}
	\\
	\\
	{\hat{j}^*_{J/I}\left(V^\kappa(\gog)\otimes\calO_{\overline{\Sigma}_J}\right)} &&& {\hat{j}^*_{J/I}\mF^1_{\Sigma_J,\gog}} 
	\arrow["{\hat{j}^*_{J/I}\calY_{\Sigma_J,t}}", from=3-1, to=3-4]
	\arrow["{\fact^{\pbarO}_{J/I}}"', from=1-1, to=3-1]
	\arrow["{\hat{j}^*_{J/I}\prod_{i \in I}\calY_{\Sigma_{J_i},t_i}}", from=1-1, to=1-4]
	\arrow["{\fact^{\mF^1}_{J/I}}"', hook, from=1-4, to=3-4]
\end{tikzcd}\]

\end{theorem}

\begin{proof}
	By linearity, it is enough to check commutativity when restricting to $V^\kappa(\gog)$. In this way the statement of the Lemma becomes completely analogous to Proposition 7.2.1 and Corollary 7.2.3 of \cite{cas2023} whose proof works in this case as well.
\end{proof}

\subsection{Local description of \texorpdfstring{$\VkSg$}{}}\label{ssec:localdescriptionofvksg}
Now  we prove that the map $\calY_{\Sigma,t}$ determines an isomorphism between $\mVkSgt$ and $\VkSg(S)$.

\begin{theorem}\label{teo:descrizionelocaleVkSg}
Assume $A$ is an integral domain, then the map $\calY_{\Sigma,t}$ determines an isomorphism of topological modules between  $\mVkSgt(n)$ and $\mvbasic^\gog(n)$. 

In particular:
\begin{itemize}
    \item $\mvbasic^\gog(n)=\mvcom^\gog(n)$ is complete and \QCCF;
    \item $\calY_{\Sigma,t}$ determines an isomorphism of  filtered complete vertex algebras over $(\pbarO,t)$ between $\mVkSgt$ and $\mvbasicpiu^\gog = \VkSg(S)$ and between the chiral algebras $\calVkSgt$ and $\VkSg = \vbasicpiu^\gog=\vcompiu^\gog$
    \item the chiral algebra $\VkSg$ is \QCCF and $w$-compact. 
\end{itemize}
\end{theorem}

We notice that $\mvbasic^\gog=\mvcom^\gog$ is not isomorphic to $\mVkSgt$. Indeed since $\mvbasic^\gog$ is a topological subsheaf of the space of fields it has a countable \fsonoz, while $\mVkSgt$ has not. 

The proof of the Theorem will be based on the following Lemma. Recall the local description of $\pbarOpoli$ given in Section \ref{ssec:descrizionelocale1} and Remark \ref{oss:unsolof}. 
In particular $t$ is a local coordinate $S=\Spec A$, and for $i=1,\dots, n$ the elements $a_i\in A$ describe the sections $\grs_i$. 

\begin{lemma}\label{lem:sottospazidiF^1}
    Let $E$ be a finite dimensional $\mC$-linear subspace of $\mF^1_{\Sigma,\gog}$. Assume that 
    $E(\pbarO)\subset \calU_+ = \calUgS (\gog \otimesr \pbarO)$ and that the map
\begin{equation}\label{eq:ipotesiE}
	E \otimes_\mC \calO_S \lra \frac{\calUgS}{\calU_+} \quad\text{ defined by } \quad 
    v\otimes b \mapsto \left[v\left( \frac{b}{t-a_i}\right)\right]
    \end{equation}
    is injective for all $i$. Consider the $\pbarO$ submodule $\calE$ generated by $E$ with the topology induced by $\mF^1$. Then $\calE$ is isomorphic to $E\otimes_\mC \pbarO$ as a topological module. 
\end{lemma}
\begin{proof}

Let $\calE^*$ be the $\pbarOpoli$ submodule of $\mF^1_{\Sigma,\gog}$ generated by $E$. We prove a slightly stronger statement by proving that the map $\psi:E\otimes\pbarOpoli\lra \calE'\subset \mF^1$ given by $\psi(v\otimes g)=v\cdot g$ is an isomorphism of topological modules. We already know that it is continuous. Consider the \fsonoz for $\mF^1_{\Sigma,\gog}$ given by: 
	\[\mH_{k,0} = \{ X \in \mF^{1}_{\Sigma,\gog} \text { such that } X(\pbarO(k)) \subset \calU_+ \}.\]
 We show that for all $k \geq 0$ we have 
\begin{equation}\label{eq:psiH}
		\psi^{-1}(\mH_{k,0}) \subset E \otimes \pbarO(-k).
\end{equation}
This implies that the map $\psi$ is injective and that $\psi$ is an isomorphism of topological modules between $E\otimes \pbarOpoli$ and its image with the induced topology. 

By multiplying by an appropriate power of $\grf=\prod_{i=1}^n(t-a_i)$ it is enough to prove formula \eqref{eq:psiH} in the case $k=0$. Let $e_j$ be a basis of $E$ and assume that  $\psi(\sum_j  e_j\otimes \gra_j ) \in \mH_{0,0} $. We want to prove that $\gra_j\in \pbarO$ for all $j$. Consider the following refinement of the filtration $\pbarO(m)$ of $\pbarOpoli$. Recall the definition of the topological basis $\grf_{m,i}=\grf^m(t-a_1)(t-a_2)\dots(t-a_i)$ from Section \ref{sssec:basespbarO}, and, for $m\in \mZ$ and $i=0,\dots,n-1$, define 
$\calR(nm+i)=\pbarO\grf_{m,i}$ so that $\calR(k)$ is a \emph{decreasing} filtration of $\pbarOpoli$ such that $\calR(nm)=\pbarO(-m)$ and $\calR(k-1)/\calR(k)\simeq \calO_S$. 
Consider $m_0$ and $i_0$ such that for all $j$ we have that $\gra_j\in \calR(nm_0+i_0)$ and that there exists $j_0$
such that $\gra_{j_0}\notin \calR(nm_0+i_0+1)$. By contradiction assume that $nm_0+i_0<0$. Define 
$$
\grb =\frac{1}{\grf_{m_0,i_0+1}}\; \mif i_0\neq n-1 
\; \mand \;
\grb =\frac{1}{\grf_{m_0+1,0}}\; \mif i_0= n-1 .
$$
Notice that under the assumption $nm_0+i_0<0$ we have $\grb\in\pbarO$, hence $\psi(\sum_j e_j \otimes \gra_j \grb)\in \mH_{0,0}$. Moreover
$$
\gra_j\cdot \grb\in \frac{1}{t-a_{i_0+1}}\pbarO \mforall j \;\mand\;\gra_{j_0}\notin \pbarO.
$$
Write 
$$
\gra_j\cdot \grb = \frac{b_{j}}{t-a_{i_0+1}}+\grg_j 
$$
with $b_j\in \calO_S$ and $\grg_j\in\pbarO$. Our assumption that $\alpha_{j_0} \notin \pbarO$ implies that $b_{j_0}\neq 0$. Then, by the assumption that $\psi(\sum_j e_j\otimes \alpha_j) \in \mH_{0,0}$, and by the fact that $(e_j\cdot \grg_j)(\pbarO)\subset \calU_+$ (by assumption $E(\pbarO) \subset \calU_+$) for all $j$, it follows that 
$$
\psi\left(\sum e_j \otimes \frac{b_j}{t- a_{i_0+1}}\right)(\pbarO)\subset \calU_+
$$
in particular if we evaluate at $1 \in \pbarO$ we obtain 
$$
\sum_j e_j\left(\frac{b_j}{t- a_{i_0+1}}\right) \in \calU_+
$$
against the injectivity of the map \eqref{eq:ipotesiE}.
\end{proof}

\begin{remark}
Notice that the sheaf $\calU_+ = \calUgS (\gog \otimesr \pbarO)$, as well as the sheaf $\calUgS$, satisfies the hypothesis of Remark \ref{oss:sqccccomologia}. In particular their cohomology vanish on affine subsets, hence 
$\calUgS/\calU_+(S')=\calUgS(S')/\calU_+(S')$ for all open affine subsets $S'$. 
\end{remark}

\begin{proof}[Proof of Theorem \ref{teo:descrizionelocaleVkSg}]
	We notice that the claim about $\mVkSgt(n) \simeq \mvbasic^\gog(n)$ implies $\mvbasic^\gog(n)=\mvcom^\gog(n)$ and that the map $\calY_{\Sigma,t}: \mVkSgt \to \mvbasicpiu^\gog=\mvcompiu^\gog$ is an isomorphism. To deduce the analogous claim about the chiral algebra $\calVkSgt$ notice that the same claim holds for all open affine subset $S'$ of $S$ and in particular  $\calVkSgt(S')\simeq\VkSg(S')$. This implies that $\calY_{\Sigma,t} : \calVkSgt \to \VkSg$ is an isomorphism. 
	
	To prove that $\calY_{\Sigma,t}$ determines an isomorphism between the topological module $\mVkSgt(n)$ and $\mvbasic^\gog(n)$ it is enough to prove that it is injective and an immersion. Since as plain modules $\mVkSgt(n) = V^\kappa(\gog)_{\leq n} \otimes \pbarO(S)$, by Lemma \ref{lem:sottospazidiF^1} it is enough to prove that 
	\begin{enumerate}[\indent {Claim} 1:]
		\item for any element $v \in V^\kappa(\gog)$ the field $\calY_{\Sigma,t}(v)$ satisfies $\calY_{\Sigma,t}(v)(\pbarO) \subset \calU_+$
		\item for all $i=1,\dots,n$ the evaluation map
		\begin{align*}
		\ev_{(t-a_i)^{-1}} : &V^\kappa(\gog)\otimes\calO_S \to \calU_k(\hat{\gog}_{\Sigma})/\calU_+ \quad \text{ defined by } \\  &v\otimes g \mapsto \left[\calY_{\Sigma,t}(v)\left(\frac{g}{t-a_{i}}\right) \right] 
		\end{align*}
		are injective.
	\end{enumerate}	
	
	Claim 1 follows by induction on the PBW degree in $V^\kappa(\gog)$ with the formula for the normally ordered product. We prove Claim $2$ (hence the Theorem) in two special cases first.

\subsubsection*{The case of one section and $A$ a field} We assume that $A=L$ is a field of characteristic zero containing $\mC$ and that we have only one section $\grs$ which is equal to the zero section: $a=\grs^\sharp(t)=0$. In this case claim 2 reduces to a well known fact about vertex algebras over a field of characteristic zero. We give the details for completeness.

Let $\gog_L=\gog\otimes L$, then this is a simple Lie algebra over $L$. 
We denote by $\gog_L$, $\hgog_{\kappa,L}$, $U_{L}(\hgog)$, $U_{+,L}$ and $V_L^\kappa(\gog)$ the finite dimensional Lie algebra, the affine Lie algebra, the completion of the enveloping algebra of $\hgog_L$ at level $\kappa$, the left ideal generated by $\gog_L[[t]]$ in $U_{L}(\hgog)$ and the universal vertex algebra at level $\kappa$ using $L$ as a base field. Up to completion, these objects are the base change to $L$ of the analogous object constructed over $\mC$. In particular in the case of $V_L^\kappa(\gog)$ completion is not necessary and we have
$$
V_L^\kappa(\gog)=\Vkg\otimes _\mC L 
$$
We denote by $Y_L:V_L^\kappa(\gog)\lra \Homcont_L\big( L((t)),\End(V^\kappa_L(\gog))\big)$ 
the usual vertex operator map. 

These objects are related to our $X,S,\Sigma$ geometric setting as follows: $\pbarO(S)=L[[t]]$, $\pbarOpoli(S)=L((t))$ and:
$$
\calUgS(S) =U_{L}(\hgog) 
\qquad 
\calU_{+}(S)= U_{+,L}.
$$

Hence the map $\calY_{\Sigma,t}$ gives a map from $V^\kappa(\gog)\otimes L[[t]]$ to $\Homcont(L((t)),U_L(\hgog))$. So that to every element $v \in V^\kappa(\gog)_L$ and any $f \in L((t))$ there is an attached element $\calY_{\Sigma,t}(v)(f) \in U_L(\hat{\gog})$. We notice that for all $v,u\in  V_L^\kappa(\gog)\subset V^\kappa(\gog)\otimes L[[t]]$  and for all $f\in L((t))$ 
we have
$$
 \Big(\big( \calY_{\Sigma,t}(v) \big)(f)\Big)\cdot u=\Big(\big(Y_L(v)\big)(f)\Big)\cdot u,
$$
where the left hand side stands for the action of $\big(\calY_{\Sigma,t}(v)\big)(f) \in U_L(\hat{\gog})$ on $V^\kappa_L(\gog)$, so that the above claim means that the actions of $\big(\calY_{\Sigma,t}(v))(f)$ and $Y_L(v)(f)$ on $V^\kappa_L(\gog)$ agree.
This follows from the construction of the two operators are given by the same inductive formula \eqref{eq:vertalgprod} (in the case $S = \Spec L$).

\begin{lemma}\label{lem:injectivityvertexalg}
The map 
$$
\ev_{t^{-1}} : \Vkg \otimes_\mC L \lra \frac{U_L(\hgog)}{U_{+,L}} \quad \text{ given by }\quad 
v\otimes b\mapsto \left[\calY_{\Sigma,t}(v) \left(\frac{b}{t}\right)\right] 
$$
is injective. 
\end{lemma}

\begin{proof}
We noticed above  that the map appearing in the Lemma identifies with
$$
v \mapsto  \Big(\big(Y_L(v)\big)(t^{-1})\Big)\cdot |0\rangle 
$$
from $V_L^\kappa (\gog)$ to $V_L^\kappa (\gog) = U_L(\hgog)/U_{+,L}$. By the usual properties of the vertex operators we have
\[
\Big(\big(Y_L(v)\big)(t^{-1})\Big)\cdot |0\rangle=v,
\]
which proves our claim.
\end{proof}

\subsubsection*{The case of one section} We now prove the claim in the case of one section $\grs$. Let $a=\grs^\sharp(t)$ and recall that we assume that $A$ is a domain. As in the case of a field our constructions boil down to completed pullback along $A$; in particular $\calU(\hat{\gog}_\sigma)(S) = U_A(\hat{\gog})$, $\calU_+(S)= U_{+,A} \stackrel{\text{def}}{=} U_A(\hat{\gog})(\gog \otimes A[[t]])$ and so on. Here, where $U_{A}(\hat{\gog})$ is the completed enveloping algebra of the $A$-linear version of the affine algebra $\hat{\gog}$.

\begin{lemma}\label{lem:Yunasezione}
The map 
$$
\ev_{(t-a)^{-1}}\Vkg \otimes_\mC A\lra \frac{U_A(\hat{\gog})}{\calU_{+,A}} \quad \text{ given by }\quad 
v\otimes b\mapsto \left[\calY_{\Sigma,t} \left(v\otimes\frac{b}{t-a}\right)(1)\right] 
$$
is injective.
\end{lemma}

\begin{proof}
By the discussion of Section \ref{ssec:descrizionelocale1} we have
\(
	\pbarOpoli(S) = A((t)), \mF^1_{\Sigma,\gog}(S) = \Homcont_A\left(A((t)),U_{A}(\hat{\gog})\right).
\) Let $L$ be the fraction field of $A$ then 
$$
V^\kappa(\gog) \otimes A \hookrightarrow V^\kappa(\gog)\otimes L
\quad \mand \quad 
U_{A}(\hat{\gog}) \hookrightarrow U_{L}(\hat{\gog}). 
$$	
Moreover $U_{+,L}\cap U_{A}(\hat{\gog})\supset\calU_{+,A}$.
Since the map $\calY_{\Sigma,t}$ on $\Vkg\otimes_\mC A$ is the restriction of the analogous map on  $\Vkg\otimes_\mC L$ the Lemma follows from Lemma 
\ref{lem:injectivityvertexalg}. 
\end{proof}

\subsubsection*{The general case}
We now prove Claim 2 in the case of $n$ sections. We still assume that $A$ is a domain. Without loss of generality we can assume that there are no two equal sections in $\Sigma$ and consider the nonempty open subset $S_{\neq} \subset S$ on which all sections in $\Sigma$ do not intersect with each other. Under the assumption that $A$ is domain we have $$
\Vkg \otimes \calO_S(S) \subset \Vkg\otimes \calO_S(S_{\neq})
$$ 
and the evaluation maps $\ev_{(t-a_i)^{-1}}$ on $S'$ are the restriction of the evaluation maps on $S$. Hence it is enough to prove our claim in the case $S=S_{\neq}$. 
To study this case we use the factorization property of the map $\calY_{\Sigma,t}$, of the space of fields and of the enveloping algebra $\calUgS$. 
By Lemma \ref{thm:factorizationofcaly} we have 
$$
\calY_{\Sigma,t}(v\otimes 1)\quad \text{ is the image of }\quad   \left(\calY_{\Sigma_j,t} (v\otimes 1)\right)_j\in \prod \mF^1_{\Sigma_j,\gog} \quad\text{ in }\quad  \mF^1_{\Sigma,\gog}.
$$
Notice that the elements $t-a_i$ are invertible in $\calO_{\overline \Sigma_j}$ for $j\neq i$, hence $\calY_{\Sigma_j,t}(v\otimes 1)\Big(b(t-a_i)^{-1}\Big) \in \calU_+$ for all $j\neq i$ and $b \in A$. 
Therefore, by the factorization properties of fields (see Section \ref{ssec:pullbackcampi}), we have 
$$ \ev_{(t-a_i)^{-1}}(v\otimes b)=\bigg[ \sum_j \calY_{\Sigma_j,t}(v\otimes 1) 
\left(\frac {b}{t-a_i} \right) \bigg]=\bigg[ \calY_{\Sigma_i,t}(v\otimes 1) 
\left(\frac {b}{t-a_i} \right) \bigg] $$
Where, recall, the embedding $\calU(\hat{\gog}_{\Sigma_i}) \to \calU(\hat{\gog}_{\Sigma})$ is induced by the embedding of $\calO_{\overline \Sigma_j^*}$ in $\pbarOpoli$. By the description of the PBW basis, this map in injective and $\calU_+\cap\calU(\hat{\gog}_{\Sigma_i})=
\calU(\hat{\gog}_{\Sigma_i})(\gog\otimesl\calO_{\overline{\Sigma}_i})$. So that the injectivity of $\ev_{(t-a_i)^{-1}}$ follows from 
Lemma \ref{lem:Yunasezione}.
\end{proof}

\subsection{Factorization properties}

We conclude by stating some factorization properties of our constructions. We refer to Notation \ref{ntz:factorization} and consider a surjection of finite sets $J \twoheadrightarrow I$. Recall the factorization morphisms of Section \ref{ssec:fattorizzazionecampigenerale}.

\begin{proposition}\label{prop:factpropertieschiralalg}
The maps $\Ran^{\mF^1}_{J/I}$, $\fact^{\mF^1}_{J/I}$ above induce natural isomorphisms
	\begin{align*}
		\Ran^{V^\kappa}_{J/I} &: \hat{i}_{J/I}^*\calV^\kappa_{\Sigma_J}(\gog), \to \calV^\kappa_{\Sigma_I}(\gog) \\ \fact^{V^\kappa}_{J/I} &: \hat{j}_{J/I}^*\left( \prod_{i\in I} \calV^\kappa_{\Sigma_{J_i}}(\gog) \right) \to  \hat{j}_{J/I}^*\calV^\kappa_{\Sigma_J}(\gog) 
	\end{align*}
which make $\calV^\kappa_{\Sigma}(\gog)$ a complete topological factorization algebra with respect to $\prod$.
\end{proposition}

\begin{proof}
    The maps are constructed via by a combination of the factorization morphisms of $\mF^1_{\Sigma,\gog}$ (see Section \ref{ssec:fattorizzazionecampigenerale}) and Theorem \ref{thm:factorizationofcaly}, which together with Theorem \ref{teo:descrizionelocaleVkSg} show that they are well defined and isomorphism. The fact that they induce factorization structures follows from the properties of $\Ran^{\mF^1}$ and $\fact^{\mF^1}$.
\end{proof}

Let us restate Theorem \ref{thm:factorizationofcaly} with respect to the factorization structure of $\calV^\kappa_\Sigma(\gog)$ of Proposition \ref{prop:factpropertieschiralalg}.

\begin{corollary}[Of Theorem \ref{thm:factorizationofcaly}]\label{coro:factpropchiralalgcaly}
    The following diagrams commute
    \[\begin{tikzcd}
	{\hat{i}^*_{J/I}\left(V^\kappa(\gog)\otimes\calO_{\overline{\Sigma}_J}\right)} && {\hat{i}^*_{J/I}\calV^\kappa_{\Sigma_J}(\gog)} \\
	\\
	{V^\kappa(\gog)\otimes\calO_{\overline{\Sigma}_I}} && {\calV^\kappa_{\Sigma_I}(\gog)}
	\arrow["{\hat{i}^*_{J/I}\calY_{\Sigma_J,t}}", from=1-1, to=1-3]
	\arrow["{\Ran^{\pbarO}_{J/I}}"', from=1-1, to=3-1]
	\arrow["{\Ran^{V^\kappa}_{J/I}}", from=1-3, to=3-3]
	\arrow["{\calY_{\Sigma_I,t_I}}"', from=3-1, to=3-3]
\end{tikzcd}\]
    \[\begin{tikzcd}
		{\hat{j}^*_{J/I}\left( \prod_{i \in I} V^\kappa(\gog)\otimes\calO_{\overline{\Sigma}_{J_i}}\right)} && {\hat{j}^*_{J/I}\left( \prod_{i \in I} \calV^\kappa_{\Sigma_{J_i}}(\gog)\right)} \\ \\
	{\hat{j}^*_{J/I}\left(V^\kappa(\gog)\otimes\calO_{\overline{\Sigma}_J}\right)} && {\hat{j}^*_{J/I}V^\kappa_\Sigma(\gog)} 
	\arrow["{\hat{j}^*_{J/I}\calY_{\Sigma_J,t}}", from=3-1, to=3-3]
	\arrow["{\fact^{\pbarO}_{J/I}}"', from=1-1, to=3-1]
	\arrow["{\fact^{V^\kappa}_{J/I}}", from=1-3, to=3-3]
	\arrow["{\prod_{i \in I}\calY_{\Sigma_{J_i},t_i}}"', from=1-1, to=1-3]
\end{tikzcd}\]

\end{corollary}

\section{Coordinate independent description of \texorpdfstring{$\VkSg$}{the chiral algebra}}\label{sec:identificationopers1}

In this Section we are going to give a description of our chiral algebra $\VkSg$ which is independent from the choice of a coordinate; we will do this by exploiting the coordinate dependent isomorphism $\calY_{\Sigma,t}$ and study how it behaves under change of coordinates. 

We will see that our chiral algebra identifies with an analogue of the vertex algebra bundle constructed in \cite[Section 6]{frenkel2004vertex}. Let us recall briefly how the latter is constructed. On any smooth curve $C$ over the complex numbers there is a canonical $\Autpiu{} O$ (left) torsor $\Aut_C \to C$, then, as the vertex algebra $V^\kappa(\gog)$ has a natural action of the group $\Autpiu{} O$, one can construct the twist $\Aut_C \times_{\Autpiu{} O} V^\kappa(\gog)$ obtaining a canonically defined vector bundle of infinite dimension on $C$. The goal of this Section is to reproduce this construction when considering $\oSigma$ in place of $C$ and show that the vector bundle we obtain is canonically isomorphic to $\VkSg\otimes_{\pbarO}T_{\oSigma}$. In order to do this we will need to take into account the topological nature of $\oSigma$, so our construction will be a little less straightforward to that of Frenkel and Ben-Zvi. Let us start with some recollections on the group scheme $\Autpiu{} O$ that we will need in the sequel.

\subsection{Recollections on the group \texorpdfstring{$\Aut O$}{Aut O}}\label{ssec:recollectionsauto}

Let $O = \mC[[z]]$ and let $\gom \subset O$ be the maximal ideal $z\mC[[z]]$. For any commutative $\mC$-algebra $R$ consider $O_R = R[[z]]$ and the ideal $\gom_R = zR[[z]]$. We write $\Aut O$ for the group functor $\Aut O(R) = \Aut_R^{\cont}(O_R)$, where the topology of $R[[z]]$ is that generated by the ideal $\gom_R$. \index{$\Aut O$} Any continuous $R$-linear automorphism of $R[[z]]$ is determined by its value on $z$ and via this identification one can show that
\[ \Aut O (R) \simeq \left\{ \rho(z) = \sum_{k \geq 0} \rho_kz^k : \rho_0 \text{ is nilpotent, } \rho_1 \in R^*\right\}. \] 
This also shows that $\Aut O$ is an ind-scheme. Under this identification the group multiplication reads as
    \(     \tau_1(z) \cdot \tau_2(z) = \tau_2(\tau_1(z)).      \)
We move on and recall the subgroup $\Autpiu{} O \subset \Aut O$ and some of its properties.

\begin{definition}
    We consider the group schemes $\Autpiu{} O,\Autpiu{n} (O)$ for $n\geq2$ defined as the following functors on $\mC$-algebras:
    \begin{align*}
		\Autpiu{}O(R) &= \{ \rho \in \Aut_R^{\cont}(O_R) : \rho(\gom_R) \subset \gom_R \},\\
		\Autpiu{n} O(R) &= \{ \rho \in \Aut_R(O_R/\gom_R^n) : \rho(\gom_R) \subset \gom_R \}.
	\end{align*}
	There\index{$\Autpiu{} O, \Autpiu{n} O$} are natural surjective (also on $R$ points) group homomorphisms $$\Autpiu{} O \xrightarrow{\pi_n} \Autpiu{n} O, \quad \Autpiu{m} O \xrightarrow{\pi_{n,m}} \Autpiu{n} O$$ which present $\Autpiu{} O$ as the limit of the group schemes $\Autpiu{n} O$ along the maps $\pi_{n,m}$.
\end{definition}

As for $\Aut O$, any automorphism $\rho \in \Autpiu{} O$ is determined by its value on $z$: $\rho(z) = \sum_{k \geq 0} \rho_kz^k$. The condition for this series to determine an element in $\Autpiu{} O (R)$ is $\rho_0 = 0, \rho_1 \in R^*$. Thus, we have bijections
    \[
        \Autpiu{} O (R) \simeq \left\{ \sum_{k\geq 1} \rho_kz^k : \rho_{1} \in R^* \right\} \quad \Autpiu{n} O (R) \simeq \left\{ \sum^{n-1}_{k = 1} \rho_kz^k : \rho_{1} \in R^* \right\}.
    \]
We denote by $\Der O$ and $\Der^0 O$\index{$\Autpiu{} O$!$\Der^0 O$} the Lie algebras of $\Aut O$ and $\Autpiu{} O$ respectively. We have $\Der O = \mC[[z]]\partial_z$,\index{$\Aut O$!$\Der O$} while $\Der^0 O$ can be identified with the subalgebra of derivations $f(z)\partial_z \in \Der O$ for which $f(z) \in \gom$.

We consider also the subgroups $\Autzero{} O \subset \Autpiu{} O $ , $\Autzero{n}
O \subset \Autpiu{n} O$  defined as
\begin{align*}
	\Autzero{} O (R) &= \left\{ \rho \in \Autpiu{} O(R) \text{ s.t. } \overline{\rho} : O_R/\gom_R^2 \to O_R/\gom_R^2 \text{ is the identity}\right\}, \\
	\Autzero{n} O (R) &= \left\{ \rho \in \Autpiu{n} O(R) \text{ s.t. } \overline{\rho} : O_R/\gom_R^2 \to O_R/\gom_R^2 \text{ is the identity}\right\}.
\end{align*}
As for the groups $\Autpiu{}$ there are natural surjective morphisms $\Autzero{} O \xrightarrow{\pi_n} \Autzero{n} O$ and $\Autzero{m} O \xrightarrow{\pi_{n,m}} \Autzero{n} O$ which present $\Autzero{} O$ as the limit of the group schemes $\Autzero{n} O$. The kernel of $\pi_{n,n+1}$ is isomorphic to $\mG_a$ so that by induction on $n$ each group $\Autzero{n} O$ is unipotent and $\Autzero{} O$ is pro-unipotent.

Finally there is a natural embedding $\mG_m \hookrightarrow \Autpiu{} O$, given by $\lambda \mapsto (z \mapsto \lambda z)$. This copy of $\mG_m$ normalizes $\Autzero{} O$ and induces an isomorphism
	\[
		\Autpiu{} O = \mG_m \ltimes \Autzero{} O.
	\]
This decomposition will be fundamental to prove an equivariance Lemma in \ref{lem:repsofautodero}. 

\subsection{Spaces, Jet schemes and the canonical \texorpdfstring{$\Autpiu{} O$}{Automorphism group} torsor}\label{ssec:spacesjetscanonicalbundle}

Here we explain how to construct an analogue of the canonical $\Autpiu{} O$ torsor $\Aut_C \to C$ when $C$ is replaced by $\overline{\Sigma}$, which we think of as a formal curve over $S$. In order to do construct it we will need to introduce a suitable notion of space and the analogue of jet schemes. 

\subsubsection{Spaces}\label{sssec:spaces}
Our geometric setting will be the following. Let $\Sch_S$ be the category of schemes over $S$. We equip $\Sch_S$ with the Zariski topology. A \emph{space} over $S$ is a set valued sheaf over $\Sch_S$; we denote by $\Sp_S$ the category of spaces. We call them spaces because in our constructions they will play the role of the geometric objects, while we will use the word sheaf to denote sheaves for the Zariski topology on $S$ (such as $\pbarO,\VkSg$ etc.). Let us notice here that the restriction of a space $X \in \Sp_S$ to open subsets determines a set valued sheaf over $S$, that we will also denote by $X$ and a sheaf of algebras $\Fun(X)$ over $S$ that on an open subset $U \subset S$ is defined as $\Fun(X)(U) = \Hom_{\Sp_U}(X_{|U},\mA^1_U)$.  Every scheme over $S$ determines a space, that will be denoted by the same symbol.

Practically, we prefer to work with affine schemes instead of $\Sch_S$, since this makes the notation and various arguments a bit more transparent. We denote by $\Aff_S$\index{$\Aff_S$} the category of affine schemes equipped with a map $\Spec R \to S$. We endow $\Aff_S$ with the Zariski topology. Any space can be restricted to a functor $\Aff_S^{\op} \to \Set$, since any scheme admits a cover by affine schemes this procedure establishes an equivalence between spaces and sheaves on $\Aff_S$ for the Zariski topology. We will therefore never stress the difference between sheaves on $\Sch_S$ and sheaves on $\Aff_S$; we introduced spaces as sheaves on the former because we will need to evaluate our functors on $S$, from time to time. We call a functor $\Aff_S^{\op} \to \Set$ \emph{a prespace}. Every space determines a prespace. Given a prespace $X$ we define the associated space $X^{sp}$ as its sheafification.

\subsubsection{The spaces \texorpdfstring{$\oSigma$}{attached to the formal neighbourhood} and \texorpdfstring{$\oSigma_n$}{the n-th formal neighborhood}}

Given an object $\Spec R \in \Aff_S$ we will write $\calO_{\overline{\Sigma}_R}$ for the completed pullback of $\pbarO$ along $\Spec R \to S$, this is a complete topological sheaf of $\calO_{\Spec R}$-algebras that and just as $\pbarO$, is a \QCCF w-compact sheaf. This allows us to consider our $\overline{\Sigma}$ as the space which on affine schemes over $S$ is defined by
\[
	\overline{\Sigma} : \Aff_S^{\op} \to \text{Set} \qquad R \mapsto \Hom^{\cont}_{\mathrm{Alg}_R}\left(\calO_{\overline{\Sigma}_R},\calO_R\right) = \varinjlim_n \Hom_{\mathrm{Alg}_R}\left(\calO_{\overline{\Sigma}_R}/\calO_{\overline{\Sigma}_R}(-n),\calO_R\right).
\]

Similarly we define $\oSigma_n$ as the space given by  $R\mapsto \Hom^{\cont}_{\mathrm{Alg}_R}(\calO_{\overline{\Sigma}_R}/\calO_{\overline{\Sigma}_R}(-n),\calO_R)$. This functor is represented by the relative spectrum $\Spec_{S}(\pbarO/\pbarO(-n))$. It follows that $\overline{\Sigma}$ is an ind-affine scheme over $S$.

\begin{remark}
   Notice that defining a space $\oSigma'$ over all schemes $T$ over $S$ using the same formula does \emph{not} give us a sheaf. For example, if $S=\Spec \mC$, $X=\mA^1$ and $\Sigma$ is the zero section, then $\oSigma(R)$ is the set of nilpotent elements of $R$, while $\oSigma'(T)$ would give the nilpotent functions on $T$. This is not a Zariski sheaf over non noetherian spaces. In particular the extension as a sheaf of $\oSigma$ from affine schemes over $S$ to all schemes over $S$ is not equal to $\oSigma'$. However, since we assume $S$ to be topologically noetherian, then the restriction of $\oSigma$ and $\oSigma'$ to the open subsets of $S$, are equal. A similar remark applies to many constructions we are going to give in this Sections. 
\end{remark}

\subsubsection{Vector bundles on spaces}\label{sssec:vectorbundlesonspaces}
We can define vector bundles over $\oSigma$, or more in general over a space, as in the case of schemes.

\begin{definition}
	A vector bundle on $\overline{\Sigma}$\index{vector bundle on $\overline{\Sigma}$} is a space $V$ together with a map $V \to \overline{\Sigma}$ such that there is an addition map $+_V :V \times_{\overline{\Sigma}} V \to V$ and a multiplication map $\mA^1_{\overline{\Sigma}} \times_{\overline{\Sigma}} V \to V$ defined over $\overline{\Sigma}$ which satisfy the usual axioms of a vector bundle. We require that Zariski locally on $S$ there exist isomorphisms $V \simeq \mA^n_{\overline{\Sigma}}$ which are linear and commute with the action of the monoid $\mA^1_{\overline{\Sigma}}$. 
\end{definition}

Being $\oSigma$ inf-affine over $S$, we can describe vector bundles also as sheaves. Indeed if $\calV$ is a locally free $\pbarO$-module of finite rank then it has a natural topology which is given by $\calV(-n)=\calO(-n)\cdot \calV$. On the direct sum of finitely many copies of $\pbarO$ this coincides with the topology of the sum. Any morphism of locally free modules of finite rank is automatically continuous if we equip the modules with this topology. In particular the sheaf underlying a vector bundle is $\pbarO$ locally free sheaf of finite rank and has a natural topology. 

Viceversa any locally free  $\pbarO$-module $\calV$ of finite rank we can associate a space as in the case of ordinary schemes. Let $\calV^*=\calHom_{\pbarO}(\calV,\pbarO)$ be the dual of $\calV$ and define a space $V$ as follows. Given $f:\Spec R\lra S$ then $V(R)$ is the set of couples $(\eta,\theta)$ such that $\eta:\pbarO\lra f_*\calO_R$ is a continuous homomorphism of $\calO_S$-algebras and where $f_*\calO_R$ is equipped with the discrete topology (that is $\eta\in\oSigma(R)$) and  $\theta:\calV^*\lra f_*\calO_R$ is a morphism of 
$\pbarO$-modules. Notice that $\theta$ is automatically continuous since the image of $\pbarO(-n)$ in $f_* \calO_R$ is zero  for $n\gg 0$. 

The analogous construction over $\oSigma_n$ gives as a result the space associated to the relative spectrum over $S$ of the symmetric algebra of $\calV^*$ over the ring $\pbarO/\pbarO(-n)$. Moreover, let $\calV$ be a locally free $\pbarO$-module of finite rank and set $\calV_n$ be equal to its restriction to $\oSigma_n$. If $V$ is the space associated to $\calV$ and $V_n$ the space associated to $\calV_n$, then $V$ is the colimit of the spaces $V_n$. 

\subsubsection{Torsors on spaces}\label{ssec:torsorisuspazi}
Let $G$ be a group functor $G : \Aff_S^{\op} \to \mathrm{Grp}$, which is also a sheaf for the Zariski topology (that is, a group object in $\Sp_S$). Given a space $X$, a $G$-torsor over $X$ is a space $\goF\lra X$ such that for each $\Spec R\in \Aff_S$ there is a functorial right action of $G(R)$ over $\goF(R)$ such that the map $\goF(R)\lra X(R)$ is $G(R)$-invariant. We require $\goF \simeq X\times G$ in an equivariant way, locally on the Zariski topology on $S$. Of course one can define left torsors and right torsors, these are equivalent by considering the inverse action of $G$.

If $\goF$ is a $G$ torsor over $X$, $G$ is the pullback of an algebraic group over $\mC$, and $W$ is a finite dimensional representation of $G$ we define $W_\goF=\goF\times_G W$ as the space associate to the prespace $R\mapsto (\goF(R)\times W(R))/G(R)$.  This is a vector bundle over $X$. 


\subsubsection{Jet schemes}\label{sssec:jetsonspaces}

If $X$ is a space over $S$ then we define $JX$ and $J_n$ , the jet scheme and $n$-th jet scheme of $X$ as follows if $f:\Spec R \lra S$ then define
$$
J_nX(R)=X\big(R[[z]]/z^n\big), \qquad JX(R)= \limpro J_nX\big(R\big),
$$
where we consider  $\Spec (R[z]/z^n)$ over $S$ by precomposing $f$ with 
the natural map $\Spec(R[z]/z^n)\lra \Spec R$ (a similar convention will be used for $\Spec R[[z]]$). They are also spaces over $S$. Notice that the second space does not need to be equal to 
$JX(R[[z]])$. Indeed, in the case of $\oSigma$, we can rephrase the definitions of Jet spaces as follows:
\begin{align*}
	J\overline{\Sigma}(R) &= \Hom^{\cont}_{R\mathrm{-alg}}(\calO_{\overline{\Sigma}_R},\calO_R[[z]]) \\
	J_n\overline{\Sigma}(R) &= \overline{\Sigma}(R[z]/z^n) = \Hom^{\cont}_{R\mathrm{-alg}}(\calO_{\overline{\Sigma}_R},\calO_R[z]/z^n)
\end{align*}
where we consider $R[[z]]$ with the topology defined by the powers of $z$ (while in the definition of $\oSigma(R[[z]])$ we would consider this space with the discrete topology). 

\index{$J\overline{\Sigma},J_n\overline{\Sigma}$}We would like to study the behaviour of these spaces in the case when $S$ is affine and well covered.
Since $J\overline{\Sigma} = \varprojlim_n J_n\overline{\Sigma}$ and we will treat the former frequently as a pro-object. This will apply to other constructions related to $J\overline{\Sigma}$ as well.

\begin{lemma}\label{lem:descrjetpoints}
	Assume that $S = \Spec A$ is affine and well covered, so that there exists a coordinate $t \in \calO_{\overline{\Sigma}}(S)$. Fix a morphism $\Spec R \to S$. Then a continuous, $R$-linear morphism $\varphi : \calO_{\overline{\Sigma}_R} \to \calO_R[[z]]$ is uniquely determined by the image $\varphi(t) \in R[[z]]$. There exists a polynomial $p_{S,t}(x) \in A[x]$ such that a series $\varphi(t) \in R[[z]]$ determines a morphism $\varphi$ if and only if $p_{S,t}(\varphi(t)) \in R[[z]]$ is nilpotent modulo $z$. 
\end{lemma}

\begin{proof}
	This follows from the fact that by the local description of $\pbarO$ given in Section \ref{ssec:descrizionelocale1} we have $\calO_{\overline{\Sigma}} = \varprojlim \calO_S[t]/(\prod (t-a_i))^n$ is the completion of $\calO_S[t]$ along $\prod (t-a_i)$ and we may consider $p_{S,t}(x) = \prod (x-a_i)$.
\end{proof}

\begin{corollary}\label{coro:descrjn}
	The maps $J\overline{\Sigma} \to \overline{\Sigma}$, $J_n\overline{\Sigma} \to \overline{\Sigma}$ are representable and affine. It follows that the functors $J\overline{\Sigma}$,$J_n\overline{\Sigma}$ are representable by ind-affine schemes over $S$.
\end{corollary}

\begin{proof}
	We write the proof only for $J_n\overline{\Sigma}$ as the proof for $J\overline{\Sigma}$ is completely analogous. The assertion being Zariski local on $S$, so we may assume that $S$ is affine and well covered, write $S = \Spec A$, let $s \in \pbarO(S)$ be a coordinate for $\overline{\Sigma}$. By Lemma \ref{lem:descrjetpoints} the functor $J\overline{\Sigma}$ may be described as follows (recall that in this case $\Aff_S = \text{Alg}_A$ is the category of commutative $A$-algebras).
	\[
		\text{for } R \in \text{Alg}_A, \quad J_n\overline{\Sigma}(R) \simeq \left\{ \sum^{n-1}_{k\geq 0} r_kz^k : \prod (r_0 - a_i) \text{ is nilpotent} \right\} \simeq \overline{\Sigma}(R) \times \mA^{n-1}_S (R). \qedhere
	\]
\end{proof}

\subsubsection{The tangent bundle} 
If $X$ is a space we define $TX=J_2X$. For $J_2X$, as usual, we denote the variable $z\in R[[z]]/z^2$ by $\gre$. 
The evaluation to $\gre=0$ defines map $\pi:J_2X\lra X$ and the inclusion $R\subset R[\gre]$ gives  a section of this map. The map of $R$ algebras
$$
\{(a+\gre b,a+\gre c)\in R[\gre]\times R[\gre]\}\lra R[\gre]
$$
given by $(a+\gre b,a+\gre c)\mapsto a+\gre(b+c)$ defines a sum over the fibers of the map $\pi$ and the multiplication of $\gre$ by an element of $R$ defines an action of the monoid $\mA^1$ on these fibers. 

As shown above for $X=\oSigma$, the map $T\oSigma\lra\oSigma$ is locally (in the Zariski topology for $S$) isomorphic to the first projection $\overline{\Sigma}\times \mA^1 \to \overline{\Sigma}$; one can show that this identification respects sums over the fibers and the action of $\mA^1$ so that $T\oSigma$ is a vector bundle of rank $1$ on $\overline{\Sigma}$.
Its associated locally free sheaf of $\pbarO$-modules is $\Tan_{\overline{\Sigma}}$ and its dual is $\pbarOmega$.

\subsection{The canonical \texorpdfstring{$\Autpiu{S} O$}{Aut+ O}-torsor}\label{ssec:canonicaltorsor}

We construct canonical  $\Autpiu{S} O = S \times \Autpiu{} O$ and $\Autpiu{S,n} O = S \times \Autpiu{n} O$ torsors on $\overline{\Sigma}$, to be denoted by $\Aut_{\overline{\Sigma}}$ and $\Aut_{\overline{\Sigma},n}$ respectively. These are analogues of the torsor $\Aut_C$ on a smooth curve (see for instance \cite{casarin2025bundle}).
Recall that $\Autpiu{S} O$ and $\Autpiu{S,n} O$, as functors on $\Aff^{\text{op}}_S$, are given by the restriction of $\Autpiu{} O,\Autpiu{n} O$ to $\Aff_S \subset \Aff_{\mC}$.

\begin{definition}
	We define $\Aut_{\overline{\Sigma}}$\index{canonical $\Autpiu{}O$ torsor: $\Aut_{\overline{\Sigma}}$} as a subfunctor of $J\overline{\Sigma}$:
	\[
		\Aut_{\overline{\Sigma}}(R) = \left\{ \rho \in J\overline{\Sigma}(R) \text{ such that } d\rho : \rho^*\Omega^1_{\overline{\Sigma}} \to \Omega^1_{R[[z]]/R} \text{ is an isomorphism} \right\}
	\]
	where
	\[
		\rho: \calO_{\overline{\Sigma}_R} \to R[[z]], \qquad \rho^*\Omega^1_{\overline{\Sigma}} = 
		\Omega^1_{\overline{\Sigma}_R/R}\otimes_{\calO_{\overline{\Sigma}_R}} R[[z]].
	\]
	Similarly we define $\Aut_{\overline{\Sigma},n}$ as a subfunctor of $J_n\overline{\Sigma}$.
	\[
		\Aut_{\overline{\Sigma},n}(R) = \left\{ \rho \in J_n\overline{\Sigma}(R) \text{ such that } d\rho : \rho^*\Omega^1_{\overline{\Sigma}} \to \Omega^1_{(R[z]/z^n)/R} \text{ is an isomorphism} \right\}
	\]
\end{definition}

	The group $\Autpiu{S} O$ naturally acts by post-composition on $J\overline{\Sigma}$ and the map $J\overline{\Sigma} \to \overline{\Sigma}$ is invariant under this action. Since any element $\tau \in\Autpiu{S} O(T)$ induces an automorphism $d\tau : \Omega^1_{R[[z]]/R} \to \Omega^1_{R[[z]]/R}$ the $\Autpiu{S} O$ action preserves $\Aut_{\overline{\Sigma}}$. In the case where $S$ is affine, well covered, and equipped with a coordinate $s \in \pbarO (S)$, under the identification of Lemma \ref{lem:descrjetpoints}
	\[
		J\overline{\Sigma} = \left\{ \sum_{k \geq 0} r_k z^k : \prod (r_0 - a_i) \text{ is nilpotent} \right\}
	\]
	the action of an element $\rho \in \Autzero{} O$ is given by
	\(
		\sum_{k \geq 0} r_k z^k \mapsto \sum_{k \geq 0} r_k \rho(z)^k
	\) and $\Aut_{\oSigma}$ can be described as
	\[
		\Aut_{\overline{\Sigma}} = \left\{ \sum_{k \geq 0} r_k z^k : \prod (r_0 - a_i) \text{ is nilpotent and } r_1 \in R^* \right\}
	\]

\begin{proposition}\label{prop:trivsigmabarauto}
	The map $\Aut_{\overline{\Sigma}} \to \overline{\Sigma}$ with the above action of $\Autpiu{S} O$ on $\Aut_{\overline{\Sigma}}$ exhibits $\Aut_{\overline{\Sigma}}$ as a left $\Autpiu{S} O$-torsor over $\overline{\Sigma}$. Similarly, the map $\Aut_{\overline{\Sigma},n} \to \overline{\Sigma}$ exhibits $\Aut_{\overline{\Sigma},n}$ as an $\Autpiu{S,n}O$ torsor. The choice of a local coordinate $t$ for $\pbarO$ induces a section $\triv_t : \overline{\Sigma} \to \Aut_{\overline{\Sigma}}$ and hence a trivialization $\triv_t : \overline{\Sigma} \times \Autpiu{} O \to \Aut_{\overline{\Sigma}}$.
\end{proposition}

\begin{proof}
	We prove the statement of the Proposition for $\Aut_{\overline{\Sigma}}$, the corresponding statement for $\Aut_{\overline{\Sigma},n}$ is completely analogous. Recall the results of Lemma \ref{lem:descrjetpoints} and Corollary \ref{coro:descrjn}.

	Given a coordinate $t$ we construct the section $\triv_t : \overline{\Sigma} \to \Aut_{\overline{\Sigma}}$ as follows. Recall that the choice of $t$ and a hence an isomorphism $\pbarO \simeq \varprojlim \calO_S[t]/(\prod(t - a_i))$ induces isomorphisms
	\begin{align*}
		\overline{\Sigma}(R) &\simeq \left\{ r \in R \text{ such that } \prod(r - a_i) \text{is nilpotent} \right\} \\
		\Aut_{\overline{\Sigma}}(R) &\simeq \left\{ \sum_{k\geq 0} r_kz^k \text{ such that } \prod (r_0 - a_i) \text{ is nilpotent and } r_1 \in R^* \right\}
	\end{align*}

	\noindent so we define $\triv_t(r) = r + z$, or in other words, $\triv_t(r)$ is the unique continuous morphism $\calO_{\overline{\Sigma}_R} \to R[[z]]$ which maps $t \mapsto r + z$. The $\Autpiu{} O$ equivariant map $\triv_t : \overline{\Sigma}(R) \times \Autpiu{} O(R) \to \Aut_{\overline{\Sigma}}(R)$ that we get this way is clearly bijective by the description of $\Aut_{\overline{\Sigma}}(R)$
\end{proof}

We state the following Remark to make the statement of Proposition \ref{prop:changecoordinate} more transparent

\begin{remark}
	Let us remark that morphisms $\oSigma \to \Autpiu{S} O$ are easily described: 
	\[
		\Hom_{\Sp_S}\left( \oSigma, \Autpiu{S} O\right) = \left\{ \sum_{k\geq 1} f_kz^k \text{ with } f_k \in \pbarO(S) \text{ and } f_1 \in \pbarO(S)^*\right\} = \Aut_{S}^0(\pbarO(S)). 
	\]
	This follows by the fact that $\oSigma = \varinjlim \oSigma_n$ and by the fact that both $\oSigma_n$ and $\Autpiu{S} O = \Autpiu{} O \times S$.
\end{remark}

\begin{proposition}\label{prop:changecoordinate}
	Assume that $S$ is affine and well covered. Given coordinates $t,s$ for $\pbarO$, denote by $\triv_t,\triv_s : \overline{\Sigma} \times \Autpiu{S} O \to \Aut_{\overline{\Sigma}}$ the trivializations induced by the choice of $t$ and $s$ respectively. Then the map
	\[
		\triv_{st} = \triv_s^{-1}\triv_t : \overline{\Sigma} \times \Autpiu{S} O \to \overline{\Sigma} \times \Autpiu{S} O,
	\]
	which, being equivariant, is equivalent to give a morphism $\triv^{\univ}_{st}:\overline{\Sigma} \to \Autpiu{S} O$ such that 
	$\triv_t(r,\gra)  = \triv_s(r,\gra\cdot\triv^{\univ}_{st}(r))$. Then 
	\index{$\triv^{\univ}_{st}$}
	\[
		\triv^{\univ}_{st} = \sum_{k\geq 1} \frac{1}{k!}(\partial^k_ts)z^k \in \Hom_{\mathrm{Sp}_S}(\overline{\Sigma},\Autpiu{S} O)= \Aut_{S}^0(\pbarO(S)).
	\]
\end{proposition}

\begin{proof}
	We compute first $\triv_t$. Recall the for every $\Spec R \to S$, the trivialization $\triv_t$ is a map from $\oSigma(R)$ to $J\oSigma(R)$. Recall also that  $\oSigma(R)$ is 
	the space of continuous homomorphism $\calO_{\oSigma_R}\lra R$ and that $J\oSigma(R)$ is the space of continuous homomorphism from $\calO_{\oSigma_R}\lra R[[z]]$. 
	By Proposition  \ref{prop:trivsigmabarauto} for each $\gra\in \oSigma(R)$ we have 
	that $\triv_t(\gra):\calO_{\oSigma_R}\lra R[[z]]$ is the only continuous homomorphism such that
	$$
	\triv_t(\gra)(t)=\gra(t)+z.
	$$
    It follows that for all $g\in \calO_{\oSigma_R}$ we have
	$$
	\triv_t(\gra)(g)= \sum_{n\geq 0}\frac 1{n!}\, \gra(\partial^n_t g) \, z^n = \gra(g)+\sum_{n\geq 1}\frac 1{n!}\, \gra(\partial^n_t g) \, z^n.
	$$
	Indeed the formula on the right hand side defines a continuous homomorphism of $R$-algebras, with the same properties. Since, $\triv^{\univ}_{st}(\gra)$ is the unique element of $\Aut^0_S O$, such that
	$$
	\triv^{\univ}_{st}(\gra) \cdot \triv_s(\gra)(s) =  \triv_{t}(\gra)(s)
	$$
    from the previous formula applied to $g=s$ and $\triv_s(\gra)(s)=\gra(s)+z$ the claim follows.
\end{proof}

\subsection{The sheaf \texorpdfstring{$\twistAut{\oSigma}{V}$}{of functions on Opers over the formal neighborhood}}\label{ssec:gammasigmav}

Here we explain how to emulate the vertex algebra bundle construction of $V \mapsto \Aut_C \times_{\Autpiu{} O} V$ of \cite[Section 6]{frenkel2004vertex}. Having at hand the canonical torsor $\Aut_{\oSigma} \to \oSigma$ this will be fairly easy, but to deal with the topology of $\oSigma$ some care will be needed.

If $W$ is a finite dimensional representation of $\Aut^0$ recall from Section \ref{ssec:torsorisuspazi} the definition of the vector bundle $\Aut_\oSigma \times_{\Autpiu{} O} W$ over $\oSigma$. We denote the sheaf of sections of this vector bundle by $\twistAut{\oSigma}{W}$.


\begin{definition}[Construction of $\twistAut{\oSigma}{V}$]
	Let $V$ be an $\Autpiu{} O$ representation. Let $(V)^{\text{fin}}$ denote the directed poset of finite dimensional sub-representations of $V$. Recall that, as for any representation of an affine group scheme, $V = \varinjlim_{W \in (V)^{\text{fin}}} W$. 
    We define the topological sheaf
    \[
        \twistAut{\oSigma}{V} \stackrel{\text{def}}{=} \varinjlim_{W \in (V)^{\text{fin}}} \twistAut{\oSigma}{W},
    \]
    \index{$\twistAut{\oSigma}{V}$}where $\twistAut{\oSigma}{W}$ denotes the sheaf of $\pbarO$ modules on $S$ of sections of the finite dimensional vector bundle $\Aut_{\overline{\Sigma}} \times_{\Autpiu{} O} W$ on $\overline{\Sigma}$. Notice that in this case, locally, as a topological sheaf we have $\twistAut{\oSigma}{V} \simeq V \otimes \pbarO$ so that it is a direct sum of complete topological sheaves and hence complete. In the case where $V$ is an $\Autpiu{} O$-commutative algebra the sheaf $\twistAut{\oSigma}{V}$ is naturally a sheaf of commutative $\pbarO$-algebras.
\end{definition}

\begin{remark}\label{rmk:changecoordfunopd}
	Assume that $S$ is affine and well covered, so that we may pick a coordinate $t \in \pbarO(S)$, the isomorphism $\triv_t : \overline{\Sigma} \times \Autpiu{} O \to \Aut_{\overline{\Sigma}}$
	of Proposition \ref{prop:changecoordinate} induces, for any $\Autpiu{} O$ (finite dimensional) representation $V$, an isomorphism
	\[
		\triv^V_t : \oSigma \times V \to \Aut_{\oSigma}\times_{\Autpiu{S} O} V
		\qquad \text{ given by }\qquad 
		\triv^V_t(r\times v)=[\triv_t(r,1),v] .
	\]
	where $[\triv(r,1),v]$ is the class of $(\triv_t(r,1),v) \in \Aut_\oSigma \times V$ modulo the diagonal action of $\Autpiu{S} O$. This induces an isomorphism on the sheaf of sections
	\[
		\varphi_t : V \otimes \pbarO \to \twistAut{\oSigma}{V}
	\]
	\index{$\varphi_t$}
	Given any two coordinates $t,s$ a point $r \in \oSigma$, we have
	\begin{align*}
		(\triv^V_t)^{-1}\triv^V_s (r \times v) &= (\triv^V_t)^{-1}[\triv_s(r,1),v] = (\triv^V_t)^{-1} [(\triv^{\univ}_{st}(r))^{-1} \cdot \triv_t(r,1),v] \\
		&= (\triv^V_t)^{-1}[\triv_t(r,1),\triv^{\univ}_{st}(r) \cdot\, v] \\
		&= r \times (\triv^{\univ}_{st}(r)\cdot\, v)
	\end{align*}
	It follows that the automorphism $\varphi_{t,s} = \varphi^{-1}_{t}\varphi_s : V \otimes \pbarO \to  V\otimes \pbarO$ is then given by the action of $\triv^{\univ}_{st}$ on $V \otimes \pbarO$, so that $\grf_{t,s}$	corresponds to the element in $\Autpiu{} O(\pbarO(S)) = \Hom_{\Sp_S}(\oSigma,\Autpiu{S} O)$ 
	\[
		\varphi_{t,s}(z) = \sum_{k \geq 0} \frac{1}{k!} (\partial_t^k s) z^k \in \Autpiu{} O(\pbarO(S))
	\]
    \index{$\varphi_{t,s}$}acting on $V \otimes \pbarO$.
\end{remark}

\subsubsection{Factorization properties}

Recall notation \ref{ntz:factorization} and fix a surjection of finite sets $J \twoheadrightarrow I$. Recall the factorization structure of $\pbarO$ of Example \ref{ex:factstructurepbaro}:
\begin{align*}
	\Ran^{\pbarO}_{J/I} &: \hat{i}_{J/I}^*\calO_{\overline{\Sigma}_J} \rightarrow \calO_{\overline{\Sigma}_I}, \\
	\fact^{\pbarO}_{J/I} &: \hat{j}_{J/I}^*\left(\prod_{i \in I} \calO_{\overline{\Sigma}_{J_i}} \right) \to \hat{j}_{J/I}^*\calO_{\overline{\Sigma}_J} .
\end{align*}
and that a coordinate $t_J \in \calO_{\overline{\Sigma}_J}$ induces coordinates $t_i \in \calO_{\overline{\Sigma}_{J_i}}$ and $t_I \in \calO_{\overline{\Sigma}_J}$.

\begin{lemma}\label{lem:factorizationpsi}
	Let $V$ be any $\Autpiu{} O$ module. With the notations above, there are natural isomorphisms
	\[
    \Ran^{V,\Sigma}_{J/I} : \hat{i}_{J/I}^*\left( \twistAut{\oSigma_{J}}{V} \right) \rightarrow\left(\twistAut{\oSigma_{I}}{V} \right), \quad
    \fact^{V,\Sigma}_{J/I} : \hat{j}_{J/I}^*\left( \prod_{i\in I}\twistAut{\oSigma_{J_i}}{V} \right) \to  \hat{j}_{J/I}^*\left( \twistAut{\oSigma_{J}}{V} \right).
	\]
        Which make $\twistAut{\oSigma}{V}$ into a complete topological factorization algebra with respect to $\prod$. In addition on any well covered open subset with a coordinate $t$, the following diagrams commute
	\[\begin{tikzcd}
        {\hat{i}_{J/I}^*(V\otimes\calO_{\overline{\Sigma}_J})} && {\hat{i}_{J/I}^*(\twistAut{\oSigma_{J}}{V})} \\ \\
    {V\otimes\calO_{\overline{\Sigma}_I}} && {\twistAut{\oSigma_{I}}{V}}  \\  
	{\hat{j}_{J/I}^*\left(\prod_{i\in I} V\otimes\calO_{\overline{\Sigma}_{J_i}}\right)} && {\hat{j}_{J/I}^*\left(\prod_{i\in I} \twistAut{\oSigma_{J_i}}{V}\right)} \\ \\
	{\hat{j}_{J/I}^*(V\otimes\calO_{\overline{\Sigma}_J})} && {\hat{j}_{J/I}^*(\twistAut{\oSigma_{J}}{V})}
	\arrow["{\hat{j}_{J/I}^*\varphi_t}", from=4-1, to=4-3]
	\arrow["{\fact^{\pbarO}_{J/I}}"', from=4-1, to=6-1]
	\arrow["{\fact^{V,\Sigma}_{J/I}}", from=4-3, to=6-3]
	\arrow["{\hat{i}^*_{J/I}\varphi_t}", from=1-1, to=1-3]
	\arrow["{\Ran^{\pbarO}_{J/I}}"', from=1-1, to=3-1]
	\arrow["{\Ran^{V,\Sigma}_{J/I}}", from=1-3, to=3-3]
	\arrow["{\hat{j}_{J/I}^*(\prod_{i\in I} \varphi_{t_i}) }"', from=6-1, to=6-3]
	\arrow["{\varphi_{t_I}}"', from=3-1, to=3-3]
\end{tikzcd}\]
\end{lemma}

\begin{proof}
    The factorization structure on $\twistAut{\oSigma}{V}$ comes from the natural structure of a factorization space on $\Aut_{\overline{\Sigma}}$. Indeed, it follows directly from the definition that $i_{J/I}^*\Aut_{\overline{\Sigma}_J} \simeq \Aut_{\overline{\Sigma}_I}$ and that $j_{J/I}^*\Aut_{\overline{\Sigma}_J} = \coprod_{i \in I} j_{J/I}^*\Aut_{\overline{\Sigma}_{J_i}}$. The statement of commutativity of the diagram then easily follows.
\end{proof}

\subsubsection{The canonical connection}

Recall that the action of $\Autpiu{} O$ on $V^\kappa(\gog)$ actually comes from the structure of an $\Aut O$-module. We will prove that given any $\Aut O$-module $V$, the twisted sheaf $\twistAut{\oSigma}{V}$ has a natural connection, that's to say, its sheaf of sections is a left $\calD_{\overline{\Sigma}}$-module. In what follows we work with a fixed $\Aut O$-module $V$.

Let $T_{\varepsilon} : V[\varepsilon] \to V[\varepsilon]$ be the automorphism of $V[\varepsilon]$ corresponding to the element $T_{\varepsilon} \in \Aut O (\mC[\varepsilon])$ which satisfies $T_{\varepsilon}(z) = z - \varepsilon$. Notice that there exists a unique endomorphism $T$ of $V$ such that for any $v,w \in V$, $T_{\varepsilon}(v+\varepsilon w) = v + \varepsilon(Tv + w)$. 

Locally, on a well covered $S$ with a coordinate $t$, using the isomorphism $\grf_t$ of Remark \ref{rmk:changecoordfunopd} the action of $\partial_t$ will be given on $V\otimes \pbarO$ by $$\partial _t = T\otimes 1 + 1\otimes \partial _t.$$ We will prove that this definition is indepedent of the choice of the local coordinate, hence it defines a global structure of left $\pbarD$-module on $\twistAut{\oSigma}{V}$.

\begin{lemma}\label{lem:canonicalconnectionauto}
	Let $V$ be any $\Aut O$-module. Consider the associated locally free sheaf $\twistAut{\oSigma}{V}$ on $\overline{\Sigma}$. Let $s,t$ be two local coordinates for $\overline{\Sigma}$. Then the following diagram is commutative 
	\[\begin{tikzcd}
		{V\otimes\pbarO} &&& {V\otimes\pbarO} \\
		\\
		{V\otimes\pbarO} &&& {V\otimes\pbarO}
		\arrow["{T\otimes 1 + 1 \otimes \partial_t}", from=1-1, to=1-4]
		\arrow["{\varphi_{t,s}}"', from=3-1, to=1-1]
		\arrow["{\varphi_{t,s}}", from=3-4, to=1-4]
		\arrow["{  T\otimes (\partial_ts) + 1\otimes \partial_t}"', from=3-1, to=3-4]
	\end{tikzcd}\]
	where $\varphi_{t,s}$ is the automorphism constructed in Remark \ref{rmk:changecoordfunopd}.
\end{lemma}

We will prove the Lemma below. We first notice that this immediately implies that the action described above is well given.   

\begin{proposition}\label{prop:canonicalconnectionauto}
	Let $V$ be any $\Aut O$-module. Then the sheaf $\twistAut{\oSigma}{V}$ is canonically a left $\calD_{\overline{\Sigma}}$-module. After the choice of a local coordinate $t$ the following diagram commutes:
	\[\begin{tikzcd}
	{V\otimes\pbarO} && {\twistAut{\oSigma}{V}} \\
	\\
	{V\otimes\pbarO} && {\twistAut{\oSigma}{V}}
	\arrow["{\varphi_t}", from=1-1, to=1-3]
	\arrow["{T\otimes 1 + 1\otimes \partial_t}"', from=1-1, to=3-1]
	\arrow["{\partial_t}", from=1-3, to=3-3]
	\arrow["{\varphi_t}"', from=3-1, to=3-3]
	\end{tikzcd}\]
\end{proposition}

\begin{proof}
	After the choice of a coordinate $t$ define the action of $\calD_{\overline{\Sigma}} = \pbarO[\partial_t]$ by making $\partial_t$ act like the diagram above indicates. As all morphisms $\varphi$ are $\pbarO$-linear by writing $\partial_t = (\partial_t s)\partial_s$ and applying Lemma \ref{lem:canonicalconnectionauto} we see that the action constructed is independent from the choice of the coordinate $t$.
\end{proof}

To prove Lemma \ref{lem:canonicalconnectionauto} we need the following remark on group functor actions on vector spaces.

\begin{remark}\label{rmk:extensionautoaction}
	Let $R' \subset R$ be commutative algebras over $\mC$ and let $G : \Aff_{\mC} \to \text{Grp}$ be any group functor. By functoriality of $G$ there is a natural action of the group $\Aut_{R'} R$ on the group $G(R)$. Let $\widetilde{G(R)}_{R'} = G(R) \rtimes \Aut_{R'} R$ be the semidirect product coming from this action. 
	
	If $V \in \text{Vect}_{\mC}$ is a $G$ representation then there is a natural $R'$-linear action of $\widetilde{G(R)}_{R'}$ on $V\otimes R$. It is described as follows: the restricted action of $G(R)$ is given by the $G$-module structure of $V$ while the restricted action of $\Aut_{R'} R$ fixes $V$ and acts naturally on $R$. 
\end{remark}

\begin{proof}[Proof of Lemma \ref{lem:canonicalconnectionauto}]
	Notice that the statement of the Lemma is equivalent to the commutativity of the diagram 
	\[\begin{tikzcd}
	{V\otimes\pbarO(S)[\varepsilon]} &&&& {V\otimes\pbarO(S)[\varepsilon]} \\
	\\
	{V\otimes\pbarO(S)[\varepsilon]} &&&& {V\otimes\pbarO(S)[\varepsilon]}
	\arrow["{(T\otimes 1 + 1\otimes \partial_t)_\varepsilon}", from=1-1, to=1-5]
	\arrow["{\varphi_{t,s}}"', from=3-1, to=1-1]
	\arrow["{\varphi_{t,s}}", from=3-5, to=1-5]
	\arrow["{(T\otimes (\partial_ts) + 1\otimes \partial_t)_\varepsilon}"', from=3-1, to=3-5]
	\end{tikzcd}
	\]
	where if $\ell$ is invertible in $\pbarO$ we define $(T\otimes \ell + 1\otimes \partial_t)_{\varepsilon}$ as the automorphism of $V\otimes\pbarO[\varepsilon]$ given by
	\[
	(T\otimes \ell + 1\otimes \partial_t)_\varepsilon(v\otimes f + \varepsilon(w\otimes g)) = v \otimes f + \varepsilon(Tv\otimes \ell f + v\otimes\partial_t f + w\otimes g),
	\]
	while $\varphi_{t,s}$, by a slight abuse of notation, stands for the $\pbarO(S)[\varepsilon]$-linear extension of $\varphi_{t,s}$. All morphisms appearing in the latter diagram come from the action of $\widetilde{\Aut O (\pbarO(S)[\varepsilon])}_{\mC[\varepsilon]}$ introduced in Remark \ref{rmk:extensionautoaction} (in the case $G = \Aut O$, $R' = \mC[\varepsilon]$ and $R = \pbarO(S)[\varepsilon]$) so that we can make direct computations. Notice that in this case the action of $\eta \in \Aut_{\mC[\varepsilon]}(\pbarO(S)[\varepsilon])$ on $\rho \in \Aut O (\pbarO(S)[\varepsilon])$ is described as follows: the series $(\eta\cdot\rho)(z)$ is given by applying $\eta$ to the coefficients of $\rho(z)$. The automorphisms we are interested in correspond to the following elements of $\widetilde{\Aut O(\pbarO(S)[\varepsilon])}_{\mC[\varepsilon]}$
	\begin{align*}
	(T\otimes 1 + 1\otimes \partial_t)_{\varepsilon} &= \left( z \mapsto z - \varepsilon, f \mapsto f + \varepsilon\partial_tf \right), \\
	(T \otimes (\partial_ts)+ 1\otimes \partial_t)_{\varepsilon} &= \left( z \mapsto z - \varepsilon(\partial_ts), f \mapsto f + \varepsilon\partial_tf \right), \\
	\varphi_{t,s} &= \left( z \mapsto \sum_{k\geq 1}\frac{1}{k!}(\partial^k_ts)z^k, f \mapsto f \right).
	\end{align*}
	By direct computation we have
	\begin{align*}
	\varphi_{t,s}( T\otimes (\partial_ts) + 1\otimes \partial_t)_\varepsilon &= \left( z \mapsto \sum_{k\geq 1} (\partial_t^ks)z^{k} - \varepsilon(\partial_ts), f \mapsto f +\varepsilon\partial_tf \right), \\
	(T\otimes 1 + 1\otimes \partial_t)_\varepsilon\varphi_{t,s} &= \left( z \mapsto \sum_{k\geq 1}\frac{1}{k!}(\partial_t^ks)z^k + \varepsilon \sum_{k \geq 1}\frac{1}{k!}(\partial_t\partial_t^{k}s)z^k - \varepsilon\sum_{k\geq 0} \frac{1}{k!}(\partial_t^{k+1}s)z^k, f \mapsto f + \varepsilon\partial_tf \right)
	\end{align*}
	which are equal.
\end{proof}

As a particular case of the proposition above we have.

\begin{corollary}\label{coro:funopDalgebra}
	The topological sheaf of $\pbarO$-modules $\twistAut{\oSigma}{V^\kappa(\gog)}$ is naturally a left $\calD_{\overline{\Sigma}}$ module.
\end{corollary}

\subsection{The case of  \texorpdfstring{$\twistAut{\oSigma}{V^\kappa(\gog)}$}{of functions on Opers over the formal neighborhood}}

In this Section we give another description of the $\pbarD$ module $\twistAut{\oSigma}{V^\kappa(\gog)}$. As for a right $\calD$ modules on varieties, given a right $\pbarD$-module $\calM$ the tensor product $\calM\otimes_{\pbarO} T_{\oSigma}$ has a structure of  
left $\pbarD$ module where the action of $\theta \in T_{\oSigma}$ is given by 
$ \theta \cdot (m\otimes \eta)=-m\cdot \theta \otimes \eta + m \otimes [\theta,\eta]$.

\begin{theorem}\label{thm:globaldescrchiralalg}
	Assume that $S$ is integral and quasi separated. Let $\Sigma : S \to X^I$ be an arbitrary finite collection of sections. Then there is a canonical isomorphism of left $\calD_{\overline{\Sigma}}$-modules
	\[
	\beta : \VkSg\otimes_\pbarO \Tan_{\overline{\Sigma}} \to \twistAut{\overline{\Sigma}}{V^\kappa(\gog)}
	\]
	For any choice of a local coordinate $t$ we have a commutative diagram 
	\[\begin{tikzcd}
	& {V^\kappa(\gog)\otimes\pbarO} \\
	{\calV^\kappa_{\overline{\Sigma}}(\gog)\otimes\Tan_{\overline{\Sigma}}} && {\twistAut{\oSigma}{\Vkg}}
	\arrow["{\psi_t}"', from=1-2, to=2-1]
	\arrow["{\varphi_t}", from=1-2, to=2-3]
	\arrow["\beta"', from=2-1, to=2-3]
	\end{tikzcd}\]
		where $\varphi_t$ is the isomorphism\index{$\psi_t$} of Remark \ref{rmk:changecoordfunopd} and $\psi_t : V^\kappa(\gog)\otimes\pbarO \to \VkSg\otimes_\pbarO\Tan_{\overline{\Sigma}} $ is defined by 
	$$
	\psi_t(x\otimes f) = \calY_{\Sigma,t}(x \otimes f) \otimes \partial_t
	$$
	in which $\calY_{\Sigma,t}$ is the isomorphism constructed in \ref{ssez:Y}, while $\partial_t \in \Tan_{\overline{\Sigma}}(S)$ is the derivation associated to the coordinate $t$.
\end{theorem}

The proof of this Theorem will occupy the remaining pages. 

We notice first that $\grf_t$ and $\psi_t$ are isomorphism of left $\pbarD$-modules, where $\partial_t$ acts as $T\otimes 1 + 1\otimes \partial_t$ on $\Vkg\otimes\pbarO$.
Hence locally we can define $\beta_t = \varphi_t\psi_t^{-1}$. We would like to set $\beta = \beta_t$, but given another coordinate $s$ we have a priori different isomorphism $\beta_s = \varphi_s\psi_s^{-1}$. To show that $\beta_t = \beta_s$ we show that $\varphi_{ts}=\varphi_t^{-1}\varphi_s = \psi_t^{-1}\psi_s=\psi_{ts}$\index{$\psi_{ts}$} for every choice of $t,s$.

We will first focus on the case where $S=\Spec A$ is affine and $\Sigma = \{ \sigma \}$ consists of a single section such that $\calO_{\overline{\sigma}} \simeq A[[t]]$ and $\calO_{\overline{\sigma}^*} \simeq A((t))$ and then use this result, together with some factorization properties to deal with the case of multiple sections.

\subsubsection{The case of a single section: translation}

Assume that $\Sigma = \{ \sigma \}$ consists of a single section, that $S =\Spec A$ is affine and that $\calO_{\overline{\sigma}} \simeq \calO_S[[t]]$ as topological sheaves of $\calO_S$ algebras. We study this case first because, under these assumptions, coordinates on $\pbarO$ are an $\Autpiu{S} O(A)$-torsor, so that the compositions $\varphi_s^{-1}\varphi_t, \psi_s^{-1}\psi_t$ induce, after the choice of a specific coordinate $t$, $\Autpiu{} O(A)$ actions on $V^\kappa(\gog)\otimes A[[t]]$ that we must check to be equal. We will show that to prove this is enough compare the actions of the Lie algebra $\Der^0 O$ and show that they are equal, which we check by direct computation. In what follows we formalize this idea.

We will work over global sections for simplicity. Recall that our notion of a coordinate is that of an étale map $X \to \mA^1_S$. Considering a coordinate $t \in \pbarO(S)$ we get an isomorphism of topological sheaves $\pbarO \simeq \varprojlim_n \calO_S[t]/\prod (t-a_i)^n$ for some functions $a_i \in A$. In this section we extend our notion of a coordinate to the following one: a coordinate will be a function $t \in \pbarO(S)$ which induces an isomorphism $\pbarO \simeq \varprojlim_n \calO_S[t]/\prod (t-b_i)^n$ as above, for some possibly different functions $b_i \in A$. In the particular case of a single section $\Sigma = \{\sigma \}$, after the choice of a coordinate $t$, which induces an isomorphism $\calO_{\overline{\sigma}}(S) \simeq A[[t]]$ the set of coordinates identifies with the series $f(t) = f_0 + f_+(t)$ such that $f_+(t) \in t\calO_S[[t]]$ and $\partial_t f_+(t) \in (\calO_S[[t]])^{*}$. In particular there is a natural action of $\mG_a(A) \times \Autpiu{} O(A)$ on the set of coordinates which is simply transitive.

\begin{lemma}\label{lem:trivialactionga}
	Let $s,t$ be two coordinates and assume that $s = t + a$ for some function $a \in A$. Then $\varphi_{ts},\psi_{ts}$ are the identity morphism.
\end{lemma}

\begin{proof}
	The assertion for $\varphi$ immediately follows from Remark \ref{rmk:changecoordfunopd} and by the fact that under the  assumptions on $s$ we have $\partial_st = 1$. The claim for $\psi$ follows by $\partial_t = \partial_s$ and by construction of $\calY$, in particular it follows from the formulas in Remark \ref{rmk:descrycan} that the formation of $\calY$ depends only on the differences $t\otimes 1-1\otimes t, s\otimes 1 - 1\otimes s$ which coincide.
\end{proof}

It follows from this Lemma that it is enough to focus on coordinates $s \in A[[t]]$ which are obtained from $t$ via the action of $\Autpiu{} O(A)$.

\subsubsection{The case of a single section: the \texorpdfstring{$\Autpiu{} O$}{Aut+ O} actions}\label{ssec:autoactions}
After the choice of a specific coordinate $t$, we claim that we get two (a priori different) actions $\psi,\varphi$ of $\Autpiu{} O(A)$ on $V^\kappa(\gog)\otimes A[[z]]$, associated to the morphisms $\varphi_t,\psi_t$ as we let our coordinate vary. We introduce another formal coordinate $z$ via the isomorphism $\rho_t : A[[z]] \to \calO_{\overline{\sigma}}(S)$ which maps $z$ to $t$. Consider the isomorphisms
\begin{align*}
        \tilde{\calY_t} &: V^\kappa(\gog) \otimes A[[z]] \xrightarrow{\id \otimes \rho_t} V^\kappa(\gog) \otimes \calO_{\overline{\sigma}} \xrightarrow{\calY_{\sigma,t}} \calV^\kappa_{\sigma}(\gog) \\
	\tilde{\psi}_t &: V^\kappa(\gog)\otimes A[[z]] \xrightarrow{\id \otimes \rho_t} V^\kappa(\gog)\otimes\calO_{\overline{\sigma}} \xrightarrow{\psi_t} \calV^\kappa_\sigma(\gog)\otimes\Tan_{\overline{\sigma}}, \\
	\tilde{\varphi}_t &: V^\kappa(\gog)\otimes A[[z]] \xrightarrow{\id \otimes \rho_t} V^\kappa(\gog) \otimes\calO_{\overline{\sigma}} \xrightarrow{\varphi_t} \twistAut{\overline{\sigma}}{V^\kappa(\gog)}.
\end{align*}
 \index{$\tilde{\calY_t},\tilde{\psi}_t,\tilde{\varphi}_t$}We need to introduce the new formal coordinate $z$ to obtain an actual action of $\Autpiu{} O$, which will act on $A[[z]]$ in the natural way.

\begin{lemma}\label{lem:groupactions} The formulas 
\begin{align*}
    \calY &: \Autpiu{} O(A) \to \Aut_A(V^\kappa(\gog)\otimes A[[z]]) \quad \tau(z) \mapsto \tilde{\calY}_{t}^{-1}\tilde{\calY}_{\tau(t)} \\
	\psi &: \Autpiu{} O(A) \to \Aut_A(V^\kappa(\gog)\otimes A[[z]]) \quad \tau(z) \mapsto \tilde{\psi}_{t}^{-1}\tilde{\psi}_{\tau(t)} \\
	\varphi &: \Autpiu{} O(A) \to \Aut_A(V^\kappa(\gog)\otimes A[[z]]) \quad \tau(z) \mapsto \tilde{\varphi}_{t}^{-1}\tilde{\varphi}_{\tau(t)}
\end{align*}
	\index{$\calY,\psi,\varphi$}define actions of $\Autpiu{} O(A)$ on $V^\kappa(\gog)\otimes A[[z]]$. Both these actions are $\Autpiu{} O(A)$ sesquilinear with respect to the natural action of $\Autpiu{} O(A)$ on $A[[z]]$, meaning that for $v \in V^\kappa(\gog)\otimes A[[z]]$ and $f \in A[[z]]$ we have
	\begin{align*}
		\varphi(\tau) (v\cdot f(z)) &= \varphi(\tau)(v)\cdot f(\tau(z)) \\
		\psi(\tau) (v\cdot f(z)) &= \psi(\tau)(v)\cdot f(\tau(z))
	\end{align*}
\end{lemma}

\begin{proof}
	Sesquilinearity follows by the fact that $\psi_t,\varphi_t$ are $\calO_{\overline{\sigma}}(S)$ linear and that $\rho_t^{-1}\rho_{\tau(t)} = \tau$ as automorphisms of $A[[z]]$. The different cases of $\varphi$, $\calY$ and $\psi$ need different proofs.
	\begin{itemize}
		\item[($\varphi$)] Here we think $\tau$ as its associated formal series. Consider $$\widetilde{\Autpiu{} O(A[[z]])}_A = \Autpiu{} O (A[[z]])\rtimes \Autpiu{A}(A[[z]]),$$ the semidirect product of Remark \ref{rmk:extensionautoaction}, which naturally acts on $V^\kappa(\gog)\otimes A[[z]]$. To avoid possible confusion we introduce another formal variable $w$ and write $\Autpiu{} O (A[[z]]) = \Autpiu{A[[z]]}(A[[z]][[w]]) $, while we keep the notation $\Autpiu{} O (A) = \Autpiu{A}(A[[z]])$. Unravelling the definitions one may check that, using Remark \ref{rmk:changecoordfunopd}, when acting on $V^\kappa(\gog)\otimes A[[z]]$ the following equality holds:
		\[
			\varphi(\tau) = \left( w \mapsto \sum_{k \geq 0} \frac{1}{k!}(\partial^k_z \tau(z))w^k, f(z) \mapsto f(\tau(z)) \right) \in \widetilde{\Autpiu{} O(A[[z]])}_A.
		\]	
		To check that the corresponding map $\varphi : \Autpiu{} O(A) \to \widetilde{\Autpiu{} O(A[[z]])}_A$ is a morphism of groups we look at the embedding $\widetilde{\Autpiu{} O(A[[z]])}_A \subset \Autpiu{A}(A[[z,w]])$ where the latter group is the group of continuous automorphisms which preserve the ideal $(z,w)$. Let $u = w + z$, it is easy to check that $\varphi(\tau)$ corresponds the automorphism of $A[[z,u]] = A[[z,w]]$ which maps $z \mapsto \tau(z)$, $u \mapsto \tau(u)$, so that the fact that $\varphi$ is a morphism of groups is evident.

		\item[($\calY$)] Consider the space of fields with a fixed coordinate $z$, $\mF^1_{z,\gog} = \Homcont_A\left(A((z)),U_\kappa(\hat{\gog}_A) \right)$, where $\hat{\gog}_{A,\kappa}$ is the $A$ linear version of the affine algebra and $U_\kappa(\hat{\gog}_A)$ is the corresponding completed enveloping algebra. This is equipped with a natural $\Autpiu{} O(A)$ action by conjugation that we will denote by $\calY^{\mF}_{\tau}$, for $\tau \in \Autpiu{} O (A)$. 
		
		For any coordinate $t$ the isomorphism $\rho_t : A[[z]] \to \calO_{\overline{\sigma}}$ induces, by conjugating by $\rho_t$, an isomorphism $\calY^{\mF}_t : \mF^1_{z,\gog} \to \mF^1_{\sigma,\gog}$, it's easy to check that the composite $(\calY^{\mF}_t)^{-1}\calY^{\mF}_{\tau(t)}$ agrees with the action $\calY^{\mF}_{\tau}$ on $\mF^1_{z,\gog}$. 

		In addition, we have that the following diagram commutes:

		\[\begin{tikzcd}
	{V^\kappa(\gog)\otimes A[[z]]} && {\mF^1_{z,\gog}} \\
	\\
	{V^\kappa(\gog)\otimes\calO_{\overline{\sigma}}} && {\mF^1_{\sigma,\gog}}
	\arrow["{\calY_z}", from=1-1, to=1-3]
	\arrow["{\id\otimes \rho_t}"', from=1-1, to=3-1]
	\arrow["{\calY^{\mF^1}_{t}}", from=1-3, to=3-3]
	\arrow["{\calY_{\sigma,t}}"', from=3-1, to=3-3]
\end{tikzcd}\]
		This diagram is indeed equivalent to $\calY_{\sigma,t}\left(v\otimes \rho_t(f)\right)(\rho_t(g)) = \rho_t\left(\calY_{z}(v\otimes f)(g)\right)$, which immediately follows by induction from the construction of $\calY$.
		It follows that the subspace determined by the image of $\calY_z$ is invariant under the $\Autpiu{} O(A)$ action and that for any $\tau \in \Autpiu{} O(A)$ the restriction of $\calY^{\mF^1}_\tau$ to $V^\kappa(\gog) \otimes A[[z]]$ agrees with $\calY_\tau$. Since the former is a group action, so is the latter.
	\item[($\psi$)] To prove the Lemma for $\psi$ it is enough to notice that $\psi(\tau) = \mathrm{mult}((\partial_z\tau(z))^{-1}) \circ \calY(\tau)$ so that $\psi$ being a morphism of groups is automatic by sesquilinearity.\qedhere
	\end{itemize}
\end{proof}

By the definition of the above actions, it looks like that they depend on the choice of the coordinate $t$. It turns out that they do not actually depend from it, as the following Lemma shows. That's why we dropped $t$ from the notation.

\begin{lemma}
    The actions $\varphi,\psi,\calY$ are independent from the choice of the coordinate $t$.
\end{lemma}

\begin{proof}
    The same proof works for all actions, so let us pick $\varphi$ as an example. Write, for the purposes of this lemma $\varphi^t(\tau)=\varphi_t^{-1}\varphi_{\tau(t)}$. Here we think $\tau$ as a power series, so that it make sense to apply it to $t \in \pbarO$. We want to show that, given another coordinate $s = \tau_0(t)$, we have $\varphi^s(\tau) =\varphi^t(\tau)$. This follows by direct computation:
    \[
        \varphi^t(\tau_0)\varphi^s(\tau) =\varphi_t^{-1}\varphi_{\tau_0(t)}\varphi_s^{-1}\varphi_{\tau(s)} =\varphi_t^{-1}\varphi_{\tau_0(t)}\varphi_{\tau_0(t)}^{-1}\varphi_{\tau(\tau_0(t))} =\varphi^t(\tau_0\cdot\tau),
    \]
    here $\tau_0\cdot \tau$ denotes the product in $\Autpiu{} O$, so that $(\tau_0\cdot\tau)(t)= \tau(\tau_0(t))$. Using that $\varphi^t$ is a morphism of groups it follows that $\varphi^s(\tau)=\varphi^t(\tau)$, as claimed.
\end{proof}

In order to prove Theorem \ref{thm:globaldescrchiralalg} in the case of one section we reduce ourselves to prove that these actions are actually the same.

\begin{lemma}\label{lem:actionsequal}
	Assume that the actions constructed above are equal, so that $\varphi = \psi$. Then for any two coordinates $s,t$ we have $\varphi_s\psi_{s}^{-1} = \varphi_t\psi_{t}^{-1}$.
\end{lemma}

\begin{proof}
	By Lemma \ref{lem:trivialactionga}, the case $s = t + a$ with $a \in \calO(S)$ is trivial, so we may reduce to the case when there is an element $\tau \in \Autpiu{} O(A)$ such that $s = \tau(t)$. Recall that $\tilde{\varphi}_t = \varphi_t\rho_t$ where $\rho_t : A[[z]] \to \pbarO(S)$ is the automorphism $z \mapsto t$ so that $\varphi_t\psi_t^{-1} = \tilde{\varphi}_t\tilde{\psi}_t^{-1}$. By assumption, we have $\tilde{\varphi}_{t}^{-1}\tilde{\varphi}_{\tau(t)} = \tilde{\psi}_{t}^{-1}\tilde{\psi}_{\tau(t)}$ so the claim easily follows.
\end{proof}

We now describe the actions of $\Autpiu{} O(A)$ in better terms. Recall that since both actions of $\Autpiu{} O(A)$ are $A[[z]]$ sesquilinear, they preserve the subspaces $V^\kappa(\gog)\otimes z^nA[[z]]$. Since in addition $V^\kappa(\gog)\otimes A[[z]] \subset V^\kappa(\gog)[[z]]$ we reduce ourselves to check that the two actions are equal on the quotients $V^\kappa(\gog)\otimes A[[z]]/(z^n)$.

We start with a technical Lemma on representations of the group $\Autpiu{} O$. We refer to Section \ref{ssec:recollectionsauto} for the notation on the group $\Autzero{} O$ and the fact that $\Autpiu{} O = \mG_m \ltimes \Autzero{} O$.

\begin{lemma}\label{lem:repsofautodero}
    Let $V$ and $W$ be two representations of $\Autpiu{} O$ (by that we mean a morphism of group functors $\Autpiu{} O(R)\to GL_R(V\otimes R)$) and let $\varphi : V \to W$ be a $\mC$ linear map. If $\varphi$ is $\Der^0 O$-equivariant then it is also $\Autpiu{}O$ equivariant.
\end{lemma}

\begin{proof}
    Since $\Autpiu{} O$ is a group scheme any representation of it is the union of its finite dimensional sub-representations; it is then enough to prove the claim for $V,W$ finite dimensional. Since $\Autpiu{} O = \mG_m \ltimes \Autzero{} O$, it is enough to show equivariance separately for $\mG_m$ and for $\Autzero{} O$. Equivariance for $\mG_m$ follows by the fact that $\mG_m$ is connected. To check that $\varphi$ is $\Autzero{} O$ equivariant notice that the action of $\Autzero{} O$ factors via an action of $\Autzero{n} O$ on both $V$ and $W$ for some $n$ so we are reduced to prove the analogue of the statement of the Lemma for an unipotent group of finite type, which follows since any such group is connected.
\end{proof}

\begin{lemma}\label{lem:autoaaction}
	The $\Autpiu{} O(A)$ actions on $\psi,\varphi$ on $V^\kappa(\gog)\otimes A[[z]]/(z^n)$ are compatible with base change of $A$. In particular, letting $A$ vary, $\psi,\varphi$ determine an action of the group scheme $\Autpiu{} O$ on $V^\kappa(\gog)\otimes \mC[[z]]/(z^n)$. 
\end{lemma}

\begin{proof}
	The assertion for $\varphi$ is evident since $\Aut_{\overline{\Sigma}}\left(V^\kappa(\gog)\right)$ is obtained from the $\Autpiu{A} O$ torsor $\Aut_{\overline{\Sigma}}$ and by definition the action of $\varphi$ is exactly the one induced by a trivialization as in Remark \ref{rmk:changecoordfunopd}. To prove the statement for $\psi$ it is enough to notice that the morphisms $\psi_t$ behaves well under (completed) base change and are defined for any choice of the base ring $A$.
\end{proof}

\begin{corollary}\label{cor:sameactiondero}
	To check that the actions $\psi,\varphi$ are equal it is enough to prove that the associated actions of $\Der^0 O$ are the same when $S = \Spec \mC$. In addition by $A[[z]]$ sesquilinearity we just need to check that the actions $\varphi,\psi$ of $\Der^0 O$ are equal when restricted to $V^\kappa(\gog)$. 
\end{corollary}

\begin{proof}
	Use Lemma \ref{lem:autoaaction} and Lemma \ref{lem:repsofautodero}.
\end{proof}

\subsubsection{The case of a single section: comparison of the \texorpdfstring{$\Der^0 O$}{Der+ O} actions}\label{ssec:comparisonderoactionsphipsi}

Thanks to Corollary \ref{cor:sameactiondero} and Lemma \ref{lem:actionsequal}, to prove Theorem \ref{thm:globaldescrchiralalg} we just need to compare the infinitesimal actions of $\varphi,\psi$. By Lemma \ref{lem:autoaaction}, we only need to treat the $S = \Spec \mC$ case.

\begin{remark}\label{rmk:nontildeversion}
	Given any automorphism $\tau \in \Autpiu{} O$, to check that $\varphi(\tau) = \psi(\tau)$ it is enough to check that $\varphi_{t,\tau(t)} = \psi_{t,\tau(t)}$ (the non-\textquote{tilde} versions). This follows from $\varphi(\tau) = \rho_t^{-1}\varphi_{t,\tau(t)}\rho_{\tau(t)}$ and $\psi(\tau) = \rho_t^{-1}\psi_{t,\tau(t)}\rho_{\tau(t)}$. So, in order to prove \ref{thm:globaldescrchiralalg} we reduce ourselves to prove that $\varphi_{t,\tau(t)} = \psi_{t,\tau(t)}$, instead of showing $\tilde{\varphi}_{t,\tau(t)} = \tilde{\psi}_{t,\tau(t)}$. 
 
    We needed to introduce $\tilde{\varphi},\tilde{\psi}$ to obtain actual group actions: this wouldn't have been the case working with $\varphi_{t,s},\psi_{t,s}$ alone. We believe that the computations we will perform from now on are better understood by considering $\varphi_{t,s},\psi_{t,s}$, since they are $\calO_{\overline{\sigma}}$-linear, so in what follows the \textquote{tilde} versions $\tilde{\varphi}_{t,s},\tilde{\psi}_{t,s}$ won't appear again. 
\end{remark}

To compare the actions of $L_f = f(z)\partial_z \in \Der^0 O$ , we have to work with $S = \Spec \mC[\varepsilon]$, consider the new coordinate $s = t + \varepsilon f(t)$ and compute $\psi_{t,s},\varphi_{t,s}$. In order to compute $\psi_{t,s}$ it will come in handy to consider $\calY_{t,s}$ as well. We have the following formulas

\begin{align}\label{eq:derivativesvsautmorph}
        \calY_{t,s}(x) &= x + \varepsilon L^{\calY}_f x \\
	\psi_{t,s}(x) &= x + \varepsilon L^\psi_f x \\
	\varphi_{t,s}(x) &= x + \varepsilon L^\varphi_f x
\end{align}

Since these act $\calO_{\overline{\sigma}}$-linearly we just need to compute the action on elements $v \in V^\kappa(\gog) \subset V^\kappa(\gog)\otimes \calO_{\overline{\sigma}}$, so that we reduce to show that $\psi_{t,s} = \varphi_{t,s}$ on $V^\kappa(\gog)\subset V^\kappa(\gog)\otimes\calO_{\overline{\sigma}}$.  As the formulas above indicate, we denote by $L^{\varphi}_f,L^\psi_f$ the actions of $L_f$ with respect to $\varphi,\psi$ respectively. 

In what follows we consider the operators $X_{(n)}$ for $X \in \gog$ which act $\calO_{\overline{\sigma}}[\varepsilon]$ linearly on $V^\kappa(\gog)\otimes \calO_{\overline{\sigma}}[\varepsilon]$. 

\begin{remark}\label{rmk:deroactioncommutationvacuum}
	The action of the operators $X_{(m)}$ on the vacuum vector $\vac$ generates the whole space $V^\kappa(\gog)$ so that to prove $L_f^{\varphi} = L_f^\psi$ it is enough to prove that $L_f^\varphi\vac = L_f^\psi \vac$ and that $[L_f^\varphi,X_{(m)}] = [L_f^\psi,X_{(m)}]$.
\end{remark}

\begin{proposition}\label{prop:actionvacuum}
    Let $\vac \in V^\kappa(\gog)$ be the vacuum vector. Then for any coordinates $t,s \in \calO_{\overline{\Sigma}}$ we have
    \[
        \varphi_{t,s}\vac = \psi_{t,s}\vac = \vac,
    \]
    so $\vac$ is an invariant vector for both actions $\varphi,\psi$.
\end{proposition}

\begin{proof}
    Notice that since the $\Autpiu{} O$ action on $\vac \in V^\kappa(\gog)$ is trivial the statement about $\varphi$ is immediate.
    To prove the statement about $\psi$, we claim that $\psi_t = \psi_s$ when restricted to $\vac\otimes\calO_{\overline{\sigma}}$.
    We notice that $\psi_t : \vac\otimes\calO_{\overline{\sigma}} \to \calV^{\kappa}_\sigma(\gog)\otimes\Tan_{\overline{\sigma}}$ factors through $\calO_{\overline{\sigma}} \xrightarrow{\text{unit}} \Omega^1_{\overline{\sigma}}\otimes\Tan_{\overline{\sigma}} \xrightarrow{u\otimes\id} \calV_{\sigma}^\kappa(\gog)\otimes\Tan_{\overline{\sigma}}$, where the morphism $\text{unit}$ is the unit of the $\calO_{\overline{\sigma}}$-linear duality pairing between $\Omega^1_{\overline{\sigma}}$ and $\Tan_{\overline{\sigma}}$. This shows that $\psi_t$, restricted to $\vac\otimes\calO_{\overline{\sigma}}$, does not depend on the choice of $t$.
\end{proof}

\begin{remark}\label{rmk:psivscaly}
	We start with some remarks about $\psi$. Recall that $\psi_t = \calY_t \partial_t$, so that for any $x \in V^\kappa(\gog)\otimes\calO_{\overline{\Sigma}}$ we have 
	
	\[
		\psi_{ts} x = \psi_t^{-1}(\calY_s(x)\partial_s) = \psi_t^{-1}(\calY_s(x)(\partial_st)\partial_t) = \calY_t^{-1}\calY_s(x)\partial_st = \calY_{ts}(x)\partial_s t,
	\]
	where we set $\calY_{ts} \stackrel{\text{def}}{=} \calY_t^{-1}\calY_s$, which is a $\calO_{\overline{\Sigma}}$ linear automorphism of $V^\kappa(\gog)\otimes\calO_{\overline{\sigma}}$. In particular, recalling the notation of formula \eqref{eq:derivativesvsautmorph} in the $\calY$ case, we have that for any $x \in V\otimes\calO_{\overline{\sigma}}$,
	\[
		x + \varepsilon L^\psi_f x = (x + \varepsilon L^{\calY}_f x)(1 - \varepsilon\partial_t f(t)) 
	\]
	so that 
	\[
	L^\psi_f = L^{\calY}_f - \partial_tf(t)\id.
	\]
\end{remark}

\begin{proposition}\label{prop:deroactionpsi}
	The endomorphisms $L_f^{\calY},L_f^\psi$ on $V^\kappa(\gog) \subset V^\kappa(\gog)\otimes \calO_{\overline{\sigma}}$ satisfy the following:
	\[
		[L^{\calY}_f]=[L^\psi_f,X_{(m)}] = m\sum_{k \geq 0} \frac{1}{(k+1)!}X_{(k+m)} \otimes \partial_t^{k+1}f(t).
	\]
    Notice that the above infinite sum is a well defined endomorphism of $V^\kappa(\gog) \otimes \calO_{\overline{\sigma}}$. 
\end{proposition}

\begin{proof}
	As before let $s = t + \varepsilon f(t)$. Since $X_{(m)}$ is $\calO_{\overline{\sigma}}$-linear, by Remark \ref{rmk:psivscaly} we have $[L^\psi_f,X_{(m)}] = [L^\calY_f,X_{(m)}]$ so that it is enough to prove the analogous statement for $\calY$. Recall that $\calY_t$ is constructed inductively on $V^\kappa(\gog)$ by formula \eqref{eq:vertalgprod}. As before consider $\calY_{t,s} = \calY_{t}^{-1}\calY_s$ and write, for $v \in V^\kappa(\gog)$ 
	\[
		\calY_{t,s}v = v + \varepsilon L^\calY_fv \in V^\kappa(\gog) \otimes \calO_{\overline{\sigma}}[\varepsilon].
	\]
	In particular, with our notation, we have
	\[
		\calY_s v = \calY_{t}(v + \varepsilon L^\calY_f v)
	\]
	Recall that by construction for $X \in \gog$ we have $\calY_{s}(X_{(-1)}\vac) = \calY_{t}(X_{(-1)}\vac)$. We compute $\calY_{t,s}$ on $X_{(m)}Y$ for $X\in \gog$. As a field we have
	\[
		\calY_{s} (X_{(m)}Y) = (\calY_s X)_{(m,s)}(\calY_s Y)
	\]
	where, given any two fields $A,B \in \mF^1_{\Sigma,\calU}$, we set
	\[
		A_{(m,s)}B(g) = (AB)((s_1-s_2)^m(1\otimes g)) - BA((s_2-s_1)^m(g\otimes 1)).
	\]
	Writing $(s_1 - s_2)^m$ as a sum of powers of $(t_1 - t_2)$,
	\begin{equation*}
		(s_1 - s_2)^m = (t_1 - t_2)^m + m\varepsilon\sum_{k \geq 0} \frac{1}{(k+1)!}\partial^{k+1}_{t_2}f(t_2)(t_1 - t_2)^{k+m},
	\end{equation*}
	we deduce that given two fields $A,B \in \mF^1$ we have
	\[
		A_{(m,s)}B = A_{(m,t)}B + m\varepsilon\sum_{k\geq 0} \frac{1}{(k+1)!} A_{(k+m,t)}B \partial^{k+1}_tf(t).
	\]
	Finally, for $X \in \gog$ and $Y \in V^\kappa(\gog)$ we compute
	\begin{align*}
		&\calY_{t,s}(X_{(m)}Y) = \calY^{-1}_t\left( \calY_s(X)_{(m,s)}\calY_s(Y) \right) \\
		&= \calY_t^{-1} \left( \calY_s(X)_{(m,t)}\calY_s(Y) + m\varepsilon\sum_{k\geq 0} \frac{1}{(k+1)!} \calY_s(X)_{(k+m,t)}\calY_s(Y) \partial^{k+1}_tf(t) \right) \\
		&= \calY_t^{-1} \left( \calY_t(X)_{(m,t)}\calY_t\calY_{t,s}(Y) + m\varepsilon\sum_{k\geq 0} \frac{1}{(k+1)!} \calY_t(X)_{(k+m,t)}\calY_t\calY_{t,s}(Y) \partial^{k+1}_tf(t) \right) \\
		&= X_{(m)}(\calY_{t,s}Y)  + m\varepsilon\sum_{k\geq 0} \frac{1}{(k+1)!} X_{(k+m)}\calY_{t,s}(Y) \partial^{k+1}_t f(t)
	\end{align*}
	and writing $\calY_{t,s}(X) = X + \varepsilon L^\calY_fX$ we get
	\[
		[L^\calY_f,X_{(m)}] = m\sum_{k \geq 0} \frac{1}{(k+1)!}X_{(k+m)}Y \otimes \partial_t^{k+1}f(t)
	\]
	and the Proposition follows.
\end{proof}

We now show the same formula for $L^\varphi_f$.

\begin{proposition}\label{prop:deroactionphi}
	The endomorphisms $L_f^\varphi$ on $V^\kappa(\gog) \subset V^\kappa(\gog)\otimes \calO_{\overline{\sigma}}$ satisfy the following:
	\[
		[L^\varphi_f,X_{(m)}] = m\sum_{k \geq 0} \frac{1}{(k+1)!}X_{(k+m)} \otimes \partial_t^{k+1}f(t).
	\]
    Notice that the above infinite sum is a well defined endomorphism of $V^\kappa(\gog) \otimes \calO_{\overline{\sigma}}$. 
\end{proposition}

\begin{proof}
	Let $s = t + \varepsilon f(t)$ and as before write $\varphi_{t,s}v = v + \varepsilon L_f^{\varphi}v$. Recall that in this case, by Remark \ref{rmk:changecoordfunopd}, the action of $\Autpiu{} O$ is induced by the the action of $\Autpiu{} O(\calO_{\overline{\sigma}}[\varepsilon])$ on $V^\kappa(\gog)\otimes\calO_{\overline{\sigma}}[\varepsilon]$, from the element
	\[
		\left( z \mapsto z + \varepsilon\sum^n_{k  =  0} \frac{1}{(k+1)!}\partial_t^{k+1}f(t) z^{k+1} \right) \in \Autpiu{} O (\calO_{\overline{\sigma}}[\varepsilon]).
	\]
	This action comes from the action of $\Autpiu{} O$ on the Lie algebra $\hat{\gog}_\kappa$ so that 
	\begin{equation}\label{eq:changecoordliealg}
		\varphi_{t,s}(X_{(m)}Y) = \varphi_{t,s}((Xz^m) \cdot Y) = (\varphi_{t,s}(Xz^m))\cdot (\varphi_{t,s}Y),
	\end{equation}
        Where $Xz^m \in \hat{\gog}_\kappa = \gog((z)) \oplus \mC\mathbf{1}$ acts on $Y \in V^\kappa(\gog)$. By direct calculation we have
	\begin{align*}
		\varphi_{t,s}(Xz^m) &= X\left(z + \varepsilon\sum_{k  \geq 0} \frac{1}{(k+1)!}\partial_t^{k+1}f(t)z^{k+1}\right)^m \\ &= X\left( z^m + m\varepsilon\sum^n_{k=0} \frac{1}{(k+1)!}\partial_t^{k+1}f(t)z^{k+m}\right) \\
		&= X_{(m)} + m\varepsilon\sum_{k \geq 0} \frac{1}{(k+1)!}X_{(k+m)}\otimes \partial_t^{k+1}f(t).
	\end{align*}
	Putting this information into equation \eqref{eq:changecoordliealg} and writing $\varphi_{t,s}X = X + \varepsilon L^\varphi_f X$ we get
	\begin{align*}
		X_{(m)}Y &+ \varepsilon L^\varphi_f(X_{(m)}Y) \\ &= \left( X_{(m)} + m\varepsilon\sum_{k\geq 0} \frac{1}{(k+1)!}X_{(k+m)}\otimes \partial_t^{k+1}f(t) \right) \cdot (Y + \varepsilon L^{\varphi}_fY) \\
		&=X_{(m)}Y + m\varepsilon\sum_{k\geq 0} \frac{1}{(k+1)!} X_{(k+m)}Y\otimes \partial_t^{k+1}f(t) + \varepsilon X_{(m)}(L^\varphi_f Y)
	\end{align*}
	so that the Proposition follows.
\end{proof}

\begin{proposition}\label{prop:canisozetaopdisc}
	Assume that $\Sigma = \{\sigma\}$ consists of a single section. Then the isomorphism $\beta = \varphi_t\psi_t^{-1}$ is independent from the choice of the coordinate $t$. It therefore induces a canonical isomorphism of left $\calD_{\overline{\sigma}}$-modules
	\[
		\beta : \calV^\kappa_{\sigma}(\gog)\otimes_{\calO_{\overline{\sigma}}} \Tan_{\overline{\sigma}} \to \twistAut{\overline{\sigma}}{V^\kappa(\gog)}.
	\]
\end{proposition}

\begin{proof}
	The combination of Corollary \ref{cor:sameactiondero} with Remark \ref{rmk:deroactioncommutationvacuum}, Proposition \ref{prop:actionvacuum} Proposition \ref{prop:deroactionphi} and Proposition \ref{prop:deroactionpsi} proves that for any $U \subset S$ which is affine and well covered there is a natural identification 
	\[
		\beta : \calV^\kappa_\sigma(\gog)\otimes_{\calO_{\overline{\sigma}}} \Tan_{\overline{\sigma}} \to \twistAut{\overline{\sigma}}{V^\kappa(\gog)},
	\]
	constructed via the choice of a coordinate, but independent from this choice. These can therefore be glued together to get a morphism $\beta$ which provides a canonical identification on the whole $S$. 
\end{proof}

\subsubsection{Proof of Theorem \ref{thm:globaldescrchiralalg} in the general case}
We now prove Theorem \ref{thm:globaldescrchiralalg} in the case of more than one section. 
Recall first that under our assumptions on $S$ (i.e. to be integral, topologically noetherian) for any open inclusion $j : U \subset S$ we have $\pbarO \subset j_*j^* \pbarO$.

	We prove that $\varphi_t\psi^{-1}_t$ is independent of the choice of a coordinate, by showing that $\varphi_t^{-1}\varphi_s = \psi_t^{-1}\psi_s$ for any choice of coordinates $t,s$. Recall that by Proposition \ref{prop:canisozetaopdisc} we already know this statement when $\Sigma$ consists of a single section, so we may assume that in $\Sigma$ there are at least two different sections. Without loss of generality we can assume that there are no sections which are equal so that we can consider $S_{\neq}$, the (non-empty) open subset of $S$ where all sections are different. Let $j : S_{\neq} \to S$ be the natural inclusion. By Lemma \ref{lem:factorizationpsi} and Lemma \ref{coro:factpropchiralalgcaly} we have a commutative diagram 
	\[\begin{tikzcd}
	{V^\kappa(\gog)\otimes\pbarO} && {\prod_{i\in I} V^\kappa(\gog)\otimes j_*\calO_{\overline{\Sigma}_i}} && {V^\kappa(\gog)\otimes\pbarO} \\
	\\
	{V^\kappa(\gog)\otimes\pbarO} && {\prod_{i\in I} V^\kappa(\gog)\otimes j_*\calO_{\overline{\Sigma}_i}} && {V^\kappa(\gog)\otimes\pbarO}
	\arrow["{(\fact^{\pbarO})^{-1}}", hook, from=1-1, to=1-3]
	\arrow["{\psi_{t,s}}", from=1-1, to=3-1]
	\arrow["{\prod_{i\in I}\varphi_{t_is_i}}", shift left=5, from=1-3, to=3-3]
	\arrow["{\prod_{i\in I}\psi_{t_is_i}}"', shift right=5, from=1-3, to=3-3]
	\arrow["{(\fact^{\pbarO})^{-1}}"', hook', from=1-5, to=1-3]
	\arrow["{\varphi_{t,s}}"', from=1-5, to=3-5]
	\arrow["{(\fact^{\pbarO})^{-1}}", hook, from=3-1, to=3-3]
	\arrow["{(\fact^{\pbarO})^{-1}}"', hook', from=3-5, to=3-3]
\end{tikzcd}\]
	so that the result follows from the case of a single section (i.e. Proposition \ref{prop:canisozetaopdisc}). \hfill \qedsymbol

\section*{Acknowledgements}

Both authors want to thank Alberto De Sole for his support and the encouragement during the preparation of this work. The first author also wants to thank the University of Pisa for the hospitality during his visits in which part of this work was carried out. 

\textbf{Funding:} The first author was funded by national
PRIN Grants 2022S8SSW and 2022HMBTTL and by INFN - CSN4 (Commissione Scientifica Nazionale 4 - Fisica Teorica), MMNLP project.

\printindex

\bibliography{biblio.bib}

\begin{thebibliography}{FLMM24}

\bibitem[aut25a]{casmaffei2}
author.
\newblock The factorizable {F}eigin-{F}renkel center.
\newblock {\em In progress}, 2025.

\bibitem[aut25b]{casarin2025bundle}
author.
\newblock A note on the bundle underlying opers, 2025.

\bibitem[BD04]{BDchirali}
Alexander Beilinson and Vladimir Drinfeld.
\newblock {\em Chiral algebras}, volume~51 of {\em American Mathematical Society Colloquium Publications}.
\newblock American Mathematical Society, Providence, RI, 2004.

\bibitem[Bei08]{beilinsonTopologicalAlgebras}
A.~Beilinson.
\newblock Remarks on topological algebras.
\newblock {\em Mosc. Math. J.}, 8(1):1--20, 183, 2008.

\bibitem[Cas23]{cas2023}
Luca Casarin.
\newblock A {F}eigin-{F}renkel theorem with {$n$} singularities.
\newblock {\em Adv. Math.}, 434:Paper No. 109335, 85, 2023.

\bibitem[FBZ04]{frenkel2004vertex}
Edward Frenkel and David Ben-Zvi.
\newblock {\em Vertex algebras and algebraic curves}, volume~88 of {\em Mathematical Surveys and Monographs}.
\newblock American Mathematical Society, Providence, RI, second edition, 2004.

\bibitem[FG09]{FG7}
Edward Frenkel and Dennis Gaitsgory.
\newblock Local geometric {L}anglands correspondence: the spherical case.
\newblock In {\em Algebraic analysis and around}, volume~54 of {\em Adv. Stud. Pure Math.}, pages 167--186. Math. Soc. Japan, Tokyo, 2009.

\bibitem[FG10]{FG6}
Edward Frenkel and Dennis Gaitsgory.
\newblock Weyl modules and opers without monodromy.
\newblock In {\em Arithmetic and geometry around quantization}, volume 279 of {\em Progr. Math.}, pages 101--121. Birkh\"{a}user Boston, Boston, MA, 2010.

\bibitem[FLMM22]{fortuna2022local}
Giorgia Fortuna, Davide Lombardo, Andrea Maffei, and Valerio Melani.
\newblock Local opers with two singularities: the case of {$\mathfrak{sl}(2)$}.
\newblock {\em Comm. Math. Phys.}, 394(3):1303--1360, 2022.

\bibitem[FLMM24]{fortuna2023semi}
Giorgia Fortuna, Davide Lombardo, Andrea Maffei, and Valerio Melani.
\newblock The semi-infinite cohomology of {W}eyl modules with two singular points.
\newblock {\em Pure Appl. Math. Q.}, 20(3):1251--1284, 2024.

\bibitem[For13]{fortuna2013beilinson}
Giorgia Fortuna.
\newblock {\em The {B}eilinson-{B}ernstein {L}ocalization {T}heorem for the affine {G}rassmannian}.
\newblock ProQuest LLC, Ann Arbor, MI, 2013.
\newblock Thesis (Ph.D.)--Massachusetts Institute of Technology.

\bibitem[Fu11]{fu2011etale}
Lei Fu.
\newblock {\em Etale cohomology theory}, volume~13 of {\em Nankai Tracts in Mathematics}.
\newblock World Scientific Publishing Co. Pte. Ltd., Hackensack, NJ, 2011.

\bibitem[HTT08]{HTTDmoduli}
Ryoshi Hotta, Kiyoshi Takeuchi, and Toshiyuki Tanisaki.
\newblock {\em {$D$}-modules, perverse sheaves, and representation theory}, volume 236 of {\em Progress in Mathematics}.
\newblock Birkh\"auser Boston, Inc., Boston, MA, japanese edition, 2008.

\bibitem[Hua97]{Huang}
Yi-Zhi Huang.
\newblock {\em Two-dimensional conformal geometry and vertex operator algebras}, volume 148 of {\em Prog. Math.}
\newblock Boston, MA: Birkh{\"a}user, 1997.

\bibitem[Kac98]{kac1998vertex}
Victor Kac.
\newblock {\em Vertex algebras for beginners}, volume~10 of {\em University Lecture Series}.
\newblock American Mathematical Society, Providence, RI, second edition, 1998.

\bibitem[Ras14]{Raskin}
Sam Raskin.
\newblock {Day II, talk 2. Factorization I, notes of the Workshop ``Towards the proof of the geometric Langlands conjecture''}.
\newblock Available at \url{https://sites.google.com/site/geometriclanglands2014/notes}, 2014.

\end{thebibliography}
\bibliographystyle{alpha}

\end{document}